\documentclass[11pt, eqno]{article}
\usepackage{bbm}
\usepackage{mathrsfs}
\usepackage{amsfonts}
\usepackage{amssymb}
\usepackage{graphicx}
\usepackage[all]{xy}

\usepackage{amsthm}
\usepackage{amsmath}
\usepackage{amsmath,amssymb,latexsym,color}
\usepackage[mathscr]{eucal}
\usepackage{CJK}
\usepackage{cases}
\usepackage{graphics}

\newenvironment{dedication}
        {\vspace{3ex}\begin{quotation}\begin{center}\begin{em}}
        {\par\end{em}\end{center}\end{quotation}}

\textwidth 6.4in \textheight9.0in \voffset=-0.3in \hoffset=-0.6in 



\usepackage{psfrag}
\usepackage{subfigure}
\usepackage{color}
\usepackage{amssymb,latexsym}
\usepackage{amsmath,latexsym}
\usepackage{amscd}

\newcommand{\N}{{\mathbb N}}

\newcommand{\R}{{\mathbb R}}

\newtheorem{theorem}{Theorem}[section]
\newtheorem{cor}[theorem]{Corollary}
\newtheorem{corollary}[theorem]{Corollary}
\newtheorem{defn}[theorem]{Definition}
\newtheorem{thm}[theorem]{Theorem}
\newtheorem{definition}[theorem]{Definition}
\newtheorem{example}[theorem]{Example}
\newtheorem{remark}[theorem]{Remark}

\newtheorem{lemma}[theorem]{Lemma}

\newtheorem{guess}[theorem]{Conjecture}
\newtheorem{prop}[theorem]{Proposition}

\begin{document}
\title{
Coisotropic Hofer-Zehnder capacities
of convex domains\\ and related results}
\date{September 18, 2019\\
First revision July 30, 2021\\
Second revision July 12, 2022\\
Final version March 25, 2023
}

\author{Rongrong Jin\footnote{Partially supported
by Scientific Research Foundation of CAUC (No:2020KYQD107).}\; and\; Guangcun Lu
\thanks{Corresponding author
\endgraf \hspace{2mm} Partially supported
by the NNSF  11271044 of China and the Fundamental Research Funds for Central Universities.
\endgraf\hspace{2mm} 2010 {\it Mathematics Subject Classification.}
 53D35, 53C23 (primary), 70H05, 37J05, 57R17 (secondary).}}
 \maketitle \vspace{-0.3in}

\vspace{0.3in}
\begin{dedication}
\hfill{Dedicated to Professor Leonid Polterovich on the occasion of his sixtieth birthday}
\end{dedication}
\vspace{0.1in}

\abstract{
We prove representation formulas for the  coisotropic Hofer-Zehnder capacities of bounded convex domains with special coisotropic
submanifolds and the leaf relation (introduced by Lisi and Rieser recently), study their estimates and relations with the Hofer-Zehnder capacity,
give some interesting corollaries, and also obtain corresponding versions of a Brunn-Minkowski type inequality by Artstein-Avidan and Ostrover
 and a theorem by Evgeni Neduv.
} \vspace{-0.1in}


\medskip\vspace{12mm}
\tableofcontents

\section{Introduction}

\subsection{Coisotropic capacity}\label{sec:coCap}
We begin with brief review of the notion of a  \textsf{coisotropic capacity} (i.e., a symplectic capacity relative to a coisotropic
submanifold of a symplectic manifold) introduced by   Lisi and  Rieser \cite{LiRi13} recently.
Consider a tuple $(M,N,\omega, \sim)$ consisting of a symplectic manifold
$(M,\omega)$, a coisotropic submanifold $N\hookrightarrow M$,  and
a coisotropic equivalence relation $\sim$.
(\textsf{In this paper all manifolds are assumed to be connected without special statements}!) Recall that the symplectic complementary distribution $TN^\omega\subset TN$ is integrable on $N$ and so $N$ is foliated by leaves of this foliation(\cite{McSal98}).
 By \cite[Definition 1.4]{LiRi13}, a  \textsf{coisotropic equivalence relation} on $N$ is an equivalence relation $\sim$ with the
property that if $x$ and $y$ are on the same leaf then $x\sim y$. A special example
is the so-called  \textsf{leaf relation} $\sim$: \;
 $x\sim y$ if and only if $x$ and $y$ are on the same leaf.
We do not assume $\dim N<\dim M$. If $\dim N=\dim M$ then the leaf relation on $N$ means that
that $x,y\in N$ satisfies $x\sim y$ if and only if $x=y$.

A class of important examples of $(M,N,\omega, \sim)$ consist of
   the standard symplectic space $(\mathbb{R}^{2n},  \omega_0)$,
 its coisotropic linear subspaces
\begin{equation}\label{e:stand-cois}
\mathbb{R}^{n,k}=\{x\in\mathbb{R}^{2n}\,|\,x=(q_1,\cdots,q_n,p_1,\cdots,p_k,0,\cdots,0)\}
\end{equation}
for $k=0,\cdots,n$, and the leaf relation $\sim$ on $\mathbb{R}^{n,k}${\color{red}.}
(Here  we understand
$\mathbb{R}^{n,0}=\{x\in\mathbb{R}^{2n}\,|\,x=(q_1,\cdots,q_n,0,\cdots,0)\}$.)
Denote by
\begin{eqnarray}\label{e:V0}
&&\hspace{-6mm}V_0^{n,k}=\{x\in\mathbb{R}^{2n}\;|\;x=(0,\cdots,0,q_{k+1},\cdots,q_n,0,\cdots,0)\},\\
&&\hspace{-6mm}V^{n,k}_1=\{x\in\mathbb{R}^{2n}\,|\,x=(q_1,\cdots,q_k,0,\cdots,0,p_1,\cdots,p_k,0,\cdots,0)\}.\label{e:V1}
\end{eqnarray}
When $k=0$, $L_0^n:=V_0^{n,0}=\{x\in\mathbb{R}^{2n}\;|\;x=(q_{1},\cdots,q_n,0,\cdots,0)\}=\mathbb{R}^{n,0}$ is a Lagrangian subspace, and $V^{n,k}_1=\{0\}$. Moreover, $V^{n,n}_0=\{0\}$ and $V^{n,n}_1=\mathbb{R}^{2n}$.
Clearly, two points $x, y\in\mathbb{R}^{n,k}$  satisfy
  $x\sim y$ if and only if $y\in x+V_0^{n,k}$.
Let  $J_{2n}$ denote the standard complex structure  on $\mathbb{R}^{2n}$ given by
\begin{equation}\label{e:standComplex}
 (q_1,\cdots,q_n, p_1,\cdots, p_n)\mapsto (-p_1,\cdots,-p_n, q_1,\cdots, q_n).
\end{equation}
Then $\mathbb{R}^{2n}$ has the orthogonal decompositions
\begin{equation}\label{lagmultipier1}
\mathbb{R}^{2n}=J_{2n}V^{n,k}_0\oplus \mathbb{R}^{n,k}=J_{2n}\mathbb{R}^{n,k}\oplus V^{n,k}_0
\end{equation}
 with respect to the standard inner product.

Let $(M_0, N_0, \omega_0, \sim_0)$ and $(M_1, N_1, \omega_1, \sim_1)$ be
two tuples as above.  By \cite[Definition 1.5]{LiRi13}
a \textsf{relative symplectic embedding} from
$(M_0, N_0, \omega_0)$ and $(M_1, N_1, \omega_1)$
is a symplectic embedding $\psi: (M_0, \omega_0)\to (M_1, \omega_1)$
satisfying $\psi^{-1}(N_1) = N_0$. We say such an embedding
 $\psi$ to \textsf{respect the pair of coisotropic equivalence relations}
    $(\sim_0,\sim_1)$
        if for every $x, y \in N_0$,
    $$
    \psi(x) \sim_1 \psi(y)\quad \Longrightarrow \quad x \sim_0 y.
    $$

For $a\in\mathbb{R}$ we write ${\bf a}:=(0,\cdots,0,a)\in\mathbb{R}^{2n}$. Denote by
\begin{equation}\label{e:Ball1}
B^{2n}({\bf a}, r)\quad\hbox{and}\quad B^{2n}(r)
\end{equation}
the open balls of radius $r$ centered at ${\bf a}$ and the origin in $\R^{2n}$ respectively, and by
\begin{eqnarray}\label{e:Ball2}
&&\hspace{-10mm}W^{2n}(R) := \left \{ (x_1, \dots, x_n, y_1, \dots, y_n) \in \R^{2n} \, | \, x_n^2 + y_n^2  < R^2\,\text{or}\, y_n < 0
\right\},\\
&&\hspace{-10mm}  W^{n,k}(R):=  W^{2n}(R) \cap \R^{n,k}\quad\hbox{and}\quad B^{n,k}(r):= B^{2n}(r) \cap \R^{n,k}.\label{e:Ball3}
\end{eqnarray}
($W^{2n}(R)$ was written as $W(R)$ in \cite[Definition~1.1]{LiRi13}).

\begin{definition}[\hbox{\cite[Definition~1.7]{LiRi13}}]\label{def:coCap}
{\rm
A \textsf{coisotropic capacity} is a map which associates a tuple $(M,N,\omega, \sim)$ as above
to a non-negative number or infinity $c(M,N,\omega, \sim)$ with the following properties.
  \begin{enumerate}
         \item [(i)] {\bf Monotonicity}. If there exists a relative
symplectic embedding $\psi$ from $(M_0, N_0, \omega_0, \sim_0)$ to $(M_1, N_1, \omega_1, \sim_1)$
respecting the coisotropic equivalence relations, and $\dim M_0=\dim M_1$, then
$$c(M_0,N_0,\omega_0, \sim_0) \leq c(M_1,N_1,\omega_1, \sim_1).$$

    \item [(ii)]{\bf Conformality}.  
$c(M,N,\alpha\omega, \sim)=|\alpha|c(M,N,\omega, \sim),\;\forall\alpha \in \mathbb{R}\backslash \{0\}$.

    \item [(iii)]{\bf Non-triviality}. With the leaf
    relation $\sim$  it holds that for $k=0,\cdots,n-1$,
    \begin{eqnarray}\label{e:Ball4}
    c(B^{2n}(1),B^{n,k}(1),\omega_0, \sim ) =\frac{\pi}{2}=
    c(W^{2n}(1),W^{n,k}(1),\omega_0, \sim ).
    \end{eqnarray}
    \end{enumerate}}
\end{definition}

As remarked in  \cite[Remark~1.9]{LiRi13}, it was because of
the non-triviality (iii) that $c$ cannot be any symplectic capacity.

\textsf{From now on, we abbreviate $c(M,N,\omega,\sim)$ as $c(M,N,\omega)$
if $\sim$ is the leaf relation on $N$.  
In particular, for domains $D\subset\mathbb{R}^{2n}$ we also abbreviate $c\left(D, D\cap\mathbb{R}^{n,k},\omega_0\right)$ as
   $c\left(D, D\cap\mathbb{R}^{n,k}\right)$ for simplicity.}

\begin{definition}\label{def:coWidth}
{\rm Let $N$ be a $n+k$-dimensional coisotropic submanifold  in a  symplectic manifold $(M, \omega)$ of dimension $2n$.
We define the \textsf{relative Gromov width} of it to be
$$
{\it w}_G(N;M,\omega):=\sup\left\{\pi r^2\,\Bigg|\,\begin{array}{ll}
&\exists\;\hbox{a relative symplectic embedding}\\
&\hbox{$(B^{2n}(r), B^{n,k}(r))\to (M,N)$ respecting}\\
&\hbox{ the leaf relations on $B^{n,k}(r)$ and $N$}
\end{array}\right\}.
$$
}
\end{definition}

\textsf{We always assume $k\in\{0,1\cdots,n-1\}$ without special statements.}
When $k=0$, $N$ is a Lagrangian submanifold and this relative Gromov width was first
introduced  (without the terminology) by  Barraud-Cornea \cite[\S 1.3.3]{BaCor07}
and formally defined by Biran-Cornea \cite[\S6.2]{BirCor08}.

It is easily seen that ${\it w}_G$ satisfies monotonicity, conformality and
\begin{equation}\label{e:Rwidth-c}
{\it w}_G(B^{2n}(r)\cap\mathbb{R}^{n,k}; B^{2n}(r),\omega_0) =\pi r^2,\quad\forall r>0.
\end{equation}
By the non-squeezing for relative embeddings \cite{LiRi13}, 
${\it w}_G(W^{n,k}(1);W^{2n}(1),\omega_0)\le \pi$.
Hence
$$
\pi={\it w}_G(B^{2n}(r)\cap\mathbb{R}^{n,k}; B^{2n}(r),\omega_0)\le{\it w}_G(W^{n,k}(1);W^{2n}(1),\omega_0)\le\pi
$$
and it follows that ${\it w}_G/2$ is a coisotropic capacity. Indeed, ${\it w}_G/2$ is the smallest coisotropic capacity in the sense that
\begin{equation}\label{e:width-c}
{\it w}_G(N;M,\omega)/2\le c(M, N,\omega).
\end{equation}


 Lisi and  Rieser \cite{LiRi13} gave an example of the coisotropic capacities
by constructing  an analogue of the Hofer-Zehnder capacity relative to a coisotropic submanifold,
called the \textsf{coisotropic Hofer-Zehnder
capacity}. Using it they also studied symplectic embeddings relative to coisotropic constraints and got some corresponding dynamical results.
This coisotropic capacity also played a key role in the proof of
Humili\'ere-Leclercq-Seyfaddini's  important rigidity result  that symplectic homeomorphisms preserve coisotropic submanifolds
 and their characteristic foliations (\cite{HuLeSe15}).

\begin{defn}[\hbox{\cite[Definition 1.10]{LiRi13}}]\label{def:simple}
{\rm Given a coisotropic submanifold $N$ of a symplectic manifold $(M,\omega)$,
a smooth Hamiltonian $H:M\to\mathbb{R}$ is called  \textsf{simple} if
\begin{enumerate}
  \item [(i)] there exists a compact set $K\subset M$ (depending on $H$) and a constant $m(H)$ such that $K\subset M\setminus\partial M$, $\emptyset\ne K\cap N\subset N$, and
      $H|_{M\setminus K}\equiv m(H)$;

  \item [(ii)] there exists an open set $U\subset M$ (depending on $H$) intersecting with $N$ such that
      $H|_U\equiv 0$;

  \item [(iii)] $0\le H(x)\le m(H)$ for all $x\in M$.
  \end{enumerate}
Denote the set of simple Hamiltonians by $\mathcal{H}(M,N)$. ({\it Note}: $\mathcal{H}(M,N)=\mathcal{H}(M)$ if $N=M$.)}
\end{defn}

Let $\sim$ be a coisotropic equivalence relation on $N$ and $X_H$ the Hamiltonian
vector field of  $H\in \mathcal{H}(M,N)$.
Then any solution $\gamma(t)$ of $\dot{\gamma}(t)=X_H(\gamma(t))$ with
$\gamma(0)\in N$ is well-defined on $\mathbb{R}$ (since $X_H$ has compact support).
In \cite[Definition 1.11]{LiRi13} the \textsf{return time} of $\gamma$ relative to $N$ and $\sim$ was defined
by
$$
T_\gamma=\inf\{t>0\;|\; \gamma(t)\in N\;\hbox{and}\;\gamma(0)\sim\gamma(t)\},
$$
where the infimum of the empty set is understood as $+\infty$.
Clearly, for the trivial equivalence relation $\sim$, $T_\gamma$ is a return time
to the submanifold $N$ itself. If $\dim N=\dim M$, and the
coisotropic equivalence relation on $N$ is the leaf relation $\sim$, then
 $T_\gamma$ is the minimal period of $\gamma$, hereafter we understand $T_\gamma=+\infty$
if $\gamma$ is nonconstant and has no finite period.
For the leaf relation $\sim$, $T_\gamma$ measures the
shortest non-trivial leafwise chord. Here a \textsf{leafwise chord} for $N$ in $(M,\omega)$ is
 a Hamiltonian trajectory that starts and ends on  the same leaf of the  coisotropic foliation (\cite{LiRi13}).
By \cite[Definition 1.12]{LiRi13},
a function $H\in \mathcal{H}(M, N)$ is called
\textsf{admissible} for the coisotropic equivalence relation $\sim$,
if any solution of
$$
\dot{\gamma}=X_{H}(\gamma)\quad\hbox{with}\quad
\gamma(0)\in N
$$
 is {either} constant {or} such that $T_\gamma > 1$, i.e.,  the return time of the orbit $\gamma$ relative to $(N,\sim)$
  is greater than $1$.  Denote the collection of all such admissible functions by $\mathcal{H}_{\rm ad}(M,N,\omega, \sim)$.
It should be noted that for $N=M$ the above admissible condition  is equivalent to the condition  that
  any solution of $\dot{\gamma}=X_{H}(\gamma)$  is either constant {or} has minimal period $T_\gamma > 1$,
  namely, it becomes the admissible condition in the definition of the Hofer-Zehnder capacity \cite{HoZe90}.
In view of \cite[Definition 1.13]{LiRi13} we call
$$
c_{\rm LR}(M,N,\omega,\sim)=\sup\{m(H)\;|\;H\in\mathcal{H}_{\rm ad}(M,N,\omega,\sim)\}
$$
the \textsf{coisotropic Hofer-Zehnder capacity} of the tuple $(M,N,\omega,\sim)$.
It is a coiso-tropic  capacity (\cite[Theorem~1.14]{LiRi13}).
  Without special statements, for domains $D\subset\mathbb{R}^{2n}$ we abbreviate $c_{\rm LR}\left(D, D\cap\mathbb{R}^{n,k},\omega_0\right)$ as $c_{\rm LR}\left(D, D\cap\mathbb{R}^{n,k}\right)$ for simplicity.

\begin{remark}\label{rem:cocap}
{\rm It is not hard to check that $c_{\rm LR}(M, M,\omega)$ is equal to the Hofer-Zehnder capacity $c_{\rm HZ}(M, \omega)$ in \cite{HoZe90} since $\mathcal{H}(M,N)=\mathcal{H}(M)$ and $x\sim y$ if and only if $x=y$ for leaf relation $\sim$ on $M$.
If $\dim N=\dim M$, $N\ne M$ and $c_{\rm LR}(M, N,\omega)<+\infty$, then the Hamiltonian vector field $X_H$
of any $H\in\mathcal{H}(M,N)$ with $\max H>c_{\rm LR}(M, N,\omega,\sim)$ has a nonconstant periodic trajectory through $N$;
thus $c_{\rm LR}(M, N,\omega,\sim)$ is completely different from  relative Hofer-Zehnder capacities  introduced in \cite{GiGu04} and \cite{Lu06IJM}.
If $\dim N<\dim M$, $\sim$ is the leaf relation on $N$ and $c_{\rm LR}(M,N,\omega,\sim)<\infty$,
for a compact hypersurface $S \hookrightarrow M$  bounding a compact symplectic manifold,
  and a parametrized family  $\{S_\epsilon\,|\,\epsilon \in I\}$ of hypersurfaces modelled on it and
   transverse to $N$, \cite[Theorem~4.6]{LiRi13} showed that there exists a  leafwise chord for $N$ on $S_\epsilon$
   for almost each $\epsilon \in I$.
 }
\end{remark}

\subsection{Background and main results}\label{sec:main}

Symplectic capacities of convex bodies (i.e., compact convex subsets containing interior points)
in $(\mathbb{R}^{2n},\omega_0)$ play important roles in studies
of symplectic topology and other subjects such as billiard dynamics and convex geometry (cf. \cite{AAO08, AAO14, AAKO14}).
For example, in \cite{AAKO14}  Artstein-Avidan,  Karasev and Ostrover proved that
Viterbo's symplectic { isoperimetric} conjecture for symplectic capacities of convex domains  (\cite{Vit00}) implies the famous Mahler conjecture regarding the volume product of symmetric convex bodies in convex geometry.
Their proof is based on the
relation between symplectic capacities and the length of shortest billiard trajectories (\cite{AAO14}).
In \cite{AAO08} Artstein-Avidan and Ostrover proved a Brunn-Minkowski type inequality for the Ekeland-Hofer-Zehnder capacity of convex domains based on the representation formula for the Ekeland-Hofer-Zehnder capacity of convex domains (\cite{EH89, EH90, Sik90} and {\cite[Proposition~4]{HoZe90}}.
We  generalized these results to the symmetric Ekeland-Hofer-Zehnder  capacity
 and the generalized Ekeland-Hofer-Zehnder one of convex domains (\cite{JinLu1915, JinLu1916}).
 Generally speaking, it is more difficult to compute the coisotropic Hofer-Zehnder capacity than
 to compute the  Hofer-Zehnder capacity. In this paper, for the coisotropic Hofer-Zehnder capacity of convex domains
  we shall prove that  there exists a corresponding representation formula (Theorem~\ref{th:represention})
  and a corresponding Brunn-Minkowski type inequality  (Theorem~\ref{th:Brun}).
The basic proof ideas are following \cite{HoZe90, AAO08} and \cite{JinLu1915, JinLu1916}. The main difficulty realizing
the goal is looking for  suitable Banach spaces on which variational methods are carried out.\\

\noindent{1.2.1}. {\bf Representation formulas for coisotropic Hofer-Zehnder capacities of convex domains}.\;
Let $\mathcal{S}$ be  the boundary  of a bounded convex domain $D$ in $(\mathbb{R}^{2n},\omega_0)$.
   We defined in \cite{JinLu1916}  a nonconstant  absolutely continuous curve $z:[0,T]\to \mathbb{R}^{2n}$ (for some $T>0$)
  to be a  \textsf{generalized characteristic} on $\mathcal{S}$  if $z([0,T])\subset \mathcal{S}$ and
    $\dot{z}(t)\in JN_\mathcal{S}(z(t))\;\hbox{a.e.}$, where
    $$
    N_\mathcal{S}(x)=\{y\in\mathbb{R}^{2n}\,|\, \langle u-x, y\rangle\le 0\;\forall u\in D\}
    $$
    is the normal cone to $D$ at $x\in\mathcal{S}$.
\textsf{Fix an integer $0\le k<n$.} When $D\cap \mathbb{R}^{n,k}\ne\emptyset$, we call a generalized characteristic $z:[0,T]\to\mathcal{S}$
a \textsf{generalized leafwise chord} (abbreviated GLC) on $\mathcal{S}$ for $\mathbb{R}^{n,k}$ if
   $z(0), z(T)\in \mathbb{R}^{n,k}$ and $z(0)-z(T)\in V_0^{n,k}$.
(Such a chord becomes a leafwise chord on $\mathcal{S}$ for $\mathbb{R}^{n,k}$ if $\mathcal{S}$ is of class $C^1$.)
We define the action of a GLC $z:[0,T]\to\mathcal{S}$ by
$$
A(z)=\frac{1}{2}\int_0^T\langle -J_{2n}\dot{z},z\rangle dt.
$$

\begin{theorem}\label{th:represention}
  Let $D\subset\mathbb{R}^{2n}$ be a bounded convex  domain such that $D\cap \mathbb{R}^{n,k}\ne\emptyset$. Then there exists a generalized leafwise chord
  $x^\ast$ on $\partial D$ for $\mathbb{R}^{n,k}$ such that
  \begin{eqnarray}
  &&c_{\rm LR}(D,D\cap \mathbb{R}^{n,k})=A(x^\ast)\label{capacity}\\
  &&\qquad=\min\{A(x)>0 \,|\,x\;\hbox{is a GLC on\;$\partial D$\;for \;$\mathbb{R}^{n,k}$}\}\label{minaction}.
  \end{eqnarray}
   \end{theorem}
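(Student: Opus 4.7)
The plan is to follow the Clarke dual variational approach underlying the classical Hofer--Zehnder representation formula \cite{HoZe90}, as adapted in \cite{JinLu1915, JinLu1916}, while encoding the coisotropic boundary conditions that distinguish generalized leafwise chords from closed characteristics. Unlike the purely periodic case (no endpoint constraints) or the Lagrangian case $k=0$ (both endpoints constrained to lie on one Lagrangian subspace), here a curve $x : [0,1] \to \R^{2n}$ must satisfy $x(0), x(1) \in \R^{n,k}$ \emph{and} $x(1) - x(0) \in V_0^{n,k}$, which is a mixed linear coupling of the two endpoints. Working with $u = \dot{x}$ in an $L^\alpha$-type space, this translates into the single constraint $\int_0^1 u(t)\, dt \in V_0^{n,k}$, together with a prescription of the constant of integration using the splitting $\R^{2n} = J_{2n} V_0^{n,k} \oplus V_0^{n,k} \oplus V_1^{n,k}$ induced by \eqref{lagmultipier1}.

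First I would construct the closed subspace $E$ of admissible $u$'s and a bounded linear primitive operator $\widetilde{\pi} : E \to C([0,1], \R^{2n})$ so that $x = \widetilde{\pi} u$ automatically meets the endpoint conditions. On $E$ the dual action functional
\[
\Psi(u) \;=\; \tfrac{1}{2}\int_0^1 \langle -J_{2n}\, \widetilde{\pi}\, u,\, u\rangle\, dt \;+\; \int_0^1 j_D^{\,\ast}(u(t))\, dt,
\]
where $j_D$ is the Minkowski gauge of $D$ and $j_D^{\,\ast}$ its Legendre dual, is strictly convex modulo a finite-dimensional kernel. Standard direct-method arguments (coercivity on a level set $\int j_D^{\,\ast}(u)\,dt = 1$, weak lower semicontinuity, and compactness of $\widetilde{\pi}$ via Arzel\`a--Ascoli) then yield a minimizer $u^\ast$; analyzing the Euler--Lagrange equation and rescaling identifies $x^\ast := \widetilde{\pi} u^\ast$ as a generalized leafwise chord on $\partial D$ for $\R^{n,k}$ with $\dot{x}^\ast \in J N_{\partial D}(x^\ast)$ a.e., and its action $A(x^\ast)$ equal to the infimum of $A$ over all GLCs, giving \eqref{minaction}.

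To match this variational minimum with the capacity I would prove the two inequalities separately. For $c_{\rm LR}(D, D\cap \R^{n,k}) \geq A(x^\ast)$, I would produce a family of admissible Hamiltonians of the form $H_\epsilon = f_\epsilon \circ j_D^2$ with $f_\epsilon$ a non-decreasing cut-off of slope bounded by a constant strictly less than $A(x^\ast)$ and $m(H_\epsilon) \nearrow A(x^\ast)$; any trajectory of $X_{H_\epsilon}$ starting on $\R^{n,k}$ and returning to a leaf-equivalent point within time $1$ would project to a GLC on some rescaled $\partial D$ with action below $A(x^\ast)$, contradicting minimality, forcing $H_\epsilon \in \mathcal{H}_{\rm ad}$. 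For $c_{\rm LR}(D, D\cap \R^{n,k}) \leq A(x^\ast)$, I would argue contrapositively: given any $H \in \mathcal{H}(D, D \cap \R^{n,k})$ with $m(H) > A(x^\ast)$, replace $j_D^{\,\ast}$ in $\Psi$ by the Legendre dual of a suitable convex minorant constructed from $H$, and minimize the resulting functional on $E$; the critical point supplies a nonconstant trajectory $\gamma$ of $X_H$ with $\gamma(0), \gamma(T_\gamma) \in \R^{n,k}$ on the same leaf and $T_\gamma \le 1$, so $H \notin \mathcal{H}_{\rm ad}$.

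The hard part, as the authors themselves flag in the introduction, is the construction of $E$ and $\widetilde{\pi}$: because the boundary constraint $x(1) - x(0) \in V_0^{n,k}$ is neither periodic nor of pure Dirichlet type on a single Lagrangian, the natural $L^\alpha$ primitive must be corrected by a finite-dimensional projection adapted to the splitting \eqref{lagmultipier1}, while remaining compact and enjoying the symmetry required to make $\Psi$ a well-defined Clarke-style dual functional whose Euler--Lagrange equation recovers the characteristic condition $\dot{x} \in J N_{\partial D}(x)$. Once this functional-analytic setup is in place, the convexity arguments, the action-minimizing construction, and the Hofer--Zehnder style squeezing estimates for admissible Hamiltonians should transfer from \cite{HoZe90, JinLu1915, JinLu1916} with only cosmetic modifications reflecting the three-piece decomposition $\R^{2n} = J_{2n} V_0^{n,k} \oplus V_0^{n,k} \oplus V_1^{n,k}$.
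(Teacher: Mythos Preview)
Your plan for \eqref{minaction} and for the lower bound $c_{\rm LR}(D,D\cap\R^{n,k})\ge A(x^\ast)$ is essentially the paper's: the authors work directly with $x$ in the space
\[
\mathscr{F}_p^{n,k}=\Bigl\{x\in W^{1,p}([0,1],\R^{2n}):x(0),x(1)\in\R^{n,k},\ x(0)\sim x(1),\ \textstyle\int_0^1 x\in J_{2n}V_0^{n,k}\Bigr\},
\]
minimize $I_p(x)=\int_0^1 (H_D^\ast(-J_{2n}\dot x))^{p/2}$ on $\{A=1\}$, and the two technical ingredients are exactly the Poincar\'e estimate (Lemma~3.1) and the $L^p$-decomposition $L^p=\{-J_{2n}\dot\zeta\}\dot{+}\R^{n,k}$ (Lemma~3.2), which play the role of your $E$ and $\widetilde\pi$. (One small slip: $j_D$ is $1$-homogeneous, so $j_D^\ast$ is the indicator of $D^\circ$; you want $H_D^\ast=((j_D)^2)^\ast=(h_D)^2/4$.)

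The genuine gap is in your argument for the upper bound $c_{\rm LR}(D,D\cap\R^{n,k})\le A(x^\ast)$. An arbitrary $H\in\mathcal{H}(D,D\cap\R^{n,k})$ is \emph{not} convex (it vanishes on an open set and equals $m(H)$ outside a compact set), so Clarke duality does not apply to $H$ itself. Replacing $H$ by a ``convex minorant'' $\widetilde H$ and minimizing the associated dual functional yields a trajectory of $X_{\widetilde H}$, not of $X_H$; there is no mechanism forcing such a trajectory to lie in a level set where $\widetilde H=H$, so it does not show $H\notin\mathcal H_{\rm ad}$. The paper handles this step by an entirely different method: after reducing to smooth strictly convex $D$, it extends $H$ to a quadratic-at-infinity $\overline H$ and runs a minimax (linking) argument for the indefinite functional $\Phi_{\overline H}=\mathfrak a-\mathfrak b_{\overline H}$ on the half-fractional space $E=H^{1/2}_{n,k}$, with linking sets $\partial\Sigma_\tau$ and $\Gamma_\alpha$ built from the minimizer $x^{\ast+}$. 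Two further ingredients are needed and absent from your outline: a Sard-type lemma (Lemma~3.4) showing the action set $\Sigma_{\mathcal S}$ of leafwise chords has empty interior, used to choose $\epsilon$ with $A(x^\ast)+\epsilon\notin\Sigma_{\mathcal S}$ so that the Palais--Smale condition holds; and the observation that any critical point of $\Phi_{\overline H}$ with positive critical value must be a nonconstant solution sitting inside $D$. Your Clarke-dual scheme cannot substitute for this minimax step.
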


This result is a core of this paper. Seemingly, the proof of (\ref{minaction}) is following
\cite[\S1.5]{HoZe94} within the analytic framework in  \cite{LiRi13}.
 However, our main novelty is to find a substitute of the space $\mathcal{F}$ on page 26 of \cite{HoZe94},
  the  Banach subspace $\mathscr{F}_p^{n,k}$ of $W^{1,p}([0,1],\mathbb{R}^{2n})$
 given in (\ref{e:BanachSpace}), which does not appear in any references. 
 It is by no means natural to choose such a space. After carefully checking the proof of periodic case (for example
 \cite{Sik90}), trying and testing  many times we gradually realized that this space was suitable.
  Another key for the proof of (\ref{capacity}) is Lemma~\ref{nointerior}, which is a corresponding
  result about set of actions of leafwise chords on $\partial D$ for $\mathbb{R}^{n,k}$
of a proposition in \cite[\S~7.4]{Sik90}.

 \begin{theorem}\label{th:represention*}
  Let $D\subset\mathbb{R}^{2n}$ be a bounded convex  domain such that  $D\cap \mathbb{R}^{n,k}\ne\emptyset$.
  If $0\le k<n-1$ then
  \begin{eqnarray}\label{e:add1}
  c_{\rm LR}(D,D\cap \mathbb{R}^{n,k})\le c_{\rm LR}(D,D\cap \mathbb{R}^{n,k+1}).
  \end{eqnarray}
 If $k=n-1$ then
 \begin{eqnarray}\label{e:add2}
  c_{\rm LR}(D,D\cap \mathbb{R}^{n,n-1})\le c_{\rm HZ}(D).
  \end{eqnarray}
  \end{theorem}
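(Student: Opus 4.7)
The proof should use the representation formula from Theorem~\ref{th:represention} together with the natural inclusions $\mathbb{R}^{n,k}\subset\mathbb{R}^{n,k+1}$ and $V_0^{n,k+1}\subset V_0^{n,k}$. For part (ii) I also invoke Hofer--Zehnder's classical result that, for a bounded convex $D$, $c_{\rm HZ}(D)$ equals the minimum action over closed characteristics on $\partial D$.

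For (i), let $z^*:[0,T]\to\partial D$ be a minimizing GLC for $\mathbb{R}^{n,k+1}$, so $z^*(0),z^*(T)\in\mathbb{R}^{n,k+1}$ and $z^*(0)-z^*(T)\in V_0^{n,k+1}$. Because $V_0^{n,k+1}\subset V_0^{n,k}$, the closure condition needed to be a GLC for $\mathbb{R}^{n,k}$ is already satisfied. The only obstruction is that $a:=p_{k+1}(z^*(0))=p_{k+1}(z^*(T))$ (the two values coincide because $V_0^{n,k+1}$ has zero $p_{k+1}$-component) might be nonzero. If $a=0$, then $z^*$ is already a GLC for $\mathbb{R}^{n,k}$ and \eqref{e:add1} follows at once from \eqref{minaction}.

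When $a\ne 0$, I would produce from $z^*$ a GLC $\tilde z$ for $\mathbb{R}^{n,k}$ with $A(\tilde z)\le A(z^*)$. A natural candidate is the translate $\tilde z(t):=z^*(t)-av$, where $v\in\mathbb{R}^{2n}$ denotes the unit vector in the $p_{k+1}$-direction. Then $\tilde z(0),\tilde z(T)\in\mathbb{R}^{n,k}$, the closure $\tilde z(0)-\tilde z(T)=z^*(0)-z^*(T)\in V_0^{n,k}$ still holds, and a direct computation from $A(z)=\tfrac12\int_0^T\langle -J_{2n}\dot z,z\rangle\,dt$ yields $A(\tilde z)=A(z^*)$: the only correction equals $\tfrac12\langle J_{2n}(z^*(T)-z^*(0)),av\rangle$, which vanishes because $z^*(T)-z^*(0)\in V_0^{n,k+1}$ forces $J_{2n}(z^*(T)-z^*(0))$ to lie in the span of the $p_{k+2},\ldots,p_n$-directions, hence perpendicular to $v$. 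The genuine difficulty is that $\tilde z$ lies on $\partial(D-av)$ instead of $\partial D$. To finish, one must either transfer $\tilde z$ back to $\partial D$ by exploiting the convexity of $D$, or argue directly---via the first-order optimality (Lagrange-multiplier) conditions at the endpoints of the variational problem on the appropriate Banach space of curves---that a minimizer can always be chosen with $a=0$. This is the main technical step.

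For part (ii), let $z^*$ be a minimum-action closed characteristic on $\partial D$ realizing $c_{\rm HZ}(D)$. If $p_n(z^*(t_0))=0$ for some $t_0$, reparametrizing $z^*$ to start at $z^*(t_0)$ produces a GLC for $\mathbb{R}^{n,n-1}$ of the same action (since $z^*(0)=z^*(T)$ trivially gives closure in $V_0^{n,n-1}$), and \eqref{e:add2} follows. The delicate case is when $z^*$ avoids $\{p_n=0\}$ entirely; for example, if $D$ is a small translate of a ball in the $p_n$-direction, the minimum closed characteristic may stay in $\{p_n=\mathrm{const}>0\}$, although there still exist other closed orbits of action $c_{\rm HZ}(D)$ that meet $\mathbb{R}^{n,n-1}$. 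Establishing the existence of such a GLC systematically for a general convex $D$ parallels the obstacle encountered in (i), and can presumably be handled by the same variational or approximation argument (first reducing to strictly convex smooth $D$).
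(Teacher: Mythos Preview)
Your approach has a genuine gap that you yourself identify but do not close. You take a minimizing GLC $z^*$ for $\mathbb{R}^{n,k+1}$ and translate by $-av$ in the $p_{k+1}$-direction; you correctly compute that the boundary conditions and the action survive, but the translated curve now lies on $\partial(D-av)$, not on $\partial D$. Neither of your proposed fixes is carried out: there is no mechanism for ``transferring $\tilde z$ back to $\partial D$'' while preserving the action, and the first-order (transversality) conditions for a minimizing GLC with respect to $\mathbb{R}^{n,k+1}$ only constrain $\nabla H_D$ at the endpoints to lie in $J_{2n}\mathbb{R}^{n,k+1}$, which says nothing about $p_{k+1}(z^*(0))$. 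The analogous obstruction in part (ii) is just as real: a minimum-action closed characteristic on $\partial D$ need not meet $\{p_n=0\}$, and your parenthetical claim that ``there still exist other closed orbits of action $c_{\rm HZ}(D)$ that meet $\mathbb{R}^{n,n-1}$'' is an assertion, not an argument.

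The paper resolves this by switching to the \emph{dual} (Clarke) variational description coming from Proposition~\ref{prop:Brun.0}, namely
\[
c_{\rm LR}(D,D\cap\mathbb{R}^{n,k})=\min_{x\in\mathcal{A}_2^{n,k}}\int_0^1 H_D^\ast(-J_{2n}\dot x)\,dt,
\]
where $\mathcal{A}_2^{n,k}\subset\mathscr{F}_2^{n,k}$ is the set of $W^{1,2}$ paths with $x(0),x(1)\in\mathbb{R}^{n,k}$, $x(0)-x(1)\in V_0^{n,k}$, $\int_0^1 x\in J_{2n}V_0^{n,k}$, and $A(x)=1$. The crucial point is that this functional depends only on $\dot x$, so translation by any constant vector is free. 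Given a minimizer $x\in\mathcal{A}_2^{n,k+1}$, the paper sets $y=x-c$ with $c=(0,\dots,0,p_{k+1}(0),0,\dots,0)$ and checks directly that $y\in\mathcal{A}_2^{n,k}$ (here one uses $c\in J_{2n}V_0^{n,k}$ for the mean-value condition and $c\in\mathbb{R}^{n,k+1}$ together with $x(1)-x(0)\in V_0^{n,k+1}$ to see $A(y)=A(x)=1$). Since $\dot y=\dot x$, the functional values agree, giving \eqref{e:add1}. The proof of \eqref{e:add2} is identical, starting from the well-known dual formula $c_{\rm HZ}(D)=\min_{x\in\mathcal E}\int_0^1 H_D^\ast(-J_{2n}\dot x)\,dt$ over closed loops with mean zero and $A(x)=1$. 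In short, the translation idea is right, but it has to be applied on the dual side, where the curve is no longer forced to sit on $\partial D$.
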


These two theorems are proved in \S~\ref{sec:3} and \S~\ref{sec:4}, respectively.

\begin{definition}
{\rm For a bounded convex domain $D\subset \mathbb{R}^{2n}$ such that $D\cap \mathbb{R}^{n,k}\ne\emptyset$,
a generalized leafwise chord on $\partial D$ for $\mathbb{R}^{n,k}$ with action $c_{\rm LR}(D, D\cap\mathbb{R}^{n,k})$
is called a $c_{\rm LR}(D, D\cap\mathbb{R}^{n,k})$ carrier.}
\end{definition}

\begin{remark}\label{rem:CLRcarrier}
{\rm Suppose that $D$ is a convex domain in $\mathbb{R}^{2n}$ satisfying $0\in D$. \textsf{ We always denote by $j_D$ the Minkowski functional associated to $D$ defined by}
\begin{equation}\label{e:Mink}
j_D(x):=
\left\{
\begin{array}{ll}
\inf\{\lambda>0\,|\,\lambda^{-1}x\in D\},& \hbox{for}\quad x\ne 0,\\
0,& \hbox{for}\quad x=0.
\end{array}
\right.
\end{equation}
Let $H_D:=j^2_D$.
{Then a standard computation (at the beginning of Step~4 in the proof of Proposition~\ref{prop:Brun.0})  leads to the fact that
 a generalized characteristic  on $\partial D$ is a $c_{\rm LR}(D, D\cap\mathbb{R}^{n,k})$ carrier if and only if
it may be reparametrized as a solution $x:[0,\mu]\to \partial D$ of
\begin{equation}\label{e:BDY}
-J_{2n}\dot{x}(t)\in  \partial H_D(x(t)),\quad x(0), x(\mu)\in \mathbb{R}^{n,k},\quad x(0)-x(\mu)\in V_0^{n,k}
\end{equation}
where $\mu=c_{\rm LR}(D,D\cap \mathbb{R}^{n,k})=A(x)$ is the return time of $x$ for $\mathbb{R}^{n,k}$ and the leaf relation $\sim$.
Moreover, }since $\{\partial H(x)|x\in\partial D\}$ is a bounded set in $\mathbb{R}^{2n}$ (see \cite[(4.3)]{JinLu1916}), by Arzela-Ascoli theorem we deduce that
all $c_{\rm LR}(D, D\cap\mathbb{R}^{n,k})$ carriers  form a compact subset in $C^0([0,\mu],\partial D)$
(and $C^1([0,\mu],\partial D)$ if $\partial D$ is $C^1$), where $\mu=c_{\rm LR}(D,D\cap \mathbb{R}^{n,k})$.}
\end{remark}

\begin{remark}\label{rem:R10case}
{\rm If $n=1$, $k=0$ and $D\subset\mathbb{R}^2$ is a bounded convex domain such that
$D\cap\mathbb{R}^{1,0}\ne\emptyset$, for a non-periodic leafwise chord $x:[0,T]\to \partial D$,  the line segment $D\cap\mathbb{R}^{1,0}$
 and $x$ form a loop $\gamma$. Note that  $\langle -J_{2n}\dot{z},z\rangle$ vanishes
  along line segment $D\cap\mathbb{R}^{1,0}$. Then
  $$
  A(x)=\int_x qdp=\int_\gamma qdp
  $$
 is equal to the symplectic area of the bounded domain surrounded by $\gamma$ in view of Stokes theorem (by a smooth approximation if necessary). Hence by Theorem ~\ref{th:represention}~ $c_{\rm LR}(D, D\cap\mathbb{R}^{1,0})$ is equal to the smaller symplectic area of
$D$ above and below the the line segment $D\cap\mathbb{R}^{1,0}$.
 In particular, we obtain
 \begin{eqnarray}\label{e:BDY.1}
 &&c_{\rm LR}(B^2(r), B^2(r)\cap\mathbb{R}^{1,0})=\pi r^2/2,\\
  &&c_{\rm LR}((a, b)\times (-c, d), (a, b)\times (-c, d)\cap\mathbb{R}^{1,0})=(b-a)\min\{c,d\}\label{e:BDY.2}
  \end{eqnarray}
for any positive numbers $r, c, d$ and real numbers $a<b$. (\ref{e:BDY.1}) and (\ref{e:BDY.2}) are basic for computation of coisotropic
 Hofer-Zehnder capacities of polydiscs and cuboids
in higher dimension. }
\end{remark}

In the following we give more corollaries of Theorem~\ref{th:represention}; in particular, we can derive (\ref{e:Ball4}).

\begin{corollary}\label{cor:ellipsoid}
For numbers $r_j>0$, $j=1,\cdots,n$, define an ellipsoid
\begin{equation}\label{e:ellipsoid-}
E(r_1,\cdots,r_n):=\left\{(q_1,\cdots,q_n,p_1,\cdots,p_n)\in\mathbb{R}^{2n}\,\bigg|\,
\sum^n_{j=1}(q^2_j+p^2_j)/r_j^2<1\right\}.
\end{equation}
Then there holds
  \begin{equation}\label{e:ellipsoid}
  c_{\rm LR}\left(E(r_1,\cdots,r_n), E(r_1,\cdots,r_n)\cap\mathbb{R}^{n,k}\right) =
  \frac{\pi}{2}\min\{2\min_{i\le k}r_i^2, \min_{i>k}r^2_i\}.
    \end{equation}
\end{corollary}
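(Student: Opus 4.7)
The plan is to apply Theorem~\ref{th:represention} and compute the minimum action explicitly. Since $\partial E(r_1,\ldots,r_n)$ is smooth and strictly convex, the generalized characteristics on it coincide, as unparametrized curves, with the integral curves of the Hamiltonian flow of $H_E = j_E^2 = \sum_{j=1}^n (q_j^2+p_j^2)/r_j^2$ on $\partial E$. Choosing the parametrization dictated by (\ref{e:BDY}), namely $-J_{2n}\dot x = \nabla H_E(x)$, the equations decouple into the symplectic $(q_j,p_j)$-planes: writing $z_j = q_j+ip_j$, the orbits are $z_j(t) = z_j(0)\,e^{i\omega_j t}$ with $\omega_j = 2/r_j^2$, subject to the normalisation $\sum_j |z_j(0)|^2/r_j^2 = 1$ that places the orbit on $\partial E$.

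Because $H_E$ is $2$-homogeneous and equals $1$ on $\partial E$, Euler's identity gives $\langle -J_{2n}\dot x, x\rangle = \langle\nabla H_E(x),x\rangle = 2H_E(x) = 2$, so the action of such an orbit on $[0,T]$ is simply $A(x) = T$. Hence minimising $A$ over generalized leafwise chords reduces to minimising $T>0$ subject to the endpoint conditions $x(0),x(T)\in\R^{n,k}$ and $x(0)-x(T)\in V_0^{n,k}$. Translated into the coordinates $z_j$, these conditions read: for each $j\le k$ with $z_j(0)\ne 0$ one needs $z_j(T)=z_j(0)$, forcing $T$ to be a positive integer multiple of $2\pi/\omega_j = \pi r_j^2$; for each $j>k$ with $z_j(0)\ne 0$ one needs $\mathrm{Im}\,z_j(0) = \mathrm{Im}\,z_j(T) = 0$, forcing $T$ to be a positive integer multiple of $\pi/\omega_j = \pi r_j^2/2$.

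To minimise $T$, observe that activating more than one coordinate plane forces $T$ to be a common multiple of the active periods, which is at least the largest of them; hence the optimum is attained by concentrating the orbit in a single coordinate plane. The best single-plane choice with $i\le k$ yields $T = \pi\min_{i\le k} r_i^2$, while the best with $j>k$ yields $T = \tfrac{\pi}{2}\min_{j>k} r_j^2$. The smaller of these two values is precisely $\tfrac{\pi}{2}\min\{2\min_{i\le k} r_i^2,\ \min_{j>k} r_j^2\}$, which is (\ref{e:ellipsoid}). No serious obstruction is expected; the only bookkeeping step is the verification that any multi-plane configuration is suboptimal, which is immediate from the LCM observation above.
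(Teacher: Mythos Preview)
Your proof is correct and follows essentially the same approach as the paper's: both parametrize the characteristics on $\partial E$ by the Hamiltonian flow of $j_E^2$, observe that the action equals the time $T$, decouple the endpoint conditions plane by plane (periodicity in the $j\le k$ planes, vanishing imaginary part in the $j>k$ planes), and read off the minimum. Your use of complex coordinates $z_j=q_j+ip_j$ and the explicit ``common multiple'' remark make the suboptimality of multi-plane configurations a bit more transparent than the paper's two-case split, but the content is the same.
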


Clearly, (\ref{e:ellipsoid}) implies the first equality in (\ref{e:Ball4}).

\begin{corollary}[\hbox{\cite[Proposition~2.7, Corollary~1.15]{LiRi13}}]\label{cor:ellipsoid+}
Let $B^{2n}({\bf a}, 1)$ and $B^{n,k}(r)$ be as in (\ref{e:Ball1}) and (\ref{e:Ball3}), respectively, where
$a\in (-1,0]\cup[0,1)$ and $r=\sqrt{1-a^2}$.      Then
\begin{equation}\label{e:1ellipsoid+}
c_{\rm LR}\left(B^{2n}({\bf a}, 1), B^{n,k}(r)\right) =\arcsin(r)-r\sqrt{1-r^2}.
\end{equation}
 This and the conformality (due to $B^{2n}({\bf a}, R)=RB^{2n}({\bf a}/R, 1)$) imply
\begin{equation}\label{e:ellipsoid++}
c_{\rm LR}\left(B^{2n}({\bf a}, R), B^{2n}({\bf a}, R)\cap\mathbb{R}^{n,k}\right) =\left(
\arcsin(r_R)-r_R\sqrt{1-r_R^2}\right)R^2
\end{equation}
for any $R>0$ and $|a|<R$, where $r_R=\sqrt{1-a^2/R^2}$.
\end{corollary}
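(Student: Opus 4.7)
The plan is to apply Theorem~\ref{th:represention}, which reduces the computation of $c_{\rm LR}\bigl(B^{2n}({\bf a}, 1), B^{n,k}(r)\bigr)$ to finding the minimum positive action of a generalized leafwise chord on the sphere $\mathcal{S} := \partial B^{2n}({\bf a}, 1)$ for $\mathbb{R}^{n,k}$. Since $\mathcal{S}$ is smooth and strictly convex, any generalized characteristic on it coincides (up to positive reparametrization) with a classical one, which in the complex notation $z_j = q_j + ip_j$ is given explicitly by
$$
z(t) = {\bf a} + e^{it}\bigl(z(0) - {\bf a}\bigr), \qquad t \in \mathbb{R}.
$$
Before beginning the calculation I would first use the antisymplectic involution $(q,p) \mapsto (q,-p)$, combined with the conformality axiom applied with $\alpha = -1$, to reduce to the case $a \in [0, 1)$.

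The heart of the plan is a component-by-component classification of the generalized leafwise chords among this explicit family. Writing $w(0) := z(0) - {\bf a}$ with components $w_j(0) = u_j + iv_j$, and noting that $w(T) = e^{iT} w(0)$, the conditions $z(0), z(T) \in \mathbb{R}^{n,k}$ and $z(0) - z(T) \in V_0^{n,k}$ translate into: $v_j = 0$ for $k < j < n$ and $v_n = -a$; the imaginary part of $(1 - e^{iT}) w_j(0)$ vanishes for every $j$; and the real part additionally vanishes for $j \le k$. For $j \le k$ the resulting $2 \times 2$ linear system has determinant $2(1-\cos T)$ and forces $w_j(0) = 0$ unless $T \in 2\pi\mathbb{Z}$; for $k < j < n$ the equation $u_j \sin T = 0$ forces $u_j = 0$ unless $T \in \pi\mathbb{Z}$; and for $j = n$ the unit length constraint $|w(0)| = 1$ gives $u_n = \pm r$, which combined with $u_n = -a \tan(T/2)$ produces exactly two positive candidate durations $T_1 = 2\arctan(r/a) < T_2 = 2\pi - 2\arctan(r/a)$ when $a > 0$, and $T_1 = T_2 = \pi$ in the limit $a = 0$.

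The final step is to evaluate and compare actions. Since $\dot z = J_{2n}(z - {\bf a})$, one has $\langle -J_{2n}\dot z, z\rangle = |z|^2 - \langle {\bf a}, z\rangle$, and the explicit parametrization reduces the integral to elementary trigonometry, giving $A_1 = T_1/2 - ar$ and $A_2 = T_2/2 + ar$; in particular $A_1 + A_2 = \pi$, so $A_1 \le A_2$. Using the identity $\arctan(r/a) = \arcsin(r)$ (valid since $a = \sqrt{1-r^2} \ge 0$) yields
$$
A_1 = \arcsin(r) - r\sqrt{1 - r^2},
$$
which is the asserted formula. It remains to verify that no other GLC gives a smaller positive action: the remaining candidates are periodic orbits $(T \in 2\pi\mathbb{Z}_{>0})$ and higher iterates of the two primitive chord families, each of which has action at least $\pi \ge A_1$. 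The main obstacle in this plan is the component-wise classification in the second paragraph; once accomplished, what remains is a standard trigonometric computation.
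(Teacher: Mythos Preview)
Your approach is correct and essentially the same as the paper's: both invoke Theorem~\ref{th:represention}, parametrize the characteristics on the sphere explicitly, and carry out a component-by-component case analysis to locate the minimal-action leafwise chord in the last coordinate plane. The only cosmetic differences are your use of complex notation and your preliminary reduction to $a\ge 0$ via $\tau_0$ and conformality; the paper works directly in real coordinates with the Hamiltonian $H(x)=|x-{\bf a}|^2$ (so its time variable is half of yours) and handles both signs of $a$ simultaneously.
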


\cite[Proposition~2.7]{LiRi13} showed
$$
c_{\rm LR}\left(B^{2n}({\bf a}, 1), B^{n,k}(r)\right) \ge\arcsin(r)-r\sqrt{1-r^2}.
$$
 The converse inequality was contained in the proof of \cite[Corollary~1.15]{LiRi13}.

Since $B^{n,k}(r)=B^{2n}({\bf a}, 1)\cap\mathbb{R}^{n,k}$, taking $a=0$ we recover
the first equality in (\ref{e:Ball4}) again.

Define $U^{n,k}(1)=U^{2n}(1)\cap\mathbb{R}^{n,k}$, where
$$
U^{2n}(1)=\mathbb{R}^{2n-2}\times\{(x,y)\in\mathbb{R}^2\,|\,x^2+y^2<1\;\hbox{or
$-1<x<1$ and $y<0$}\}.
$$
 Let $W^{2n}(R)$ and  $W^{n,k}(R)$ be as in (\ref{e:Ball2}) and (\ref{e:Ball3}), respectively.
It was proved in \cite[Section~3]{LiRi13} that
\begin{equation}\label{e:ellipsoid+++}
c_{\rm LR}\left(W^{2n}(1), W^{n,k}(1)\right) =c_{\rm LR}\left(U^{2n}(1), U^{n,k}(1)\right).
\end{equation}

From Theorem~\ref{th:represention} we can also derive

\begin{corollary}[\hbox{\cite[Propositions~2.7,3.1]{LiRi13}}]\label{cor:ellipsoid++}
\begin{equation}\label{e:ellipsoid4+}
c_{\rm LR}\left(U^{2n}(1), U^{n,k}(1)\right) =\frac{\pi}{2}.
\end{equation}
\end{corollary}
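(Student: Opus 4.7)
I would prove the two inequalities $c_{\rm LR}(U^{2n}(1), U^{n,k}(1))\ge \pi/2$ and $c_{\rm LR}(U^{2n}(1), U^{n,k}(1))\le \pi/2$ separately. The lower bound drops out of monotonicity applied to the ball that already sits inside $U^{2n}(1)$. The upper bound, which is the substantive one, goes through an inner exhaustion of $U^{2n}(1)$ by bounded convex subdomains $D_R$, for which Theorem~\ref{th:represention} supplies a variational characterization of $c_{\rm LR}(D_R, D_R\cap \mathbb{R}^{n,k})$; an explicit half-circle GLC of action $\pi/2$ caps $c_{\rm LR}(D_R)$ uniformly in $R$.

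\textbf{Lower bound.} I would observe that $B^{2n}(1)\subset U^{2n}(1)$ and check directly that $B^{2n}(1)\cap U^{n,k}(1) = B^{n,k}(1)$ (since in $\mathbb{R}^{n,k}$ the defining condition of $U^{2n}(1)$ collapses to $q_n^2<1$), and that the identity inclusion trivially respects the leaf relations. Combining monotonicity with Corollary~\ref{cor:ellipsoid+} at $a=0$, which evaluates to $c_{\rm LR}(B^{2n}(1), B^{n,k}(1))=\pi/2$, immediately gives $c_{\rm LR}(U^{2n}(1), U^{n,k}(1))\ge \pi/2$.

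\textbf{Upper bound via inner exhaustion.} For $R>1$, set $D_R:=U^{2n}(1)\cap B^{2n}(R)$, a bounded convex domain with $D_R\cap\mathbb{R}^{n,k}\supseteq B^{n,k}(1)$. Since every $H\in\mathcal{H}_{\rm ad}(U^{2n}(1), U^{n,k}(1))$ has compact support, $H|_{D_R}$ is simple on $D_R$ once $R$ is large, and admissibility persists because $D_R\cap\mathbb{R}^{n,k}\subseteq U^{n,k}(1)$ forces the return time relative to $D_R\cap\mathbb{R}^{n,k}$ to be at least that relative to $U^{n,k}(1)$. Hence
\[
c_{\rm LR}(U^{2n}(1), U^{n,k}(1))\le \sup_{R>1} c_{\rm LR}(D_R, D_R\cap\mathbb{R}^{n,k}),
\]
and by Theorem~\ref{th:represention} each summand on the right equals the minimum action over GLCs on $\partial D_R$ for $\mathbb{R}^{n,k}$. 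I then produce the upper half-circle
\[
z(\theta):=(0,\ldots,0,\cos\theta,\sin\theta),\qquad \theta\in[0,\pi],
\]
which sits on the smooth portion $\{q_n^2+p_n^2=1,\, p_n>0\}$ of $\partial U^{2n}(1)\cap \partial D_R$ (it has norm $1<R$, so lies in the interior of $B^{2n}(R)$). At each such point the outer normal direction is $\nu(z)=z$, and a direct computation gives $\dot z(\theta)=J_{2n}\nu(z(\theta))$, exhibiting $z$ as a generalized characteristic. Its endpoints $(0,\ldots,0,\pm 1,0)$ lie in $\mathbb{R}^{n,k}$, their difference lies in $V_0^{n,k}$ (using $k<n$), and $A(z)=\tfrac12\int_0^\pi\langle -J_{2n}\dot z,z\rangle\,d\theta=\tfrac12\int_0^\pi|z(\theta)|^2\,d\theta=\pi/2$. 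This forces $c_{\rm LR}(D_R, D_R\cap\mathbb{R}^{n,k})\le \pi/2$ for every $R>1$, completing the upper bound.

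\textbf{Main obstacle.} The only nontrivial step is the inner-exhaustion bookkeeping: one must check carefully that simplicity and admissibility transfer from the unbounded $U^{2n}(1)$ to the bounded $D_R$, since Theorem~\ref{th:represention} itself requires boundedness. This is routine once one uses the compact support of simple Hamiltonians together with the monotonicity of return times under restriction of the coisotropic submanifold, but it is the one point where the unboundedness of $U^{2n}(1)$ must be handled with care; everything else is either the representation formula or the explicit trigonometric calculation above.
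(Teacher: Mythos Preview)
Your proof is correct and follows the same high-level strategy as the paper---exhaust $U^{2n}(1)$ by bounded convex domains and invoke Theorem~\ref{th:represention}---but the execution differs in a useful way. The paper exhausts by the specially tailored domains
\[
\widetilde{\Delta}_R=\{z\in\mathbb{R}^{2n}:|z_1|^2/R^2+j_R^2(z_2)<1\}
\]
(built from the 2D region $\Delta_R$ bounded by the upper half-circle, the vertical segments $q_n=\pm1$, and the line $p_n=-R$) and then computes $c_{\rm LR}(\widetilde{\Delta}_R,\widetilde{\Delta}_R\cap\mathbb{R}^{n,k})=\pi/2$ \emph{exactly} via a full case analysis of the system~(\ref{e:chord6.1})--(\ref{e:chord6.2}); this single computation furnishes both inequalities at once. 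You instead take the simpler exhaustion $D_R=U^{2n}(1)\cap B^{2n}(R)$, obtain the upper bound by exhibiting just one GLC (the upper half-circle, action $\pi/2$) without classifying all of them, and get the lower bound separately from the ball inclusion and Corollary~\ref{cor:ellipsoid+}. Your route is more economical---no need to solve the characteristic system on $\partial\widetilde{\Delta}_R$---while the paper's route has the side benefit of determining the capacity of each $\widetilde{\Delta}_R$ on the nose.
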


\begin{corollary}\label{cor:polyDisk}
For numbers $r_j>0$, $j=1,\cdots,n$, consider polydiscs
$$
P^{2n}(r_1,\cdots,r_n):=B^2(r_1)\times\cdots\times B^2(r_n)\subset (\mathbb{R}^{2})^n\equiv\mathbb{R}^{2n}
$$
(and so $\mathbb{R}^{n,k}$ and $V^{n,k}_0$ are identified with
$$
(\mathbb{R}^2)^k\times(\mathbb{R}\times\{0\})^{n-k}\quad\hbox{and}\quad(\{0\}\times\{0\})^k\times(\mathbb{R}\times\{0\})^{n-k},
$$
respectively.) Then
\begin{eqnarray}\label{e:polyD.1}
&&c_{\rm LR}\left(P^{2n}(r_1,\cdots,r_n), P^{2n}(r_1,\cdots,r_n)\cap\mathbb{R}^{n,k}\right)\le\frac{\pi}{2}\min_{i>k}r^2_i,\\
&&c_{\rm LR}\left(P^{2n}(r_1,\cdots,r_n), P^{2n}(r_1,\cdots,r_n)\cap\mathbb{R}^{n,k}\right)
\ge\frac{\pi}{2}\min\{2\min_{i\le k}r_i^2, \min_{i>k}r^2_i\},
\label{e:polyD.2}
\end{eqnarray}
and hence
\begin{eqnarray}\label{e:polyD.3}
c_{\rm LR}\left(P^{2n}(r_1,\cdots,r_n), P^{2n}(r_1,\cdots,r_n)\cap\mathbb{R}^{n,k}\right)=\frac{\pi}{2}\min_{i>k}r^2_i
\end{eqnarray}
if $\sqrt{2}\min\{r_1,\cdots,r_k\}\ge\min\{r_{k+1},\cdots, r_n\}$.
\end{corollary}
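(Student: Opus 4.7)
The plan is to prove (\ref{e:polyD.1}) and (\ref{e:polyD.2}) independently and then combine them. For the upper bound (\ref{e:polyD.1}) I exhibit a single explicit generalized leafwise chord on $\partial P^{2n}(r_1,\ldots,r_n)$ and apply Theorem~\ref{th:represention}; for the lower bound (\ref{e:polyD.2}) I use the monotonicity of $c_{\rm LR}$ applied to the obvious inclusion $E(r_1,\ldots,r_n)\hookrightarrow P^{2n}(r_1,\ldots,r_n)$ together with Corollary~\ref{cor:ellipsoid}. The equality (\ref{e:polyD.3}) is then immediate because the hypothesis $\sqrt 2\min_{i\le k}r_i\ge\min_{i>k}r_i$ collapses the two bounds.

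For the upper bound, pick an index $i_0>k$ with $r_{i_0}^2=\min_{i>k}r_i^2$ and define $z:[0,\pi]\to\mathbb{R}^{2n}$ by letting every coordinate vanish except $(q_{i_0}(t),p_{i_0}(t))=r_{i_0}(\cos t,\sin t)$. This curve lies on the smooth stratum of $\partial P$ where only the $i_0$-th disc factor is saturated, so the normal cone there is the one-dimensional ray generated by $(q_{i_0},p_{i_0})$ placed in the $i_0$-slot; applying $J_{2n}$ to this ray produces a vector parallel to $\dot z$, which verifies the inclusion $\dot z\in J_{2n}N_{\partial P}(z)$ and makes $z$ a generalized characteristic. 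Since $i_0>k$, the vanishing of $p_{i_0}$ at $t=0,\pi$ places both endpoints in $\mathbb{R}^{n,k}$, and $z(0)-z(\pi)$ has its single nonzero entry $2r_{i_0}$ in the $q_{i_0}$-slot and therefore lies in $V_0^{n,k}$; so $z$ is a GLC. A direct evaluation using the action formula gives $A(z)=\frac{1}{2}\int_0^{\pi} r_{i_0}^2\,dt=\frac{\pi}{2}r_{i_0}^2$, and Theorem~\ref{th:represention} then yields (\ref{e:polyD.1}).

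For the lower bound, any $(q,p)\in E(r_1,\ldots,r_n)$ satisfies $\sum_j(q_j^2+p_j^2)/r_j^2<1$, so every summand is strictly less than $1$ and hence $E(r_1,\ldots,r_n)\subset P^{2n}(r_1,\ldots,r_n)$. The inclusion $\iota$ is a symplectic embedding acting as the identity on $\mathbb{R}^{2n}$, so $\iota^{-1}(P\cap\mathbb{R}^{n,k})=E\cap\mathbb{R}^{n,k}$ holds automatically and the leaf relations are respected because on both sides they take the form $x\sim y\Longleftrightarrow y-x\in V_0^{n,k}$. Monotonicity of $c_{\rm LR}$ together with Corollary~\ref{cor:ellipsoid} then delivers (\ref{e:polyD.2}). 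The only delicate point in the argument is the verification in the previous paragraph that $z$ truly qualifies as a generalized characteristic in the sense of Section~1.2.1, since $\partial P$ is not globally smooth; I sidestep this by keeping $z$ in the relative interior of a single smooth face of $\partial P$, so the normal cone reduces to a standard one-dimensional ray and no nonsmooth subgradient considerations enter.
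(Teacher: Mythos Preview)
Your proof is correct. For the lower bound (\ref{e:polyD.2}) you do exactly what the paper does: embed the ellipsoid $E(r_1,\ldots,r_n)$ into the polydisc and invoke monotonicity together with Corollary~\ref{cor:ellipsoid}. The deduction of (\ref{e:polyD.3}) from the two bounds is likewise identical.

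Where you diverge is in the upper bound (\ref{e:polyD.1}). The paper first uses a coordinate-permuting symplectomorphism to arrange $r_n=\min_{i>k}r_i$, then observes that $P^{2n}(r_1,\ldots,r_n)\subset r_nU^{2n}(1)$ and appeals to monotonicity, conformality, and Corollary~\ref{cor:ellipsoid++}, which computes $c_{\rm LR}(U^{2n}(1),U^{n,k}(1))=\pi/2$. You instead exhibit the half-circle $t\mapsto r_{i_0}(\cos t,\sin t)$ in the $i_0$-th factor as an explicit generalized leafwise chord and read off the upper bound directly from the representation formula of Theorem~\ref{th:represention}. Your route is the more self-contained one: it avoids the separate computation for $U^{2n}(1)$ (whose proof in Section~\ref{sec:4.4} is itself a nontrivial exhaustion argument) and uses only the representation theorem, which is already the paper's central tool. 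The paper's route, by contrast, illustrates how the capacity behaves under inclusions into model domains, which is useful elsewhere. Your remark that the chord stays in the relative interior of a single smooth face, so that the normal cone is one-dimensional, is exactly the right way to dispatch the nonsmoothness concern.
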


(\ref{e:polyD.3}) may be viewed as a corresponding version of a result for the Hofer-Zehnder capacity
$$
c_{\rm HZ}(P^{2n}(r_1,\cdots,r_n))=\pi\min\{r^2_1,\cdots, r^2_n\}.
$$
A special case of equation (\ref{e:polyD.3}) is that for any $a>0$,
\begin{eqnarray}\label{e:polyD.4}
&&c_{\rm LR}\left((B^2(a))^k\times (B^2(\sqrt{2}a))^{n-k},  ((B^2(a))^k\times (B^2(\sqrt{2}a))^{n-k})\cap\mathbb{R}^{n,k}\right)\nonumber\\
&&=\pi a^2.
\end{eqnarray}

With results in \cite{JinLu1918},
we can prove that the inequality in (\ref{e:polyD.2}) is actually an equality,
see Remark~\ref{rem:EH-product}(ii) below.

\begin{remark}\label{rem:EH-product}
{\rm
\begin{enumerate}
\item[(i)] For suitable subsets in $\mathbb{R}^{2n}$, we constructed another
coiso-tropic capacity in \cite{JinLu1918}, the coisotropic Ekeland-Hofer capacity.
For each bounded convex  domain $D\subset \mathbb{R}^{2n}$ such that  $D\cap\mathbb{R}^{n,k}\ne\emptyset$,
we proved that its  coisotropic Ekeland-Hofer capacity $c^{n,k}(D)$ is equal to
the right side of (\ref{minaction}). Thus it follows from the definition of $c^{n,k}$ and the inner regularity of $c_{\rm LR}$  that
\begin{eqnarray}\label{e:fixpt.3}
c^{n,k}(D)=c_{\rm LR}(D,D\cap \mathbb{R}^{n,k})
\end{eqnarray}
for any  convex  domain $D\subset\mathbb{R}^{2n}$ such that $D\cap \mathbb{R}^{n,k}\ne\emptyset$.
For the coisotropic Ekeland-Hofer capacity we proved a product formula in \cite{JinLu1918}. It and equation (\ref{e:fixpt.3}) lead to:\\
{\it Claim}. For   convex domains $D_i\subset\mathbb{R}^{2n_i}$ containing the origins, $i=1,\cdots,m\ge 2$,
 and integers $0\le l_0\le n:=n_1+\cdots+n_m$, $l_j=\max\{l_{j-1}-n_j,0\}$, $j=1,\cdots, m-1$,
    it holds that
 \begin{eqnarray}\label{e:product1}
&&c_{\rm LR}(D_1\times\cdots\times D_m,  (D_1\times\cdots\times D_m)\cap \mathbb{R}^{n,l_0} )\nonumber\\
&&=\min_ic_{\rm LR}(D_i, D_i\cap\mathbb{R}^{n_i,\min\{n_i,l_{i-1}\}}).
 \end{eqnarray}
Hereafter  $\mathbb{R}^{2n_1}\times \mathbb{R}^{2n_2}\times\cdots\times \mathbb{R}^{2n_m}$ is identified with $\mathbb{R}^{2(n_1+\cdots+n_m)}$
via
$$
((q^{(1)},p^{(1)}),\cdots, (q^{(m)},p^{(m)}))\mapsto (q^{(1)},\cdots, q^{(m)},p^{(1)},\cdots, p^{(m)}).
$$
By Remark~\ref{rem:cocap} we understand $c_{\rm LR}(D_i, D_i\cap\mathbb{R}^{n_i,n_i})$ as $c_{\rm HZ}(D_i)$.

\item[(ii)] Applying (\ref{e:product1}) to $P^{2n}(r_1,\cdots,r_n)$ we get an improvement of
Corollary~\ref{cor:polyDisk} as follows:
 \begin{eqnarray}\label{e:product3}
 &&c_{\rm LR}(P^{2n}(r_1,\cdots,r_n), P^{2n}(r_1,\cdots,r_n)\cap\mathbb{R}^{n,k})\nonumber\\
 &=&\min_ic_{\rm LR}(B^2(r_i), B^2(r_i)\cap\mathbb{R}^{1,\min\{1,l_{i-1}\}})\\
 &=&\frac{\pi}{2}\min\{2\min_{i\le k}r_i^2, \min_{i>k}r^2_i\},\label{e:product4}
 \end{eqnarray}
 where $0\le l_0=k< n$, $l_j=\max\{l_{j-1}-1,0\}$, $j=1,\cdots, n-1$. (Here we use (\ref{e:BDY.1}) and the fact that
$c_{\rm LR}(B^2(r_i), B^2(r_i)\cap\mathbb{R}^{1,1})=c_{\rm HZ}(B^2(r_i))=\pi r_i^2$.)

\item[(iii)] For real numbers $a_i<b_i$ and positive numbers $c_i, d_i$, $i=1,\cdots,n$, since
\begin{eqnarray}\label{e:product5}
&& c_{\rm LR}((a_i, b_i)\times (-c_i, d_i), (a_i, b_i)\times (-c_i, d_i)\cap\mathbb{R}^{1,1})\\
&& =c_{\rm HZ}((a_i, b_i)\times (-c_i, d_i))  =(b_i-a_i)(c_i+d_i),
  \end{eqnarray}
as above we may derive from equations (\ref{e:BDY.2}) and (\ref{e:product1}) that for each integer $0\le k<n$,
\begin{eqnarray}\label{e:product7}
&&c_{\rm LR}(\prod^n_{i=1}(a_i, b_i)\times (-c_i, d_i), \prod^n_{i=1}(a_i, b_i)\times (-c_i, d_i)\cap \mathbb{R}^{n,k} )\nonumber\\
&&=\min\left\{\min_{i\le k}(b_i-a_i)(c_i+d_i), \min_{i>k}((b_i-a_i)\min\{c_i,d_i\})\right\}.
 \end{eqnarray}
 \end{enumerate}
}
\end{remark}

  Suppose that a bounded convex  domain $D\subset\mathbb{R}^{2n}$ is centrally symmetric, i.e., $D=-D$.
  Let $\overline{D}^\circ=\{y\in\mathbb{R}^{2n}\;|\;\langle x, y\rangle\le 1\;\forall x\in\overline{D}\}$ be the polar of
the closure $\overline{D}$ of $D$, and let $\|\cdot\|_{\overline{D}^\circ}$ and $\|\cdot\|_{\overline{D}}$ be
 the norms given by the Minkowski functionals  $j_{\overline{D}}$ and $j_{\overline{D}^\circ}$ associated to $\overline{D}$ and $\overline{D}^\circ$
 as in (\ref{e:Mink}), respectively.
  Gluskin and  Ostrover \cite{GlOs} gave the following estimate for
the cylindrical capacity $c^Z(D)$,
\begin{eqnarray}\label{e:J-estimate1}
c_{\rm EHZ}(D)\le c^Z(D)\le \frac{4}{\|J_{2n}\|_{\overline{D}^\circ\to\overline{D}}},
\end{eqnarray}
where
\begin{eqnarray}\label{e:J-estimate2}
\|J_{2n}\|_{\overline{D}^\circ\to\overline{D}}:&=&\sup\{\langle J_{2n}v,u\rangle\;|\;v,u\in\overline{D}^\circ\}\nonumber\\
&=&\sup\{\|J_{2n}v\|_{\overline{D}}\;|\;\|v\|_{\overline{D}^\circ}\le 1\}
\end{eqnarray}
is the operator norm of $J_{2n}$ as a linear map between the normed
spaces $(\mathbb{R}^{2n}, \|\cdot\|_{\overline{D}^\circ})$ and $(\mathbb{R}^{2n}, \|\cdot\|_{\overline{D}})$.
 Correspondingly,  we have the following the estimate for  $c_{\rm LR}(D,D\cap \mathbb{R}^{n,k})$, whose proof will be given
in Section~\ref{sec:add}.

\begin{theorem}\label{th:J-estimate}
  Let $D\subset\mathbb{R}^{2n}$ be a centrally symmetric bounded convex domain
   such that $D\cap \mathbb{R}^{n,k}\ne\emptyset$ for some integer $0\le k<n$. Then
  \begin{eqnarray}\label{e:J-estimate3}
  c_{\rm LR}(D,D\cap \mathbb{R}^{n,k})\le \frac{2}{\|J_{2n}|_{J_{2n}V^{n,k}_0}\|_{\overline{D}^\circ\to\overline{D}}}.
  \end{eqnarray}
  Here $V_0^{n,k}$ is as in (\ref{e:V0}) and
   \begin{eqnarray}\label{e:J-estimate4}
  \|J_{2n}|_{J_{2n}V^{n,k}_0}\|_{\overline{D}^\circ\to\overline{D}}=\sup\{\langle J_{2n}v,u\rangle\;|\;
  u\in \overline{D}^\circ,\;
  v\in (J_{2n}V^{n,k}_0)\cap\overline{D}^\circ\}
  \end{eqnarray}
 is the operator norm of $J_{2n}|_{J_{2n}V^{n,k}_0}$ as a linear map between the normed
spaces $(J_{2n}V^{n,k}_0, \|\cdot\|_{\overline{D}^\circ})$ and $(\mathbb{R}^{2n}, \|\cdot\|_{\overline{D}})$.
   \end{theorem}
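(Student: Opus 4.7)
My plan is to combine the representation formula of Theorem~\ref{th:represention} with a Gluskin--Ostrover type duality estimate adapted to the coisotropic boundary condition. By Theorem~\ref{th:represention} and the discussion surrounding (\ref{e:BDY}), there exists a $c_{\rm LR}(D,D\cap\mathbb{R}^{n,k})$-carrier $x^*:[0,\mu]\to\partial D$ solving $-J_{2n}\dot x^*(t)\in\partial H_D(x^*(t))$ with $x^*(0),x^*(\mu)\in\mathbb{R}^{n,k}$, $x^*(0)-x^*(\mu)\in V_0^{n,k}$, and $\mu=A(x^*)=c_{\rm LR}(D,D\cap\mathbb{R}^{n,k})$. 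Since $H_D=j_D^2$ and $j_D\equiv 1$ on $\partial D$, this refines to $-J_{2n}\dot x^*(t)\in 2\partial j_D(x^*(t))\subset 2\overline{D}^\circ$, so $\|{-J_{2n}\dot x^*(t)}\|_{\overline{D}^\circ}\le 2$ for almost every $t$.

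Next I would use the orthogonal decomposition (\ref{lagmultipier1}) to split $x^*=x_1^*+x_2^*$ with $x_1^*$ valued in $J_{2n}V_0^{n,k}$ and $x_2^*$ valued in $\mathbb{R}^{n,k}$. Because $V_0^{n,k}\subset\mathbb{R}^{n,k}$ lies in the orthogonal complement of $J_{2n}V_0^{n,k}$, the condition $x^*(\mu)-x^*(0)\in V_0^{n,k}$ splits as $x_1^*(\mu)=x_1^*(0)$ together with $x_2^*(\mu)-x_2^*(0)\in V_0^{n,k}$; in particular the $J_{2n}V_0^{n,k}$-component $x_1^*$ is closed on $[0,\mu]$. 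The central symmetry $D=-D$ then permits me to translate so that $x_1^*$ has zero mean, which is the standard prerequisite for a Wirtinger--Poincar\'e inequality on the closed component.

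The key step is a duality estimate. Substituting the decomposition into $A(x^*)=\tfrac12\int_0^\mu\langle-J_{2n}\dot x^*(t),x^*(t)\rangle\,dt$, integrating by parts (the boundary term vanishes because $x_1^*$ is closed on $[0,\mu]$ and the $J_{2n}V_0^{n,k}$-part pairs trivially against $\mathbb{R}^{n,k}$), and noting that the diagonal terms $\langle-J_{2n}\dot x_j^*,x_j^*\rangle$ vanish pointwise, reduces $A(x^*)$ to a coupling integral involving $\dot x^*$ and the closed $J_{2n}V_0^{n,k}$-valued part $x_1^*$. Applying the pointwise velocity bound from the first paragraph together with the duality $\langle v,w\rangle\le\|v\|_{\overline{D}^\circ}\|w\|_{\overline{D}}$, where $v$ ranges over $J_{2n}V_0^{n,k}\cap\overline{D}^\circ$, brings in precisely the restricted operator norm $M:=\|J_{2n}|_{J_{2n}V_0^{n,k}}\|_{\overline{D}^\circ\to\overline{D}}$ rather than the full $\|J_{2n}\|_{\overline{D}^\circ\to\overline{D}}$. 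A short calculation then yields $A(x^*)\cdot M\le 2$, which is (\ref{e:J-estimate3}).

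The main obstacle, in my view, is the third step: isolating the correct restricted norm and tracking constants so that the factor $2$ (rather than $4$ or $1$) appears on the right side of (\ref{e:J-estimate3}). If the direct route becomes opaque, I would fall back on the Clarke-type dual formulation supplied by the identification $c^{n,k}(D)=c_{\rm LR}(D,D\cap\mathbb{R}^{n,k})$ via (\ref{e:fixpt.3}) and \cite{JinLu1918}, and substitute an explicit admissible test curve supported in the direction of a vector $y_0\in V_0^{n,k}$ realizing $M$; the Legendre pairing $H_D^*(y)=\tfrac14 j_{\overline{D}^\circ}(y)^2$ then reads off the bound $2/M$ directly.
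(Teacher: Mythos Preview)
Your main line of attack has a genuine gap. The claim that both diagonal terms $\langle -J_{2n}\dot x_j^*,x_j^*\rangle$ vanish pointwise fails for $j=2$ as soon as $k>0$: the subspace $\mathbb{R}^{n,k}$ is only coisotropic, and its symplectic part $V_1^{n,k}$ carries a nondegenerate $\omega_0$, so $\omega_0(\dot x_2^*,x_2^*)$ need not vanish. Thus the reduction of $A(x^*)$ to a single ``coupling integral'' involving only $x_1^*$ does not go through, and with it the mechanism by which the \emph{restricted} norm $\|J_{2n}|_{J_{2n}V_0^{n,k}}\|_{\overline D^\circ\to\overline D}$ (rather than the full $\|J_{2n}\|_{\overline D^\circ\to\overline D}$) would appear. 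A secondary issue: you invoke central symmetry to ``translate so that $x_1^*$ has zero mean'', but in fact $x_1^*(0)=x_1^*(\mu)=0$ already (since $x^*(0),x^*(\mu)\in\mathbb{R}^{n,k}$ and $\mathbb{R}^{n,k}\cap J_{2n}V_0^{n,k}=\{0\}$), and translating $x^*$ would move it off $\partial D$; the Wirtinger--Poincar\'e step is also left completely unspecified in the relevant $\overline D$/$\overline D^\circ$ norms, where projection onto a subspace can increase the gauge.

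The paper's proof takes a completely different route. It first passes to the symplectic decomposition $\mathbb{R}^{2n}=V_1^{n,k}\oplus(V_0^{n,k}+J_{2n}V_0^{n,k})$, projects $D$ to $D_1\times D_0$, and uses the product formula (\ref{e:product1}) to reduce to $c_{\rm LR}(D_0,D_0\cap\mathbb{R}^{n-k,0})$---thereby eliminating the symplectic factor $V_1^{n,k}$ that obstructs your argument. It then bounds this Lagrangian-case capacity by projecting $S(D_0)$ onto coordinate $2$-planes for $S\in{\rm Sp}(\mathbb{R}^{2(n-k)},\omega_0;\mathbb{R}^{n-k,0})$, applying the Gluskin--Ostrover area lemma \cite[Lemma~2.6]{GlOs}, and optimising over such $S$ via Lemma~\ref{lem:Lagr}. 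The factor $2$ (versus $4$) arises because the leafwise-chord capacity sees only half the area of the $2$-dimensional projection. Your fallback idea of plugging an explicit test curve into the dual functional is closer in spirit to this, but you would still need to produce a curve in $\mathcal{A}_{p}^{n,k}$ whose velocity is supported on two directions $u\in\overline D^\circ$ and $v\in(J_{2n}V_0^{n,k})\cap\overline D^\circ$ while respecting the boundary constraint $x(0)-x(1)\in V_0^{n,k}$; the paper sidesteps this construction entirely by the projection/product-formula mechanism.
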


If the domain $D$ in Theorem~\ref{th:J-estimate} is not necessarily
 centrally symmetric, then we can apply Theorem~\ref{th:J-estimate} to $\mathbb{D}:=D-D$.
 Since $D\subset\mathbb{D}$ we obtain
 \begin{eqnarray}\label{e:J-estimate5}
 c_{\rm LR}(D,D\cap \mathbb{R}^{n,k})\le  c_{\rm LR}(\mathbb{D}, \mathbb{D}\cap \mathbb{R}^{n,k})
 \le \frac{2}{\|J_{2n}|_{J_{2n}V^{n,k}_0}\|_{\overline{\mathbb{D}}^\circ\to\overline{\mathbb{D}}}}.
 \end{eqnarray}
Clearly, $\|J_{2n}|_{J_{2n}V^{n,k}_0}\|_{\overline{\mathbb{D}}^\circ\to\overline{\mathbb{D}}}$
 is much easier to compute than the right side of (\ref{minaction}).

For a real symplectic manifold $(M,\omega,\tau)$ with nonempty real part $N:={\rm Fix}(\tau)$,
$N$ is a Lagrangian submanifold of $(M,\omega)$.
 (Indeed, for any $x\in N$ it follows from  $\tau^\ast\omega=-\omega$ that both
$T_xN={\rm Ker}(D\tau(x)-id_{T_xM})$ and ${\rm Ker}(D\tau(x)+id_{T_xM})$
are isotropic subspaces in $(T_xM,\omega_x)$. Moreover, there holds $T_xM=T_xN\oplus{\rm Ker}(D\tau(x)+id_{T_xM})$
   since $\tau^2=id_M$. Hence $T_xN$ and ${\rm Ker}(D\tau(x)+id_{T_xM})$
are Lagrange subspaces in $(T_xM,\omega_x)$.)
  Clearly, the leaf relation in $N$ is trivial, i.e, $x\sim y$ for all $x$, $y\in N$. The coisotropic Hofer-Zehnder capacity $c_{\rm LR}(M, N,\omega)$ is closely related to the symmetrical Hofer-Zehnder capacity of $(M,\omega,\tau)$ which is given by
$$
c_{\rm HZ,\tau}(M,\omega):=\sup\{m(H)\,|\,H\in\mathcal{H}_{ad}(M,\omega,\tau)\}
$$
(\cite{LiuW12}). Here $\mathcal{H}_{ad}(M,\omega,\tau)$ denotes the set of Hamiltonians $H:M\rightarrow \mathbb{R}$ such that
\begin{enumerate}
  \item [(i)]$H$ is $\tau$-invariant, i.e., $H(\tau x)=H(x)$, $\forall x\in M$;
  \item [(ii)]there exists a nonempty open subset $U=U(H)\subset M$ with $N\cap U\ne\emptyset$ such that $H|_U\equiv0$;
  \item [(iii)]there exists a compact subset
$K=K(H)\subset M\setminus\partial M$ such that $H|_{M\setminus K}\equiv m(H):=\max H$;
\item [(iv)]$0\leq H\leq m(H)$;
\item [(v)] every periodic trajectory $x(t)$ of $X_H$ satisfying
$$
 x(T-t)=x(-t)=\tau(x(t)),\;\;\forall t\in\mathbb{R}
$$
is either constant or has period $T>1$.
\end{enumerate}
By the definitions of the two capacities we deduce that
\begin{equation}\label{e:LargP0}
 c_{\rm HZ,\tau}(M,\omega)\le  2c_{\rm LR}(M, N,\omega)
 \end{equation}
(see also \cite[Remark~1.1]{JinLu1915}).

In this paper the canonical antisymplectic involution on $(\mathbb{R}^{2n},\omega_0)$ means
\begin{equation}\label{e:1.can-inv}
\tau_0:\mathbb{R}^{2n}\to\mathbb{R}^{2n},\;(q,p)\mapsto (q,-p).
\end{equation}
If a convex body $K\subset \mathbb{R}^{n}$ is centrally symmetric, i.e., $-K=K$,
its mean-width  is defined by
  \begin{equation}\label{e:mean-width}
M^\ast(K)=\int_{S^{n-1}}h_K(x)d\sigma_n(x)=
\int_{O(n)}h_K(Ax)d\mu_n(A)
 \end{equation}
 for any $x\in S^{n-1}$, where $\sigma_n$ is the normalized rotation invariant measure on $S^{n-1}$
and $\mu_n$ is the Haar measure (cf. \cite{AAO08} and \cite[(17)]{BoLiMi88}).
For any convex body $K\subset \mathbb{R}^{n}$ containing the origin in its interior, define
\begin{eqnarray}\label{e:BrunJ}
r_K=M^\ast(\widehat{K})\quad\hbox{with}\quad\widehat{K}:=\frac{1}{2}(K+(-K)).
\end{eqnarray}
Clearly,  $r_K=M^\ast(K)$ if $K$  is centrally symmetric.

\begin{corollary}\label{cor:LagrP}
\begin{enumerate}
  \item [\rm (1)]
For a $\tau_0$-invariant convex bounded domain $D\subset\mathbb{R}^{2n}$ it holds that
 \begin{equation}\label{e:LargP1}
  c_{\rm HZ}(D)\le c_{\rm HZ,\tau_0}(D,\omega_0)=2c_{\rm LR}(D, D\cap\mathbb{R}^{n,0})\le 2c_{\rm HZ}(D).
 \end{equation}
 \item[\rm (2)]
If a convex body $\Delta\subset\mathbb{R}^n_q$ contains the origin in its interior
and $\Lambda\subset\mathbb{R}^n_p$ is centrally symmetric,
 \begin{equation}\label{e:LargP2}
 c_{\rm LR}(\Delta\times\Lambda, (\Delta\times \Lambda)\cap\mathbb{R}^{n,0})\le 2r_\Delta r_\Lambda.
 \end{equation}
In particular, for any centrally symmetric convex body $\Delta\subset\mathbb{R}^n_q$,  there hold equalities
 \begin{equation}\label{e:LargP3}
 c_{\rm LR}(\Delta\times \Delta^\circ, (\Delta\times \Delta^\circ)\cap\mathbb{R}^{n,k})=2,\quad k=0,\cdots,n-1,
 \end{equation}
where $\Delta^{\circ}=\{p\in\mathbb{R}^{n}_p\,|\, \langle q,p\rangle\le 1\;\forall q\in \Delta\}$
is the polar body of $\Delta$.
\end{enumerate}
\end{corollary}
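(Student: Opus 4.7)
The plan is to deduce the corollary from Theorems~\ref{th:represention} and~\ref{th:represention*}, the convex-domain representation and Brunn--Minkowski type bound for the symmetric Hofer--Zehnder capacity $c_{\mathrm{HZ},\tau_0}$ from \cite{JinLu1915}, and Artstein-Avidan--Ostrover's identity $c_{\mathrm{HZ}}(\Delta\times\Delta^\circ)=4$ for centrally symmetric $\Delta$ \cite{AAO08}. The triple statement in (1) splits into three pieces. The right-most $c_{\mathrm{LR}}(D,D\cap\mathbb{R}^{n,0})\le c_{\mathrm{HZ}}(D)$ is obtained by iterating Theorem~\ref{th:represention*}:
$c_{\mathrm{LR}}(D,D\cap\mathbb{R}^{n,0})\le c_{\mathrm{LR}}(D,D\cap\mathbb{R}^{n,1})\le\cdots\le c_{\mathrm{LR}}(D,D\cap\mathbb{R}^{n,n-1})\le c_{\mathrm{HZ}}(D)$. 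The left-most $c_{\mathrm{HZ}}(D)\le c_{\mathrm{HZ},\tau_0}(D,\omega_0)$ is immediate from the convex-case representation formulas for both capacities, since $\tau_0$-symmetric closed characteristics form a subset of all closed characteristics.

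The middle equality $c_{\mathrm{HZ},\tau_0}(D,\omega_0)=2c_{\mathrm{LR}}(D,D\cap\mathbb{R}^{n,0})$ is the heart of part (1). By Theorem~\ref{th:represention}, $c_{\mathrm{LR}}(D,D\cap\mathbb{R}^{n,0})$ equals the minimum action of a generalized leafwise chord on $\partial D$ for $\mathbb{R}^{n,0}$; since $V_0^{n,0}=\mathbb{R}^{n,0}=\mathrm{Fix}(\tau_0)$ the GLC condition reduces to having both endpoints in $\mathrm{Fix}(\tau_0)$. The convex-case representation from \cite{JinLu1915} realizes $c_{\mathrm{HZ},\tau_0}(D,\omega_0)$ as the minimum action of a $\tau_0$-symmetric closed generalized characteristic. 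I would then set up the bijection by reflection: a GLC $x\colon[0,\mu]\to\partial D$ extends to a $\tau_0$-symmetric closed orbit $y\colon[0,2\mu]\to\partial D$ via $y(t)=x(t)$ on $[0,\mu]$ and $y(t)=\tau_0 x(2\mu-t)$ on $[\mu,2\mu]$. Continuity and $2\mu$-periodicity follow from $x(0),x(\mu)\in\mathrm{Fix}(\tau_0)$; the anti-symplectic identity $\tau_0^\top J_{2n}\tau_0=-J_{2n}$ combined with $\tau_0(D)=D$ propagates $\dot z\in J_{2n}N_{\partial D}(z)$ to the reflected half; and a change of variables $s=2\mu-t$ yields $A(y)=2A(x)$. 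The inverse map restricts a $\tau_0$-symmetric closed orbit to its first half, whose endpoints at time $0$ and half-period lie in $\mathrm{Fix}(\tau_0)$ by $\tau_0$-symmetry.

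For part (2), the inequality $c_{\mathrm{LR}}(\Delta\times\Lambda,\cdot)\le 2r_\Delta r_\Lambda$ follows by combining the equality just established (applicable since $\Lambda=-\Lambda$ makes $\Delta\times\Lambda$ $\tau_0$-invariant) with the bound $c_{\mathrm{HZ},\tau_0}(\Delta\times\Lambda)\le 4r_\Delta r_\Lambda$ from \cite{JinLu1915}. For the equalities $c_{\mathrm{LR}}(\Delta\times\Delta^\circ,(\Delta\times\Delta^\circ)\cap\mathbb{R}^{n,k})=2$ with $\Delta$ centrally symmetric, I would prove the two bounds separately. Lower bound: by part (1) and \cite{AAO08}, $c_{\mathrm{LR}}(\Delta\times\Delta^\circ,(\Delta\times\Delta^\circ)\cap\mathbb{R}^{n,0})\ge\tfrac12 c_{\mathrm{HZ}}(\Delta\times\Delta^\circ)=2$, and Theorem~\ref{th:represention*} propagates this to all $k\le n-1$. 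Upper bound: choose $q_0=(0,\ldots,0,a)\in\partial\Delta$ (possible since $0\in\mathrm{Int}(\Delta)$) and let $p_0\in\partial\Delta^\circ$ be the dual point collinear with the outer normal $\nu_\Delta(q_0)$, satisfying $\langle q_0,p_0\rangle=1$. The three-leg concatenation $(q_0,0)\to(q_0,p_0)\to(-q_0,p_0)\to(-q_0,0)$ on $\partial(\Delta\times\Delta^\circ)$ is a generalized characteristic whose action, computed via $\int q\,dp$, equals $1+0+1=2$, and whose endpoint difference $(2q_0,0)$ lies in $V_0^{n,k}$ for every $k\le n-1$. This single GLC thus yields the upper bound $\le 2$ uniformly in $k$.

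The main technical obstacles I expect are: (a) the change-of-variables giving the factor $A(y)=2A(x)$ in part (1), which hinges on the anti-symplectic identity together with the correct orientation of the normal cone on the reflected half; and (b) justifying the middle leg of the GLC in part (2) when $\Delta$ is not strictly convex or not smooth, where the normal cones $N_\Delta(q_0)$ and $N_{\Delta^\circ}(p_0)$ are multi-valued and one must select compatible branches proportional to $\pm q_0$ via polar duality and central symmetry.
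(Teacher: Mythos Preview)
Your proposal is correct and follows essentially the same argument as the paper: the same restriction of a $\tau_0$-symmetric carrier to its first half for one direction of the equality in (1), the same iteration of Theorem~\ref{th:represention*} and appeal to \cite{JinLu1915} for the remaining inequalities in (1) and for \eqref{e:LargP2}, and the same explicit leafwise chord starting on the $q_n$-axis for the upper bound in \eqref{e:LargP3}. The only minor deviations are that the paper obtains $c_{\mathrm{HZ},\tau_0}\le 2c_{\mathrm{LR}}$ directly from the general inequality \eqref{e:LargP0} (at the level of admissible Hamiltonians) rather than via your reflection of a GLC, and for the lower bound in \eqref{e:LargP3} it cites $c_{\mathrm{HZ},\tau_0}(\Delta\times\Delta^\circ)=4$ from \cite{JinLu1915}; your alternative input $c_{\mathrm{HZ}}(\Delta\times\Delta^\circ)=4$ is from \cite{AAKO14}, not \cite{AAO08}.
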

\begin{proof}
Using the representation formula for the symmetrical Hofer-Zehnder capacity
(\cite[Theorem~1.3]{JinLu1915}), we can obtain a generalized closed characteristic
 on $\partial D$, $x^{\ast}:[0,T]\to \partial D$, such that $x(T-t)=\tau_0x(t)\;\forall t$ and $A(x^{\ast})=c_{\rm HZ,\tau_0}(D)$.
These and  the representation formula for $c_{\rm HZ}(D)$ in \cite[Proposition~4]{HoZe90}
 yield the first inequality in (\ref{e:LargP1}). The second comes from
  (\ref{e:add2}).

Let us prove the equality in (\ref{e:LargP1}). By (\ref{e:LargP0}) we get
\begin{equation}\label{e:left}
c_{\rm HZ,\tau_0}(D, \omega_0)\le 2c_{\rm LR}(D, D\cap\mathbb{R}^{n,0}).
\end{equation}
Note that both $x^\ast(T/2)$ and $x^\ast(0)=x^\ast(T)$ belong to
$$
{\rm Fix}(\tau_0)=\{(q,p)\in\mathbb{R}^{2n}\,|\,p=0\}=
\mathbb{R}^{n,0}.
$$
 The restriction of $x^\ast$ to $[0, T/2]$, denoted by $y^\ast$,
is a generalized leafwise chord on $\partial D$ for $\mathbb{R}^{n,0}$.
It follows from Theorem~\ref{th:represention} that
\begin{eqnarray*}
c_{\rm LR}(D, D\cap\mathbb{R}^{n,0})\le A(y^\ast)=\frac{1}{2}A(x^\ast)=\frac{1}{2}c_{\rm HZ,\tau_0}(D,\omega_0).
\end{eqnarray*}
This and (\ref{e:left}) lead to  the equality in (\ref{e:LargP1}).

(\ref{e:LargP2}) follows from (\ref{e:LargP1}), and \cite[(1.62)]{JinLu1915} which claims
  $c_{\rm HZ,\tau_0}(\Delta\times\Lambda)\le 4r_\Delta r_\Lambda$.

By  \cite[(1.58)]{JinLu1915}, i.e., $c_{\rm HZ,\tau_0}(\Delta\times \Delta^\circ)=4$,
and (\ref{e:LargP1})  we get
$$
 c_{\rm LR}(\Delta\times \Delta^\circ, (\Delta\times \Delta^\circ)\cap\mathbb{R}^{n,0})=2,
$$
and hence
\begin{equation}\label{e:LargP3-Add1}
 c_{\rm LR}(\Delta\times \Delta^\circ, (\Delta\times \Delta^\circ)\cap\mathbb{R}^{n,k})\ge 2,\quad k=0,\cdots,n-1
 \end{equation}
by (\ref{e:add1}).
On the other hand, for each $k=0,\cdots,n-1$, by considering the leafwise chord $x$ with respect to $\mathbb{R}^{n,k}$ staring from $((0,\cdots,0,q_n),(0,\cdots,0))\in \partial (\Delta\times \Delta^\circ)$ with action $A(x)=2$, we obtain
\begin{equation}\label{e:LargP3-Add2}
c_{\rm LR}(\Delta\times \Delta^\circ, (\Delta\times \Delta^\circ)\cap\mathbb{R}^{n,k})\le 2.
\end{equation}
(\ref{e:LargP3}) follows from (\ref{e:LargP3-Add1}) and (\ref{e:LargP3-Add2}).
\end{proof}

For a $\tau_0$-invariant convex bounded domain $D\subset\mathbb{R}^{2n}$, Theorem~\ref{th:represention*}
and (\ref{e:LargP1})  lead to
 \begin{equation}\label{e:LargP1-Add1}
\frac{1}{2}c_{\rm HZ}(D)\le  c_{\rm LR}(D, D\cap\mathbb{R}^{n,k})\le c_{\rm HZ}(D),\quad k=0,\cdots,n-1.
 \end{equation}
 The first equality case is possible because
 $c_{\rm HZ}(B^{2n}(1))=\pi$, and
 $$
 c_{\rm LR}(B^{2n}(1), B^{2n}(1)\cap\mathbb{R}^{n,k})=\frac{\pi}{2}
 $$
 for each $k=0,\cdots,n-1$ by (\ref{e:ellipsoid}).
 The following example shows that  the first inequality in (\ref{e:LargP1-Add1}) can be strict in some cases.

 \begin{example}\label{ex:add}
 {\rm For the convex domain  $D:=E^2(1,2)\times D^2(1)\subset\mathbb{R}^4(q,p)=\mathbb{R}^2_q\times \mathbb{R}^2_p$,
   where
\begin{eqnarray*}
&&E^2(1,2)=\{(x_1,x_2)\,|\,x_1^2+\frac{x_2^2}{4}\le 1\}\subset\mathbb{R}^2_q\quad\hbox{and}\quad\\
&& D^2(1)=\{(y_1,y_2)\,|\,y_1^2+y_2^2\le 1\}\subset\mathbb{R}^2_p,
\end{eqnarray*}
 there hold
 \begin{eqnarray}\label{e:LargP1-Add2}
&&c_{\rm LR}(E^2(1,2)\times D^2(1),(E^2(1,2)\times D^2(1))\cap\mathbb{R}^{2,0})=2.\\
&&c_{\rm HZ}(E^2(1,2)\times D^2(1))=4.\label{e:LargP1-Add3}\\
&&c_{\rm LR}(E^2(1,2)\times D^2(1),E^2(1,2)\times D^2(1)\cap\mathbb{R}^{2,1})\ge\pi.\label{e:LargP1-Add4}
\end{eqnarray}}
  \end{example}
These will be proved in Section~\ref{sec:add}.

Let $D\subset\mathbb{R}^{2n}$ be a bounded convex  domain which is invariant under
the anti-symplectic involution $\tau$ on $(\mathbb{R}^{2n},\omega_0)$. Assume
 $\partial D\cap {\rm Fix}(\tau)\ne\emptyset$, and define
$\mathfrak{R} (\partial K, \tau)$ by
$$
\frac{\min\{A(x)>0\,|\,x\;\text{is a generalized $\tau$-brake closed characteristic on}\;\partial D\}}
{\min\{A(x)>0\,|\,x\;\text{is a generalized  closed characteristic on}\;\partial D\}}.
$$
%
%
When $\partial D$ is smooth, $\mathfrak{R} (\partial K, \tau)$ was called the \textsf{symmetric ratio} of the symmetric convex
 hypersurface $(\partial D, \tau)$ in \cite{KiKiKw20}.
Suppose that the involution $\tau$ is linear. By Lemma 2.29 in \cite{R12},  there exists a linear symplectic isomorphism $\Psi$
of $(\mathbb{R}^{2n},\omega_0)$ such that $\Psi \tau_0=\tau\Psi$.
It follows from \cite[(1.21) \& ((1.25)]{JinLu1915} and (\ref{e:LargP1}) that
$$
c_{\rm HZ,\tau}(D)=c_{\rm EH,\tau}(D)=c_{\rm EH,\tau_0}(\Psi D)=c_{\rm HZ,\tau_0}(\Psi D)\le 2c_{\rm HZ}(\Psi D)=2c_{\rm HZ}(D).
$$
Since $c_{\rm HZ}(D)\le c_{\rm HZ,\tau}(D)$ by definitions of $c_{\rm HZ,\tau}$ and $c_{\rm HZ}$,
from these, { \cite[Proposition~4]{HoZe90}} and \cite[Theorem~1.3]{JinLu1915}
we derive:

  \begin{corollary}\label{cor:S-ratio}
Let $\tau$ be a linear anti-symplectic involution  on $(\mathbb{R}^{2n},\omega_0)$ and let
$D\subset\mathbb{R}^{2n}$ be a $\tau$-invariant bounded convex  domain such that
 $\partial D\cap {\rm Fix}(\tau)\ne\emptyset$. Then
\begin{equation}\label{e:S-ratio}
1 \leq \mathfrak{R}(\partial D, \tau) \leq 2.
\end{equation}
\end{corollary}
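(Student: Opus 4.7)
The strategy is to reinterpret the symmetric ratio $\mathfrak{R}(\partial D,\tau)$ as a quotient of two Hofer--Zehnder-type capacities of $D$, and then read off the two inequalities from results already collected in the excerpt. By the representation formula of Hofer--Zehnder \cite[Proposition~4]{HoZe90}, the denominator in the definition of $\mathfrak{R}(\partial D,\tau)$ equals $c_{\rm HZ}(D)$. By the representation formula for the symmetrical Hofer--Zehnder capacity \cite[Theorem~1.3]{JinLu1915}, the numerator equals $c_{\rm HZ,\tau}(D)$. Hence
\[
\mathfrak{R}(\partial D,\tau)\;=\;\frac{c_{\rm HZ,\tau}(D)}{c_{\rm HZ}(D)},
\]
and the corollary reduces to proving $c_{\rm HZ}(D)\le c_{\rm HZ,\tau}(D)\le 2c_{\rm HZ}(D)$.

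\textbf{Lower bound.} For the inequality $c_{\rm HZ}(D)\le c_{\rm HZ,\tau}(D)$, which yields $\mathfrak{R}(\partial D,\tau)\ge 1$, I would argue directly from the definitions: the class $\mathcal{H}_{ad}(D,\omega_0,\tau)$ of $\tau$-symmetric admissible Hamiltonians involves only the weaker ``$\tau$-brake'' periodicity constraint, so every $H\in\mathcal{H}(D,\omega_0)$ that is additionally $\tau$-invariant and admissible in the ordinary Hofer--Zehnder sense already lies in $\mathcal{H}_{ad}(D,\omega_0,\tau)$, and hence the supremum over the latter is at least as large as $c_{\rm HZ}(D)$. (This inequality is also stated verbatim in the paragraph preceding the corollary statement.)

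\textbf{Upper bound.} For $c_{\rm HZ,\tau}(D)\le 2c_{\rm HZ}(D)$, I would first apply Lemma~2.29 of \cite{R12} to produce a linear symplectomorphism $\Psi$ of $(\mathbb{R}^{2n},\omega_0)$ with $\Psi\tau_0=\tau\Psi$, which conjugates the involution $\tau$ to the canonical one $\tau_0$ on the transformed domain $\Psi D$. Using symplectic invariance and the identifications between the symmetrical Ekeland--Hofer and symmetrical Hofer--Zehnder capacities in \cite[(1.21) \& (1.25)]{JinLu1915}, I obtain the chain
\[
c_{\rm HZ,\tau}(D)\;=\;c_{\rm EH,\tau}(D)\;=\;c_{\rm EH,\tau_0}(\Psi D)\;=\;c_{\rm HZ,\tau_0}(\Psi D).
\]
Now $\Psi D$ is $\tau_0$-invariant, so estimate (\ref{e:LargP1}) applies and gives $c_{\rm HZ,\tau_0}(\Psi D)\le 2c_{\rm HZ}(\Psi D)=2c_{\rm HZ}(D)$, which is the desired bound.

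\textbf{Main obstacle.} There is essentially no new analytical work: all the representation formulas, the linear normal-form lemma from \cite{R12}, the identification of symmetrical EH and HZ capacities, and the key quantitative estimate (\ref{e:LargP1}) are already available. The only point requiring a small amount of care is checking that the hypothesis $\partial D\cap{\rm Fix}(\tau)\neq\emptyset$ of the corollary transfers under $\Psi$ to the hypothesis $\partial(\Psi D)\cap{\rm Fix}(\tau_0)\neq\emptyset$ needed to invoke (\ref{e:LargP1}); this is immediate from the intertwining relation $\Psi\tau_0=\tau\Psi$, which identifies the two fixed-point sets under $\Psi$.
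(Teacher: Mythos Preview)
Your proposal is correct and follows essentially the same route as the paper: the paragraph immediately preceding the corollary already carries out exactly your chain $c_{\rm HZ,\tau}(D)=c_{\rm EH,\tau}(D)=c_{\rm EH,\tau_0}(\Psi D)=c_{\rm HZ,\tau_0}(\Psi D)\le 2c_{\rm HZ}(\Psi D)=2c_{\rm HZ}(D)$ via Lemma~2.29 of \cite{R12}, \cite[(1.21) \& (1.25)]{JinLu1915} and (\ref{e:LargP1}), together with the lower bound $c_{\rm HZ}(D)\le c_{\rm HZ,\tau}(D)$ and the two representation formulas to identify $\mathfrak{R}(\partial D,\tau)$ with the capacity ratio. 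One small remark: your justification of $c_{\rm HZ}(D)\le c_{\rm HZ,\tau}(D)$ via admissible Hamiltonians implicitly uses that $\tau$-invariant HZ-admissible functions suffice to compute $c_{\rm HZ}(D)$; the completely trivial argument is instead at the level of characteristics, since every generalized $\tau$-brake closed characteristic is in particular a generalized closed characteristic, so the numerator in $\mathfrak{R}$ is automatically at least the denominator.
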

This result is a special case of \cite[Theorem~1.3]{KiKiKw20}
where the involution $\tau$ is not required to be linear
though  $\partial D$ is assumed to be smooth (which can be
removed as in \cite[\S4.3]{JinLu1915}). Different from our methods
\cite[Theorem~1.3]{KiKiKw20} was proved  with Floer theory.

By (\ref{e:LargP1}) and (\ref{e:LargP1-Add2})-(\ref{e:LargP1-Add3}) (resp. two lines under (\ref{e:LargP1-Add1})) we have
$$
\mathfrak{R} (\partial (E^2(1,2)\times D^2(1)), \tau_0)=1\quad\hbox{(resp. $\mathfrak{R} (\partial B^{2n}(1), \tau_0)=1$).}
$$
Hence the first equality  in (\ref{e:S-ratio}) is possible. Moreover, if $n=1$ in Corollary~\ref{cor:S-ratio},
 by  {\cite[Proposition~4]{HoZe90}} and \cite[Theorem~1.3]{JinLu1915}
both $c_{\rm HZ}(D)$ and $c_{\rm HZ,\tau}(D)$ are equal to the area of $D$ and so $\mathfrak{R}(\partial D, \tau)=1$.

From properties of $c_{\rm LR}$, Theorem~\ref{th:represention} and Corollary~\ref{cor:ellipsoid+},
we deduce the following estimates for the action of leafwise chord on the boundary of convex domains which
generalize the well-known results for closed characteristics \cite[Theorem~C]{CrWe81} by Croke-Weinstein and
    \cite[Chapter~5, \S1, Proposition~5]{Ek90} by Ekeland. We also generalized the two theorems to closed brake
     characteristics (\cite[Corollary~1.17]{JinLu1915}) and to
    characteristics satisfying special boundary conditions (\cite[Corollary~1.16]{JinLu1916}).

\begin{corollary}\label{cor:CrokeW1}
Let  $D\subset \mathbb{R}^{2n}$ be a convex bounded domain with
 boundary $\mathcal{S}=\partial D$.
\begin{description}
\item[(i)] If $D$ contains a ball $B^{2n}({\bf a}, R)$ as in (\ref{e:ellipsoid++}) then for any
 generalized leafwise chord  $x$ on $\mathcal{S}$ for $D\cap\mathbb{R}^{n,k}$
  it holds with $r_R=\sqrt{1-a^2/R^2}$ that
\begin{equation}\label{e:croke1}
A(x)\ge \left(
\arcsin(r_R)-r_R\sqrt{1-r_R^2}\right)R^2.
\end{equation}
\item[(ii)] If $D$ is contained in a ball $B^{2n}({\bf a}, R)$ as in (\ref{e:ellipsoid++}),  there exists a
 generalized leafwise chord   $x^\star$ on $\mathcal{S}$ for  $B^{2n}({\bf a}, R)\cap\mathbb{R}^{n,k}$   such that
  with $r_R=\sqrt{1-a^2/R^2}$,
 \begin{equation}\label{e:croke2}
 A(x^\star)\le \left(
\arcsin(r_R)-r_R\sqrt{1-r_R^2}\right)R^2.
\end{equation}
\end{description}
 \end{corollary}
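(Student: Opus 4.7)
The plan is to reduce both parts of the corollary to the combination of two ingredients already established in the excerpt: the monotonicity property of the coisotropic Hofer-Zehnder capacity $c_{\rm LR}$, and the representation formula of Theorem~\ref{th:represention}, together with the explicit computation of $c_{\rm LR}$ for balls in Corollary~\ref{cor:ellipsoid+}.

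For part (i), the key step is to observe that the inclusion $\iota:B^{2n}({\bf a},R)\hookrightarrow D$ is a relative symplectic embedding which respects the pair of leaf relations. Indeed $\iota$ is obviously a symplectic embedding; moreover $\iota^{-1}(D\cap\mathbb{R}^{n,k})=B^{2n}({\bf a},R)\cap\mathbb{R}^{n,k}$. Because both $D$ and $B^{2n}({\bf a},R)$ are convex, each intersection of $D\cap\mathbb{R}^{n,k}$ (resp.\ $B^{2n}({\bf a},R)\cap\mathbb{R}^{n,k}$) with a translate of $V_0^{n,k}$ is convex, hence connected, so the leaves are exactly the non\-empty intersections with translates of $V_0^{n,k}$; this forces $\iota$ to respect the leaf relations. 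The monotonicity axiom (i) of Definition~\ref{def:coCap} then yields
\[
c_{\rm LR}\bigl(B^{2n}({\bf a},R),B^{2n}({\bf a},R)\cap\mathbb{R}^{n,k}\bigr)\le c_{\rm LR}(D,D\cap\mathbb{R}^{n,k}),
\]
and by Corollary~\ref{cor:ellipsoid+} the left side equals $\bigl(\arcsin(r_R)-r_R\sqrt{1-r_R^2}\bigr)R^2$. Finally, Theorem~\ref{th:represention} identifies $c_{\rm LR}(D,D\cap\mathbb{R}^{n,k})$ with the minimal positive action among all generalized leafwise chords on $\partial D$ for $\mathbb{R}^{n,k}$, so every such GLC $x$ satisfies $A(x)\ge c_{\rm LR}(D,D\cap\mathbb{R}^{n,k})$, which gives (\ref{e:croke1}).

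For part (ii), the same monotonicity argument applied to the inclusion $D\hookrightarrow B^{2n}({\bf a},R)$ yields the reverse inequality
\[
c_{\rm LR}(D,D\cap\mathbb{R}^{n,k})\le c_{\rm LR}\bigl(B^{2n}({\bf a},R),B^{2n}({\bf a},R)\cap\mathbb{R}^{n,k}\bigr)=\bigl(\arcsin(r_R)-r_R\sqrt{1-r_R^2}\bigr)R^2.
\]
Invoking the existence half of Theorem~\ref{th:represention}, pick a GLC $x^\star$ on $\mathcal{S}=\partial D$ for $\mathbb{R}^{n,k}$ realizing the minimum, so that $A(x^\star)=c_{\rm LR}(D,D\cap\mathbb{R}^{n,k})$; since $D\cap\mathbb{R}^{n,k}\subset B^{2n}({\bf a},R)\cap\mathbb{R}^{n,k}$ its endpoints also lie in $B^{2n}({\bf a},R)\cap\mathbb{R}^{n,k}$, and (\ref{e:croke2}) follows.

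I expect no serious obstacle: the only substantive point is the verification that inclusions of convex domains respect the leaf relations, and this is immediate from convexity as sketched above. Everything else is an application of previously proved results (monotonicity, Theorem~\ref{th:represention}, Corollary~\ref{cor:ellipsoid+}).
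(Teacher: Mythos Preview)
Your proof is correct and follows exactly the approach the paper indicates (the paper only states that the corollary follows ``from properties of $c_{\rm LR}$, Theorem~\ref{th:represention} and Corollary~\ref{cor:ellipsoid+}'' without further detail). Your careful verification that the inclusion of convex domains respects the leaf relations is a useful addition, and the rest is precisely the intended combination of monotonicity, the representation formula, and the explicit capacity of balls.
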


\begin{remark}\label{rem:chord}
{\rm Theorem~\ref{th:represention} can also yield some special cases of
the Arnold's chord conjecture.
For a bounded  star-shaped domain $D\subset\mathbb{R}^{2n}$ (with respect to the origin) with smooth boundary $\mathcal{S}$,
the canonical one-form  on $\mathbb{R}^{2n}$,
$$
\lambda_0=\frac{1}{2}\sum^{n}_{j=1}(q_jdp_j-p_jdq_j),
$$
 restricts to a contact form $\alpha$ on $\mathcal{S}$.  A \textsf{Reeb chord of length} $T$ to a Legendrian submanifold $\Lambda$ in $(\mathcal{S}, \alpha)$
is an orbit $\gamma:[0, T]\to \mathcal{S}$ of the Reeb vector field $R_\alpha$ with $\gamma(0),\gamma(T)\in\Lambda$.
\textsf{Arnold's chord conjecture} \cite{Ar86} stated that every closed Legendrian submanifold in $(\mathcal{S}, \alpha)$
has a Reeb chord. This was proved by  Mohnke \cite{Moh01}.

Choose a Hamiltonian function $H$ on $\mathbb{R}^{2n}$ such that $\mathcal{S}=H^{-1}(c)$ for some $c\in\mathbb{R}$.
Then by \cite[Lemma~1.4.10]{Ge08}
the Reeb flow of $\alpha$ is a reparametrisation of the Hamiltonian flow of $X_H$ on $\mathcal{S}$
because the Reeb vector field $R_\alpha$ of $\alpha$ is equal to $f X_H|_{\mathcal{S}}$, where $f$ is the restriction of $1/\lambda_0(X_H)$
to $\mathcal{S}$. In particular, suppose that the domain $D$ is convex and contains the origin in its interior.
We choose $H:=(j_D)^2$ and hence $\mathcal{S}=H^{-1}(1)$.
Let $\Lambda:=\mathcal{S}\cap\mathbb{R}^{n,0}$, which is a Legendrian submanifold in $\mathcal{S}$.
By {Remark~\ref{rem:CLRcarrier}}
we have a smooth path
 $x:[0,\mu]\to \mathcal{S}$ such that $\dot{x}(t)\in  X_H(x(t))$, and $x(0),\; x(\mu)\in \mathbb{R}^{n,0}$,
 where  $\mu=c_{\rm LR}(D,D\cap \mathbb{R}^{n,k})=A(x)$ is the return time of $x$ for $\mathbb{R}^{n,k}$.
Let $h(s)=\int^s_0\lambda_0(X_H(x(\tau))d\tau$ for $s\in [0, \mu]$ and $T=h(\mu)$. Then $h:[0,\mu]\to [0, T]$
has a smooth inverse $g:[0, T]\to [0,\mu]$. Define $\gamma:[0, T]\to\mathcal{S}$ by $\gamma(t)=x(g(t))$.
It is  a Reeb chord of length $h(\mu)$ to a Legendrian submanifold $\mathcal{S}\cap\mathbb{R}^{n,0}$ in $(\mathcal{S}, \alpha)$.
By the reverse reasoning it is easily seen that such a Reeb chord has the shortest length.}
\end{remark}

\noindent{1.2.2}. {\bf A Brunn-Minkowski type inequality for coisotropic Hofer-Zehnder capacities}.\;
 Let $h_K$ be the  support  function of a nonempty   convex subset $K\subset \mathbb{R}^{2n}$  defined by
 \begin{equation}\label{e:supp.1}
h_K(w)=\sup\{\langle x,w\rangle\,|\,x\in K\},\quad\forall w\in\mathbb{R}^{2n}.
\end{equation}
For a compact convex subset $K\subset \mathbb{R}^{2n}$  containing $0$ in its interior,
and its polar body $K^{\circ}$ defined below (\ref{e:LargP3}),
by  \cite[Theorem~1.7.6]{Sch93} and (\ref{e:Brunn.0}) we get respectively
 \begin{equation}\label{e:supp.1.1}
h_K=j_{K^{\circ}}\quad\hbox{and}\quad (h_K)^2= 4H^\ast_K,
\end{equation}
 where $H^\ast_K$ is the Legendre transform of $H_K:=(j_K)^2$ defined by
 $$
 H^\ast_K(z):=\max_{\xi\in\mathbb{R}^{2n}}(\langle\xi,z\rangle-H_K(\xi)).
 $$

For two bounded convex  domains $D, K\subset\mathbb{R}^{2n}$ containing  $0$
 and a real number $p\ge 1$,  there exists a unique bounded convex domain
 $D+_pK\subset\mathbb{R}^{2n}$ containing $0$ with support function
   \begin{equation}\label{e:supp.2}
\mathbb{R}^{2n}\ni w\mapsto h_{D+_pK}(w)=(h^p_{D}(w)+h_{K}^p(w))^{\frac{1}{p}}
 \end{equation}
  (\cite[Theorem~1.7.1]{Sch93}), which is called
   the $p$-sum of $D$ and $K$ by Firey (cf. \cite[(6.8.2)]{Sch93}).
   For convex domains $D$, $K\subset\mathbb{R}^{2n}$ not necessarily containing $0$ and p=1, define
   $$
   D+_1K:=D+K=\{x+y\,|\, x\in D,\,y\in K\},
   $$
which coincides with one defined above if both $D$ and $K$ contain $0$.

Corresponding to the Brunn-Minkowski type inequality for the Hofer-Zehnder capacities of convex domains given
 by \cite[Theorem~1.5]{AAO08}, we have the following inequality for the coisotropic
Hofer-Zehnder capacities of convex domains.

\begin{thm}\label{th:Brun}
   Let $D, K\subset \mathbb{R}^{2n}$ be two bounded convex  domains  containing  $0$.
      Then for any  real $p\ge 1$ it holds that
   \begin{eqnarray}\label{e:BrunA}
   &&\left(c_{\rm LR}(D+_pK, (D+_pK)\cap\mathbb{R}^{n,k} )\right)^{\frac{p}{2}}\nonumber\\
   &&\ge
    \left(c_{\rm LR}(D, D\cap\mathbb{R}^{n,k})\right)^{\frac{p}{2}}+ \left(c_{\rm LR}(K, K\cap\mathbb{R}^{n,k})\right)^{\frac{p}{2}}.
   \end{eqnarray}
   Moreover,  the equality holds if  there exist
    a $c_{\rm LR}(D, D\cap\mathbb{R}^{n,k})$ carrier,
$\gamma_D:[0, T]\to\partial D$,  and
a $c_{\rm LR}(K, K\cap\mathbb{R}^{n,k})$ carrier,
 $\gamma_K:[0, T]\to\partial K$,
such that they coincide up to  dilations and translations in $\mathbb{R}^{n,k}$, i.e., $\gamma_D=\alpha\gamma_K+{\bf b}$
for some $\alpha\in\mathbb{R}\setminus\{0\}$ and
some ${\bf b}\in \mathbb{R}^{n,k}$;
   and in the case $p>1$ the latter condition is also necessary for the equality in (\ref{e:BrunA}) holding.
  \end{thm}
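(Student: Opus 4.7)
The plan is to follow the proof of the Brunn--Minkowski inequality for the Hofer--Zehnder capacity in \cite{AAO08}, adapting it to the GLC boundary conditions via the Clarke-type duality already employed in Theorem~\ref{th:represention} and the authors' prior works \cite{JinLu1915, JinLu1916}. The preliminary step is to promote Proposition~\ref{prop:Brun.0} (the $L^2$-Clarke dual underlying Theorem~\ref{th:represention}) to the ``symplectic-isoperimetric'' $L^1$-representation
\begin{equation*}
\bigl(c_{\rm LR}(D, D\cap\mathbb{R}^{n,k})\bigr)^{1/2} = \min_{y\in\mathcal{E}^{n,k},\, A(y) > 0} \frac{\int_0^1 h_D(-J_{2n}\dot y(t))\, dt}{2\sqrt{A(y)}}
\end{equation*}
over a suitable Banach space $\mathcal{E}^{n,k}$ of absolutely continuous curves $y:[0,1]\to\mathbb{R}^{2n}$ with $y(0), y(1)\in\mathbb{R}^{n,k}$ and $y(0)-y(1)\in V_0^{n,k}$. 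Equality is achieved at reparameterizations of $c_{\rm LR}$-carriers: if $\gamma:[0,T]\to\partial D$ is such a carrier, the equation $-J_{2n}\dot\gamma\in\partial H_D(\gamma)$ together with the $2$-homogeneity of $H_D$ forces $h_D(-J_{2n}\dot\gamma)$ to be constant, and a direct calculation gives the ratio $T^{1/2} = c_{\rm LR}^{1/2}$. Proving the matching lower bound for arbitrary $y\in\mathcal{E}^{n,k}$---the GLC analog of the symplectic isoperimetric inequality used in \cite{AAO08}---is the principal technical obstacle; it is obtained by dualizing Proposition~\ref{prop:Brun.0} and tracking the equality case under the GLC boundary constraints.

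Given this $L^1$-representation, inequality (\ref{e:BrunA}) follows uniformly in $p \ge 1$ from Jensen's inequality combined with the defining Firey identity $h_{D+_pK}^p = h_D^p + h_K^p$. Let $y^\ast\in \mathcal{E}^{n,k}$ be a $c_{\rm LR}(D+_pK, (D+_pK)\cap\mathbb{R}^{n,k})$-carrier reparameterized to $[0,1]$; then $h_{D+_pK}(-J_{2n}\dot y^\ast)$ is constant, so
\begin{equation*}
\int_0^1 h_{D+_pK}^p(-J_{2n}\dot y^\ast)\,dt = 2^p\, c_{\rm LR}(D+_pK, (D+_pK)\cap\mathbb{R}^{n,k})^{p/2}\, A(y^\ast)^{p/2}.
\end{equation*}
On the other hand, the Firey identity makes the left-hand side additive in $D$ and $K$, while Jensen's inequality on the measure-$1$ interval $[0,1]$ followed by the $L^1$-representation applied with test curve $y^\ast$ yields
\begin{equation*}
\int_0^1 h_D^p(-J_{2n}\dot y^\ast)\,dt \ge \Bigl(\int_0^1 h_D(-J_{2n}\dot y^\ast)\,dt\Bigr)^p \ge 2^p\, c_{\rm LR}(D, D\cap\mathbb{R}^{n,k})^{p/2}\, A(y^\ast)^{p/2},
\end{equation*}
and similarly with $K$ in place of $D$. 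Adding the $D$- and $K$-estimates, comparing with the preceding display, and dividing by $2^p A(y^\ast)^{p/2}$ gives (\ref{e:BrunA}).

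Equality in (\ref{e:BrunA}) is equivalent to equality in each of the two Jensen steps and each of the two symplectic-isoperimetric estimates at $y^\ast$. Jensen equality forces $h_D(-J_{2n}\dot y^\ast)$ and $h_K(-J_{2n}\dot y^\ast)$ to be constant, and the isoperimetric equalities identify $y^\ast$, up to a nonzero dilation and a translation in $\mathbb{R}^{n,k}$, simultaneously with a $c_{\rm LR}(D,\cdot)$-carrier and a $c_{\rm LR}(K,\cdot)$-carrier. Since the functional $\int h_D(-J_{2n}\dot y)/(2\sqrt{A(y)})$ is invariant under dilations of $y$ and under translations of $y$ by elements of $\mathbb{R}^{n,k}$ (the latter because $\omega_0$ pairs $\mathbb{R}^{n,k}$ trivially with $V_0^{n,k}$, by the decomposition (\ref{lagmultipier1})), this identification is exactly the stated relation $\gamma_D = \alpha\gamma_K + \mathbf{b}$ with $\alpha\in\mathbb{R}\setminus\{0\}$ and $\mathbf{b}\in\mathbb{R}^{n,k}$. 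The strict convexity of $s\mapsto s^p$ for $p>1$ makes the Jensen equality rigid, yielding necessity in that range; the converse (sufficiency for every $p\ge 1$) is a direct substitution into the variational formula.
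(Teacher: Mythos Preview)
Your argument is correct and close in spirit to the paper's, but the two differ in how the $p$-th power is handled. The paper proves a full family of $L^{p_2}$-representations (its Proposition~\ref{prop:Brun.2}): for any $p_1>1$ and $p_2\ge 1$,
\[
\bigl(c_{\rm LR}(D, D\cap\mathbb{R}^{n,k})\bigr)^{p_2/2}=\min_{x\in\mathcal{A}_{p_1}^{n,k}}\frac{1}{2^{p_2}}\int_0^1 \bigl(h_D(-J\dot x)\bigr)^{p_2}\,dt.
\]
With $p_2=p$ this makes the Firey identity $h_{D+_pK}^p=h_D^p+h_K^p$ immediately additive inside the minimum, and (\ref{e:BrunA}) follows from the trivial inequality $\min(f+g)\ge\min f+\min g$; no Jensen step is needed. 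You instead use only the $p_2=1$ case of this representation and bridge to general $p$ via Jensen on $[0,1]$. Both routes rest on the same underlying fact---that carriers are minimizers with constant integrand---so neither is more general; your version makes the $p>1$ necessity argument slightly cleaner (Jensen rigidity directly gives constancy of $h_D(-J\dot y^\ast)$ and $h_K(-J\dot y^\ast)$), whereas the paper extracts this constancy via a H\"older comparison between the $L^p$ and $L^{p_1}$ representations. For sufficiency, the paper makes explicit what your ``direct substitution'' entails: from $\gamma_D=\alpha\gamma_K+{\bf b}$ one builds $z_D,z_K\in\mathcal{A}_{p_1}^{n,k}$ with $\dot z_D=\dot z_K$, so the same test curve simultaneously realizes both minima and equality follows.
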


\begin{corollary}\label{cor:Brun.1}
 For two bounded convex  domains $D, K\subset \mathbb{R}^{2n}$,
  there holds
  \begin{eqnarray*}
&& \left(c_{\rm LR}(D+K, (D+K)\cap(\mathbb{R}^{n,k}+{\bf a}+{\bf b}))\right)^{\frac{1}{2}}\\
&\ge& \left(c_{\rm LR}(D, D\cap(\mathbb{R}^{n,k}+{\bf a}))\right)^{\frac{1}{2}}+
\left(c_{\rm LR}(K, K\cap(\mathbb{R}^{n,k}+{\bf b}))\right)^{\frac{1}{2}}
 \end{eqnarray*}
  for any ${\bf a}\in D$ and ${\bf b}\in K$.
\end{corollary}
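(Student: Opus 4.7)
\noindent\textbf{Proof plan for Corollary~\ref{cor:Brun.1}.}
The plan is to deduce the inequality from Theorem~\ref{th:Brun} with $p=1$, after suitable translations. The key auxiliary fact I will use is that the coisotropic Hofer-Zehnder capacity is invariant under translations of $\mathbb{R}^{2n}$: for any ${\bf c}\in\mathbb{R}^{2n}$, the map $\psi_{\bf c}(x)=x-{\bf c}$ is a symplectomorphism of $(\mathbb{R}^{2n},\omega_0)$ carrying $D$ onto $D-{\bf c}$ and $\mathbb{R}^{n,k}+{\bf c}$ onto $\mathbb{R}^{n,k}$, so by the monotonicity axiom (applied to $\psi_{\bf c}$ and to $\psi_{\bf c}^{-1}$) one has
\begin{equation*}
c_{\rm LR}\bigl(D, D\cap(\mathbb{R}^{n,k}+{\bf c})\bigr)=c_{\rm LR}\bigl(D-{\bf c}, (D-{\bf c})\cap\mathbb{R}^{n,k}\bigr),
\end{equation*}
whenever the intersection is nonempty. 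Note that this is always the case when ${\bf c}\in D$, since then ${\bf c}\in D\cap(\mathbb{R}^{n,k}+{\bf c})$.

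Next, fix any ${\bf e}\in D+K$ with $(D+K)\cap(\mathbb{R}^{n,k}+{\bf e})\neq\emptyset$ and decompose ${\bf e}={\bf a}+{\bf b}$ with ${\bf a}\in D$ and ${\bf b}\in K$. Then $D-{\bf a}$ and $K-{\bf b}$ are bounded convex domains containing the origin, whose Minkowski sum is $(D+K)-{\bf e}$. Applying Theorem~\ref{th:Brun} with $p=1$ to the pair $(D-{\bf a},\,K-{\bf b})$ gives
\begin{equation*}
c_{\rm LR}\bigl((D+K)-{\bf e},\,((D+K)-{\bf e})\cap\mathbb{R}^{n,k}\bigr)^{\frac{1}{2}}
\ge c_{\rm LR}\bigl(D-{\bf a},(D-{\bf a})\cap\mathbb{R}^{n,k}\bigr)^{\frac{1}{2}}
+c_{\rm LR}\bigl(K-{\bf b},(K-{\bf b})\cap\mathbb{R}^{n,k}\bigr)^{\frac{1}{2}}.
\end{equation*}
Rewriting each term via translation invariance and bounding the two summands on the right below by their infima over ${\bf a}\in D$ and ${\bf b}\in K$ respectively, I obtain
\begin{equation*}
c_{\rm LR}\bigl(D+K,\,(D+K)\cap(\mathbb{R}^{n,k}+{\bf e})\bigr)^{\frac{1}{2}}
\ge \Bigl(\inf_{{\bf a}\in D}c_{\rm LR}(D,D\cap(\mathbb{R}^{n,k}+{\bf a}))\Bigr)^{\frac{1}{2}}
+\Bigl(\inf_{{\bf b}\in K}c_{\rm LR}(K,K\cap(\mathbb{R}^{n,k}+{\bf b}))\Bigr)^{\frac{1}{2}}.
\end{equation*}

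Finally, since the right-hand side of the last display is independent of ${\bf e}\in D+K$, and $t\mapsto t^{1/2}$ is monotone on $[0,\infty)$ (so that $\inf t^{1/2}=(\inf t)^{1/2}$), taking the infimum over ${\bf e}\in D+K$ on the left yields exactly the desired inequality. There is no substantial obstacle here; the only care needed is to verify that each intersection appearing in the proof is nonempty so that the capacities are defined (which follows automatically from ${\bf a}\in D$, ${\bf b}\in K$ and the choice ${\bf e}={\bf a}+{\bf b}\in D+K$), and to ensure that the translated domains $D-{\bf a}$ and $K-{\bf b}$ indeed contain the origin so that Theorem~\ref{th:Brun} applies with $p=1$.
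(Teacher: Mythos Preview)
Your proposal is correct and follows essentially the same approach as the paper: translate so that the origin lies in $D-{\bf a}$ and $K-{\bf b}$, apply Theorem~\ref{th:Brun} with $p=1$, use translation invariance of $c_{\rm LR}$, and then take infima. The only cosmetic difference is that the paper starts from arbitrary ${\bf a}\in D$, ${\bf b}\in K$ and sets ${\bf e}={\bf a}+{\bf b}$, whereas you start from ${\bf e}\in D+K$ and decompose it; both routes are equivalent since every ${\bf e}\in D+K$ admits such a decomposition.
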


\begin{proof}
For any ${\bf a}\in D$ and ${\bf b}\in K$, the monotonicity of $c_{\rm LR}$  and (\ref{e:BrunA}) lead to
\begin{eqnarray*}
&& \left(c_{\rm LR}(D+K, (D+K)\cap(\mathbb{R}^{n,k}+{\bf a}+{\bf b}))\right)^{\frac{1}{2}}\\
&=& \left(c_{\rm LR}(D+K-{\bf a}-{\bf b}, (D+K-{\bf a}-{\bf b})\cap\mathbb{R}^{n,k})\right)^{\frac{1}{2}}\\
&\ge& \left(c_{\rm LR}(D-{\bf a}, (D-{\bf a})\cap\mathbb{R}^{n,k})\right)^{\frac{1}{2}}+
\left(c_{\rm LR}(K-{\bf b}, (K-{\bf b})\cap\mathbb{R}^{n,k})\right)^{\frac{1}{2}}\\
&=& \left(c_{\rm LR}(D, D\cap(\mathbb{R}^{n,k}+{\bf a}))\right)^{\frac{1}{2}}+
\left(c_{\rm LR}(K, K\cap(\mathbb{R}^{n,k}+{\bf b}))\right)^{\frac{1}{2}}.
 \end{eqnarray*}
\end{proof}

As in \cite{AAO08, AAO14} Corollary~\ref{cor:Brun.1} has

\begin{corollary}\label{cor:Brun.2}
 Let $D$ and $K$ be as in Corollary~\ref{cor:Brun.1}.
 \begin{description}
\item[(i)] If $0\in D$ and $\{-x,-y\}\subset K$,  then for any $0\le \lambda\le 1$, there holds
 \begin{eqnarray*}
&&\lambda\left(c_{\rm LR}(D\cap (x+K),  D\cap (x+K)\cap\mathbb{R}^{n,k})\right)^{1/2}\nonumber\\
&&+(1-\lambda)\left(c_{\rm LR}(D\cap (y+K),  D\cap (y+K)\cap\mathbb{R}^{n,k})\right)^{1/2}\nonumber\\
&\le&\left(c_{\rm LR}(D\cap(\lambda x+(1-\lambda)y+K),  D\cap(\lambda x+(1-\lambda)y+K)\cap\mathbb{R}^{n,k})\right)^{1/2}.
\end{eqnarray*}
  In particular, if $D$ and $K$ are centrally symmetric, i.e., $-D=D$ and $-K=K$, then
     \begin{eqnarray}\label{e:BrunB}
   c_{\rm LR}(D\cap(x+K),  D\cap(x+K)\cap\mathbb{R}^{n,k})\le c_{\rm LR}(D\cap K, D\cap K\cap\mathbb{R}^{n,k}).
   \end{eqnarray}

\item[(ii)] Suppose that $0\in D\cap K$. Then the limit
\begin{equation}\label{e:BrunC}
\lim_{\varepsilon\to 0+}
\frac{c_{\rm LR}(D+ \varepsilon K, (D+\varepsilon K)\cap\mathbb{R}^{n,k})- c_{\rm LR}(D, D\cap\mathbb{R}^{n,k})}{\varepsilon}
\end{equation}
exists, denoted by $d_K(D, D\cap\mathbb{R}^{n,k})$, and it holds that
\begin{eqnarray}\label{e:BrunD}
2(c_{\rm LR}(D, D\cap\mathbb{R}^{n,k}))^{1/2}(c_{\rm LR}(K, K\cap\mathbb{R}^{n,k}))^{1/2}\nonumber\\
\le d_K(D, D\cap\mathbb{R}^{n,k})\le
\inf_{z_D}\int_0^1h_K(-J\dot{z}_D(t))dt,
\end{eqnarray}
where the infimum is over all $c_{\rm LR}(D, D\cap\mathbb{R}^{n,k})$-carriers $z_D:[0,1]\to\partial D$.
\end{description}
\end{corollary}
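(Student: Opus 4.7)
The plan is to derive (i) from Theorem~\ref{th:Brun} with $p=1$ combined with the monotonicity and conformality of $c_{\rm LR}$, and to derive (ii) from the same Brunn--Minkowski inequality together with the representation formula in Theorem~\ref{th:represention}. Throughout I use the scaling identity $c_{\rm LR}(\lambda E,\lambda E\cap\mathbb{R}^{n,k})=\lambda^2 c_{\rm LR}(E,E\cap\mathbb{R}^{n,k})$ for $\lambda\ne 0$, which follows from the conformality axiom applied to the dilation $x\mapsto\lambda x$ (whose pullback of $\omega_0$ is $\lambda^2\omega_0$).

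For part (i), set $D_1:=D\cap(x+K)$, $D_2:=D\cap(y+K)$ and $D_\lambda:=D\cap(\lambda x+(1-\lambda)y+K)$. The hypothesis $0\in D$ together with $-x,-y\in K$ forces $0$ into each of these sets and hence into $\mathbb{R}^{n,k}$; a direct Minkowski computation using convexity of $D$ and $K$ yields the inclusion $\lambda D_1+(1-\lambda)D_2\subset D_\lambda$, which is a relative symplectic embedding respecting the leaf relation. Combining this with monotonicity and applying Theorem~\ref{th:Brun} ($p=1$) to the pair $\lambda D_1$ and $(1-\lambda)D_2$, together with the scaling above, produces the asserted concavity inequality. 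For (\ref{e:BrunB}), specialise to $y=-x$ and $\lambda=1/2$, and use $-D=D$, $-K=K$ together with $c_{\rm LR}(-E,-E\cap\mathbb{R}^{n,k})=c_{\rm LR}(E,E\cap\mathbb{R}^{n,k})$ (the scaling with $\lambda=-1$) to identify $c_{\rm LR}(D\cap(-x+K),\,\cdot\,)$ with $c_{\rm LR}(D\cap(x+K),\,\cdot\,)$.

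For part (ii), the lower bound in (\ref{e:BrunD}) is obtained by squaring
\[
\sqrt{c_{\rm LR}(D+\varepsilon K,(D+\varepsilon K)\cap\mathbb{R}^{n,k})}\ge\sqrt{c_{\rm LR}(D,D\cap\mathbb{R}^{n,k})}+\varepsilon\sqrt{c_{\rm LR}(K,K\cap\mathbb{R}^{n,k})}
\]
(Theorem~\ref{th:Brun} with $p=1$), dividing by $\varepsilon$ and letting $\varepsilon\to 0^+$. Existence of the limit in (\ref{e:BrunC}) follows from the concavity of $\varepsilon\mapsto\sqrt{c_{\rm LR}(D+\varepsilon K,(D+\varepsilon K)\cap\mathbb{R}^{n,k})}$ on $[0,\infty)$, which is a consequence of Theorem~\ref{th:Brun} applied to the identity $\mu(D+\varepsilon_1K)+(1-\mu)(D+\varepsilon_2K)=D+(\mu\varepsilon_1+(1-\mu)\varepsilon_2)K$ (using $\mu D+(1-\mu)D=D$) together with the scaling; any concave function on $[0,\infty)$ admits a right derivative at $0$, which the upper bound below will pin down as finite.

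The genuine technical difficulty is the upper bound in (\ref{e:BrunD}). Following Artstein-Avidan--Ostrover~\cite{AAO08}, I would first approximate $D$ and $K$ by smooth strictly convex bodies so that the $c_{\rm LR}(D,D\cap\mathbb{R}^{n,k})$-carrier $z_D:[0,1]\to\partial D$ provided by Theorem~\ref{th:represention} is smooth, and then consider the test curve
\[
z_\varepsilon(t):=z_D(t)+\varepsilon\,\nabla h_K\bigl(-J_{2n}\dot z_D(t)\bigr).
\]
Because $-J_{2n}\dot z_D(t)$ is the outward normal to $\partial D$ at $z_D(t)$ up to a positive scalar and $\nabla h_K(u)$ is the point of $\partial K$ with outward normal parallel to $u$, the relation $h_{D+\varepsilon K}=h_D+\varepsilon h_K$ ensures $z_\varepsilon(t)\in\partial(D+\varepsilon K)$. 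An expansion of $A(z_\varepsilon)$, using Euler's identity $\langle\nabla h_K(u),u\rangle=h_K(u)$ together with integration by parts, produces
\[
A(z_\varepsilon)=A(z_D)+\varepsilon\int_0^1 h_K(-J_{2n}\dot z_D(t))\,dt+\varepsilon^2 A\bigl(\nabla h_K\circ(-J_{2n}\dot z_D)\bigr)+B(\varepsilon),
\]
where $B(\varepsilon)$ collects the boundary contribution coming from the chord (as opposed to periodic) character of $z_D$. The delicate step, which is the principal obstacle, is to enforce $z_\varepsilon(0),z_\varepsilon(1)\in\mathbb{R}^{n,k}$ and $z_\varepsilon(0)-z_\varepsilon(1)\in V_0^{n,k}$ so that $z_\varepsilon$ is an admissible test GLC on $\partial(D+\varepsilon K)$ for $\mathbb{R}^{n,k}$ and $B(\varepsilon)=o(\varepsilon)$; this may require modifying $z_\varepsilon$ near its endpoints or a judicious choice of $z_D$. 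Granted this, Theorem~\ref{th:represention} gives $c_{\rm LR}(D+\varepsilon K,(D+\varepsilon K)\cap\mathbb{R}^{n,k})\le A(z_\varepsilon)$, and dividing by $\varepsilon$, passing to $\varepsilon\to 0^+$, removing the smoothness/strict convexity approximation, and taking the infimum over carriers yield the upper bound in (\ref{e:BrunD}).
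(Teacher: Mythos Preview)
Your treatment of (i), and of the existence of the limit and the lower bound in (ii), is correct and is essentially what the paper does (monotonicity of the difference quotient is the same as your concavity argument).

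The gap is in the upper bound of (\ref{e:BrunD}). Your test curve $z_\varepsilon=z_D+\varepsilon\,\nabla h_K(-J\dot z_D)$ has no reason to satisfy the leafwise boundary conditions $z_\varepsilon(0),z_\varepsilon(1)\in\mathbb{R}^{n,k}$ and $z_\varepsilon(0)-z_\varepsilon(1)\in V_0^{n,k}$, since $\nabla h_K(-J\dot z_D(0))$ and $\nabla h_K(-J\dot z_D(1))$ are arbitrary points of $\partial K$; you note this yourself, and the suggested endpoint surgery is not obviously $o(\varepsilon)$-controllable.

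The paper avoids this difficulty entirely by working on the dual side. The key input is Proposition~\ref{prop:Brun.2}, which gives
\[
\bigl(c_{\rm LR}(E,E\cap\mathbb{R}^{n,k})\bigr)^{1/2}=\min_{x\in\mathcal{A}_{p_1}^{n,k}}\frac{1}{2}\int_0^1 h_E(-J\dot x)\,dt
\]
for every such convex $E$. The point is that the competition space $\mathcal{A}_{p_1}^{n,k}$ is \emph{independent of the domain}: the boundary constraints $x(0),x(1)\in\mathbb{R}^{n,k}$, $x(0)\sim x(1)$, $\int x\in J_{2n}V_0^{n,k}$, $A(x)=1$ are built in once and for all. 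Hence a minimiser $u$ for $E=D$ is automatically an admissible test function for $E=D+\varepsilon K$, and $h_{D+\varepsilon K}=h_D+\varepsilon h_K$ gives
\[
\bigl(c_{\rm LR}(D+\varepsilon K,\cdot)\bigr)^{1/2}\le \bigl(c_{\rm LR}(D,\cdot)\bigr)^{1/2}+\frac{\varepsilon}{2}\int_0^1 h_K(-J\dot u)\,dt.
\]
Remark~\ref{rem:Brun.1.5} then identifies $u$ with a carrier $z_D$ up to a dilation and a translation in $\mathbb{R}^{n,k}$, namely $\dot z_D=\bigl(c_{\rm LR}(D,\cdot)\bigr)^{1/2}\dot u$, which converts the last integral into $\frac{1}{2}\bigl(c_{\rm LR}(D,\cdot)\bigr)^{-1/2}\int_0^1 h_K(-J\dot z_D)$ and yields (\ref{e:BrunD}) after multiplying through by $2\bigl(c_{\rm LR}(D,\cdot)\bigr)^{1/2}$. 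No construction on $\partial(D+\varepsilon K)$ and no endpoint correction are needed.
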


 Following  \cite{AAO08, AAO14} we define  the length of $z_D$ with respect to the convex body $JK^\circ$ by
$$
{\rm length}_{JK^\circ}(z_D):=\int_0^1j_{JK^\circ}(\dot{z}_D(t))dt.
$$
Since $H^\ast_K(-Jv)=(j_{JK^\circ}(v))^2/4$ by (\ref{e:supp.1.1}),  (\ref{e:BrunD}) implies
$$
d_K(D, D\cap\mathbb{R}^{n,k})\le 2\inf_{z_D}\int_0^1(H_K^{\ast}(-J\dot{z}_D(t)))^{\frac{1}{2}}dt=
\inf_{z_D}\int_0^1j_{JK^\circ}(\dot{z}_D(t))dt
$$
and hence $4c_{\rm LR}(D, D\cap\mathbb{R}^{n,k})c_{\rm LR}(K, K\cap\mathbb{R}^{n,k})\le
\inf_{z_D}({\rm length}_{JK^\circ}(z_D))^2$.\\

\noindent{1.2.3}. {\bf  An extension of a theorem by Evgeni Neduv}.\;
Let $\mathscr{H}\in C^2(\mathbb{R}^{2n},\mathbb{R}_+)$ be a proper and strictly convex Hamiltonian
such that $\mathscr{H}(0)=0$ and $\mathscr{H}''>0$ (i.e. $\mathscr{H}'' $ is positive definite at each point),  which imply
$\mathscr{H}\ge 0$ by Taylor's formula.
 If $e_0> 0$ is a regular value of $\mathscr{H}$ with
$\mathscr{H}^{-1}(e_0)\ne\emptyset$, then for each number $e$ near $e_0$
the set $D(e):=\{\mathscr{H}<e\}$
is a bounded strictly convex  domain in $\mathbb{R}^{2n}$ containing  $0$ and
with $C^2$-boundary $\mathcal{S}(e)=\mathscr{H}^{-1}(e)$.
Based on the representation formula for Hofer-Zehnder capacities of convex domains (\cite[Proposition~4]{HoZe90}),
Evgeni Neduv \cite{Ned01} studied  the differentiability of  the Hofer-Zehnder capacity $c_{\rm HZ}(D(e))$
 at $e=e_0$ and obtained some applications to the prescribed periodic problems.
The main result \cite[Theorem~4.4]{Ned01} was generalized by us to brake periodic orbits (\cite{JinLu1915})
and non-periodic orbits satisfying certain boundary conditions (\cite{JinLu1916}) recently.
 In the following we consider corresponding generalizations to leafwise chord using the representation
 formula for coisotropic Hofer-Zehnder capacities Theorem \ref{th:represention}.

For any $e$ near $e_0$ let $\mathscr{C}_k(e):=c_{\rm LR}(D(e),D(e)\cap \mathbb{R}^{n,k})$.
As remarked below Theorem~\ref{th:represention} all $c_{\rm LR}(D(e),D(e)\cap \mathbb{R}^{n,k})$-carriers
form a compact subset in $C^1([0, \mathscr{C}_k(e)], \mathcal{S}(e))$. Hence
\begin{equation}\label{e:convexDiff}
\mathscr{I}_k(e):=\left\{T_x=2\int^{\mathscr{C}(e)}_0\frac{dt}{\langle\nabla\mathscr{H}(x(t)), x(t)\rangle}\,\bigg|\hspace{-4mm}
 \begin{array}{ll}
 &\hbox{$x$ is a}\; c_{\rm LR}(D(e),D(e)\cap \mathbb{R}^{n,k})\\
 & \hbox{-carrier}
 \end{array}\right\}
\end{equation}
is a compact subset in $\mathbb{R}$. Denote by $T_k^{\max}(e)$ and $T_k^{\min}(e)$
 the largest and smallest numbers in $\mathscr{I}_k(e)$.
By the reparameterization every $c_{\rm LR}(D(e),D(e)\cap \mathbb{R}^{n,k})$-carrier $x$ yields a solution of
\begin{equation}\label{e:convexDiff+}
-J\dot{y}(t)=\nabla\mathscr{H}(y(t)),
\end{equation}
$y:[0, T_x]\to \mathcal{S}(e)=\mathscr{H}^{-1}(e)$, such that
\begin{equation}\label{e:convexDiff++}
\left.\begin{array}{ll}
&\hbox{$y(0), y(T_x)\in \mathbb{R}^{n,k}$, $y(0)-y(T_x)\in V_0^{n,k}$,   and for each $t\in (0, T_{x})$}\\
&\hbox{there holds either  $y(t)\notin \mathbb{R}^{n,k}$  or $y(0)-y(t)\notin V_0^{n,k}$},
 \end{array}\right\}
\end{equation}
 namely,
$T_{x}$ is the return time of $y$ for $\mathbb{R}^{n,k}$ and the leaf relation $\sim$.
 Corresponding to \cite[Theorem~4.4]{Ned01} we have:

\begin{thm}\label{th:convexDiff}
Let  $\mathscr{H}\in C^2(\mathbb{R}^{2n},\mathbb{R}_+)$ be as above.
Then $\mathscr{C}_k(e)$ has left and right derivatives
at $e_0$, $\mathscr{C}'_{k,-}(e_0)$ and $\mathscr{C}'_{k,+}(e_0)$, and they satisfy
\begin{eqnarray*}
&&\mathscr{C}'_{k,-}(e_0)=\lim_{\epsilon\to0-}T_k^{\max}(e_0+\epsilon)=T_k^{\max}(e_0)\quad\hbox{and}\\
&&\mathscr{C}'_{k,+}(e_0)=\lim_{\epsilon\to0+}T_k^{\min}(e_0+\epsilon)=T_k^{\min}(e_0).
\end{eqnarray*}
Moreover, if  $[a,b]\subset (0,\sup\mathscr{H})$ is a regular interval of $\mathscr{H}$ such that
$\mathscr{C}'_{k,+}(a)<\mathscr{C}'_{k,-}(b)$, then for any  $r\in (\mathscr{C}'_{k,+}(a),
\mathscr{C}'_{k,-}(b))$ there exist $e'\in (a,b)$ such that $\mathscr{C}_k(e)$
is differentiable at $e'$ and $\mathscr{C}'_{k,-}(e')=\mathscr{C}'_{k,+}(e')=r=T_k^{\max}(e')=T_k^{\min}(e')$.
\end{thm}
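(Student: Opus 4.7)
The plan is to adapt Neduv's original strategy for the Hofer--Zehnder capacity \cite{Ned01}, which we already transported to the symmetric and to the boundary-condition settings in \cite[\S6]{JinLu1915} and \cite{JinLu1916}, replacing the Hofer--Zehnder representation formula by Theorem~\ref{th:represention}. The first ingredient is monotonicity: on the regular interval $[a,b]$ the family $\{D(e)\}$ is increasing, so $\mathscr{C}_k$ is non-decreasing. Using $\nabla\mathscr{H}\neq 0$ on $\bigcup_{e\in[a,b]}\mathcal{S}(e)$, one compares $D(e_0\pm\epsilon)$ with radial dilates of $D(e_0)$ up to $O(\epsilon)$, and together with the conformality of $c_{\rm LR}$ deduces that $\mathscr{C}_k$ is locally Lipschitz near $e_0$.

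Next, by the discussion following Theorem~\ref{th:represention}, every $c_{\rm LR}(D(e),D(e)\cap\mathbb{R}^{n,k})$-carrier can be reparametrized as a solution $y:[0,T_y]\to\mathcal{S}(e)$ of (\ref{e:convexDiff+})--(\ref{e:convexDiff++}) with $T_y$ given by (\ref{e:convexDiff}), and under this parametrization
\begin{equation*}
\mathscr{C}_k(e)=A(y)=\tfrac{1}{2}\int_0^{T_y}\langle\nabla\mathscr{H}(y(t)),y(t)\rangle\,dt.
\end{equation*}
The compactness of the carrier set noted after Theorem~\ref{th:represention}, combined with continuous dependence of the flow of (\ref{e:convexDiff+}) on the energy $e$, implies that if $e_n\to e_\ast$ and $y_n$ is a carrier on $\mathcal{S}(e_n)$, then a subsequence converges in $C^1$ to a carrier on $\mathcal{S}(e_\ast)$; in particular $T_k^{\max}$ is upper semicontinuous and $T_k^{\min}$ is lower semicontinuous on $[a,b]$.

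The core step is the one-sided differentiation. For $\epsilon>0$ small, choose a carrier $y_\epsilon$ on $\mathcal{S}(e_0+\epsilon)$ attaining $T_k^{\min}(e_0+\epsilon)$; radially push its image onto $\mathcal{S}(e_0)$ via the smooth radial diffeomorphism $\mathcal{S}(e_0+\epsilon)\to\mathcal{S}(e_0)$ provided by the strict convexity of $\mathscr{H}$, obtaining an admissible GLC on $\mathcal{S}(e_0)$ whose action bounds $\mathscr{C}_k(e_0)$ from above; a first-order Taylor expansion of this action in $\epsilon$ produces
\begin{equation*}
\mathscr{C}_k(e_0+\epsilon)-\mathscr{C}_k(e_0)\le \epsilon\,T_k^{\min}(e_0+\epsilon)+o(\epsilon).
\end{equation*}
Starting with a carrier on $\mathcal{S}(e_0)$ and pushing it onto $\mathcal{S}(e_0+\epsilon)$ yields the matching lower bound, so combining with the semicontinuity above identifies $\mathscr{C}'_{k,+}(e_0)=T_k^{\min}(e_0)$; the argument for $\mathscr{C}'_{k,-}(e_0)=T_k^{\max}(e_0)$ is symmetric. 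For the final statement, given $r\in(\mathscr{C}'_{k,+}(a),\mathscr{C}'_{k,-}(b))$, the monotone-rearrangement argument of \cite[Proof of Theorem 4.4]{Ned01}, adapted as in \cite[\S6]{JinLu1915}, produces $e'\in(a,b)$ with $\mathscr{C}'_{k,+}(e')=r=\mathscr{C}'_{k,-}(e')$ by exploiting the semicontinuity of $T_k^{\min},T_k^{\max}$ together with $\mathscr{C}'_{k,+}\le\mathscr{C}'_{k,-}$ and the almost-everywhere differentiability of the monotone function $\mathscr{C}_k$.

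The step I expect to require the most care is the first-order envelope computation above. Unlike Neduv's closed-orbit setting, where the only first-order contribution comes from perturbing the energy constraint, here the radial rescaling generally moves the endpoints $y(0),y(T_y)\in\mathbb{R}^{n,k}$ off the coisotropic subspace, and one must either project them back along the leaves or absorb the boundary variation into the Taylor expansion. The saving grace is that the primitive $\tfrac{1}{2}\langle-J_{2n}z,dz\rangle$ vanishes along the leaf direction $V_0^{n,k}$ --- the same identity used to interpret $A$ as a symplectic area in the calculation leading to (\ref{e:BDY.1})--(\ref{e:BDY.2}) --- so the boundary contributions vanish to first order; verifying this uniformly over the compact family of carriers is the delicate point.
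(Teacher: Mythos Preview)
Your overall architecture---monotonicity and local Lipschitz continuity of $\mathscr{C}_k$, compactness of carriers and semicontinuity of $T_k^{\min},T_k^{\max}$, an envelope inequality, then an intermediate-value argument---matches the paper's and Neduv's. The gap is in the core step. The curve obtained by radially pushing a carrier from $\mathcal{S}(e_0+\epsilon)$ onto $\mathcal{S}(e_0)$ is \emph{not} a generalized characteristic on $\mathcal{S}(e_0)$: the condition $\dot{x}(t)\in J N_{\mathcal{S}(e_0)}(x(t))$ fails because the outward normals to $\mathcal{S}(e_0)$ and $\mathcal{S}(e_0+\epsilon)$ at radially related points differ (they agree only if $j_{D(e_0)}$ and $j_{D(e_0+\epsilon)}$ are proportional, i.e.\ $\epsilon=0$). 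Since Theorem~\ref{th:represention} expresses $\mathscr{C}_k(e_0)$ as the minimum of $A$ over generalized leafwise chords, not over arbitrary boundary curves, the pushed curve does not bound $\mathscr{C}_k(e_0)$ from above, and your inequality $\mathscr{C}_k(e_0+\epsilon)-\mathscr{C}_k(e_0)\le \epsilon\,T_k^{\min}(e_0+\epsilon)+o(\epsilon)$ is unproved. The endpoint issue you flag is a second, independent obstruction (a nonuniform radial scaling destroys $x(0)-x(T)\in V_0^{n,k}$, since the common $V_1^{n,k}$-component of $x(0)$ and $x(T)$ gets multiplied by two different factors), but repairing it would still leave the first one.

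The paper, following \cite{Ned01}, avoids both obstructions by working in the Clarke-dual picture of Proposition~\ref{prop:Brun.0}: with $H_\epsilon=(j_{D(e_0+\epsilon)})^2$ one has
\[
\mathscr{C}_k(e_0+\epsilon)=\min_{u\in\mathcal{A}_2^{n,k}}\int_0^1 H^*_\epsilon(-J_{2n}\dot{u})\,dt,
\]
where the constraint set $\mathcal{A}_2^{n,k}$ is \emph{independent of $\epsilon$} and only the integrand varies. Hence the minimizer $u$ at $\epsilon=0$ (associated to a carrier $x^*$ on $\mathcal{S}(e_0)$) is automatically an admissible competitor for every $\epsilon$, giving immediately
\[
\mathscr{C}_k(e_0+\epsilon)-\mathscr{C}_k(e_0)\le \int_0^1\bigl(H^*_\epsilon-H^*_0\bigr)(-J_{2n}\dot{u})\,dt = T_{x^*}\,\epsilon + O(\epsilon^2),
\]
the expansion coming from the $C^2$ dependence of $\epsilon\mapsto H^*_\epsilon$ (cf.\ \cite[Lemma~3.2, Theorem~3.1]{Ned01}); this is Theorem~\ref{th:Neduv}. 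No curve is ever transported between hypersurfaces, so neither the characteristic condition nor the leaf boundary condition is disturbed. From this estimate the one-sided derivatives and the final assertion follow exactly as in \cite[Theorem~4.4 and Theorem~5.1]{Ned01}.
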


As a monotone function on a regular interval $[a,b]$ of $\mathscr{H}$ as above, $\mathscr{C}_k(e)$
satisfies $\mathscr{C}'_{k,-}(e)=\mathscr{C}'_{k,+}(e)$ for almost all values of $e\in [a, b]$
and thus both $T_k^{\max}$ and $T_k^{\min}$ are almost everywhere continuous.
Actually, the first claim of Theorem~\ref{th:convexDiff} and a recent result \cite[Corollary~6.4]{Bl14}
imply that both $T_k^{\max}$ and $T_k^{\min}$ have only at most countable discontinuous
points and are also Riemann integrable on $[a,b]$.

By Theorem~\ref{th:convexDiff}, for any regular interval $[a,b]\subset (0,\sup \mathscr{H})$
 of $\mathscr{H}$ with $\mathscr{C}'_{k,+}(a)\le\mathscr{C}'_{k,-}(b)$,
if $T\in [\mathscr{C}'_{k,+}(a), \mathscr{C}'_{k,-}(b)]$ then
(\ref{e:convexDiff+}) has a solution
$y:[0, T]\to \mathscr{H}^{-1}([a,b])$ such that
(\ref{e:convexDiff++}) holds with $T_x=T$.
 For example, we have

 \begin{corollary}\label{cor:convexDiff}
Suppose that a proper and strictly convex Hamiltonian $\mathscr{H}\in C^2(\mathbb{R}^{2n},\mathbb{R}_+)$
satisfies the conditions:
\begin{enumerate}
\item[(i)] $\mathscr{H}(0)=0$, $\mathscr{H}''>0$ and every $e>0$ is a regular value of $\mathscr{H}$,
\item[(ii)] there exist positive numbers $r_j, R_j$, $j=1,\cdots,n$ such that
\begin{eqnarray}\label{e:convexDiff+++}
&&\min\{r^2_1,\cdots,r^2_k, r^2_{k+1}/2,\cdots, r^2_n/2\}\nonumber\\
&&\le\min\{R^2_1,\cdots, R^2_k, R^2_{k+1}/2,\cdots, R^2_n/2\}
\end{eqnarray}
    and that for $x=(q_1,\cdots,q_n,p_1,\cdots,p_n)$
    with   small (resp. large) norm     $\mathscr{H}(x)$ is equal to
    \begin{center}
     $q(x):=\sum^n_{j=1}(q^2_j+p^2_j)/r_j^2$ {\rm (}resp.
    $Q(x):=\sum^n_{j=1}(q^2_j+p^2_j)/R_j^2${\rm )}.
 \end{center}
 \end{enumerate}
Then for every $T=T_0\pi$, where $T_0$ sits between the two numbers in (\ref{e:convexDiff+++}), the corresponding system
(\ref{e:convexDiff+}) has a solution
$y:[0, T]\to \mathbb{R}^{2n}$ such that (\ref{e:convexDiff++}) holds with $T_x=T$.
\end{corollary}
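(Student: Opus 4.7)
The plan is to compute $\mathscr{C}_k(e)$ explicitly on the two energy regimes where $\mathscr{H}$ coincides with a quadratic, and then feed the result into Theorem~\ref{th:convexDiff} on a regular interval straddling the two. Write
$$
\alpha:=\min\{r_1^2,\dots,r_k^2,\,r_{k+1}^2/2,\dots,r_n^2/2\},\qquad
\beta:=\min\{R_1^2,\dots,R_k^2,\,R_{k+1}^2/2,\dots,R_n^2/2\},
$$
so that hypothesis (\ref{e:convexDiff+++}) is $\alpha\le\beta$.

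First I would show that for $e>0$ small enough, condition (ii) forces $D(e)=\{q<e\}=\sqrt{e}\,E(r_1,\dots,r_n)$; the conformality of $c_{\rm LR}$ applied to the scaling $x\mapsto\sqrt{e}\,x$ (which pulls $\omega_0$ back to $e\omega_0$) together with Corollary~\ref{cor:ellipsoid} then yields $\mathscr{C}_k(e)=\pi\alpha\,e$, and analogously $\mathscr{C}_k(e)=\pi\beta\,e$ for $e$ large. Since on these purely quadratic regimes $\langle\nabla\mathscr{H}(x),x\rangle=2\mathscr{H}(x)=2e$ along $\mathscr{H}^{-1}(e)$, the formula defining $\mathscr{I}_k(e)$ in (\ref{e:convexDiff}) collapses to $T_x=\mathscr{C}_k(e)/e$, which is identically $\pi\alpha$ on small energies and $\pi\beta$ on large energies. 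Consequently $\mathscr{C}_k$ is linear on neighborhoods of any $a$ in the low-energy regime and any $b$ in the high-energy regime, so $\mathscr{C}'_{k,+}(a)=\pi\alpha$ and $\mathscr{C}'_{k,-}(b)=\pi\beta$.

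By hypothesis (i) every $e\in(0,\sup\mathscr{H})$ is a regular value, so $[a,b]$ is a regular interval. If $\alpha<\beta$ and $T_0\in(\alpha,\beta)$, then $T:=T_0\pi$ lies in $(\mathscr{C}'_{k,+}(a),\mathscr{C}'_{k,-}(b))$, and Theorem~\ref{th:convexDiff} produces $e'\in(a,b)$ with $T_k^{\max}(e')=T_k^{\min}(e')=T$; the associated $c_{\rm LR}(D(e'),D(e')\cap\mathbb{R}^{n,k})$-carrier on $\partial D(e')$, reparametrised as in the discussion immediately preceding Theorem~\ref{th:convexDiff}, gives the desired solution $y\colon[0,T]\to\mathscr{H}^{-1}(e')$ of (\ref{e:convexDiff+}) subject to (\ref{e:convexDiff++}) with $T_x=T$. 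The endpoint values $T_0\in\{\alpha,\beta\}$ (and the degenerate case $\alpha=\beta$) I would handle by an explicit ellipsoidal carrier: for an index $i$ attaining $\alpha$, the circular arc in the $(q_i,p_i)$-plane on $\{q=e\}$ for some small $e$ solves $-J\dot y=\nabla\mathscr{H}(y)$, has both endpoints in $\mathbb{R}^{n,k}$ with difference in $V_0^{n,k}$, and has return time $\pi r_i^2=\pi\alpha$ if $i\le k$ or $\pi r_i^2/2=\pi\alpha$ if $i>k$; the case $T_0=\beta$ is symmetric, using the high-energy regime. The main technical point is confirming that the one-sided derivatives at $a,b$ truly equal $\pi\alpha,\pi\beta$ rather than merely being bounded by them; this is exactly what the identity $T_x\equiv\mathscr{C}_k(e)/e$ on the quadratic regimes supplies, after which everything else is a direct combination of Corollary~\ref{cor:ellipsoid}, conformality, and Theorem~\ref{th:convexDiff}.
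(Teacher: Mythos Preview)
Your proposal is correct and follows essentially the same route as the paper: identify $D(e)$ with the scaled ellipsoid $\sqrt{e}\,E(r_1,\dots,r_n)$ (resp.\ $\sqrt{e}\,E(R_1,\dots,R_n)$) on the quadratic regimes, invoke conformality and Corollary~\ref{cor:ellipsoid} to get $\mathscr{C}_k(e)=\pi\alpha e$ (resp.\ $\pi\beta e$) there, read off the one-sided derivatives, and apply Theorem~\ref{th:convexDiff}. Your additional computation of $T_x$ via $\langle\nabla\mathscr{H}(x),x\rangle=2e$ is a pleasant consistency check but not needed once $\mathscr{C}_k$ is known to be linear; and your explicit ellipsoidal-carrier treatment of the endpoint values $T_0\in\{\alpha,\beta\}$ is more hands-on than the paper, which instead absorbs the endpoints via the identity $\mathscr{C}'_{k,+}(a)=T_k^{\min}(a)$, $\mathscr{C}'_{k,-}(b)=T_k^{\max}(b)$ from the first part of Theorem~\ref{th:convexDiff} (see the paragraph immediately preceding the corollary).
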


In fact, if $e>0$ is small (resp. large) enough then
$D(e)$ is equal to
\begin{eqnarray*}
&&\hbox{$D_q(e):=\{q<e\}=\sqrt{e}E(r_1,\cdots,r_n)$}\\
&&\hbox{(resp.  $D_Q(e):=\{Q< e\}=\sqrt{e}E(R_1,\cdots,R_n)$)}
\end{eqnarray*}
and so
\begin{eqnarray*}
&&\hspace{-4mm}\hbox{$c_{\rm LR}(D(e),D(e)\cap \mathbb{R}^{n,k})=e\pi\min\{r^2_1,\cdots,r^2_k, r^2_{k+1}/2,\cdots, r^2_n/2\}$}\\
&&\hspace{-4mm}\hbox{(resp. $c_{\rm LR}(D(e),D(e)\cap \mathbb{R}^{n,k})=e\pi\min\{R^2_1,\cdots, R^2_k, R^2_{k+1}/2,\cdots, R^2_n/2\}$)}.
\end{eqnarray*}
By these and  Corollary~\ref{cor:ellipsoid},
$\mathscr{C}'_k(a)=\pi\min\{r^2_1,\cdots,r^2_k, r^2_{k+1}/2,\cdots, r^2_n/2\}$ for $a>0$ small enough,
and  $\mathscr{C}'_k(b)=\pi\min\{R^2_1,\cdots, R^2_k, R^2_{k+1}/2,\cdots, R^2_n/2\}$ for $b>0$ large enough.
From Theorem~\ref{th:convexDiff} we derive Corollary~\ref{cor:convexDiff}  immediately.\\

\begin{remark}\label{rm:add}
{\rm  Pazit Haim-Kislev \cite{Haim19} gave
 a combinatorial formula for the Ekeland-Hofer-Zehnder capacity of a convex polytope in $\mathbb{R}^{2n}$,
 and used it to prove  subadditivity of this capacity for hyperplane cuts of arbitrary convex domains.
 Motivated by this work,  Kun Shi and the second named author \cite{ShiLu2021} used Theorem~\ref{th:represention}
 to derive the corresponding combinatorial formula of $c_{\rm LR}(D,D\cap \mathbb{R}^{n,k})$
 for any  convex  polytope $D\subset\mathbb{R}^{2n}$ such that $D\cap \mathbb{R}^{n,k}\ne\emptyset$.
However, contrary to Pazit Haim-Kislev’s subadditivity result for the Ekeland-Hofer-Zehnder capacities
of convex domains, we showed that the coisotropic Hofer-Zehnder capacities
satisfy the superadditivity for suitable hyperplane cuts of two-dimensional
convex domains.}
\end{remark}


\noindent{\bf Outline of the paper}.
In the next section  we give  variational settings and related preparations
on the basis  of \cite{LiRi13}. In particular, our Proposition~\ref{prop:L^2} shows that
the Hilbert subspace $L^2_{n,k}$ of $L^2([0,1],\mathbb{R}^{2n})$ in \cite[Definition~3.6]{LiRi13}
is exactly $L^2([0,1],\mathbb{R}^{2n})$.
 Section~\ref{sec:3} proves Theorem~\ref{th:represention}.
  Choices of Banach spaces $\mathscr{F}_p^{n,k}$ in (\ref{e:BanachSpace}) and
 Lemmas~\ref{lem:3.1} and \ref{lem:3.2} are keys completing proof.
Theorem~\ref{th:represention*},  Corollaries~\ref{cor:ellipsoid},\ref{cor:ellipsoid+}, \ref{cor:ellipsoid++}
and \ref{cor:polyDisk}
are proved in Section~\ref{sec:4}.
 In the setting of Section~\ref{sec:3} Theorems~\ref{th:Brun} and \ref{th:convexDiff} are easily proved
  as in \cite{JinLu1915, JinLu1916}. For completeness we outline their proofs in
   Sections~\ref{sec:Brunn} and ~\ref{sec:Neduv}, respectively.
  In Section~\ref{sec:add} we give proofs of Theorem~\ref{th:J-estimate} and Example~\ref{ex:add}.
Section~\ref{sec:viterbo} proposes an analogue of Viterbo's volume-capacity conjecture for the coisotropic capacity $c_{\rm LR}$
and gives some example to support it.

\section{Variational settings and  preparations}\label{sec:2}
\setcounter{equation}{0}

Fix a real $p\ge 1$.
 Recall that every element of Banach space $W^{1,p}([0,1],\mathbb{R}^{2n})$
is actually an equivalence class of almost equal Lebesgue-integrable $\mathbb{R}^{2n}$-valued functions on $[0, 1]$ everywhere which contains
an $\mathbb{R}^{2n}$-valued function having $L^p$-integrable weak derivative on $[0,1]$.
Moreover, every  $x\in W^{1,p}([0,1],\mathbb{R}^{2n})$ contains only
a continuous $\mathbb{R}^{2n}$-valued function representative, denoted by $\underline{x}$ for clearness,
which is also absolutely continuous on $[0, 1]$.
The embedding  $W^{1,p}([0,1],\mathbb{R}^{2n})\ni x\mapsto \underline{x}\in C^0([0,1],\mathbb{R}^{2n})$
is compact.
Without occurring of confusions, an element $x$ of $W^{1,p}([0,1],\mathbb{R}^{2n})$ is simply called a function,
and $\underline{x}(t)$ is written as $x(t)$ for any $t\in [0, 1]$ in this paper.
Consider the following Banach subspace of $W^{1,p}([0,1],\mathbb{R}^{2n})$,
\begin{equation*}
X_p:=\{x\in W^{1,p}([0,1],\mathbb{R}^{2n})\,| \,\underline{x}(0),\, \underline{x}(1)\in L^n_0\},
\end{equation*}
which  is dense in $L^p([0,1],\mathbb{R}^{2n})$.
 The complex structure $J_{2n}$  gives rise to
 a real linear, unbounded Fredholm operator
 on $L^p([0,1],\mathbb{R}^{2n})$ with domain ${\rm dom} (\Lambda_p)=X_p$,
\begin{equation*}
\Lambda_p:=-J_{2n}\frac{d}{dt}.
\end{equation*}
(For $p=2$, this is also selj-adjoint and has pure spectrum without limit point
by \cite[Lemma~3.1]{CLM94}.)
By identifying ${\bf a}\in L^n_0 $ with a constant path in $L^p([0,1],\mathbb{R}^{2n})$ given by
$\hat{\bf a}(t)={\bf a}$ for all $t\in [0,1]$, we can identify
${\rm Ker} (\Lambda_p)$ with $L^n_0$, and
write ${\rm Ker} (\Lambda_p)=L^n_0$ below (without occurring of confusions).
The following proposition
can be  easily derived by improving the proof of Proposition~2 in \cite[Chap. III, Sec. 1]{Ek90} (or \cite[Lemma~3.1]{CLM94}).

\begin{prop}\label{prop:eigenvalues}
\begin{description}
    \item[(i)] The range $R(\Lambda_p)$ of $\Lambda_p$ is closed in $L^p([0,1],\mathbb{R}^{2n})$ and there exists the following direct sum decomposition
        \begin{equation}\label{e:split}
        L^p([0,1],\mathbb{R}^{2n})= {\rm Ker} (\Lambda_p)+ R(\Lambda_p),
        \end{equation}
        such that $\int^1_0\langle x(t), y(t)\rangle dt=0$ for all $x\in R(\Lambda_p)$ and  $y\in{\rm Ker} (\Lambda_p)$.
    \item[(ii)]  The restriction ~$\Lambda_{p,0}:=\Lambda_p|_{R(\Lambda_p)\cap {\rm dom}(\Lambda_p)}$ is
    bijection onto $R(\Lambda_p)$, and $\Lambda_{p,0}^{-1}$ (as an operator from $R(\Lambda_p)$ to itself) is  compact (and
        self-adjoint for $p=2$).
\end{description}
\end{prop}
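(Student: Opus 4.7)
The plan is to solve the ODE $\Lambda_p x = f$ explicitly in order to pin down both ${\rm Ker}(\Lambda_p)$ and $R(\Lambda_p)$, then exploit the orthogonal decomposition $\mathbb{R}^{2n}=J_{2n}L^n_0\oplus L^n_0$ from (\ref{lagmultipier1}) with $k=0$ to produce the splitting in (i). For (ii) I would read off the inverse from the solution formula and deduce compactness via the compact Sobolev embedding $W^{1,p}([0,1])\hookrightarrow L^p([0,1])$; self-adjointness in the case $p=2$ is then an integration-by-parts check whose boundary terms vanish because $L^n_0$ is Lagrangian. The only technical bookkeeping is keeping careful track of which subspace each boundary value lands in.

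For (i), the equation $-J_{2n}\dot x=f$ forces $x(t)=x(0)+J_{2n}\int_0^t f(s)\,ds$ for any absolutely continuous representative, so $x\in X_p$ is equivalent to $x(0)\in L^n_0$ together with $J_{2n}\int_0^1 f(s)\,ds\in L^n_0$. Since $\mathbb{R}^{2n}=L^n_0\oplus J_{2n}L^n_0$ orthogonally, the second condition is equivalent to $\int_0^1 f(s)\,ds\in J_{2n}L^n_0$, whence
\begin{equation*}
R(\Lambda_p)=\left\{f\in L^p([0,1],\mathbb{R}^{2n})\,:\,\pi_{L^n_0}\!\!\int_0^1 f(s)\,ds=0\right\},
\end{equation*}
where $\pi_{L^n_0}$ is the orthogonal projection of $\mathbb{R}^{2n}$ onto $L^n_0$ along $J_{2n}L^n_0$. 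This expresses $R(\Lambda_p)$ as the kernel of a bounded linear map into $L^n_0$, so $R(\Lambda_p)$ is closed. Any $f\in L^p$ decomposes as $f=c+(f-c)$ with $c:=\pi_{L^n_0}\!\int_0^1 f(s)\,ds$ viewed as a constant function in ${\rm Ker}(\Lambda_p)=L^n_0$ and $f-c\in R(\Lambda_p)$ by construction; the two summands intersect trivially since $L^n_0\cap J_{2n}L^n_0=\{0\}$. The pairing claim is immediate: for $y\in L^n_0$ constant and $x\in R(\Lambda_p)$, $\int_0^1\langle x(t),y\rangle\,dt=\bigl\langle\int_0^1 x(t)\,dt,\,y\bigr\rangle=0$ because $\int_0^1 x\in J_{2n}L^n_0$ is orthogonal to $y\in L^n_0$.

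For (ii), given $f\in R(\Lambda_p)$ I would set $\tilde z(t):=J_{2n}\int_0^t f(s)\,ds$, which lies in $X_p$ by the range condition and solves $\Lambda_p\tilde z=f$; subtracting its $L^n_0$-mean yields $z:=\tilde z-\pi_{L^n_0}\!\int_0^1\tilde z(s)\,ds\in R(\Lambda_p)\cap X_p$ with $\Lambda_p z=f$. Uniqueness of the preimage is forced by ${\rm Ker}(\Lambda_p)\cap R(\Lambda_p)=\{0\}$, so $\Lambda_{p,0}$ is bijective. The inverse $\Lambda_{p,0}^{-1}$ is bounded as a map $R(\Lambda_p)\to W^{1,p}([0,1],\mathbb{R}^{2n})$ by the explicit formula, and composing with the compact Sobolev embedding $W^{1,p}\hookrightarrow L^p$ (Rellich-Kondrachov, or equivalently Arzel\`a-Ascoli applied to the absolutely continuous representatives) shows that $\Lambda_{p,0}^{-1}:R(\Lambda_p)\to R(\Lambda_p)$ is compact. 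For $p=2$, symmetry of $\Lambda_2$ on $X_2$ follows from the identity
\begin{equation*}
\int_0^1\!\langle -J_{2n}\dot x,y\rangle\,dt-\int_0^1\!\langle x,-J_{2n}\dot y\rangle\,dt=\langle -J_{2n}x(t),y(t)\rangle\Big|_0^1,
\end{equation*}
whose right-hand side vanishes because $-J_{2n}x(i)\in J_{2n}L^n_0$ is orthogonal to $y(i)\in L^n_0$ for $i=0,1$; hence $\Lambda_{2,0}^{-1}$ is self-adjoint on $R(\Lambda_2)$.
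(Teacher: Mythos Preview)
Your proof is correct and follows the same overall strategy as the paper: solve $-J_{2n}\dot x=f$ explicitly, exploit the orthogonal splitting $\mathbb{R}^{2n}=L^n_0\oplus J_{2n}L^n_0$ to produce the decomposition, and obtain compactness of the inverse through the Sobolev embedding $W^{1,p}\hookrightarrow L^p$, with self-adjointness for $p=2$ via integration by parts whose boundary terms die because $L^n_0$ is Lagrangian. The one place you streamline the paper's argument is the closedness of $R(\Lambda_p)$: you identify $R(\Lambda_p)$ directly as the kernel of the bounded map $f\mapsto\pi_{L^n_0}\!\int_0^1 f$, whereas the paper runs a Cauchy-sequence argument (normalise preimages by $\hat u_m=u_m-u_m(0)$ and show $(\hat u_m)$ is Cauchy in $W^{1,p}$). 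Similarly, you read off boundedness of $\Lambda_{p,0}^{-1}$ from the explicit integral formula, while the paper first checks that $R(\Lambda_p)\cap X_p$ is closed in $X_p$ and then invokes the open mapping theorem. Your route is a little cleaner; the paper's is more hands-on but yields the same conclusions.
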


 \begin{proof}

\noindent{\bf Step 1}. The first claim  follows from \cite[Ex.~6.9]{Bre} directly.
Let us prove (\ref{e:split}).
For any fixed $x\in L^p([0,1],\mathbb{R}^{2n})$,
by the orthogonal decomposition (\ref{lagmultipier1}) (with $k=0$)
we may write
$\int_0^1 J_{2n}x(s)ds=J_{2n}{\bf a}+{\bf b}$,
where ${\bf a}, {\bf b}\in L_0^n$. For $t\in [0,1]$ let
$$
u(t):=\int_0^t J_{2n}(x(s)-{\bf a})ds.
$$
Then $u\in W^{1,p}([0,1],\mathbb{R}^{2n})$, $u(0)=0$, $u(1)={\bf b}\in L^n_0$ and $x=\Lambda_p u+ {\bf a}$.
For any ${\bf c}\in {\rm Ker} (\Lambda_p)=L^n_0$ and $y=\Lambda_p w$, where $w\in X_p$,  since $w(1)$, $w(0)\in L^n_0$ there holds
$$
\langle {\bf c},y\rangle_{L^2}=\int_0^1 \langle {\bf c}, -J\dot{w}\rangle dt
=\langle {\bf c}, -J_{2n}(w(1)-w(0))\rangle=0.
$$

\noindent{\bf Step 2}.
Clearly, $R(\Lambda_p)\cap X_p$ is a closed subspace in $X_p$
since it is equal to the inverse image of $R(\Lambda_p)$ under the continuous
map $\Lambda_p:X_p\to L^p([0,1],\mathbb{R}^{2n})$.

We claim that \textsf{$\Lambda_{p,0}:=\Lambda_p|_{R(\Lambda_p)\cap X_p}$ is a bijective continuous linear map from a Banach subspace ${\rm dom} (\Lambda_{p,0})=
R(\Lambda_p)\cap X_p$ of $X_p$ to the Banach subspace $R(\Lambda_p)$ of $L^p([0,1],\mathbb{R}^{2n})$}.
Indeed, for any $x\in R(\Lambda_p)$, there exists $u\in X_p$ such that
$\Lambda_p u=x$. By (\ref{e:split}) we have
$u=\Lambda_p w+{\bf a}$, where $w\in X_p$ and ${\bf a}\in L^n_0$. Let
 $\hat{u}:=u-{\bf a}$.  Then
 $\hat{u}\in R(\Lambda_p)\cap X_p$ and $\Lambda_{p,0}\hat{u}=\Lambda_p u=x$,
   which implies that $\Lambda_{p,0}$ is surjective. Moreover, suppose that $u_1$, $u_2\in R(\Lambda_p)\cap X_p$
satisfies $\Lambda_{p,0} u_1=\Lambda_{p,0} u_2$. Then we have ${\bf c}\in L^n_0$ and $w_1$, $w_2\in X_p$ such that
$u_1-u_2={\bf c}$, $u_1=\Lambda_p w_1$ and $u_2=\Lambda_p w_2$. These imply
${\bf c}=\Lambda_p (w_1-w_2)\in R(\Lambda_p)$, and so ${\bf c}=0$ and $u_1=u_2$.
Therefore $\Lambda_{p,0}$ is bijective.
By the Banach inverse operator theorem we get a
continuous linear operator
$\Lambda_{p,0}^{-1}$ from $R(\Lambda_p)$ to the Banach subspace $R(\Lambda_p)\cap X_p$ of $X_p$.
  Note that the inclusion map
$i_p:R(\Lambda_p)\cap X_p\hookrightarrow R(\Lambda_p)$ (as a restriction of the compact inclusion map $W^{1,p}\hookrightarrow L^p$) is compact.
  Hence
  $i_p\circ \Lambda_{p,0}^ {-1 }:R(\Lambda_p)\rightarrow R(\Lambda_p)$
   is compact.

   Finally, let us prove  that $i_2\circ \Lambda_{2,0}^{-1 }$ is also self-adjoint. In fact, for
   $x_j=\Lambda u_j\in R(\Lambda_2)$, $j=1,2$,
    where $u_1$, $u_2\in R(\Lambda_2)\cap X_2$, since
    $u_1(j),\;u_2(j)\in L^n_0$ for $j=0,1$,
        it follows that
  \begin{eqnarray*}
  &&\langle i_2\circ\Lambda_{2,0}^{-1}x_1,x_2\rangle_{L^2}=\langle u_1,\Lambda_2 u_2\rangle_{L^2}
  =\int_0^1 \langle u_1(t),-J_{2n}\dot{u}_2(t)\rangle dt\\
  &=&\int_0^1 \langle -J_{2n}\dot{u}_1(t),u_2(t)\rangle dt+\langle u_1(1),-J_{2n}u_2(1)\rangle-\langle u_1(0),-J_{2n}u_2(0)\rangle\\
  &=&\int_0^1 \langle -J_{2n}\dot{u}_1(t),u_2(t)\rangle dt
 = \langle x_1,i_2\circ\Lambda_{2,0}^{-1}x_2\rangle_{L^2}.
  \end{eqnarray*}
\end{proof}

For each $i=1,\cdots,n$, let $e_i$ be a vector in $\mathbb{R}^{2n}$ with $1$ in the $i$-th position and $0$s elsewhere.
 Clearly, $\{e_i\}_{i=1}^n$ is an orthogonal  basis for $L^n_0$.

\begin{cor}\label{cor:eigenvalues}
$L^2([0,1],\mathbb{R}^{2n})$ has an orthogonal basis
$\{e^{m\pi tJ_{2n}}e_i\}_{1\le i\le n, m\in\mathbb{Z}}$,
and every $x\in L^2([0,1],\mathbb{R}^{2n})$ can be uniquely expanded as
\begin{equation}\label{e:expand}
x=\sum_{m\in\mathbb{Z}}e^{m\pi tJ_{2n}}x_m,
\end{equation}
where $x_m\in L_0^n$ for all $m\in\mathbb{Z}$, satisfy
$\sum_{m\in\mathbb{Z}}|x_m|^2<\infty$.
\end{cor}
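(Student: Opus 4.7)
The plan is to compute the spectrum of the self-adjoint operator $\Lambda_2=-J_{2n}\frac{d}{dt}$ on $L^2([0,1],\mathbb{R}^{2n})$ with domain $X_2$ directly, and then invoke Proposition~\ref{prop:eigenvalues}(ii) together with the spectral theorem for compact self-adjoint operators to deduce that the eigenfunctions form a Hilbert basis. First I would solve the eigenvalue equation $-J_{2n}\dot{x}=\lambda x$ subject to $x(0),x(1)\in L_0^n$. The general solution on $[0,1]$ is $x(t)=e^{\lambda tJ_{2n}}x(0)$, and writing $x(0)=(q_0,0)\in L_0^n$ the explicit description (\ref{e:standComplex}) yields $x(t)=(\cos(\lambda t)q_0,\sin(\lambda t)q_0)$. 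The remaining boundary condition $x(1)\in L_0^n$ then forces $\sin(\lambda)q_0=0$, so nontrivial eigenfunctions occur exactly for $\lambda=m\pi$, $m\in\mathbb{Z}$, each eigenvalue carrying the $n$-dimensional eigenspace $\{e^{m\pi tJ_{2n}}v:v\in L_0^n\}$, spanned by $\{e^{m\pi tJ_{2n}}e_i\}_{i=1}^n$.

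Next I would check that this family is orthonormal in $L^2([0,1],\mathbb{R}^{2n})$. Since $e^{m\pi tJ_{2n}}$ is orthogonal pointwise, a one-line calculation gives $\int_0^1\langle e^{m\pi tJ_{2n}}e_i,e^{m'\pi tJ_{2n}}e_j\rangle\,dt=\delta_{ij}\int_0^1\cos((m'-m)\pi t)\,dt=\delta_{ij}\delta_{mm'}$; one may equivalently invoke the self-adjointness of $\Lambda_2$ together with the distinctness of the eigenvalues. Completeness is then the main ingredient, and it comes from Proposition~\ref{prop:eigenvalues}(ii): the operator $\Lambda_{2,0}^{-1}$ is compact and self-adjoint on the Hilbert space $R(\Lambda_2)$, so by the spectral theorem for compact self-adjoint operators $R(\Lambda_2)$ admits an orthonormal basis of eigenvectors of $\Lambda_{2,0}^{-1}$, which by the ODE analysis above must be precisely the $\{e^{m\pi tJ_{2n}}e_i\}_{m\neq 0,\,1\le i\le n}$. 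Combining this with the orthonormal basis $\{e_i\}_{i=1}^n$ of ${\rm Ker}(\Lambda_2)=L_0^n$ (the $m=0$ contribution) and the orthogonal decomposition (\ref{e:split}) yields the asserted orthonormal basis of the full space $L^2([0,1],\mathbb{R}^{2n})$.

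Finally, the expansion (\ref{e:expand}) is the standard Fourier expansion in this basis: setting $x_m:=\sum_{i=1}^n\langle x,e^{m\pi tJ_{2n}}e_i\rangle_{L^2}\,e_i\in L_0^n$ gives the (unique) representation, while Parseval's identity yields $\sum_{m\in\mathbb{Z}}|x_m|^2=\|x\|_{L^2}^2<\infty$. The main substantive step is the first one, namely ensuring that no eigenvalue outside $\pi\mathbb{Z}$ is missed and that each eigenspace is genuinely $n$-dimensional; once that explicit ODE calculation is in place, everything else is a direct application of Proposition~\ref{prop:eigenvalues}(ii) and abstract Hilbert-space Fourier theory.
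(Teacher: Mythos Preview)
Your proposal is correct and follows essentially the same approach as the paper: both arguments solve the ODE $-J_{2n}\dot{x}=\lambda x$ with the boundary condition $x(0),x(1)\in L_0^n$ to identify the eigenspaces $e^{m\pi tJ_{2n}}L_0^n$, invoke Proposition~\ref{prop:eigenvalues}(ii) and the spectral theorem for the compact self-adjoint operator $\Lambda_{2,0}^{-1}$ on $R(\Lambda_2)$ to obtain completeness there, and then adjoin the kernel $L_0^n$ via the decomposition (\ref{e:split}). The only cosmetic difference is that the paper phrases the eigenvalue computation in terms of $\Lambda_{2,0}^{-1}$ (eigenvalues $1/(m\pi)$) whereas you phrase it in terms of $\Lambda_2$ (eigenvalues $m\pi$), which is of course equivalent.
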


\begin{proof}
Since $R(\Lambda_2)$ is a closed subspace of the separable Hilbert space $L^2([0,1],\mathbb{R}^{2n})$,
by the standard linear functional analysis theory, there exists an orthogonal basis of
$R(\Lambda_2)$ which completely consists of eigenvectors of $i_2\circ\Lambda_{2,0}^{-1}$.

Suppose that $y\in R(\Lambda_2)$ is a nonzero eigenvector of $i_2\circ\Lambda_{2,0}^{-1}$ which belongs to the
eigenvalue $\lambda$. Then $\lambda\ne 0$, $y\in R(\Lambda_2)\cap X_2$ and satisfies $\Lambda_2 (\lambda y)=y$, which implies that
$-J_{2n}\dot{y}=y/{\lambda}$ and $y(i)\in L^n_0$, $i=0,1$.
It follows that $y$ is smooth and has the form
$y(t)=e^{J_{2n}t/\lambda}y(0)$.
Since
$$
y(1)=e^{J_{2n}/\lambda}y(0)=\cos(1/\lambda)y(0)+\sin(1/\lambda)J_{2n}y(0)\in L_0^n,\quad y(0)\in L_0^n\setminus\{0\},
$$
we obtain $\sin(1/\lambda)=0$, and so
$1/\lambda=k\pi$ for some $k\in\mathbb{Z}\setminus\{0\}$. Moreover,
for each $k\in\mathbb{Z}\setminus\{0\}$ it is easily checked that $\frac{1}{k\pi}$ is
an eigenvalue of $i_2\circ\Lambda_{2,0}^{-1}$, and has the eigenvector subspace
$e^{k\pi tJ_{2n}}L_0^n$. It follows that
$$
\{e^{m\pi tJ_{2n}}e_i\}_{1\le i\le n, m\in\mathbb{Z}}\setminus\{e_i\}_{i=1}^n
$$
is an orthogonal basis for the closed subspace $R(\Lambda_2)$ of $L^2([0,1],\mathbb{R}^{2n})$.
Then  this and (\ref{e:split}) lead to the desired conclusions.
\end{proof}

Consider the Hilbert space defined in \cite[Definition 3.6]{LiRi13}
\begin{eqnarray*}
L^2_{n,k}=\Big\{x\in L^2([0,1],\mathbb{R}^{2n})\,\Big|\!\!\!\! &&x\stackrel{L^2}{=}\sum_{m\in\mathbb{Z}}e^{m\pi tJ_{2n}}a_m+\sum_{m\in\mathbb{Z}}e^{2m\pi tJ_{2n}}b_m\\
&&a_m\in V^{n,k}_0,\quad b_m\in V^{n,k}_1,\\
&&\sum_{m\in\mathbb{Z}}(|a_m|^2+|b_m|^2)<\infty
\Big\}
\end{eqnarray*}
with inner product
$$
\langle\psi,\phi\rangle_{L^2_{n,k}}=\left(\int^1_0\langle\psi(t),\phi(t)\rangle dt\right)^{\frac{1}{2}}.
$$

\begin{prop}\label{prop:L^2}
The Hilbert space $L^2_{n,k}$ is exactly $L^2([0,1],\mathbb{R}^{2n})$.
\end{prop}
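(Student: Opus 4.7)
The plan is to exploit the orthogonal decomposition $\mathbb{R}^{2n} = V_0^{n,k}\oplus J_{2n}V_0^{n,k}\oplus V_1^{n,k}$, which is immediate from the coordinate descriptions in (\ref{e:V0}) and (\ref{e:V1}) (these three subspaces occupy disjoint coordinate slots in $\mathbb{R}^{2n}$ and have total dimension $(n-k)+(n-k)+2k=2n$). This induces an orthogonal splitting
$$
L^2([0,1],\mathbb{R}^{2n}) = L^2([0,1], V_0^{n,k}\oplus J_{2n}V_0^{n,k})\,\oplus\,L^2([0,1], V_1^{n,k}),
$$
and I will show that the first sum in the definition of $L^2_{n,k}$ spans the first factor, while the second sum spans the second factor.

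For the first factor, I choose the basis $\{e_i\}_{i=1}^n$ of $L_0^n$ in Corollary~\ref{cor:eigenvalues} so that $\{e_i\}_{i=k+1}^n$ is an orthonormal basis of $V_0^{n,k}$. The subfamily $\{e^{m\pi tJ_{2n}}e_i\}_{m\in\mathbb{Z},\,k+1\le i\le n}$ is then an orthonormal subset of the basis produced by Corollary~\ref{cor:eigenvalues}; each of its elements is trivially of the form $\sum_m e^{m\pi tJ_{2n}}a_m$ with $a_m\in V_0^{n,k}$ (take $a_m=e_i$ in one slot and zero elsewhere); and each is pointwise valued in $V_0^{n,k}+J_{2n}V_0^{n,k}$, since $e^{m\pi tJ_{2n}}v=\cos(m\pi t)v+\sin(m\pi t)J_{2n}v$. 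These orthonormal vectors exhaust $L^2([0,1], V_0^{n,k}\oplus J_{2n}V_0^{n,k})$ by the standard completeness argument (the analogue of Corollary~\ref{cor:eigenvalues} applied within this $2(n-k)$-dimensional target).

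For the second factor, the key point is that $V_1^{n,k}$ is $J_{2n}$-invariant: it consists of vectors supported in coordinates $1,\dots,k$ and $n{+}1,\dots,n{+}k$, which $J_{2n}$ merely permutes (with signs). Hence $V_1^{n,k}$ inherits a complex structure under which $J_{2n}$ is multiplication by $i$, giving an isometry $V_1^{n,k}\cong\mathbb{C}^k$, and so $L^2([0,1],V_1^{n,k})\cong L^2([0,1],\mathbb{C}^k)$. Under this identification $e^{2m\pi tJ_{2n}}$ becomes multiplication by the scalar function $e^{2im\pi t}$. Since $\{e^{2im\pi t}\}_{m\in\mathbb{Z}}$ is the classical Fourier orthonormal basis of $L^2([0,1],\mathbb{C})$, sums $\sum_m e^{2m\pi tJ_{2n}}b_m$ with $b_m\in V_1^{n,k}$ and $\sum |b_m|^2<\infty$ cover all of $L^2([0,1],V_1^{n,k})$.

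Combining the two, $L^2_{n,k}$ contains the orthogonal sum of these two factors, which is all of $L^2([0,1],\mathbb{R}^{2n})$; the reverse inclusion is built into the definition of $L^2_{n,k}$. The only non-trivial point is the $J_{2n}$-invariance of $V_1^{n,k}$, which is precisely what allows the period-$1/|m|$ modes of the second sum to reach every $L^2$-function valued in $V_1^{n,k}$; without this invariance one would mistakenly expect that frequencies at odd multiples of $\pi$ must also be included, and the identification $L^2_{n,k}=L^2$ would fail.
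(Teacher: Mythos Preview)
Your proof is correct and follows essentially the same approach as the paper's: both split $L^2([0,1],\mathbb{R}^{2n})$ according to the orthogonal decomposition $\mathbb{R}^{2n}=(V_0^{n,k}\oplus J_{2n}V_0^{n,k})\oplus V_1^{n,k}$, handle the first summand via the half-integer basis of Corollary~\ref{cor:eigenvalues} (the paper does this by projecting to $\mathbb{R}^{2(n-k)}$ and applying the corollary there, you do it by restricting the corollary's basis to indices $i>k$), and handle the second summand via the standard Fourier basis after observing that $V_1^{n,k}$ is $J_{2n}$-invariant. The only difference is that the paper works with explicit coordinate projections $\Pi_1,\Pi_2$ while you work intrinsically; the content is the same.
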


\begin{proof}
Consider the projections
\begin{eqnarray*}
&&\hspace{-6mm}\Pi_1:\mathbb{R}^{2n}\to \mathbb{R}^{2(n-k)},\;   (q_1,\cdots,q_n, p_1,\cdots, p_n)\mapsto (q_{k+1},\cdots,q_n, p_{k+1},\cdots, p_n),\\
&&\hspace{-6mm}\Pi_2:\mathbb{R}^{2n}\to \mathbb{R}^{2k},\;   (q_1,\cdots,q_n, p_1,\cdots, p_n)\mapsto (q_1,\cdots,q_k,p_1,\cdots,p_k).
\end{eqnarray*}
Then $\Pi_1(V_0^{n,k})=L_0^{n-k}:=\{(u_1,\cdots,u_{n-k}, 0,\cdots,0)\in \mathbb{R}^{2(n-k)}\}$,  $\Pi_2|_{V^{n,k}_1}$
is a linear isomorphism to  $\mathbb{R}^{2k}$ and
\begin{equation}\label{e:proj}
\Pi_1\circ J_{2n}=J_{2(n-k)}\circ\Pi_1,\qquad \Pi_2\circ J_{2n}=J_{2k}\circ\Pi_2.
\end{equation}

For any given $x\in L^2([0,1],\mathbb{R}^{2n})$ we write
\begin{equation}\label{e:decomp}
x(t)=(q_1(t),\cdots,q_n(t),p_1(t),\cdots,p_n(t))=\hat{x}(t)+ \check{x}(t),
\end{equation}
where
\begin{eqnarray*}
&&\hat{x}(t)=(0,\cdots,0,q_{k+1}(t),\cdots,q_n(t),0,\cdots,0, p_{k+1}(t),\cdots,p_n(t))\in \mathbb{R}^{2n},\\
&&\check{x}(t)=(q_1(t),\cdots,q_k(t),0,\cdots,0,p_1(t),\cdots,p_k(t),0,\cdots,0)\in \mathbb{R}^{2n}.
\end{eqnarray*}
Then we have elements in $L^2([0,1],\mathbb{R}^{2(n-k)})$ and $L^2([0,1],\mathbb{R}^{2k})$, respectively,
\begin{eqnarray}\label{e:proj1}
&&\hspace{-6mm}[0,1]\ni t\mapsto\Pi_1\circ\hat{x}(t)=(q_{k+1}(t),\cdots,q_n(t),p_{k+1}(t),\cdots,p_n(t))\in \mathbb{R}^{2(n-k)},\nonumber\\
&&\\
&&\hspace{-6mm}[0,1]\ni t\mapsto\Pi_2\circ\check{x}(t)=(q_{1}(t),\cdots,q_k(t),p_{1}(t),\cdots,p_k(t))\in \mathbb{R}^{2k}.\label{e:proj2}
\end{eqnarray}
Applying Corollary~\ref{cor:eigenvalues} to $\Pi_1\circ\hat{x}$ and $L_0^{n-k}$ we deduce that
$\Pi_1\circ\hat{x}$ has the following expansion in $L^2([0,1],\mathbb{R}^{2(n-k)})$
\begin{equation}\label{e:expand1}
\Pi_1\circ\hat{x}=\sum_{m\in\mathbb{Z}}e^{m\pi tJ_{2(n-k)}}\hat{a}_m,
\end{equation}
where $\hat{a}_m=(q_{k+1,m},\cdots,q_{n,m},0,\cdots,0)\in L_0^{n-k}$ for all $m\in\mathbb{Z}$, satisfy
$\sum_{m\in\mathbb{Z}}|\hat{a}_m|^2<\infty$.
Moreover, we can represent $\Pi_2\circ\check{x}$ by its Fourier expansions in $L^2([0,1],\mathbb{R}^{2k})$
\begin{equation}\label{e:expand2}
\Pi_2\circ\check{x}=\sum_{m\in\mathbb{Z}}e^{2m\pi tJ_{2k}}\check{b}_m,
\end{equation}
where $\check{b}_m=(q_{1,m},\cdots,q_{k,m}, p_{1,m},\cdots,p_{k,m})\in \mathbb{R}^{2k}$ for all $m\in\mathbb{Z}$, satisfy
$\sum_{m\in\mathbb{Z}}|\check{b}_m|^2<\infty$.
For every $m\in\mathbb{Z}$, we define $a_m, b_m\in \mathbb{R}^{2m}$ by
\begin{eqnarray*}
&&a_m=(0,\cdots,0,q_{k+1,m},\cdots,q_{n,m},0,\cdots,0),\\
&&b_m=(q_{1,m},\cdots,q_{k,m},0,\cdots,0,p_{1,m},\cdots,p_{k,m},0,\cdots,0).
\end{eqnarray*}
Then $a_m\in V_0^{n,k}, b_m\in V^{n,k}_1$ satisfy $\Pi_1({a}_m)=\hat{a}_m$, $\Pi_2({b}_m)=\check{b}_m$ and
\begin{equation}\label{e:expand3}
|a_m|=|\hat{a}_m|\quad\hbox{and}\quad |b_m|=|\check{b}_m|.
\end{equation}
By (\ref{e:proj}) and (\ref{e:expand1}) (resp. (\ref{e:expand2})) it is not hard to check that
\begin{equation}\label{e:expand4}
\hat{x}=\sum_{m\in\mathbb{Z}}e^{m\pi tJ_{2n}}{a}_m\quad\hbox{and}\quad
\check{x}=\sum_{m\in\mathbb{Z}}e^{2m\pi tJ_{2n}}{b}_m.
\end{equation}
These and (\ref{e:decomp}) lead to the desired claim.
\end{proof}

For any real $s\ge0$ we follow \cite[Definition~3.6]{LiRi13} to define
\begin{eqnarray*}
H^s_{n,k}=\Big\{x\in L^2([0,1],\mathbb{R}^{2n})\,\Big| &&x\stackrel{L^2}{=}\sum_{m\in\mathbb{Z}}e^{m\pi tJ_{2n}}a_m+\sum_{m\in\mathbb{Z}}e^{2m\pi tJ_{2n}}b_m\\
&&a_m\in V^{n,k}_0, \quad b_m\in V^{n,k}_1,\\
&&\sum_{m\in\mathbb{Z}}|m|^{2s}(|a_m|^2+|b_m|^2)<\infty.
\Big\}
\end{eqnarray*}

There are some standard results from \cite{LiRi13} (or \cite{HoZe94}).
\begin{lemma}[\hbox{\cite[Lemma 3.8 and 3.9]{LiRi13}}]
For each $s\ge 0$, $H^s_{n,k}$ is a Hilbert space with the inner product
$$
\langle\phi,\psi\rangle_{s,n,k}=\langle a_0,a_0'\rangle+\langle b_0,b_0'\rangle
+\pi\sum_{m\ne 0}(|m|^{2s}\langle a_m, a_m'\rangle+|2m|^{2s}\langle b_m, b_m'\rangle),
$$
where $\phi$, $\psi\in H^s_{n,k}$ are expanded respectively as
\begin{eqnarray*}
&&\phi\stackrel{L^2}{=}\sum_{m\in\mathbb{Z}}e^{m\pi tJ_{2n}}a_m+\sum_{m\in\mathbb{Z}}e^{2m\pi tJ_{2n}}b_m\quad\hbox{and}\quad\\
&&\psi\stackrel{L^2}{=}\sum_{m\in\mathbb{Z}}e^{m\pi tJ_{2n}}a_m'+\sum_{m\in\mathbb{Z}}e^{2m\pi tJ_{2n}}b_m'
\end{eqnarray*}
with
$$
a_m, a_m'\in V^{n,k}_0\quad\hbox{and}\quad b_m, b_m'\in V^{n,k}_1 \quad\hbox{for all}\quad m\in\mathbb{Z}.
$$
Furthermore, if $s>t$, then the inclusion $j:H^s_{n,k}\hookrightarrow H^t_{n,k}$
and its Hilbert adjoint $j^\ast:H^t_{n,k}\rightarrow H^s_{n,k}$ are compact.
\end{lemma}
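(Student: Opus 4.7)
The plan is to verify the Hilbert space structure first, then establish compactness of $j$, and finally deduce compactness of $j^{\ast}$ from general functional analysis.

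For the Hilbert space structure, the essential input is Proposition~\ref{prop:L^2}, which, together with the projection argument there, shows that every $x\in L^2([0,1],\mathbb{R}^{2n})$ has a unique decomposition $x = \hat{x}+\check{x}$ with $\hat{x} = \sum_m e^{m\pi tJ_{2n}}a_m$ $(a_m\in V^{n,k}_0)$ and $\check{x} = \sum_m e^{2m\pi tJ_{2n}}b_m$ $(b_m\in V^{n,k}_1)$. I would first check that $\hat{x}$ and $\check{x}$ are $L^2$-orthogonal (since $\hat{x}(t)\in J_{2n}V^{n,k}_1$ and $\check{x}(t)\in V^{n,k}_1$ in the orthogonal decomposition (\ref{lagmultipier1})), and that within $\hat{x}$ the summands $e^{m\pi tJ_{2n}}a_m$ are mutually orthogonal by Corollary~\ref{cor:eigenvalues} applied to $\Pi_1$, and similarly for $\check{x}$ via the standard Fourier basis. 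Bilinearity and symmetry of $\langle\cdot,\cdot\rangle_{s,n,k}$ are then immediate from the formula, and positive definiteness follows from the uniqueness of the expansion. For completeness I would take a Cauchy sequence $\phi^{(j)}$ in $H^s_{n,k}$; its coefficients $(a^{(j)}_m,b^{(j)}_m)$ form Cauchy sequences in the weighted $\ell^2$-space $\bigl(V^{n,k}_0\oplus V^{n,k}_1\bigr)^{\mathbb{Z}}$ with weights $|m|^{2s}$ (and weight $1$ at $m=0$). Completeness of that weighted $\ell^2$-space gives candidate limits $(a_m,b_m)$, and the partial sums converge in $L^2$ so we get a genuine element $\phi\in L^2_{n,k}=L^2$; by construction $\phi\in H^s_{n,k}$ with $\phi^{(j)}\to\phi$ in the $\|\cdot\|_{s,n,k}$-norm.

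For compactness of $j:H^s_{n,k}\hookrightarrow H^t_{n,k}$ when $s>t$, I would use the standard truncation plus diagonal argument familiar from Rellich-type theorems for Fourier series. Given a bounded sequence $\phi^{(j)}$ in $H^s_{n,k}$ with coefficients $(a^{(j)}_m,b^{(j)}_m)$, the bound $\sum_{m\ne 0}|m|^{2s}(|a^{(j)}_m|^2+|b^{(j)}_m|^2)\le C$ holds uniformly. A Cantor diagonal extraction in $j$ produces a subsequence (still denoted $\phi^{(j)}$) so that each individual coefficient $a^{(j)}_m\to a_m$, $b^{(j)}_m\to b_m$. For any $N$, the truncated head $\sum_{|m|\le N}$ converges in $H^t_{n,k}$-norm along the subsequence. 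For the tail, the crucial inequality $|m|^{2t}=|m|^{2(t-s)}\cdot|m|^{2s}\le N^{2(t-s)}|m|^{2s}$ for $|m|>N$ (since $s>t$) yields
\begin{equation*}
\sum_{|m|>N}|m|^{2t}\bigl(|a^{(j)}_m-a^{(l)}_m|^2+|2m|^{2t}|b^{(j)}_m-b^{(l)}_m|^2\bigr)\le C' N^{2(t-s)}\to 0
\end{equation*}
as $N\to\infty$, uniformly in $j,l$. Combining the two estimates shows the subsequence is Cauchy in $H^t_{n,k}$, establishing compactness of $j$.

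The claim about $j^{\ast}:H^t_{n,k}\to H^s_{n,k}$ is then free: in any Hilbert space setting the Banach-space adjoint (and hence the Hilbert adjoint) of a compact operator is compact by Schauder's theorem. The main technical obstacle, if any, is purely bookkeeping: matching up the two different families (those indexed by $m$ with frequency $m\pi$ and coefficients in $V^{n,k}_0$, and those indexed by $m$ with frequency $2m\pi$ and coefficients in $V^{n,k}_1$) and verifying their mutual $L^2$-orthogonality uniformly in $m$. Once that orthogonality is in hand, the rest of the proof runs along entirely standard lines for weighted Sobolev-type spaces of Fourier series.
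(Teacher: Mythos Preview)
The paper does not supply its own proof of this lemma: it is stated with an explicit citation to \cite[Lemmas~3.8 and 3.9]{LiRi13}, and the argument there (as in the analogous treatment in \cite[Chapter~3]{HoZe94}) is precisely the standard weighted-$\ell^2$ verification you outline. Your proposal is therefore not merely compatible with the paper's approach but fills in what the paper leaves to the reference.

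One slip worth flagging: in justifying the $L^2$-orthogonality of $\hat{x}$ and $\check{x}$ you write $\hat{x}(t)\in J_{2n}V^{n,k}_1$. This is false, since $V^{n,k}_1$ is $J_{2n}$-invariant and hence $J_{2n}V^{n,k}_1=V^{n,k}_1$. What you actually need is $\hat{x}(t)\in V^{n,k}_0\oplus J_{2n}V^{n,k}_0$, which is the orthogonal (and symplectic) complement of $V^{n,k}_1$; this is the decomposition implicit in the projections $\Pi_1,\Pi_2$ used in the proof of Proposition~\ref{prop:L^2}, not the one recorded in (\ref{lagmultipier1}). With that correction the pointwise orthogonality, and hence the $L^2$-orthogonality, is immediate. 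The remainder of your argument---diagonal extraction on coefficients, the tail bound via $|m|^{2t}\le N^{2(t-s)}|m|^{2s}$ for $|m|>N$, and Schauder's theorem for $j^\ast$---is correct as stated, modulo a harmless typo in your displayed tail estimate (a stray extra factor $|2m|^{2t}$ inside the parentheses).
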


Let $\|\cdot\|_{s,n,k}$ denote the norm induced by $\langle\cdot,\cdot\rangle_{s,n,k}$.
For $r\in\mathbb{N}$ or $r=\infty$ let
$$
C^{r}_{n,k}([0,1],\mathbb{R}^{2n})
$$
denote the space of $C^r$ maps $x:[0,1]\to\mathbb{R}^{2n}$ such that $x(i)\in\mathbb{R}^{n,k}$, $i=0,1$, and $x(1)\sim x(0)$,
 where $\sim$ is the leaf relation on $\mathbb{R}^{n,k}$.

\begin{lemma}[\hbox{\cite[Lemma 3.10]{LiRi13}}]
If $x\in H^s_{n,k}$ for $s>1/2+r$ where $r$ is an integer, then $x\in C^{r}_{n,k}([0,1],\mathbb{R}^{2n})$.
\end{lemma}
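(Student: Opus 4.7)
The plan is to prove this as a standard Sobolev-type embedding, reading off both the smoothness and the boundary/leaf conditions directly from the Fourier expansion that defines $H^s_{n,k}$. Given $x\in H^s_{n,k}$ with $s>1/2+r$, I write
$$
x(t)=\sum_{m\in\mathbb{Z}}e^{m\pi tJ_{2n}}a_m+\sum_{m\in\mathbb{Z}}e^{2m\pi tJ_{2n}}b_m,\qquad a_m\in V_0^{n,k},\ b_m\in V_1^{n,k},
$$
so that $\|x\|_{s,n,k}^2$ controls $\sum_{m\ne 0}|m|^{2s}(|a_m|^2+|b_m|^2)$ up to constants.

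To get $C^r$ regularity, I would differentiate term by term: the formal $\ell$-th derivative of the first sum has $m$-th coefficient $(m\pi J_{2n})^\ell a_m$, of norm $|m\pi|^\ell|a_m|$, and similarly $|2m\pi|^\ell|b_m|$ for the second sum. For $0\le\ell\le r$, Cauchy--Schwarz gives
$$
\sum_{m\ne 0}|m|^\ell(|a_m|+|b_m|)\le\Bigl(\sum_{m\ne 0}|m|^{2s}(|a_m|^2+|b_m|^2)\Bigr)^{1/2}\Bigl(\sum_{m\ne 0}|m|^{2(\ell-s)}\Bigr)^{1/2},
$$
and the second factor is finite exactly because $2(s-\ell)\ge 2(s-r)>1$. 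So the differentiated series converges absolutely and uniformly on $[0,1]$; standard arguments then let me interchange sum and derivative, identifying the uniform limit with $x^{(\ell)}$ and showing $x$ is (equivalent to) a $C^r$ function.

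To get the boundary conditions defining $C^r_{n,k}([0,1],\mathbb{R}^{2n})$, I evaluate the expansion at the endpoints. At $t=0$ all exponentials are the identity, so $x(0)=\sum_m a_m+\sum_m b_m$; each partial sum lies in the subspace $V_0^{n,k}+V_1^{n,k}$, which by the coordinate description in \eqref{e:V0}--\eqref{e:V1} equals $\mathbb{R}^{n,k}$, and this subspace is closed, so $x(0)\in\mathbb{R}^{n,k}$. At $t=1$, I use $e^{m\pi J_{2n}}=\cos(m\pi)\mathrm{Id}+\sin(m\pi)J_{2n}=(-1)^m\mathrm{Id}$ and $e^{2m\pi J_{2n}}=\mathrm{Id}$ to compute
$$
x(1)-x(0)=\sum_{m\in\mathbb{Z}}\bigl((-1)^m-1\bigr)a_m=-2\sum_{m\ \text{odd}}a_m\in V_0^{n,k}.
$$
Since $V_0^{n,k}\subset\mathbb{R}^{n,k}$ this yields $x(1)\in\mathbb{R}^{n,k}$ as well, and the leaf-relation condition $x(1)\sim x(0)$ is precisely $x(1)-x(0)\in V_0^{n,k}$, which is what we just obtained.

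There is no serious obstacle: the regularity step is a one-dimensional Sobolev embedding applied separately to the two frequency families, and the boundary step is a direct algebraic identity using that $e^{m\pi J_{2n}}$ preserves $V_0^{n,k}$ up to a sign and $e^{2m\pi J_{2n}}$ fixes $V_1^{n,k}$. The only bookkeeping to be careful about is the mismatch between the two frequencies $m\pi$ and $2m\pi$ in the Cauchy--Schwarz bound (both are handled by the same convergent tail $\sum|m|^{2(\ell-s)}$, up to a factor $2^r$) and making sure the closedness of $\mathbb{R}^{n,k}$ and $V_0^{n,k}$ is invoked so that the endpoint values, obtained as uniform limits of the partial sums, inherit the inclusions.
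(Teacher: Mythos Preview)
The paper does not actually prove this lemma; it is quoted verbatim from \cite[Lemma~3.10]{LiRi13} without argument. Your proposal supplies the standard proof one would expect: Cauchy--Schwarz on the Fourier coefficients gives the $C^r$ regularity (using that $\sum_{m\ne 0}|m|^{2(\ell-s)}<\infty$ for $\ell\le r<s-1/2$), and evaluating the exponentials at $t=0,1$ yields the boundary and leaf conditions directly from $e^{m\pi J_{2n}}=(-1)^m\mathrm{Id}$ and $e^{2m\pi J_{2n}}=\mathrm{Id}$. This is correct and is presumably how the cited reference proceeds as well.
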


\begin{lemma}[\hbox{\cite[Lemma 3.11]{LiRi13}}]
$j^\ast(L^2)\subset H^1_{n,k}$ and $\|j^\ast(y)\|_{1,n,k}\le \|y\|_{L^2}$.
\end{lemma}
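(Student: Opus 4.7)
The plan is to reduce the statement to a norm bound on the forward inclusion $j\colon H^1_{n,k}\hookrightarrow L^2$. Specifically, I aim to prove $\|j\|\le 1$, since then by the standard identity $\|j^\ast\|=\|j\|$ the bounded adjoint $j^\ast$ (which is defined on all of $L^2$ and takes values in $H^1_{n,k}$ by construction) automatically satisfies $\|j^\ast(y)\|_{1,n,k}\le \|y\|_{L^2}$, giving both assertions of the lemma at once.

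To establish $\|j\|\le 1$, I would first compute the $L^2$-norm of an arbitrary $\phi\in H^1_{n,k}$ in terms of its defining coefficients. Writing
\[
\phi \stackrel{L^2}{=}\sum_{m\in\mathbb{Z}} e^{m\pi tJ_{2n}}a_m+\sum_{m\in\mathbb{Z}} e^{2m\pi tJ_{2n}}b_m,\qquad a_m\in V^{n,k}_0,\quad b_m\in V^{n,k}_1,
\]
the target is the Parseval-type identity $\|\phi\|_{L^2}^2=\sum_m|a_m|^2+\sum_m|b_m|^2$. Expanding $e^{sJ_{2n}}=\cos s\cdot I+\sin s\cdot J_{2n}$ and integrating, this reduces to three orthogonality facts read off from the coordinate descriptions (\ref{e:V0})--(\ref{e:V1}): (i) $V^{n,k}_1$ is $J_{2n}$-invariant, and the subspace $V^{n,k}_0\oplus J_{2n}V^{n,k}_0$ has coordinate support disjoint from that of $V^{n,k}_1$, so it is orthogonal to $V^{n,k}_1$ and the $a$- and $b$-sums are pointwise $L^2$-orthogonal; (ii) $V^{n,k}_0\perp J_{2n}V^{n,k}_0$, so cross terms in the $a$-series collapse to integrals $\int_0^1\cos((m'-m)\pi t)\,dt$, which vanish for $m\ne m'$; (iii) in the $b$-series, both $\int_0^1\cos(2(m'-m)\pi t)\,dt$ and $\int_0^1\sin(2(m'-m)\pi t)\,dt$ vanish for every nonzero integer difference, so only the diagonal $m=m'$ contributes.

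Once the Parseval identity is in hand, comparing with
\[
\|\phi\|_{1,n,k}^2=|a_0|^2+|b_0|^2+\pi\sum_{m\ne 0}\bigl(m^2|a_m|^2+4m^2|b_m|^2\bigr)
\]
gives $\|\phi\|_{L^2}\le\|\phi\|_{1,n,k}$ termwise, since the zero modes agree and the coefficients $\pi m^2\ge \pi>1$ and $4\pi m^2>1$ dominate $1$ for every $|m|\ge 1$. This yields $\|j\|\le 1$, hence the adjoint bound, completing the proof. I do not expect a genuine obstacle here; the only bookkeeping care needed is in step (i), where the orthogonality of the relevant coordinate subspaces of $\mathbb{R}^{2n}$ must be tracked carefully, but this follows directly from the explicit descriptions of $V^{n,k}_0$ and $V^{n,k}_1$.
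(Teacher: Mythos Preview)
The paper does not prove this lemma itself; it is quoted from \cite{LiRi13}, so there is no in-paper argument to compare against directly. That said, there is a gap in your approach concerning which inclusion $j$ is at stake.

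You take $j$ to be $H^1_{n,k}\hookrightarrow L^2$, so that $j^\ast\colon L^2\to H^1_{n,k}$ by construction and the containment $j^\ast(L^2)\subset H^1_{n,k}$ is tautological; then the norm bound follows from $\|j^\ast\|=\|j\|\le 1$. Under that reading your argument is correct. But the lemma as used in the paper concerns the adjoint of the inclusion $E=H^{1/2}_{n,k}\hookrightarrow L^2$: see the proof of Lemma~\ref{lem:critic}, where $j^\ast$ is explicitly introduced as the adjoint of $j\colon E\to L^2$ just after~(\ref{e:criticEq}), and the sentence after~(\ref{e:relation}) invokes precisely the regularity gain $j^\ast(L^2)\subset H^1_{n,k}$. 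For that operator the containment is a nontrivial smoothing statement, and bounding the norm of a \emph{different} inclusion $H^1_{n,k}\hookrightarrow L^2$ does not touch it.

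The repair is short, and your Parseval identity already does the essential work. For $y\in L^2$ with coefficients $(a_m,b_m)$, the defining relation $(x,y)_{L^2}=\langle x,j^\ast y\rangle_{1/2,n,k}$ combined with your computation $\|\phi\|_{L^2}^2=\sum_m|a_m|^2+\sum_m|b_m|^2$ forces $j^\ast y$ to have coefficients $\alpha_0=a_0$, $\beta_0=b_0$, and $\alpha_m=a_m/(\pi|m|)$, $\beta_m=b_m/(2\pi|m|)$ for $m\ne 0$. Then
\[
\|j^\ast y\|_{1,n,k}^2=|a_0|^2+|b_0|^2+\frac{1}{\pi}\sum_{m\ne 0}\bigl(|a_m|^2+|b_m|^2\bigr)\le\|y\|_{L^2}^2,
\]
which gives both assertions for the intended $j$.
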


Let
\begin{equation}\label{e:space}
E=H^{1/2}_{n,k}\quad\hbox{and}\quad \|\cdot\|_E:=\|\cdot\|_{1/2,n,k}.
\end{equation}
There is an orthogonal decomposition $E=E^+\oplus E^0\oplus E^-$, where $E^0=\mathbb{R}^{n,k}$ and
\begin{eqnarray*}
&&E^-=\Big\{x\in H^{1/2}_{n,k}\,\Big|\,x\stackrel{L^2}{=}\sum_{m<0}e^{m\pi tJ}a_m+\sum_{m<0}e^{2m\pi tJ}b_m\Big\},\\
&&E^+=\Big\{x\in H^{1/2}_{n,k}\,\Big|\,x\stackrel{L^2}{=}\sum_{m>0}e^{m\pi tJ}a_m+\sum_{m>0}e^{2m\pi tJ}b_m\Big\}.
\end{eqnarray*}
Let $P^+$, $P^0$ and $P^-$ be the orthogonal projections to $E^+$, $E^0$ and $E^-$
respectively. For $x\in E$ we write
$x^+=P^+x$, $x^0=P^0x$ and $x^-=P^-x$.
Define functional
$$
\mathfrak{a}:E\rightarrow\mathbb{R},\quad x\mapsto \frac{1}{2}(\|x^+\|^2_E-\|x^-\|^2_E).
$$
Then there holds
$$
\mathfrak{a}(x)=\frac{1}{2}\int_0^1\langle -J_{2n}\dot{x},x\rangle dt,\quad\forall x\in C^1_{n,k}([0,1],\mathbb{R}^{2n}).
$$
(See \cite{LiRi13}.) The functional $\mathfrak{a}$ is differentiable with gradient
$\nabla \mathfrak{a}(x)=x^+-x^-$.

Suppose that $H:\mathbb{R}^{2n}\to\mathbb{R}$ is a $C^{1,1}$ Hamiltonian and that
$\nabla H$ is globally Lipschitz. (For example,
this is satisfied for $C^2$ Hamiltonian  $H:\mathbb{R}^{2n}\to\mathbb{R}$
with bounded second derivative, i.e., $|H_{zz}(z)|\leq C$ for some constant $C>0$ and for all $z\in\mathbb{R}^{2n}$.)
Then there exist positive real numbers $C_i$, $i=1,2,3,4$, such that
\begin{equation}\label{e:functiona0}
|\nabla H(z)|\le C_1|z|+C_2,\quad |H(z)|\le C_3|z|^2+C_4
\end{equation}
for all $z\in\mathbb{R}^{2n}$.  Define functionals $\mathfrak{b}_H, \Phi_H: E\rightarrow\mathbb{R}$ by
\begin{equation}\label{e:functional}
\mathfrak{b}_H(x)=\int_0^1 H(x(t))dt\quad\hbox{and}\quad\Phi_H=\mathfrak{a}-\mathfrak{b}_H.
\end{equation}

\begin{lemma}[\hbox{\cite[Section~3.3, Lemma~4]{HoZe94}}]\label{e:gradient}
The functional $\mathfrak{b}_H$ is differentiable. Its gradient $\nabla \mathfrak{b}_H$ is compact and
satisfies a global Lipschitz condition on $E$.
\end{lemma}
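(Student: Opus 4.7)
The plan is to realize $\nabla\mathfrak{b}_H$ as the composition $x\mapsto j^\ast(\nabla H\circ x)$, where $j:E\hookrightarrow L^2([0,1],\mathbb{R}^{2n})$ is the continuous (and compact) inclusion provided by the Lisi--Rieser lemma above and $j^\ast:L^2\to E$ is its Hilbert adjoint (which is also compact and factors through $H^1_{n,k}$). Once this representation is established, compactness and the Lipschitz estimate follow automatically from properties of $j^\ast$ and the growth/Lipschitz bounds on $\nabla H$ in (\ref{e:functiona0}).

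First I would verify Fr\'echet differentiability and compute the derivative. Fix $x,h\in E$ and pass to continuous representatives. Apply Taylor's formula to $s\mapsto H(x(t)+sh(t))$ pointwise in $t$ to get
$$
\mathfrak{b}_H(x+h)-\mathfrak{b}_H(x)-\int_0^1\langle\nabla H(x(t)),h(t)\rangle\,dt=\int_0^1\!\!\int_0^1\langle\nabla H(x(t)+sh(t))-\nabla H(x(t)),h(t)\rangle\,ds\,dt.
$$
Since $\nabla H$ is $L$--Lipschitz on $\mathbb{R}^{2n}$, the right-hand side is bounded above by $\tfrac{L}{2}\|h\|_{L^2}^2\le \tfrac{L}{2}\|j\|^2\|h\|_E^2=o(\|h\|_E)$. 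The linear term defines a bounded functional on $E$ because $|\nabla H(z)|\le C_1|z|+C_2$ forces $\nabla H\circ x\in L^2$, so $d\mathfrak{b}_H(x)[h]=\langle \nabla H\circ x,\,j(h)\rangle_{L^2}=\langle j^\ast(\nabla H\circ x),\,h\rangle_E$, which identifies $\nabla\mathfrak{b}_H(x)=j^\ast(\nabla H\circ x)$.

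Next I would deduce compactness. If $(x_m)\subset E$ is bounded, then $(x_m)$ is bounded in $L^2$ by continuity of $j$, hence $(\nabla H\circ x_m)$ is bounded in $L^2$ by the linear growth bound. Since $j^\ast:L^2\to E$ is compact (by the cited lemma), the sequence $\nabla\mathfrak{b}_H(x_m)=j^\ast(\nabla H\circ x_m)$ has a subsequence convergent in $E$. For the global Lipschitz estimate, bound
$$
\|\nabla\mathfrak{b}_H(x)-\nabla\mathfrak{b}_H(y)\|_E\le \|j^\ast\|\cdot\|\nabla H\circ x-\nabla H\circ y\|_{L^2}\le \|j^\ast\|\cdot L\cdot\|j\|\cdot\|x-y\|_E,
$$
using the pointwise Lipschitz bound on $\nabla H$ in the middle step.

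The only substantive ingredient here is the compactness (and $H^1$--mapping property) of $j^\ast$, which is already supplied by the preceding lemmas; everything else is routine Nemytskii bookkeeping. The mild pitfall I would flag is checking that $\nabla H\circ x$ genuinely lies in $L^2$ and that the Taylor remainder estimate is uniform on bounded subsets of $E$, but both follow at once from (\ref{e:functiona0}) and the continuity of $j$, so no further difficulty arises.
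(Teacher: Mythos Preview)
Your argument is correct and is exactly the standard proof from \cite[Section~3.3, Lemma~4]{HoZe94} that the paper cites (the paper gives no independent proof here): realize $\nabla\mathfrak{b}_H(x)=j^\ast(\nabla H\circ x)$, use the linear growth bound on $\nabla H$ together with compactness of $j^\ast$ (via the factorization $L^2\to H^1_{n,k}\hookrightarrow E$) for compactness, and the pointwise Lipschitz bound on $\nabla H$ for the global Lipschitz estimate. Nothing is missing.
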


\begin{lemma}\label{lem:critic}
$x\in E$ is a critical point of $\Phi_H$ if and only if $x\in C^1_{n,k}([0,1],\mathbb{R}^{2n})$ and solves
\begin{equation}\label{HamEq}
\dot{x}=J\nabla H(x).
\end{equation}
Moreover, if $H$ is of class $C^l$ ($l\ge 2$) then each critical point of $\Phi_H$ on $E$ is $C^l$.
\end{lemma}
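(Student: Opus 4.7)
The plan is to prove the equivalence separately in each direction. The forward direction, that any $x\in C^1_{n,k}$ solving (\ref{HamEq}) is a critical point of $\Phi_H$, follows from an integration-by-parts identity; the converse direction (a critical point must be a classical solution) requires a regularity bootstrap starting from the critical-point equation and using the structure of the spaces $H^s_{n,k}$.

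For the forward direction, I would first verify that for $x,y\in C^1_{n,k}([0,1],\mathbb{R}^{2n})$,
\begin{equation*}
d\mathfrak{a}(x)[y]=\int_0^1\langle -J_{2n}\dot{x}(t),y(t)\rangle\,dt.
\end{equation*}
By standard integration by parts this reduces to showing that the endpoint term $[\langle -J_{2n}y,x\rangle]_0^1$ vanishes. Rewriting
\begin{equation*}
\langle -J_{2n}y(1),x(1)\rangle-\langle -J_{2n}y(0),x(0)\rangle=\langle -J_{2n}(y(1)-y(0)),x(1)\rangle+\langle -J_{2n}y(0),x(1)-x(0)\rangle,
\end{equation*}
the leaf conditions $y(1)-y(0),\,x(1)-x(0)\in V_0^{n,k}$ together with $x(1),y(0)\in\mathbb{R}^{n,k}$, combined with the two orthogonal decompositions in (\ref{lagmultipier1}), force each summand to vanish. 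Consequently $d\Phi_H(x)[y]=\int_0^1\langle -J_{2n}\dot{x}-\nabla H(x),y\rangle\,dt$, which is identically zero whenever $x$ satisfies (\ref{HamEq}); density of smooth paths in $E$ then upgrades this to $\nabla\Phi_H(x)=0$.

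For the converse, let $x\in E$ be a critical point, so that $\nabla\mathfrak{a}(x)=\nabla\mathfrak{b}_H(x)$ in $E$. The growth bound (\ref{e:functiona0}) gives $\nabla H(x)\in L^2$, and a Riesz/adjoint identification—or equivalently the Lisi--Rieser lemma $j^{\ast}(L^2)\subset H^1_{n,k}$ cited above—implies $\nabla\mathfrak{b}_H(x)\in H^1_{n,k}$. Applying the orthogonal projections $P^{\pm}$ to $x^+-x^-=\nabla\mathfrak{b}_H(x)$ yields $x^{\pm}\in H^1_{n,k}$, and since $x^0\in\mathbb{R}^{n,k}\subset H^1_{n,k}$ we conclude $x\in H^1_{n,k}$. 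By the embedding $H^s_{n,k}\hookrightarrow C^r_{n,k}$ for $s>r+1/2$, this forces $x\in C^0_{n,k}$; in particular the coisotropic endpoint conditions $x(0),x(1)\in\mathbb{R}^{n,k}$ and $x(1)-x(0)\in V_0^{n,k}$ now hold pointwise.

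With $x\in H^1_{n,k}$ the integration-by-parts identity of the first step extends by density from smooth test functions to all $y\in E$, and the critical-point condition becomes
\begin{equation*}
\int_0^1\langle -J_{2n}\dot{x}(t)-\nabla H(x(t)),y(t)\rangle\,dt=0\quad\text{for all}\ y\in E.
\end{equation*}
Since $E=H^{1/2}_{n,k}$ is dense in $L^2([0,1],\mathbb{R}^{2n})=L^2_{n,k}$ by Proposition~\ref{prop:L^2}, we conclude $\dot{x}=J_{2n}\nabla H(x)$ almost everywhere. The right-hand side is continuous in $t$ (because $\nabla H$ is Lipschitz and $x\in C^0$), so $\dot{x}$ admits a continuous representative and $x\in C^1_{n,k}$. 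The $C^l$-regularity statement follows by a standard bootstrap: if $x\in C^j$ with $j\leq l-1$, then $\dot{x}=J_{2n}\nabla H(x)\in C^j$ and hence $x\in C^{j+1}$, iterated until $j+1=l$. The main delicate point is preserving the coisotropic endpoint conditions through every stage of the bootstrap; this is automatic since the entire argument is carried out in the spaces $H^s_{n,k}$, whose Fourier structure intrinsically encodes these conditions and is preserved by both $P^{\pm}$ and $j^{\ast}$.
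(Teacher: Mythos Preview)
Your argument is correct and reaches the same conclusion as the paper, but the two proofs are organized differently in the hard direction (critical point $\Rightarrow$ classical solution). The paper expands $x$ and $\nabla H(x)$ explicitly in the eigenbasis of Corollary~\ref{cor:eigenvalues} and reduces the equation $x^+-x^-=j^\ast\nabla H(x)$ to the coefficient relation $m\pi z_m=\hat z_m$, which gives $x\in H^1_{n,k}\subset C^0_{n,k}$; it then builds an auxiliary primitive $\xi(t)=\int_0^t\bigl(J_{2n}\nabla H(x(s))-J_{2n}d_1-c\bigr)\,ds$, matches its Fourier coefficients against those of $x$, and reads off $d_1=c=0$ and $\xi=x-x(0)$ to conclude $x\in C^1_{n,k}$ solves (\ref{HamEq}). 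You reach $x\in H^1_{n,k}$ by the same mechanism (via $j^\ast(L^2)\subset H^1_{n,k}$ and the fact that $P^\pm$ preserve $H^s_{n,k}$), but then bypass the construction of $\xi$: you extend the integration-by-parts identity $d\mathfrak a(x)[y]=\int_0^1\langle -J_{2n}\dot x,y\rangle\,dt$ by density, and invoke Proposition~\ref{prop:L^2} (so that $E$ is dense in all of $L^2$) to force $-J_{2n}\dot x=\nabla H(x)$ almost everywhere. This is a little more streamlined and makes transparent exactly where the coisotropic boundary data enter (the two endpoint terms vanish by the two orthogonal splittings in (\ref{lagmultipier1})), at the cost of relying on Proposition~\ref{prop:L^2}; the paper's argument is more self-contained but needs the explicit comparison of coefficients. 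For the easy direction the paper again argues via Fourier coefficients, whereas your integration-by-parts computation is equivalent and arguably more direct; the $C^l$ bootstrap is the same in both.
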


\begin{proof}
This can be proved by modifying the proof of \cite[Section~3.3, Lemma~5]{HoZe94}.
Firstly, suppose that $x\in E$ is a critical point of $\Phi_H$.
 Then there holds
\begin{equation}\label{e:criticEq}
x^+-x^-=\nabla \mathfrak{b}_H(x)=j^\ast\nabla H(x),
\end{equation}
where  $j^\ast:L^2\to E$ is the adjoint operator of the inclusion $j:E\to L^2$
defined by $(j(x),y)_{L^2}=\langle x, j^\ast(y)\rangle_E=\langle x, j^\ast(y)\rangle_{\frac{1}{2},n,k}$
for all $x\in E$ and $y\in L^2([0,1],\mathbb{R}^{2n})$.
By Corollary~\ref{cor:eigenvalues} we write
$$
x\stackrel{L^2}{=}\sum_{m\in\mathbb{Z}}e^{m\pi tJ_{2n}}z_m
\quad\hbox{and}\quad
\nabla H(x)\stackrel{L^2}{=}\sum_{m\in\mathbb{Z}}e^{m\pi tJ_{2n}}\hat{z}_m
$$
where $z_m$, $\hat{z}_m\in V_0^{n,k}$ for odd $m$,  and
$z_m$, $\hat{z}_m\in V_0^{n,k}\oplus V_1^{n,k}$ for even $m$, and
$$
\sum_{m\in\mathbb{Z}}|m||z_m|^2<\infty,\qquad \sum_{m\in\mathbb{Z}}|\hat{z}_m|^2<\infty.
$$
Hence (\ref{e:criticEq}) is equivalent to
\begin{equation}\label{e:relation}
m\pi z_m=\hat{z}_m\quad\forall m\in\mathbb{Z}.
\end{equation}
This implies that $x\in H^1_{n,k}$ and hence $x\in C^0_{n,k}$. It follows that
$\nabla H(x)\in C^0$ (but not necessarily in $C^0_{n,k}$ since it may not satisfy the
boundary condition). Let us write
$$
\int_0^1 J_{2n}\nabla H(x(t))dt=J_{2n}d_1+d_2+c,
$$
where $d_1,\, d_2\in V_0^{n,k}$ and $c\in V_1^{n,k}$.
For $0\le t\le 1$ let
\begin{equation}\label{xi}
\xi(t):=\int_0^t \big(J_{2n}\nabla H(x(s))-J_{2n}d_1-c\big)ds.
\end{equation}
Then
$\xi(0)=0\sim d_2=\xi(1)$
and hence $\xi\in C^1_{n,k}$. Let 
$\xi(t)\stackrel{L^2}{=}\sum_{m\in\mathbb{Z}}e^{m\pi tJ_{2n}}z_m'$.
Then
$$
\nabla H(x(t))-d_1+J_{2n}c=-J_{2n}\dot{\xi}(t)\stackrel{L^2}{=}\sum_{0\ne m\in\mathbb{Z}}m\pi e^{m\pi tJ_{2n}}z_m'.
$$
By (\ref{e:relation}) we get
\begin{equation}\label{relation2}
d_1=c=0\quad\hbox{and}\quad z_m=z_m'\quad\hbox{for}\quad m\ne 0.
\end{equation}
It follows that $\xi(t)=x(t)-x(0)$
and hence $x\in C^1_{n,k}$. Clearly $x$ solves (\ref{HamEq})
due to (\ref{xi}) and the first two equalities in (\ref{relation2}).
This $x$ also sits in $C^l_{n,k}([0,1],\mathbb{R}^{2n})$ if $H$ is $C^l$ with $l\ge 2$.  

Conversely, suppose that $x\in C^1_{n,k}([0,1],\mathbb{R}^{2n})$
 solves (\ref{HamEq}). Clearly,
 (\ref{e:relation}) holds and so does (\ref{e:criticEq}). It follows that $x$ is a
critical point of $\Phi_H$ on $E$.
\end{proof}

\section{Proof of Theorem \ref{th:represention}}\label{sec:3}
\setcounter{equation}{0}

Recall that $0\le k<n$.
We may also assume $0\in D$ without loss of generality. In fact, for any
$z_0\in D\cap\mathbb{R}^{n,k}$, $x$ is a (generalized) leafwise chord on $\partial D$ for
$\mathbb{R}^{n,k}$ if and only if $x-z_0$ is a (generalized) leafwise chord on $\partial (D-z_0)$ for
$\mathbb{R}^{n,k}-z_0=\mathbb{R}^{n,k}$. Moreover $x(1)-x(0)\in V_0^{n,k}$ and (\ref{lagmultipier1}) imply
\begin{eqnarray*}
A(x-z_0)=\frac{1}{2}\int_0^1 \langle -J_{2n}\dot{x},x-z_0\rangle dt
      =A(x)+\langle J_{2n}(x(1)-x(0)), z_0\rangle
      =A(x).
\end{eqnarray*}
 By the monotonicity of $c_{\rm LR}$ there also holds
 $$
 c_{\rm LR}(D,D\cap \mathbb{R}^{n,k})=c_{\rm LR}(D-z_0,(D-z_0)\cap \mathbb{R}^{n,k}).
 $$

\subsection{Proof of (\ref{minaction}) in Theorem \ref{th:represention}}\label{sec:3.1}

 Fix a real  $p>1$. In fact, to prove  (\ref{minaction}) it suffices to consider Clarke duality in the case $p=2$. Here we consider more general cases $p>1$
 since it is useful in the proof of Theorem~\ref{th:Brun}.
  With  the leaf relation $\sim$ on $\mathbb{R}^{n,k}$,
consider the Banach subspace of $W^{1,p}([0,1],\mathbb{R}^{2n})$
\begin{eqnarray}\label{e:BanachSpace}
\mathscr{F}_p^{n,k}:=\left\{x\in W^{1,p}([0,1],\mathbb{R}^{2n})\,\bigg|\hspace{-4mm}
\begin{array}{ll}
&x(0), x(1)\in\mathbb{R}^{n,k},\;x(0)\sim x(1),\\
&\int_0^1x(t)dt\in J_{2n}V_0^{n,k}
\end{array}\right\}
\end{eqnarray}
and its subset
\begin{eqnarray}\label{e:Brunn.1}
\mathcal{A}_p^{n,k} =\{x\in \mathscr{F}_p^{n,k}\,|\,A(x)=1 \}.
\end{eqnarray}
 As stated below Theorem \ref{th:represention} the key point of our proof is to work in the space $\mathscr{F}_p^{n,k}$,
 which does not appear in references such as \cite{HoZe90, HoZe94, LiRi13}.
Note that $\mathcal{A}_p^{n,k}$ is a regular submanifold of $\mathscr{F}_p^{n,k}$. In fact, for any $x\in \mathscr{F}_p^{n,k}$ and
$\zeta\in T_x\mathscr{F}_p^{n,k}=\mathscr{F}_p^{n,k}$ we have
$$
dA(x)[\zeta]= \int_0^{1}\langle -J_{2n}\dot{\zeta},x\rangle dt+\frac{1}{2}\langle -J_{2n}x,\zeta\rangle|_0^1=\int_0^{1}\langle -J_{2n}\dot{\zeta},x\rangle dt
$$
because $J_{2n}x(i)\in J_{2n}\mathbb{R}^{n,k}$ is
orthogonal to $\zeta(i)\in \mathbb{R}^{n,k}$ for each $i\in\{0,1\}$ by (\ref{lagmultipier1}).
 In particular, $dA(x)[x]=2$ for any $x\in \mathcal{A}_p^{n,k}$, and thus $dA\neq 0$ on $\mathcal{A}_p^{n,k}$.

The following two lemmas are very key  for the proof of (\ref{minaction}) in Theorem~\ref{th:represention} and the proof of Theorem~\ref{th:Brun}.
 \begin{lemma}\label{lem:3.1}
 There exists a constant $\widetilde{C}_1>0$ such that
\begin{equation}\label{e:Brun.2}
\|x\|_{L^\infty}\le\widetilde{C}_1\|\dot{x}\|_{L^p},\quad\forall x\in\mathscr{F}_p^{n,k}.
\end{equation}
\end{lemma}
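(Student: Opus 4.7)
The plan is to exploit the two defining constraints on $\mathscr{F}_p^{n,k}$ other than the mere $W^{1,p}$-regularity, namely $x(0)\in\mathbb{R}^{n,k}$ and $\int_0^1 x(t)\,dt\in J_{2n}V_0^{n,k}$, to pin down the value of $x(0)$ solely in terms of $\dot{x}$. Once $x(0)$ is controlled by $\|\dot{x}\|_{L^p}$, the rest is a routine fundamental-theorem-of-calculus bound.

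Concretely, I would first note that for $x\in\mathscr{F}_p^{n,k}$, the identity $x(t)=x(0)+\int_0^t\dot{x}(s)\,ds$ together with Fubini gives
\begin{equation*}
\int_0^1 x(t)\,dt = x(0) + \int_0^1 (1-s)\dot{x}(s)\,ds.
\end{equation*}
By (\ref{lagmultipier1}) we have the orthogonal (hence topological) direct sum decomposition $\mathbb{R}^{2n}=J_{2n}V_0^{n,k}\oplus \mathbb{R}^{n,k}$; let $\pi_{n,k}:\mathbb{R}^{2n}\to\mathbb{R}^{n,k}$ denote the projection along $J_{2n}V_0^{n,k}$. Since $x(0)\in\mathbb{R}^{n,k}$ and $\int_0^1 x(t)\,dt\in J_{2n}V_0^{n,k}$, applying $\pi_{n,k}$ to the identity above yields
\begin{equation*}
x(0) = -\pi_{n,k}\left(\int_0^1 (1-s)\dot{x}(s)\,ds\right).
\end{equation*}

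Let $q$ be the Hölder conjugate of $p$. Then by Hölder's inequality and the continuity of $\pi_{n,k}$,
\begin{equation*}
|x(0)|\le \|\pi_{n,k}\|\cdot\Bigl\|(1-s)\Bigr\|_{L^q}\cdot \|\dot{x}\|_{L^p}\le C_0\|\dot{x}\|_{L^p}
\end{equation*}
for some constant $C_0>0$ depending only on $p$, $n$ and $k$. Consequently, for every $t\in[0,1]$,
\begin{equation*}
|x(t)|\le |x(0)|+\int_0^t|\dot{x}(s)|\,ds\le C_0\|\dot{x}\|_{L^p}+t^{1/q}\|\dot{x}\|_{L^p}\le (C_0+1)\|\dot{x}\|_{L^p},
\end{equation*}
so taking $\widetilde{C}_1:=C_0+1$ proves (\ref{e:Brun.2}).

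There is no serious obstacle here; the statement is essentially a Poincaré-type inequality where the role of a vanishing boundary or mean-value condition is played by the pair of linear constraints $x(0)\in\mathbb{R}^{n,k}$ and $\int_0^1 x\in J_{2n}V_0^{n,k}$, which are complementary subspaces. The only point to watch is that one must use both constraints simultaneously via the splitting (\ref{lagmultipier1}): dropping either makes $x(0)$ unconstrained (one could add any nonzero constant in $\mathbb{R}^{n,k}$ or $J_{2n}V_0^{n,k}$ respectively), and the inequality would fail.
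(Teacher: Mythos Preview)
Your proof is correct and takes a genuinely different route from the paper's. The paper argues coordinate-by-coordinate: writing $x=(q_1,\dots,q_n,p_1,\dots,p_n)$, it uses the integral condition $\int_0^1 x\in J_{2n}V_0^{n,k}$ to conclude that each $q_i$ and each $p_j$ with $j\le k$ has zero mean (hence a zero somewhere in $[0,1]$), and uses the boundary condition $x(0)\in\mathbb{R}^{n,k}$ to get $p_j(0)=0$ for $j>k$; then each coordinate is bounded by $\|\dot x\|_{L^p}$ via the fundamental theorem of calculus from its zero.

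Your argument is more compact and coordinate-free: you integrate $x(t)=x(0)+\int_0^t\dot x$ over $[0,1]$, apply the projection $\pi_{n,k}$ onto $\mathbb{R}^{n,k}$ along $J_{2n}V_0^{n,k}$, and read off $x(0)$ directly as $-\pi_{n,k}\int_0^1(1-s)\dot x(s)\,ds$. This uses only the two constraints $x(0)\in\mathbb{R}^{n,k}$ and $\int_0^1 x\in J_{2n}V_0^{n,k}$, exactly as you remark, and makes transparent why the orthogonal splitting (\ref{lagmultipier1}) is the mechanism behind the inequality. The paper's version has the minor advantage of giving an explicit constant ($\widetilde C_1$ of order $\sqrt{2n}$ rather than one involving $\|\pi_{n,k}\|$ and $\|1-s\|_{L^q}$), but your approach generalizes more readily to other pairs of complementary linear constraints.
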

 \begin{proof}
 Write
  $x(t)=(q_1(t),\cdots,q_n(t),p_1(t),\cdots,p_n(t))$. Since $x(0), x(1)\in\mathbb{R}^{n,k}$,
  \begin{equation}\label{e:Brun.2.1}
p_i(0)=p_i(1)=0,\quad i=k+1,\cdots,n.
\end{equation}
 Note that
 $$
 J_{2n}V^{n,k}_0=\{(0,\cdots,0,p_{k+1},\cdots,p_n)\in\mathbb{R}^{2n}\,|\, p_i\in\mathbb{R},\;i=k+1,\cdots, n\}.
 $$
  Then $\int_0^1x(t)dt\in J_{2n}V_0^{n,k}$ leads to
  \begin{eqnarray*}
\int^1_0q_i(t) dt=0=\int^1_0p_j(t) dt,\quad i=1,\cdots,n,
\quad j=1,\cdots,k.
\end{eqnarray*}
  Hence there exist $t_i\in [0, 1]$ for $i=1,\cdots,n$, and
  $s_j\in [0, 1]$ for $j=1,\cdots,k$, such that
    \begin{eqnarray*}
q_i(t_i)=0=p_j(s_j),\quad i=1,\cdots,n,\quad j=1,\cdots,k.
\end{eqnarray*}
As usual we derive from these and (\ref{e:Brun.2.1}) that for any $t\in [0, 1]$,
\begin{eqnarray*}
&&|q_i(t)|=|q_i(t)-q_i(t_i)|=|\int^t_{t_i}\dot q_i(\tau)d\tau|\le\|\dot q_i\|_{L^p},\quad i=1,\cdots,n,\\
&&|p_j(t)|=|p_j(t)-p_j(s_j)|=|\int^t_{s_j}\dot p_j(\tau)d\tau|\le\|\dot p_j\|_{L^p},\quad j=1,\cdots,k,\\
&&|p_j(t)|=|p_j(t)-p_j(0)|=|\int^t_{0}\dot p_j(\tau)d\tau|\le\|\dot p_j\|_{L^p},\quad j=k+1,\cdots,n.
\end{eqnarray*}
 These lead to the expected inequality  immediately.
 \end{proof}

\begin{lemma}\label{lem:3.2}
$L^p([0,1],\mathbb{R}^{2n})$ has the direct sum decomposition
\begin{equation}\label{lagmultipier}
L^p([0,1],\mathbb{R}^{2n})=\{-J_{2n}\dot\zeta\,|\,\zeta\in \mathscr{F}_p^{n,k}\}\dot{+}\mathbb{R}^{n,k}
\end{equation}
such that
\begin{equation}\label{lagmultipier+}
\int_0^1\langle {\bf a},-J_{2n}\dot{\zeta}(t)\rangle dt=0\,\quad\forall \zeta\in\mathscr{F}_p^{n,k},\;\forall {\bf a}\in\mathbb{R}^{n,k},
\end{equation}
where $\mathbb{R}^{n,k}$ may be naturally viewed as a subspace of $L^p([0,1],\mathbb{R}^{2n})$ as before.
{In particular, when $p=2$, (\ref{lagmultipier+}) means that the space decomposition in (\ref{lagmultipier}) is orthogonal.}
\end{lemma}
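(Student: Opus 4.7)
\medskip

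\noindent\textbf{Proof plan.} The orthogonality relation in (\ref{lagmultipier+}) is the easiest piece: for $\zeta\in\mathscr{F}_p^{n,k}$ we have $\zeta(1)-\zeta(0)\in V_0^{n,k}$, so Newton--Leibniz gives $\int_0^1(-J_{2n}\dot\zeta)\,dt=-J_{2n}(\zeta(1)-\zeta(0))\in -J_{2n}V_0^{n,k}=J_{2n}V_0^{n,k}$, which is the orthogonal complement of $\mathbb{R}^{n,k}$ by the first decomposition in (\ref{lagmultipier1}); pairing with ${\bf a}\in\mathbb{R}^{n,k}$ therefore yields zero. This same computation shows that the sum in (\ref{lagmultipier}) is direct, since an element of the form $-J_{2n}\dot\zeta$ that equals a constant ${\bf a}\in\mathbb{R}^{n,k}$ must satisfy both $-J_{2n}(\zeta(1)-\zeta(0))={\bf a}\in J_{2n}V_0^{n,k}\cap\mathbb{R}^{n,k}=\{0\}$ and $\dot\zeta\equiv 0$, forcing $\zeta$ to be constant; then the constraint $\zeta(0)\in\mathbb{R}^{n,k}$ together with $\int_0^1\zeta(t)\,dt\in J_{2n}V_0^{n,k}$ forces $\zeta\in\mathbb{R}^{n,k}\cap J_{2n}V_0^{n,k}=\{0\}$.

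The core of the argument is existence. Given $y\in L^p([0,1],\mathbb{R}^{2n})$, I would use the decomposition $\mathbb{R}^{2n}=\mathbb{R}^{n,k}\oplus J_{2n}V_0^{n,k}$ from (\ref{lagmultipier1}) to split
\begin{equation*}
\int_0^1 y(s)\,ds={\bf a}+w,\qquad {\bf a}\in\mathbb{R}^{n,k},\ w\in J_{2n}V_0^{n,k},
\end{equation*}
and set
\begin{equation*}
\tilde\zeta(t):=\int_0^t J_{2n}\bigl(y(s)-{\bf a}\bigr)\,ds,\qquad t\in[0,1].
\end{equation*}
Then $\tilde\zeta\in W^{1,p}$, $-J_{2n}\dot{\tilde\zeta}=y-{\bf a}$, $\tilde\zeta(0)=0$, and $\tilde\zeta(1)=J_{2n}w\in -V_0^{n,k}=V_0^{n,k}$. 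Hence both endpoints of $\tilde\zeta$ lie in $\mathbb{R}^{n,k}$ and $\tilde\zeta(1)-\tilde\zeta(0)\in V_0^{n,k}$, so $\tilde\zeta$ already satisfies the endpoint and leaf-relation conditions that define $\mathscr{F}_p^{n,k}$.

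It remains to enforce the mean-value condition $\int_0^1\zeta(t)\,dt\in J_{2n}V_0^{n,k}$, which is exactly what fixes the constant of integration and makes the sum direct. Splitting $\int_0^1\tilde\zeta(t)\,dt={\bf c}_0+w_0$ with ${\bf c}_0\in\mathbb{R}^{n,k}$ and $w_0\in J_{2n}V_0^{n,k}$, I set $\zeta:=\tilde\zeta-{\bf c}_0$. Adding the constant $-{\bf c}_0\in\mathbb{R}^{n,k}$ preserves every boundary condition already verified for $\tilde\zeta$, and produces $\int_0^1\zeta(t)\,dt=w_0\in J_{2n}V_0^{n,k}$, so $\zeta\in\mathscr{F}_p^{n,k}$ and $y=-J_{2n}\dot\zeta+{\bf a}$. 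There is no real obstacle here; the only point to watch is that the two different orthogonal decompositions of $\mathbb{R}^{2n}$ in (\ref{lagmultipier1}) are used in complementary roles---one to pick off ${\bf a}$ from $\int_0^1 y$ so that the endpoint constraints are met, and the other (implicitly, through the orthogonality of ${\bf a}$ and $-J_{2n}\dot\zeta$ in $L^p$--$L^{p'}$ duality on constants) to guarantee directness.
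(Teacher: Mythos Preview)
Your proof is correct and follows essentially the same approach as the paper. The paper writes the construction in one formula---defining $\zeta(t)=\int_0^t J_{2n}(x(s)-{\bf a})\,ds- P_{n,k}\int^1_0\bigl(\int_0^\tau J_{2n}(x(s)-{\bf a})\,ds\bigr)d\tau$ with $P_{n,k}$ the orthogonal projection onto $\mathbb{R}^{n,k}$---which is exactly your two-step construction $\zeta=\tilde\zeta-{\bf c}_0$ collapsed into a single line; your explicit verification of directness (which the paper leaves implicit in the orthogonality relation) is a harmless extra, though the final remark forcing $\zeta=0$ is not needed once you have shown ${\bf a}=0$.
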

 \begin{proof}
Let $P_{n,k}:\mathbb{R}^{2n}\to \mathbb{R}^{n,k}$ denote  the orthogonal projection with respect to the first decomposition
in (\ref{lagmultipier1}).
For  $x\in L^p([0,1],\mathbb{R}^{2n})$ we  write
$\int_0^1 J_{2n}x(s)ds=J_{2n}{\bf a}+{\bf b}$,
where ${\bf a}\in\mathbb{R}^{n,k}$, ${\bf b}\in V^{n,k}_0$. Define
$$
\zeta(t):=\int_0^t J_{2n}(x(s)-{\bf a})ds- P_{n,k}\int^1_0\left(\int_0^\tau J_{2n}(x(s)-{\bf a})ds\right)d\tau,\quad t\in [0,1].
$$
Then $\zeta\in W^{1,p}([0,1],\mathbb{R}^{2n})$ satisfies $x=-J_{2n}\dot\zeta+{\bf a}$ and
\begin{eqnarray*}
&&\zeta(0)\in \mathbb{R}^{n,k},\quad \zeta(1)={\bf b}-P_{n,k}\int^1_0\left(\int_0^t J_{2n}(x(s)-{\bf a})ds\right)dt\in \mathbb{R}^{n,k},\\
&&\zeta(1)-\zeta(0)=\int^1_0\dot \zeta(t)dt=\int^1_0J_{2n}(x(t)-{\bf a})dt={\bf b}\in V^{n,k}_0,\\
&&\int^1_0\zeta(t)dt=\int^1_0\left(\int_0^t J_{2n}(x(s)-{\bf a})ds\right)dt\\
&&\hspace{20mm}-P_{n,k}\int^1_0\left(\int_0^t J_{2n}(x(s)-{\bf a})ds\right)dt\in J_{2n}V^{n,k}_0.
\end{eqnarray*}
{The last three lines show that} $\zeta\in \mathscr{F}_p^{n,k}$. Hence
$$
L^p([0,1],\mathbb{R}^{2n})=\{-J_{2n}\dot\zeta\,|\,\zeta\in \mathscr{F}_p^{n,k}\}+\mathbb{R}^{n,k}.
$$
It is easily seen that the orthogonal decompositions in (\ref{lagmultipier1}) imply
(\ref{lagmultipier+}).
\end{proof}

Having these two lemmas, the proof of equality (\ref{minaction}) can be completed by slightly generalizing the Clarke duality proving the existence of a closed characteristic on  a compact, $C^2$-smooth and strictly convex hypersurface in $\mathbb{R}^{2n}$ (see \cite[Sec. 1.5]{HoZe94}). Similar methods are used in \cite{JinLu1916} to prove the representation formula and a Brunn-Minkowski type inequality for generalized Hofer-Zehnder capacity.

Let $j_D$ be  the Minkowski functional of $D$,  $H_D=(j_D)^2$ and let $H_D^{\ast}$ be the Legendre transformation of $H_D$. Then
   $\partial D=H_D^{-1}(1)$, and
 there exist constants $R_1, R_2\geq 1$ such that
\begin{eqnarray}\label{e:dualestimate}
 \frac{|z|^2}{R_1}\leq H_D(z)\leq R_1|z|^2,\;\forall z\in\mathbb{R}^{2n},\;\,
\frac{|z|^2}{R_2}\leq H_D^{\ast}(z)\leq R_2|z|^2,\;\forall z\in\mathbb{R}^{2n}.
\end{eqnarray}
Recall that for  $p>1$ and $q=\frac{p}{p-1}$ there holds
\begin{eqnarray}\label{e:Brunn.0}
  \left(\frac{1}{p}j_D^p\right)^{\ast}(w)=\frac{1}{q}(h_D(w))^q
\end{eqnarray}
by \cite[(9.1)]{JinLu1916}. Then $H^\ast_D=(h_D/4)^2$, and therefore the following functional
\begin{eqnarray}\label{e:convexfunctional}
I_p:\mathscr{F}_p^{n,k}\to\mathbb{R},\;x\mapsto \int_0^1(H_D^{\ast}(-J_{2n}\dot{x}(t)))^{\frac{p}{2}}dt
\end{eqnarray}
is convex, and $C^1$ if $D$ is strictly convex and has $C^1$-smooth boundary.
The equality (\ref{minaction}) can be obtained from the case $p=2$ in the following proposition.

\begin{prop}\label{prop:Brun.0}
  For real $p>1$, $I_p|_{\mathcal{A}_p^{n,k}}$
  attains its minimum and
  $$
  (\min_{x\in\mathcal{A}_p^{n,k}}I_p)^{\frac{2}{p}}
  $$
  is the minimizer of
    $$
\{A(x)>0\,|\,x\;\hbox{is a generalized leafwise chord on $\partial D$ for $\mathbb{R}^{n,k}$}\}.
$$
\end{prop}
\begin{proof}

\noindent{\bf Step 1}.\quad\textsf {$\mu_p:=\inf_{x\in\mathcal{A}_p^{n,k}}I_p(x)$ is positive. }
Let $q=\frac{p}{p-1}$. By Lemma~\ref{lem:3.1},  we deduce
$$
2=2A(x)\le \|x\|_{L^q}\|\dot{x}\|_{L^p}\le \|x\|_{L^\infty}\|\dot{x}\|_{L^p}\le  \widetilde{C}_1\|\dot{x}\|_{L^p}^2\quad
\forall x\in\mathcal{A}_p^{n,k},
$$
which leads to for $R_2$  in (\ref{e:dualestimate})
$$
I_p(x)\ge\left(\frac{1}{R_2}\right)^{p/2}\|\dot{x}\|_{L^p}^p\ge \widetilde{C}_2:=
\left(\frac{2}{R_2\widetilde{C}_1}\right)^{\frac{p}{2}}>0.
$$
\noindent{\bf Step 2}.\quad\textsf{ There exists $u\in  \mathcal{A}_p^{n,k}$ such  that
$I_p(u)=\mu_p$, and thus $u$ is a critical point of  $I_p|_{\mathcal{A}_p^{n,k}}$.}
Let $(x_m)$ be a sequence in ${\mathcal{A}_p^{n,k}}$ such that
$\lim_{m\to\infty}I_p(x_m)=\mu_p$.
This and (\ref{e:dualestimate}) lead to
\begin{eqnarray*}
\frac{1}{R_2^{p/2}}\|\dot{x}_m\|_{L_p}^p&=&\int_0^1\left(\frac{|-J\dot{x}_m|^2}{R_2}\right)^{p/2}dt\\
&\le&\int_0^1\left(H_D^{\ast}(-J\dot{x}_m)\right)^{p/2} dt=I_p(x_m)\le C
\end{eqnarray*}
for some constant $C>0$. Combining with (\ref{e:Brun.2}) we get
$$
\|x_m\|_{L_p}\le\|x_m\|_{L^\infty}\le\widetilde{C}_1\|\dot{x}_m\|_{L^p}\le\widetilde{C}_1C^{1/p}R_2^{1/2}.
$$
 Therefore $(x_m)$ is a bounded sequence in $W^{1,p}([0,1],\mathbb{R}^{2n})$.
 Since $p>1$,  $W^{1,p}([0,1])$ is reflexive and the embedding
 $W^{1,p}([0,1],\mathbb{R}^{2n})\hookrightarrow C^0([0,1],\mathbb{R}^{2n})$ is compact.
 Passing to a subsequence if necessary we can assume  that
 \begin{eqnarray}\label{e:weakC}
&&\hbox{$x_m$  converges weakly to $u$ in $W^{1,p}([0,1],\mathbb{R}^{2n})$},\\
&&\hbox{$\underline{x_m}$  converges  to $\underline{u}$ in  $C^0([0,1],\mathbb{R}^{2n})$.} \label{e:uniformC}
\end{eqnarray}
  (See the beginning of Section~\ref{sec:2} for related notations.)
  It follows that
$$
\underline{u}(0), \underline{u}(1)\in\mathbb{R}^{n,k},\quad \underline{u}(0)\sim \underline{u}(1),\quad
\int_0^1 u(t)dt=\int_0^1 \underline{u}(t)dt\in J_{2n}V_0^{n,k}.
$$
Moreover,
\begin{eqnarray*}
2A(u)&=&\int_0^1\langle-J_{2n}\dot{u},u\rangle dt\\
&=&\int_0^1 \langle-J_{2n}(\dot{u}-\dot{x}_m),u\rangle dt
+\int_0^1\langle-J_{2n}\dot{x}_m,x_m\rangle dt\\
&&+\int_0^1\langle-J_{2n}\dot{x}_m,u-x_m\rangle dt
\rightarrow 2
\end{eqnarray*}
because $\int_0^1 \langle-J_{2n}(\dot{u}-\dot{x}_m),u\rangle dt\to 0$ by (\ref{e:weakC}) and
$$
\int_0^1\langle-J_{2n}\dot{x}_m,u-x_m\rangle dt=\int_0^1\langle-J_{2n}\dot{x}_m,\underline{u}-\underline{x_m}\rangle dt\rightarrow 0
$$
 by (\ref{e:uniformC}). Therefore we deduce that
$u\in\mathcal{A}_p^{n,k}$.
Consider the functional
$$
\hat{I}_p: L^p([0,1],\mathbb{R}^{2n})\to\mathbb{R}, \;x\mapsto \int_0^1(H_D^{\ast}(u(t))^{\frac{p}{2}}dt.
$$
 Then (\ref{e:convexfunctional}) implies $I_p(x)=\hat{I}_p(-J\dot{x})$ for any $x\in\mathscr{F}_p^{n,k}$.
Since $(H^\ast_D)^{\frac{p}{2}}=(h_D/2)^p$ by (\ref{e:Brunn.0}),
 $\hat{I}_p$ is convex, and continuous by (\ref{e:dualestimate}).
It follows from Corollary~3 in \cite[Chapter II, \S3]{Ek90} that $\hat{I}_p$
 has nonempty subdifferential $\partial\hat{I}_p(v)$ at each point $v\in L^p([0,1],\mathbb{R}^{2n})$,
$$
\partial\hat{I}_p(v)=\{w\in L^q([0,1],\mathbb{R}^{2n})\,|\, w(t)\in \frac{p}{2}(H_D^{\ast}(v(t))^{\frac{p}{2}-1}\partial H^\ast_D(v(t))\;\hbox{a.e. on}\;[0,1]\}.
$$
Hence there exists $w\in \partial\hat{I}_p(-J\dot{u})\subset L^q([0,1],\mathbb{R}^{2n})$ such that
\begin{eqnarray*}
I_p(u)-I_p(x_m)&=&\hat{I}_p(-J\dot{u})-\hat{I}_p(-J\dot{x}_m)\\
&\le& \int_0^1\langle w(t),-J(\dot{u}(t)-\dot{x}_m(t))\rangle dt\to 0
\end{eqnarray*}
by  (\ref{e:weakC}). Therefore
\begin{equation}\label{e:Ipu}
I_p(u)=\lim_{m\to\infty}I_p(x_m)=\mu_p.
\end{equation}

\noindent{\bf Step 3}.\quad\textsf{
There exists a generalized leafwise chord on $\partial D$ for $\mathbb{R}^{n,k}$,
${x}^\ast:[0, 1]\rightarrow \partial D$,  such that $A({x}^\ast)=(\mu_p)^{\frac{2}{p}}$.}

 Since $u$ is the minimizer  of  $I_p|_{\mathcal{A}_p^{n,k}}$,
  applying Lagrangian multiplier theorem (\cite[Theorem~6.1.1]{Cl83}) we get some $\lambda_p\in\mathbb{R}$ such that
$0\in\partial (I_p+\lambda_p A)(u)=\partial I_p(u)+\lambda_p A'(u)$.
This means that there exists some $\rho\in L^q([0,1],\mathbb{R}^{2n})$ satisfying
\begin{equation}\label{e:Brun.3-}
\rho(t)\in\partial (H_D^{\ast})^{\frac{p}{2}}(-J_{2n}\dot{u}(t))\quad\hbox{a.e.}
\end{equation}
and
\begin{equation}\label{e:Brun.3-1}
\int_0^1\langle\rho(t)+\lambda_p u(t),-J_{2n}\dot{\zeta}(t)\rangle dt=0\quad\forall \zeta\in\mathscr{F}_p^{n,k}.
\end{equation}
Using Lemma~\ref{lem:3.2}, from (\ref{e:Brun.3-1})
 we derive that for some ${\bf a}_0\in \mathbb{R}^{n,k}$,
\begin{equation}\label{e:Brun.3}
\rho(t)+\lambda_p u(t)={\bf a}_0,\quad\hbox{a.e.}\quad
\end{equation}
Since $(H_D^{\ast})^{p/2}$ is $p$-homogeneous, by \cite[Theorem~3.1]{YangWei08} and (\ref{e:Brun.3-}) we get
$$
\langle\rho(t),-J_{2n}\dot{u}(t)\rangle=p(H_D^{\ast})^{p/2}(-J_{2n}\dot{u}(t)),\quad\hbox{a.e.}.
$$
Inserting (\ref{e:Brun.3}) into the above equality and integrating over interval $[0,1]$
we obtain
\begin{equation}\label{e:intEul}
\int_0^1\langle {\bf a}_0-\lambda_p u(t),-J_{2n}\dot{u}(t)\rangle dt=p\int_0^1(H_D^{\ast})^{p/2}(-J_{2n}\dot{u}(t))dt.
\end{equation}
Because ${\bf a}_0\in\mathbb{R}^{n,k}$ and $u\in \mathcal{A}_p^{n,k}$, we derive
\begin{eqnarray*}
&&\int_0^1\langle {\bf a}_0,-J_{2n}\dot{u}(t)\rangle dt=\langle {\bf a}_0,-J_{2n}(u(1)-u(0))\rangle=0, \\
&&\int_0^1 \langle -\lambda_p u(t),-J_{2n}\dot{u}(t)\rangle dt=-2\lambda_p A(u)=-2\lambda_p.
\end{eqnarray*}
 Hence by (\ref{e:Ipu}), (\ref{e:intEul}) becomes
\begin{equation}\label{e:Brun.3.1}
-2\lambda_p=p\mu_p.
\end{equation}

By the definition of Legendre transformation,
  for every convex function $\Psi$ we have
$$
(c\Psi)^{\ast}(x)=c\Psi^{\ast}(\frac{x}{c}),\quad\forall c>0.
$$
 This and (\ref{e:Brunn.0}) imply that
 $(H_D^{\ast})^{\frac{p}{2}}=(\frac{h_D}{2})^p$ has Legendre transformation
\begin{eqnarray}
\left(\frac{h_D^p}{2^p}\right)^{\ast}(x)
=\frac{2^p}{p}\left(\frac{h_D^p}{p}\right)^{\ast}(\frac{2^p}{p}x)
=\frac{2^p}{p}\cdot\frac{1}{q}j_D^q(\frac{2^p}{p}x)
=\frac{2^q}{qp^{q-1}}j_D^q(x).
\end{eqnarray}
Combining this with (\ref{e:Brun.3-}) and (\ref{e:Brun.3}), we get that
$$
-J_{2n}\dot{u}(t)\in\frac{2^q}{qp^{q-1}}\partial j_D^q(-\lambda_p u(t)+{\bf a}_0),\quad\hbox{a.e.}.
$$
Let $v(t):=-\lambda_p u(t)+{\bf a}_0$. Then
$v(j)\in\mathbb{R}^{n,k}$ for $j=0,1$, $v(1)\sim v(0)$ and
\begin{eqnarray}\label{e:Clarke}
-J_{2n}\dot{v}(t)\in-\lambda_p\frac{2^q}{qp^{q-1}}\partial j_D^q(v(t)),\;\hbox{a.e.}
\end{eqnarray}
{Since $j_D^q$ is  convex (so regular by \cite[Proposition 2.3.6]{Cl83})
it follows from a result by Clarke (\cite[Proposition 7.7.1]{Cl83})  and Rockafellar (\cite[Theorem~2]{Ku96}) that
(\ref{e:Clarke}) implies that $j_D^q(v(t))$ is a constant}
and
\begin{eqnarray*}
\frac{-2^{q-1}\lambda_p}{p^{q-1}}j_D^q(v(t))&=&\int^1_0\frac{-2^{q-1}\lambda_p}{p^{q-1}}j_D^q(v(t))dt\\
&=&\frac{1}{2}\int^1_0\langle-J_{2n}\dot{v}(t),
v(t)\rangle dt=\lambda_p^2A(u)=\left(\frac{p\mu_p}{2}\right)^2
\end{eqnarray*}
by the Euler formula \cite[Theorem~3.1]{YangWei08} and (\ref{e:Brun.3.1}).
Therefore
\begin{eqnarray*}
&&j_D^q(v(t))=\left(\frac{p}{2}\right)^q\mu_p\quad\hbox{and}\\
&&A(v)=\frac{1}{2}\int^1_0\langle-J_{2n}\dot{v}(t), v(t)\rangle dt=\lambda_p^2=\left(\frac{p\mu_p}{2}\right)^2.
\end{eqnarray*}
It follows that  $x^{\ast}(t):=\frac{v(t)}{j_D(v(t))}$ satisfies $j_D(x^{\ast}(t))\equiv1$,
$x^\ast(j)\in\mathbb{R}^{n,k}$, $j=0,1$, $x^\ast(1)\sim x^\ast(0)$ and
$$
-J_{2n}\dot{x}^\ast(t)\in \frac{2}{q}\mu_p^{2/p}\partial j_D^q(x^\ast(t)),\;\hbox{a.e.}
\quad\hbox{and}\quad
A(x^{\ast})=\frac{1}{j_D^2(v(t))}A(v)=\mu_p^{\frac{2}{p}}.
$$
Hence $x^\ast$ is a generalized leafwise chord  on $\partial D$ for $\mathbb{R}^{n,k}$  with action
$\mu_p^{\frac{2}{p}}$.

\noindent{\bf Step 4}.
 \quad\textsf{ For any generalized leafwise chord on $\partial D$ for $\mathbb{R}^{n,k}$ with positive action, $y:[0,T]\rightarrow \mathcal{S}$, there holds $A(y)\ge \mu_p^{\frac{2}{p}}$.}

 Since  \cite[Theorem~2.3.9]{Cl83} implies
$\partial j_D^q(x)=q(j_D(x))^{q-1}\partial j_D(x)$,
by Lemma~2 in \cite[Chap.V,\S1]{Ek90}, after reparameterizing it we may assume that
$y\in W^{1,\infty}([0,T],\mathbb{R}^{2n})$ and satisfies
\begin{equation}\label{e:step4.1}
\left.\begin{array}{ll}
&j_D(y(t))\equiv 1, \quad-J_{2n}\dot{y}(t)\in\partial j_D^q(y(t))\quad\hbox{a.e. on}\;[0, T],\\
&y(0), y(T)\in \mathbb{R}^{n,k},\quad y(0)\sim y(T).
\end{array}\right\}
\end{equation}
As above, using the Euler formula \cite[Theorem~3.1]{YangWei08} we deduce
\begin{equation}\label{e:Brun.4}
A(y)=\frac{1}{2}\int^T_0\langle-J_{2n}\dot{y}(t), y(t)\rangle dt=\frac{1}{2}\int^T_0
qj_D^q(y(t))dt=\frac{qT}{2}.
\end{equation}
Define $a:=\sqrt\frac{2}{qT}$ and
\begin{equation}\label{e:defyast}
y^{\ast}:[0,1]\rightarrow \mathbb{R}^{2n},\;  t\mapsto y^{\ast}(t)=a y(tT)- P_{n,k}\int^1_0a y(sT)ds,
\end{equation}
where  $P_{n,k}:\mathbb{R}^{2n}=J_{2n}V^{n,k}_0\oplus \mathbb{R}^{n,k}\to\mathbb{R}^{n,k}$
is the orthogonal projection as above. Then
\begin{eqnarray*}
&&\int^1_0y^{\ast}(t)dt=\int^1_0a y(tT)dt- P_{n,k}\int^1_0a y(tT)dt\in J_{2n}V^{n,k}_0,\\
&&y^{\ast}(0)=a y(0)- P_{n,k}\int^1_0a y(tT)dt\in \mathbb{R}^{n,k},\\
&&y^{\ast}(1)=a y(T)- P_{n,k}\int^1_0a y(tT)dt\in \mathbb{R}^{n,k},\\
&&y^{\ast}(1)-y^{\ast}(0)=a y(T)-a y(0)\in V^{n,k}_0.
\end{eqnarray*}
That is, $y^\ast\in\mathscr{F}_p^{n,k}$. Moreover, a direct computation shows that
\begin{eqnarray*}
A(y^\ast)&=&\frac{1}{2}\int^1_0\langle-J_{2n}\dot y^\ast(t), y^\ast(t)\rangle dt\\
&=&\frac{1}{2}\int^1_0\langle-J_{2n}\dot y^\ast(t), a y(tT)\rangle dt\\
&&\qquad\qquad-\frac{1}{2}\int^1_0\langle-J_{2n}\dot y^\ast(t),P_{n,k}\int^1_0a y(sT)ds\rangle dt\\
&=&\frac{1}{2}\int^1_0\langle-aTJ_{2n}\dot y(Tt), a y(tT)\rangle dt\\
&&\qquad\qquad-\frac{1}{2}\langle J_{2n}(y^{\ast}(1)-y^{\ast}(0)),P_{n,k}\int^1_0 a y(sT)ds\rangle\\
&=&\frac{1}{2}\int^1_0\langle-aTJ_{2n}\dot y(Tt), a y(tT)\rangle dt=1.
\end{eqnarray*}
Hence  $y^\ast$ belongs to $\mathcal{A}_p^{n,k}$.

 In the following we compute $I_p(y^\ast)$. By the second term in (\ref{e:step4.1}) and the definition of $y^\ast$ in (\ref{e:defyast}) we have
$$
-J_{2n}\frac{\dot{y}^\ast (t)}{aT}=-J_{2n}\dot{y}(tT)\in \partial j_D^q(y(tT)), \quad 0\le t\le 1,
$$
i.e.,
$$
-J_{2n}\frac{\dot{y}^\ast (t)}{qaT}\in \partial (\frac{j_D^q}{q})(y(tT)), \quad 0\le t\le 1.
$$
Using this, (\ref{e:Brunn.0}) and  the Legendre reciprocity formula (cf. \cite[Proposition~II.1.15]{Ek90})
we derive
$$
\frac{h_D^p}{p}(-J_{2n}\frac{\dot{y}^\ast (t)}{qaT})+(\frac{j_D^q}{q})(y(tT))=
\langle-J_{2n}\frac{\dot{y}^\ast (t)}{qaT},y(tT)\rangle=1
$$
as in (\ref{e:Brun.4}).
Then (\ref{e:Brunn.0}) and this equality lead to
\begin{eqnarray*}
I_p(y^\ast)&=&\int_0^1 (H_D^\ast(-J_{2n}\dot{y}^\ast))^{\frac{p}{2}}dt\\
           &=&\int_0^1(\frac{h_D^p}{2^p})(-J_{2n}\dot{y}^\ast)dt\\
           &=&\frac{p(qaT)^p}{2^p}\int_0^1 (\frac{h_D^p}{p})(-J_{2n}\frac{\dot{y}^\ast (t)}{qaT})dt\\
           &=&\frac{p(qaT)^p}{2^p}\int_0^1\left(1-\frac{1}{q}\right)dt
           =\frac{(qaT)^p}{2^p}
           \ge\mu_p.
\end{eqnarray*}
Inserting $a=\sqrt{\frac{2}{qT}}$ into the last inequality we deduce that
$A(y)={qT}/{2}\ge\mu_p^{\frac{2}{p}}$.
This and Step 3 prove Proposition~\ref{prop:Brun.0}.
\end{proof}

\begin{remark}\label{rem:Brun.1.5}
{\rm  By the final two steps in the proof of Proposition~\ref{prop:Brun.0},
for a minimizer $u$ of  $I_p|_{\mathcal{A}_p^{n,k}}$ there exists ${\bf a}_0\in \mathbb{R}^{n,k}$
such that
$$
x^\ast(t)=\left(\min I_p|_{\mathcal{A}_p^{n,k}}\right)^{1/p}u(t)+ \frac{2}{p}\left(\min I_p|_{\mathcal{A}_p^{n,k}}\right)^{(1-p)/p}{\bf a}_0
$$
gives a generalized leafwise chord on $\partial D$ for $\mathbb{R}^{n,k}$
with action
$$
\left(\min I_p|_{\mathcal{A}_p^{n,k}}\right)^{2/p}=
 \min\left\{A(x)>0\,\bigg|\,\hspace{-3mm}\begin{array}{ll}
 &\hbox{$x$ is a generalized leafwise chord}\\
 &\hbox{on $\partial D$ for $\mathbb{R}^{n,k}$}
 \end{array}\right\}.
$$
Thus this and (\ref{capacity}) proved below show that
$$
x^\ast(t)=\left(c_{\rm LR}(D, D\cap\mathbb{R}^{n,k})\right)^{1/2}u(t)+ \frac{2}{p}\left(c_{\rm LR}(D, D\cap\mathbb{R}^{n,k})\right)^{(1-p)/2}{\bf a}_0
$$
 is a $c_{\rm LR}(D, D\cap\mathbb{R}^{n,k})$ carrier.
}
\end{remark}

\subsection{Proof of (\ref{capacity}) in Theorem \ref{th:represention}}\label{sec:3.2}

    The following lemma, which is a generalization of the fact that the set of actions of closed
    characteristics on a smooth convex energy surface has no interior point in $\mathbb{R}$ (cf. \cite[\S~7.4]{Sik90}),
    plays a key role in the proof of (\ref{capacity}) in Theorem \ref{th:represention}.
     We need to improve the methods of \cite[\S~7.4]{Sik90} to prove it.

\begin{lemma}\label{nointerior}
  Let $D\subset \mathbb{R}^{2n}$ be a bounded convex domain with
boundary $\mathcal{S}=\partial D$ and containing $0$ in its interior. If $\mathcal{S}$ is of class $C^{2n+2}$, then the set
   \begin{equation}\label{eq:setsigma}
    \Sigma_\mathcal{S}:=\{ A(x)\,|\,\hbox{is a leafwise chord on \;$\mathcal{S}$\;for \;$\mathbb{R}^{n,k}$ and}\;  A(x)>0\;\}
    \end{equation}
   has no interior point  in $\mathbb{R}$.
 \end{lemma}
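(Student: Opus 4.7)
The plan is to parameterize all generalized leafwise chords by initial-point/length pairs and then deploy a Sard-type argument in the resulting finite-dimensional setting.

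Since $\mathcal{S}$ is $C^{2n+2}$, convex, and contains $0$ in its interior, after a $C^{2n+2}$-small strictly convex perturbation of $D$ if necessary I may pick a $C^{2n+2}$, positively $2$-homogeneous Hamiltonian $H$ on $\mathbb{R}^{2n}\setminus\{0\}$ with $\mathcal{S}=H^{-1}(1)$ and $\nabla H\neq 0$ along $\mathcal{S}$ (the natural choice being $H=j_D^2$ when $D$ is strictly convex); the associated Hamiltonian flow $\phi_t$ is of class $C^{2n+1}$ and preserves $\mathcal{S}$. The convexity of $D$ together with $0\in\operatorname{Int}(D)$ forces the outer normal to $\mathcal{S}$ at any $x\in\mathcal{S}\cap\mathbb{R}^{n,k}$ to have a nonzero $\mathbb{R}^{n,k}$-component, so $\mathcal{S}$ meets $\mathbb{R}^{n,k}$ transversally and $M:=\mathcal{S}\cap\mathbb{R}^{n,k}$ is a compact $C^{2n+2}$ submanifold of dimension $n+k-1$. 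Writing $Q\colon\mathbb{R}^{2n}\to V_1^{n,k}\oplus J_{2n}V_0^{n,k}$ for the orthogonal projection onto the complement of $V_0^{n,k}$, I introduce
\[
\Psi\colon M\times(0,\infty)\longrightarrow V_1^{n,k}\oplus J_{2n}V_0^{n,k},\qquad \Psi(x_0,T):=Q\bigl(\phi_T(x_0)-x_0\bigr).
\]
The orbit arc $t\mapsto\phi_t(x_0)$, $t\in[0,T]$, is a leafwise chord on $\mathcal{S}$ for $\mathbb{R}^{n,k}$ precisely when $\Psi(x_0,T)=0$; Euler's identity applied to the $2$-homogeneous $H$ on $\{H=1\}$ then gives that its action equals $T$. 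Consequently $\Sigma_{\mathcal{S}}=\pi_T(\Psi^{-1}(0))$, where $\pi_T$ denotes projection onto the $(0,\infty)$ factor.

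The map $\Psi$ is of class $C^{2n+1}$ between equidimensional manifolds of dimension $n+k$, so Sard's theorem supplies a full-measure set of regular values in the target, each of whose fibres is a $0$-dimensional (locally finite) submanifold whose $\pi_T$-image is at most countable, hence nowhere dense in $(0,\infty)$. If $0$ is itself a regular value the conclusion is immediate.

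The principal obstacle is the case when $0$ is a critical value of $\Psi$, which cannot be excluded a priori because the Hamiltonian flow on a convex level set may produce highly non-generic resonances. I plan to handle it by embedding $H$ into a $C^{2n+2}$-parametric family of positively $2$-homogeneous Hamiltonians and invoking the parametric Sard-Smale theorem---whose hypotheses are supplied by the $C^{2n+1}$-regularity of $\Psi$ inherited from the $C^{2n+2}$ hypothesis on $\mathcal{S}$---to arrange that $0$ is a regular value of the perturbed map $\Psi_{\epsilon}$ for almost every small $\epsilon$, and then letting $\epsilon\to 0$: any open subinterval contained in $\operatorname{Int}(\Sigma_{\mathcal{S}})$ would, by an upper-semi-continuity argument resting on the compactness of $M$, generate uncountably many $T$-values inside $\pi_T(\Psi_{\epsilon}^{-1}(0))$ for all sufficiently small $\epsilon>0$, contradicting the countability already established at the regular-value level. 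This perturbative bookkeeping follows the scheme used in \cite[Lemma~4.1]{JinLu1916} and in the classical Hofer-Zehnder analysis of closed characteristics, and is the essentially only technical point of the proof.
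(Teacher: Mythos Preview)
Your identification of $\Sigma_{\mathcal S}$ with $\pi_T(\Psi^{-1}(0))$ is fine, but the argument collapses in the last paragraph. The direction of semi-continuity is wrong: what holds is that limits of zeros of $\Psi_\epsilon$ are zeros of $\Psi$, whereas you need the opposite---that each zero of $\Psi$ gives rise to a nearby zero of $\Psi_\epsilon$---and this persistence fails exactly at the degenerate zeros you are trying to handle. There is no mechanism forcing an interval of $T$-values in $\pi_T(\Psi^{-1}(0))$ to survive as uncountably many $T$-values in $\pi_T(\Psi_\epsilon^{-1}(0))$. Moreover, if your perturbation moves the level set $\{H_\epsilon=1\}$ away from $\mathcal S$ you are no longer proving anything about the original hypersurface, and if it keeps $\mathcal S$ fixed then the characteristic foliation---hence the set of chord actions---is unchanged, so the perturbation is vacuous.

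The paper avoids the preimage $\Psi^{-1}(0)$ entirely by arranging that the actions appear as \emph{critical values} of a real-valued function on a finite-dimensional space, so that Sard applies with no genericity assumption. The key device is to take the defining Hamiltonian to be $F=j_D^\alpha$ with $1<\alpha<2$ rather than your $\alpha=2$: the $(\alpha-1)$-homogeneity of $\nabla F$ lets one rescale a chord of action $\alpha T/2$ on $\mathcal S=\{F=1\}$ to a time-$1$ leafwise chord on the level $\{F=T^{\alpha/(\alpha-2)}\}$, which is a critical point of the functional $\Phi_{\overline F}$ with critical value $(\tfrac{\alpha}{2}-1)\,T^{\alpha/(\alpha-2)}$, a strictly monotone function of $T$. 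All time-$1$ leafwise chords for $\overline F$ are parameterized by their initial point $z\in\mathbb R^{n,k}$ through a $C^{2n}$ map $\Omega$ built from the flow, so the actions in any fixed window land, via a diffeomorphism of $\mathbb R$, inside the critical-value set of the $C^{2n}$ function $\Phi_{\overline F}\circ\Omega:\mathbb R^{n,k}\to\mathbb R$; since $\dim\mathbb R^{n,k}=n+k<2n$, Sard now gives the conclusion directly. The exponent $\alpha\neq 2$ is essential: with $\alpha=2$ the rescaling degenerates and the critical-value picture is lost.
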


\begin{proof}
 Since $\mathcal{S}$ is of class $C^{2n+2}$,
 the Minkowski functional $j_D: \mathbb{R}^{2n}\rightarrow\mathbb{R}$
 is $C^{2n+2}$ in $\mathbb{R}^{2n}\setminus\{0\}$. Fix $1<\alpha<2$. Define
 $F: \mathbb{R}^{2n}\rightarrow \mathbb{R}$ by $F(z)=(j_D(z))^{\alpha}$. It
is  $C^1$ in $\mathbb{R}^{2n}$ and $C^{2n+2}$ in $\mathbb{R}^{2n}\setminus\{0\}$.

Fix an arbitrarily $\sigma\in\Sigma_{\mathcal{S}}$.
 It suffices to prove
that $\Sigma_{\mathcal{S}}\cap(\sigma-\epsilon,\sigma+\epsilon)$ has no interior point
for some sufficiently small $\epsilon>0$. Since $F(0)=0$ we may choose $0<\varepsilon_1<\varepsilon_2$ such that
    the closure of $B^{2n}({\varepsilon_2})$ is contained in $D$
      and that
       \begin{equation}\label{period0}
     \max_{z\in B^{2n}({\varepsilon_2})}F(z)< \left(\frac{2(\sigma+\epsilon)}{\alpha}\right)^{\frac{\alpha}{\alpha-2}}.
    \end{equation}
       Take a smooth function $f:\mathbb{R}^{2n}\rightarrow [0,1]$ such that
   $f|_{B_{\varepsilon_1}}=0$ and $f|_{B_{\varepsilon_2}^{c}}=1$.
      Define  $\overline{F}:\mathbb{R}^{2n}\rightarrow \mathbb{R}$ by
   $\overline{F}(z)=f(z)F(z)$ for $z\in \mathbb{R}^{2n}$.
      Then $\overline{F}\in C^{2n+2}(\mathbb{R}^{2n},\mathbb{R})$.

    Let $x:[0,T]\rightarrow \mathcal{S}$ be a leafwise chord
for $\mathbb{R}^{n,k}$ and with action $A(x)\in (\sigma-\epsilon,\sigma+\epsilon)$.
We may assume that it satisfies
\begin{equation}\label{period1}
\dot{x}=J_{2n}\nabla F(x).
\end{equation}
Then $A(x)=\frac{\alpha T}{2}$.
Since $\nabla F(\lambda z)=\lambda^{\alpha-1}\nabla F(z)\;\forall\lambda\ge 0$ and $F(x(t))\equiv 1$, we deduce that
 \begin{equation}\label{period3}
 y:[0,1]\rightarrow \mathbb{R}^{2n},\;
t\mapsto y(t)=T^{\frac{1}{\alpha-2}}x(tT)
\end{equation}
 satisfies
\begin{eqnarray*}
&&\dot{y}(t)=J_{2n}\nabla F(y(t)),\quad  y(j)\in \mathbb{R}^{n,k}, \;j=0,1,\quad \hbox{and}\\
&&  y(1)\sim y(0),\quad  F(y(t))\equiv T^{\frac{\alpha}{\alpha-2}}
\geq  \left(\frac{2(\sigma+\epsilon)}{\alpha} \right)^{\frac{\alpha}{\alpha-2}}.
\end{eqnarray*}
The last inequality and (\ref{period0}) imply  $y([0, 1])\subset (B_{\varepsilon_2})^c$. Hence $\dot{y}=J\nabla\overline{F}(y)$ (since
$\overline{F}=F$ on $(B_{\varepsilon_2})^c$) and so
 $y$ is a  critical point of the functional
$$
\Phi_{\overline{F}}:E\rightarrow \mathbb{R},\;x\mapsto \frac{1}{2}\|x^+\|_E^2-\frac{1}{2}\|x^-\|_E^2-\int_0^{1}\overline{F}(x(t)) dt.
$$
{Here we recall that $E=H^{1/2}_{n,k}$   and $\|\cdot\|_E:=\|\cdot\|_{1/2,n,k}$  (see (\ref{e:space})) .}
Moreover, a direct computation gives rise to
 \begin{eqnarray*}
 \Phi_{\overline{F}}(y)&=&\frac{1}{2}\int_0^{1}\langle-J\dot{y},y\rangle dt-\int_0^1F(y(t))dt\\
                       &=&\left(\frac{\alpha}{2}-1\right)F(y(t))
                       =\left(\frac{\alpha}{2}-1\right)T^{\frac{\alpha}{\alpha-2}}.
 \end{eqnarray*}
By Lemma~\ref{lem:critic} all critical points of $\Phi_{\overline{F}}$ sit in $C^{2n+2}_{n,k}$
and hence $\Phi_{\overline{F}}$ and $\Phi_{\overline{F}}|_{C^1_{n,k}}$
have the same critical value sets. Since $\overline{F}$ is $C^{2n+2}$,
 as in the proof of \cite[Claim~4.4]{JinLu1916} we can deduce that $\Phi_{\overline{F}}|_{C^1_{n,k}}$ is of class $C^{2n+1}$.

Let $P_0$ and $P_1$ be the orthogonal projections of $\mathbb{R}^{2n}$ to the spaces $V_0^{n,k}$
and $V_1^{2k}$ in (\ref{e:V0}) and (\ref{e:V1}), respectively. Take a smooth
$g:[0,1]\rightarrow [0,1]$
 such that $g$ equals $1$ (resp. $0$)
near $0$ (resp. $1$). Denote by $\phi^t$ the flow of $X_{\overline{F}}$.
 Since $X_{\overline{F}}$ is $C^{2n+1}$, we have a $C^{2n+1}$ map
$$
\psi:[0,1]\times\mathbb{R}^{n,k}\rightarrow \mathbb{R}^{2n},\;(t,z)\mapsto g(t)\phi^t(z)+(1-g(t))\phi^{t-1}(P_0\phi^1(z)+P_1z).
$$
For any $z\in\mathbb{R}^{n,k}$, since $\psi(0,z)=\phi^0(z)=z$ and $\psi(1,z)=P_0\phi^1(z)+P_1z$, we have
$$
\psi(1,z),\;\psi(0,z)\in\mathbb{R}^{n,k}\quad\hbox{and}\quad \psi(1,z)\sim\psi(0,z).
$$
These and \cite[Corollary~B.2]{JinLu1916} show that $\psi$ gives rise to a $C^{2n}$ map
$$
\Omega:\mathbb{R}^{n,k}\to C^1_{n,k}([0,1],\mathbb{R}^{2n}),\quad z\mapsto \psi(\cdot,z).
$$
Hence
$\Phi_{\overline{F}}\circ\Omega: \mathbb{R}^{n,k}\to\mathbb{R}$
is of class $C^{2n}$. It follows from Sard's Theorem that the critical value sets of $\Phi_{\overline{F}}\circ\Omega$ is nowhere dense
(since $\dim \mathbb{R}^{n,k}<2n$).

Let $z\in\mathbb{R}^{n,k}$ be such that  $\phi^1(z)\in\mathbb{R}^{n,k}$ and $\phi^1(z)\sim z$.
Then $P_0\phi^1(z)-P_0z=\phi^1(z)-z$ and therefore
$P_0\phi^1(z)+P_1z=\phi^1(z)$,
which implies
$\psi(t,z)=\phi^t(z)\;\forall t\in [0,1]$.

For  $y$ in (\ref{period3}) we have $z_y\in \mathbb{R}^{n,k}$ such that $y(t)=\phi^t(z_y)\;\forall t\in [0,1]$.
This implies $\phi^1(z_y)\in\mathbb{R}^{n,k}$, $\phi^1(z_y)\sim z_y$ and therefore
 $y=\psi(\cdot,z_y)=\Omega(z_y)$. Hence $z_y$ is a critical point of $\Phi_{\overline{F}}\circ\Omega$
 and $\Phi_{\overline{F}}\circ\Omega(z_y)=\Phi_{\overline{F}}(y)$. It follows that
 $$
 \Xi:=\left\{\left(\frac{\alpha}{2}-1\right)T^{\frac{\alpha}{\alpha-2}}\,\Bigg|\,\begin{array}{ll}
 & \hbox{$x:[0,T]\rightarrow \mathcal{S}$ is a leafwise chord
for $\mathbb{R}^{n,k}$ }\\
&\hbox{that satisfies (\ref{period1}) and has action}\\
&\hbox{$A(x)\in (\sigma-\epsilon,\sigma+\epsilon)$}
\end{array}\right\}.
 $$
is nowhere dense in $\mathbb{R}$ as a subset of  the critical value sets of $\Phi_{\overline{F}}\circ\Omega$.
This implies that $\{\frac{\alpha T}{2}>0\,|\, \left(\frac{\alpha}{2}-1\right)T^{\frac{\alpha}{\alpha-2}}\in\Xi\}$
 is nowhere dense in $\mathbb{R}$.
\end{proof}

Having Lemma~\ref{nointerior},
as in \cite[\S4.4]{JinLu1916}, by approximating arguments \textsf{it suffices
to prove (\ref{capacity}) for the smooth and strictly convex $D$}.
By Proposition~\ref{prop:Brun.0} there exists a leafwise chord on $\partial D$ for $\mathbb{R}^{n,k}$,
$x^\ast:[0,1]\to \partial D$, such that
\begin{equation}\label{e:leaf-chord}
   \left.   \begin{array}{ll}
&\dot{x}^\ast=J_{2n} A(x^\ast)\nabla H_D (x^\ast)\quad\hbox{and}\\
&A(x^\ast)=\min\{A(x)>0\;|\;x\;\hbox{is a leafwise chord on\;$\partial D$\;for \;$\mathbb{R}^{n,k}$}\}>0.
\end{array}\right\}
     \end{equation}
We shall prove $c_{\rm LR}(D,D\cap \mathbb{R}^{n,k})=A(x^\ast)$ in two steps below.

\noindent{\bf Step 1.}\quad \textsf{Prove $c_{\rm LR}(D,D\cap \mathbb{R}^{n,k})\ge A(x^\ast)$.}
For small $0<\epsilon,\, \delta<1/2$, pick  a smooth
 function $f: [0,1]\rightarrow\mathbb{R}$ such that
 $f(t)=0$ for $t\le\delta$, $f(t)=A(x^{\ast})-\epsilon$ for $t\ge 1-\delta$, and
    $0\leq f'(t)<A(x^{\ast})$ for $\delta<t<1-\delta$.
     Then $D\ni x\mapsto H(x):=f(H_D(x))$ belongs to $\mathcal{H}(D,D\cap\mathbb{R}^{n,k})$ and $\max H=A(x^{\ast})-\epsilon$.
      We claim that every solution
   $x:[0, T]\to D$ of the boundary value problem
   $$
   \left.
   \begin{array}{ll}
        &\dot{x}=J_{2n}\nabla H(x)=f'(H_D(x))J_{2n}\nabla H_D(x)\quad\hbox{and}\\
        &x(0), x(T)\in \mathbb{R}^{n,k},\quad  x(T)\sim x(0)
        \end{array}\right\}
     $$
with $0<T\le 1$ is constant. By contradiction let $x=x(t)$
be a nonconstant solution of it. Then
$j_D(x(t))\equiv c\in (0, 1)$  and
$f'(H_D(x(t)))\equiv a\in (0, A(x^\ast))$.
Since $\nabla H_D(\lambda z)=\lambda\nabla H_D(z)$ for all $(\lambda,z)\in\mathbb{R}_+\times\mathbb{R}^{2n}$, $y(t):=\frac{1}{c}x(t)$
sits in $\partial D$ and satisfies
\begin{equation}\label{e:action-capacity7}
        \dot{y}=aJ_{2n}\nabla H_D(y),\quad y(0), y(T)\in \mathbb{R}^{n,k},\quad  y(T)\sim y(0).
     \end{equation}
Since $H_D$ is homogeneous of degree $2$,
there holds $\langle \nabla H_D(z), z\rangle=2H_D(z)=2$ for any $z\in\partial D$.
 (\ref{e:action-capacity7}) leads to
$0<A(y)=aT\le a<A(x^\ast)$,
which contradicts (\ref{minaction}). Hence
$H$ is admissible and so $c_{\rm LR}(D,D\cap \mathbb{R}^{n,k})\ge A(x^\ast)-\epsilon$.
Letting $\epsilon\to 0$ the expected inequality is proved.

\noindent{\bf Step 2.}\quad \textsf{Prove}
$c_{\rm LR}(D,D\cap\mathbb{R}^{n,k})\le A(x^{\ast})$.
Let $H\in \mathcal{H}(D, D\cap\mathbb{R}^{n,k})$ with $m(H)>A(x^\ast)$.
We want to prove that the boundary value problem
   \begin{equation}
        \dot{x}(t)=J_{2n}\nabla H(x(t))\;\forall t\in [0,1],\quad x(0), x(1)\in \mathbb{R}^{n,k},\quad
     x(1)\sim x(0)
     \end{equation}
   possesses a nonconstant solution $x:[0,1]\to  D$.
(Then $0\le T_x\le 1$ and hence $H$ is not admissible.)
By Lemma~\ref{nointerior} there exists  $\epsilon>0$ such that $m(H)>A(x^\ast)+\epsilon$ and
$A(x^\ast)+\epsilon\notin \Sigma_\mathcal{S}$.
Hence the boundary value problem for $x:[0,1]\to\mathbb{R}^{2n}$,
   \begin{equation}
        \dot{x}=(A(x^\ast)+\epsilon)J_{2n}\nabla H_D(x),\quad x(0), x(1)\in \mathbb{R}^{n,k},\quad
     x(1)\sim x(0),
     \end{equation}
{admits only the trivial solution $x\equiv 0$.  Otherwise, we have
$x(t)\ne 0\;\forall t\in [0, 1]$ as above. Thus after
 multiplying $x(t)$ by a suitable positive number we may assume that
  $x([0,1])\subset\mathcal{S}=\partial D$, and so $\Sigma_{\mathcal{S}}\ni A(x)=A(x^\ast)+\epsilon$,
  which is a contradiction.}

{Fix a number $\delta>0$ and choose a}  $f\in C^\infty(\mathbb{R}, \mathbb{R})$ such that
   \begin{eqnarray*}
     &f(t)\ge (A(x^\ast)+\epsilon)t,& \,\,\,   t\ge 1, \\
     &f(t)=(A(x^\ast)+\epsilon)t,& \,\,\,   t\;\hbox{large}, \\
     & f(t)=m(H),& \,\,\, 1\le t\le 1+\delta, \\
    & 0\leq f'(t)\le A(x^{\ast})+\epsilon,& \,\,\, t>1+\delta.
   \end{eqnarray*}
 Define $\overline{H}:\mathbb{R}^{2n}\to\mathbb{R}$ by
$\overline{H}(z)=f(H_D(z))$, and $\overline{H}|_D=H$.
Let $\Phi_{\overline{H}}$ be the functional associated to $\overline{H}$ as in (\ref{e:functional}).
Repeating the proof of \cite[Lemma 4]{HoZe90} yields

\begin{lemma}\label{lem:positive}
   Assume that $x:[0,1]\to\mathbb{R}^{2n}$ is a solution of
  $$
  \dot{x}(t)=X_{\overline{H}}(x(t)),\quad x(0), x(1)\in \mathbb{R}^{n,k},\quad
     x(1)\sim x(0)
     $$
  with  $\Phi_{\overline{H}}(x)>0$.
   Then it is nonconstant, sits in $D$ completely, and thus
   is a solution of  $\dot{x}=X_H(x)$ on $D$.
\end{lemma}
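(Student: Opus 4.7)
The plan is to exploit the conservation law $\overline{H}(x(t))\equiv c$ along the orbit together with the piecewise structure of $\overline{H}$, which splits $\mathbb{R}^{2n}$ into three regimes. On the \emph{inner region} $\{\overline{H}<m(H)\}$, which is contained in $K^\circ\subset D^\circ$, we have $\overline{H}=H$ and $X_{\overline{H}}=X_H$. On the \emph{middle region} $\{\overline{H}=m(H)\}=(D\setminus K^\circ)\cup\{1\le j_D^2\le 1+\delta\}$, $\overline{H}$ is locally constant, so $\nabla\overline{H}$ vanishes on its interior and, by $C^1$-continuity, on its closure; thus $X_{\overline{H}}\equiv 0$ on this closed set and, by uniqueness of the $C^{1,1}$ flow, any orbit meeting it must be globally constant. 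On the \emph{outer region} $\{j_D^2>1+\delta\}$, $\overline{H}(z)=f(j_D^2(z))$, so $X_{\overline{H}}$ is proportional to the Hamiltonian vector field of $j_D^2$ and $j_D^2$ is preserved along orbits.

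First I would rule out the case $c=m(H)$: the orbit lies in the middle region and hence is constant, so $\Phi_{\overline{H}}(x)=-m(H)<0$, contrary to hypothesis. Next I would rule out $c>m(H)$: the orbit then lies entirely in the outer region at a constant level $\rho^2:=j_D^2(x(t))>1+\delta$, and using $-J_{2n}\dot x=f'(\rho^2)\nabla j_D^2(x)$ together with the Euler identity $\langle\nabla j_D^2(x),x\rangle=2j_D^2(x)$ one computes
\begin{equation*}
\Phi_{\overline{H}}(x)=\tfrac{1}{2}\int_0^1\langle-J_{2n}\dot x,x\rangle\,dt-\int_0^1\overline{H}(x)\,dt=f'(\rho^2)\rho^2-f(\rho^2).
\end{equation*}
The defining inequalities $f(\rho^2)\ge(A(x^\ast)+\epsilon)\rho^2$ (valid for $\rho^2\ge 1$) and $f'(\rho^2)\le A(x^\ast)+\epsilon$ (valid for $\rho^2>1+\delta$) then yield $\Phi_{\overline{H}}(x)\le 0$, again contradicting the hypothesis. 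The boundary conditions $x(0),x(1)\in\mathbb{R}^{n,k}$ and $x(1)\sim x(0)$ are built into the space on which $\mathfrak{a}$ is defined and do not enter this computation.

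The remaining case $c<m(H)$ forces $x(t)$ into the open set $\{\overline{H}<m(H)\}\subset K^\circ\subset D^\circ$ for every $t$; there $\overline{H}=H$, so $\dot x=X_H(x)$, as claimed. Nonconstancy is then automatic: a constant orbit $x\equiv x_0$ would give $\Phi_{\overline{H}}(x)=-\overline{H}(x_0)\le 0$ (since $\overline{H}\ge 0$ by the defining properties of $f$ and of simple Hamiltonians), contradicting $\Phi_{\overline{H}}(x)>0$. The main delicate step is the flow-invariance of the three regimes, which is why I begin by verifying that $X_{\overline{H}}$ vanishes on the whole closed middle region; this prevents any orbit from crossing between the inner and outer regions without being trapped at a zero of $X_{\overline{H}}$, and thereby reduces the analysis to the three mutually exclusive cases above.
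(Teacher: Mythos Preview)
Your argument is correct and is precisely the Hofer--Zehnder argument that the paper invokes by citing \cite[Lemma~4]{HoZe90}; the three-regime analysis via conservation of $\overline{H}$, the computation $\Phi_{\overline{H}}(x)=f'(\rho^2)\rho^2-f(\rho^2)\le 0$ on the outer shell, and the observation that constants give $\Phi_{\overline{H}}\le 0$ are exactly what that reference does. One small inaccuracy: your set-theoretic identification $\{\overline{H}=m(H)\}=(D\setminus K^\circ)\cup\{1\le j_D^2\le 1+\delta\}$ need not be an equality (there may be points of $K^\circ$ where $H$ attains its maximum, and in principle points with $j_D^2>1+\delta$ where $f$ still equals $m(H)$), but at all such extra points one still has $\nabla\overline{H}=0$---at the former because $m(H)=\max H$, at the latter because $f'\ge 0$ together with $f(1+\delta)=f(\rho^2)=m(H)$ forces $f'(\rho^2)=0$---so your conclusion that $X_{\overline{H}}$ vanishes on the whole level set $\{\overline{H}=m(H)\}$ stands.
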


It remains to prove that there exists a critical point $x$ of
$\Phi_{\overline{H}}$ on $E$ with $\Phi_{\overline{H}}(x)>0$.

\begin{lemma}
 If  a sequence $(x_m)\subset E$ satisfies that $\nabla \Phi_{\overline{H}}(x_m)\rightarrow 0$ in $E$,
 then it has a convergent subsequence in $E$.
   In particular, $\Phi_{\overline{H}}$ satisfies the Palais-Smale condition.
\end{lemma}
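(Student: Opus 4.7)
The proof follows the classical strategy for Palais--Smale conditions for strongly indefinite functionals of the form $\mathfrak{a}-\mathfrak{b}_H$ on $H^{1/2}$: first establish boundedness of the sequence $(x_m)$ in $E$, then extract a strongly convergent subsequence by exploiting that $\nabla\mathfrak{b}_{\overline{H}}=j^{\ast}\circ\nabla\overline{H}$ takes values in a space compactly embedded in $E$. The genuinely nontrivial step is the boundedness, which is obtained by contradiction by rescaling and identifying a would-be nontrivial solution of the limiting problem, and then using the choice of $\epsilon$ (namely $A(x^{\ast})+\epsilon\notin\Sigma_{\mathcal{S}}$) to rule it out.

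\textbf{Boundedness of $(x_m)$.} Suppose for contradiction that $\|x_m\|_E\to\infty$ along a subsequence, and set $y_m:=x_m/\|x_m\|_E$, so $\|y_m\|_E=1$. Dividing $\nabla\Phi_{\overline{H}}(x_m)=x_m^{+}-x_m^{-}-j^{\ast}\nabla\overline{H}(x_m)\to 0$ by $\|x_m\|_E$ gives
$$
y_m^{+}-y_m^{-}-\frac{j^{\ast}\nabla\overline{H}(x_m)}{\|x_m\|_E}\longrightarrow 0\quad\text{in }E.
$$
Since $\overline{H}(z)=f(j_D^2(z))$ with $f'$ bounded, $\nabla\overline{H}$ has at most linear growth, so $\nabla\overline{H}(x_m)/\|x_m\|_E$ is bounded in $L^2$. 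The operator $j^{\ast}$ factors through $H^1_{n,k}$, which embeds compactly into $E$; hence a subsequence of $j^{\ast}\nabla\overline{H}(x_m)/\|x_m\|_E$ converges strongly in $E$. Projecting onto $E^{+}$ and $E^{-}$ yields convergence of $y_m^{+}$ and $y_m^{-}$ in $E$, and since $y_m^0\in E^0=\mathbb{R}^{n,k}$ is bounded in a finite-dimensional space, we may pass to a further subsequence so that $y_m\to y$ strongly in $E$ with $\|y\|_E=1$.

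\textbf{Identification of the limit.} Using that $f'(t)=A(x^{\ast})+\epsilon$ for $t\geq 1+\delta$ and that $\nabla j_D^2$ is $1$-homogeneous, we have, at every $t$ with $j_D(y(t))>0$,
$$
\frac{\nabla\overline{H}(x_m(t))}{\|x_m\|_E}=f'(j_D^2(x_m(t)))\,\nabla j_D^2(y_m(t))\longrightarrow (A(x^{\ast})+\epsilon)\,\nabla j_D^2(y(t)),
$$
while on $\{y=0\}$ the linear growth bound $|\nabla\overline{H}(x_m)|/\|x_m\|_E\leq C(|y_m|+1/\|x_m\|_E)$ forces the left-hand side to $0$. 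Dominated convergence therefore gives $\nabla\overline{H}(x_m)/\|x_m\|_E\to(A(x^{\ast})+\epsilon)\nabla j_D^2(y)$ in $L^2$. Passing to the limit in the equation yields $y^{+}-y^{-}=j^{\ast}(A(x^{\ast})+\epsilon)\nabla j_D^2(y)$; that is, $y$ is a critical point of the functional associated to the Hamiltonian $(A(x^{\ast})+\epsilon)j_D^2$. By Lemma~\ref{lem:critic}, $y$ is a classical $C^1$ solution of
$$
\dot{y}=(A(x^{\ast})+\epsilon)J_{2n}\nabla j_D^2(y),\qquad y(0),y(1)\in\mathbb{R}^{n,k},\qquad y(1)\sim y(0).
$$
By the choice of $\epsilon$ (made so that $A(x^{\ast})+\epsilon\notin\Sigma_{\mathcal{S}}$), the only solution is $y\equiv 0$, contradicting $\|y\|_E=1$. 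Hence $(x_m)$ is bounded in $E$.

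\textbf{Extraction of a convergent subsequence.} Once $(x_m)$ is bounded in $E$, it is bounded in $L^2$, so $\nabla\overline{H}(x_m)$ is bounded in $L^2$. Compactness of $j^{\ast}:L^2\to E$ then provides a subsequence along which $j^{\ast}\nabla\overline{H}(x_m)=\nabla\mathfrak{b}_{\overline{H}}(x_m)$ converges in $E$. Combined with $\nabla\Phi_{\overline{H}}(x_m)\to 0$, this gives convergence of $x_m^{+}-x_m^{-}$ in $E$, while $x_m^0$ lies in the finite-dimensional $E^0$ and has a convergent subsequence; therefore $x_m$ converges in $E$. This proves the first assertion, and the Palais--Smale condition is an immediate consequence since the boundedness of $\Phi_{\overline{H}}(x_m)$ is not used anywhere above. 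The principal obstacle is the limit identification in the rescaling argument: controlling $\nabla\overline{H}(x_m)/\|x_m\|_E$ uniformly enough to pass to $L^2$-limits, and distinguishing the behaviour on $\{y>0\}$, where $f'$ saturates at its asymptotic value, from $\{y=0\}$, where only the linear growth bound is available.
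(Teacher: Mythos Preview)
Your proof is correct and follows the same strategy as the paper: rescaling to establish boundedness by contradiction, then compactness of $\nabla\mathfrak{b}_{\overline{H}}$ together with finite-dimensionality of $E^0$ to extract a convergent subsequence. One minor slip: you assert $f'(t)=A(x^{\ast})+\epsilon$ for $t\ge 1+\delta$, but the construction only gives this for $t$ large (in fact $f'\equiv 0$ on $[1,1+\delta]$); this does not affect your argument since you only invoke it at points where $j_D^2(x_m(t))\to\infty$. In the limit identification the paper takes a slightly cleaner route than your pointwise-plus-dominated-convergence argument: it observes that $\nabla\overline{H}-\nabla Q$ is globally bounded on $\mathbb{R}^{2n}$ (where $Q=(A(x^{\ast})+\epsilon)j_D^2$), so the rescaled difference $(\nabla\overline{H}(x_m)-\nabla Q(x_m))/\|x_m\|_E\to 0$ in $L^2$, and then uses that $\nabla Q$ is globally Lipschitz (its Hessian being $0$-homogeneous) to pass from $\nabla Q(y_m)$ to $\nabla Q(y)$.
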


\begin{proof}
Note that $\nabla \Phi_{\overline{H}}(x_m)=x_m^+-x_m^--\nabla\mathfrak{b}_{\overline{H}}(x_m)$
and that $\nabla\mathfrak{b}_{\overline{H}}$ is compact by Lemma~\ref{e:gradient}.
By these and the compactness of the orthogonal projection $P^0:E\to E^0=\mathbb{R}^{n,k}$
we only need to prove that  $(x_m)$  has a bounded subsequence in $E$.
 Otherwise,  after passing to a subsequence (if necessary), we may assume that
$\lim_{m\to\infty}\|x_m\|_E=\infty$.
Let $y_m=x_m/\|x_m\|_E$. Then $\|y_m\|_E=1$ and for $j^\ast$ in (\ref{e:criticEq}) it holds that
  \begin{equation}\label{e:limit-y}
 y_m^+-y_m^--j^{\ast}\left(\frac{ \nabla \overline{H}(x_m)}{\|x_m\|_E}\right)\rightarrow 0\quad\hbox{in}\quad E.
  \end{equation}
  By the construction of $\overline{H}$, $\overline{H}_{zz}$ is bounded
  and so $\overline{H}$ satisfies (\ref{e:functiona0}). Then
  $\left(\nabla \overline{H}(x_m)/\|x_m\|_E\right)$
  is bounded in $L^2$ and hence $(y_m)$ has a convergent subsequence in $E$.
  Without loss of generality, we assume $y_m\to y$ in $E$. Then $\|y\|_E=1$. Since
  $\overline{H}(z)=Q(z):=(A(x^\ast)+\epsilon)H_D(z)$
   for $|z|$  sufficiently large, which implies that
   $|\nabla\overline{H}(z)-\nabla Q(z)|$ is bounded on $\mathbb{R}^{2n}$,
   as in the proof of Lemma~6 of \cite[page 89]{HoZe94}  we deduce
 $$
 \left\|\frac{\nabla \overline{H}(x_m)}{\|x_m\|_{E}}-\nabla Q(y)\right\|_{L^2}\rightarrow 0.
 $$
 This and (\ref{e:limit-y}) yield
 $y^+-y^--j^{\ast}\nabla Q(y)=0$.
  By Lemma \ref{lem:critic}  $y\in C^\infty_{n,k}([0,1],\mathbb{R}^{2n})$ and solves
 $
 \dot{y}=J(A(x^\ast)+\epsilon)\nabla H_D(y).
 $
 If $j_D(y(t))\ne 0$ then by multiplying a constant we get a leafwise chord
 on $\partial D$ for $\mathbb{R}^{n,k}$ and
 with action $A(y)=A(x^\ast)+\epsilon$. However, we have assumed $A(x^\ast)+\epsilon\notin\Sigma_{\mathcal{S}}$.
 Hence $y(t)=0$ for all $t$ and we get a contradiction since $\|y\|_E=1$. Therefore $(x_m)$ has a bounded subsequence.
\end{proof}

Since $A(x^\ast)>0$,  the projection $x^{\ast+}$ of $x^\ast$ to $E^+$,
does not vanish.   Following \cite{HoZe90}
 we define for $s>0$ and $\tau>0$,
 \begin{eqnarray*}
&& W_s:= E^-\oplus E^0\oplus sx^{\ast+},\\
 &&\Sigma_\tau:=\{x^-+x^0+sx^{\ast+}\,|\,0\le s\le\tau,\;\|x^-+x^0\|\le\tau\}.
 \end{eqnarray*}
Let $\partial\Sigma_\tau$ denote the boundary of $\Sigma_\tau$ in $E^-\oplus E^0\oplus\mathbb{R}x^{\ast+}$. Then
 $$
  \partial\Sigma_\tau=\{x=x^-+x^0 +sx^{\ast+}\in\Sigma_\tau\,|\,\|x^-+x^0\|_{E}=\tau \;\text{or}\; s=0 \;\text{or} \; s=\tau \}.
$$
Repeating the proofs of Lemmas~5 and 6 in \cite{HoZe90} lead to

\begin{lemma}\label{lem:HZ5}
  There exists a constant $C>0$ such that for any $s\ge 0$,
  $$
  \Phi_{\overline{H}}(x)\le-\epsilon\int^1_0H_D(x(t))dt+C,\quad\forall x\in W_s.
  $$
\end{lemma}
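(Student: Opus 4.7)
The strategy is to reduce the stated inequality to a single algebraic bound using the construction of $f$, then establish that bound through a sharp convexity estimate at $x^{\ast}$. From the defining properties of $f$—namely $f(t)\ge(A(x^{\ast})+\epsilon)t$ for $t\ge 1$ together with $\overline{H}\ge 0$—one verifies the pointwise inequality
$$\overline{H}(z)\ge(A(x^{\ast})+\epsilon)\,j_D^2(z)-c_0,\qquad z\in\mathbb{R}^{2n},$$
with $c_0:=A(x^{\ast})+\epsilon$ (the only nontrivial case $j_D^2(z)<1$ uses $\overline{H}\ge 0$ together with $(A(x^{\ast})+\epsilon)j_D^2(z)<c_0$). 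Integrating and inserting into $\Phi_{\overline{H}}=\mathfrak a-\int\overline{H}$ converts the desired inequality, with $C=c_0$, into the task of proving
$$R(x):=\mathfrak a(x)-A(x^{\ast})\int_0^1 j_D^2(x(t))\,dt\,\le\,0\qquad\forall\,x\in\bigcup_{s\ge 0}W_s.$$

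To match the quadratic growth of $\mathfrak a$ on $W_s$, I will produce a sharp quadratic lower bound on $\int j_D^2(x)\,dt$. Convexity of $j_D^2$ at the expansion point $\sigma x^{\ast}$ (with $\sigma\in\mathbb{R}$ a free parameter), combined with the $2$-homogeneity $\nabla j_D^2(\sigma x^{\ast})=\sigma\nabla j_D^2(x^{\ast})$, the Euler identity $\langle\nabla j_D^2(x^{\ast}),x^{\ast}\rangle\equiv 2$, and the Hamiltonian equation $-J_{2n}\dot{x}^{\ast}=A(x^{\ast})\nabla j_D^2(x^{\ast})$ for the carrier (in the smooth reduction), yields
$$\int_0^1 j_D^2(x(t))\,dt\,\ge\,-\sigma^2+\frac{\sigma}{A(x^{\ast})}\int_0^1\langle -J_{2n}\dot{x}^{\ast},x\rangle\,dt.$$
Maximising the right-hand side over $\sigma$ gives
$$A(x^{\ast})\int_0^1 j_D^2(x(t))\,dt\,\ge\,\frac{1}{4A(x^{\ast})}\Bigl(\int_0^1\langle -J_{2n}\dot{x}^{\ast},x\rangle\,dt\Bigr)^{\!2}.$$
For $x=x^-+x^0+sx^{\ast+}\in W_s$ the inner integral equals $s\|x^{\ast+}\|_E^2-\langle x^{\ast-},x^-\rangle_E$: the bilinear-form identity $\int\langle -J_{2n}\dot u,v\rangle\,dt=\langle u^+,v^+\rangle_E-\langle u^-,v^-\rangle_E$ handles the $x^{\pm}$ parts, and the $x^0$-contribution is $\langle -J_{2n}(x^{\ast}(1)-x^{\ast}(0)),x^0\rangle=0$ because $x^{\ast}(1)-x^{\ast}(0)\in V_0^{n,k}$ while $J_{2n}V_0^{n,k}\perp\mathbb{R}^{n,k}$ by \eqref{lagmultipier1}.

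Writing $P:=\|x^{\ast+}\|_E^2$, $N:=\|x^{\ast-}\|_E^2$ and $v:=\langle x^{\ast-},x^-\rangle_E$, the crucial identity $P-N=2\mathfrak a(x^{\ast})=2A(x^{\ast})$ (valid since $x^{\ast}\in C^1_{n,k}$) is what closes the argument. Substituting the quadratic lower bound into $R(x)=\tfrac{s^2}{2}P-\tfrac12\|x^-\|_E^2-A(x^{\ast})\int j_D^2(x)\,dt$ makes the $s^2$-coefficient
$$\frac{P}{2}-\frac{P^2}{4A(x^{\ast})}=-\frac{PN}{4A(x^{\ast})}\le 0,$$
and completing the square in $s$ together with $P-N=2A(x^{\ast})$ yields $R(x)\le\tfrac{v^2}{2N}-\tfrac12\|x^-\|_E^2$ when $N>0$ (and $R(x)\le-\tfrac12\|x^-\|_E^2$ when $N=0$, in which case $v=0$ automatically). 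Cauchy--Schwarz $v^2\le N\|x^-\|_E^2$ then forces $R(x)\le 0$, finishing the proof with $C=c_0=A(x^{\ast})+\epsilon$.

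The main obstacle is precisely this final exact cancellation: both $\mathfrak a(x)$ and the quadratic lower bound on $A(x^{\ast})\int j_D^2(x)\,dt$ grow like $s^2$ as $s\to\infty$, and only the identity $P-N=2A(x^{\ast})$—which encodes that $x^{\ast}$ attains the minimal action of a generalised leafwise chord on $\partial D$—makes the $s^2$-divergences cancel. Replacing $A(x^{\ast})$ by any strictly smaller multiplier would allow $R(x)\to+\infty$, reflecting that the minimality of $A(x^{\ast})$ is indispensable.
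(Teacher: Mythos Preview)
Your argument is essentially correct but contains one genuine gap and is more elaborate than the route the paper (following Hofer--Zehnder) has in mind.

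\textbf{The gap.} The identity $\nabla j_D^2(\sigma x^{\ast})=\sigma\,\nabla j_D^2(x^{\ast})$ holds only for $\sigma\ge 0$: the Minkowski functional $j_D$ is merely \emph{positively} $1$-homogeneous, and in the smooth strictly convex reduction $D$ is not assumed centrally symmetric. Hence your convexity lower bound $\int_0^1 j_D^2(x)\,dt\ge -\sigma^2+\sigma I/A(x^{\ast})$ is valid only for $\sigma\ge 0$, and the unconstrained maximisation over $\sigma$ is illegitimate when $\sigma_{\rm opt}=I/(2A(x^{\ast}))<0$. This is easily patched: if $I=sP-v<0$ then $v>sP\ge 0$, so Cauchy--Schwarz gives $\|x^-\|_E^2\ge v^2/N>s^2P^2/N$ (note $N>0$ here since $v\ne 0$), and $P-N=2A(x^{\ast})>0$ forces $\mathfrak a(x)=\tfrac{s^2P}{2}-\tfrac12\|x^-\|_E^2<\tfrac{s^2P(N-P)}{2N}<0$, whence $R(x)<0$ trivially from $\int j_D^2\ge 0$.

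\textbf{Comparison with the paper.} The paper just cites the Hofer--Zehnder proof, which amounts to taking the \emph{fixed} choice $\sigma=s$ rather than optimising. Writing $x=sx^{\ast}+y$ with $y:=x-sx^{\ast}$ (so $y^+=0$), convexity at $sx^{\ast}(t)$ and the carrier equation give $\int_0^1 j_D^2(x)\,dt\ge s^2-\tfrac{sw}{A(x^{\ast})}$ with $w:=\langle x^{\ast-},y^-\rangle_E$, while the same bilinear-form identity you used gives $\mathfrak a(x)=s^2A(x^{\ast})-sw-\tfrac12\|y^-\|_E^2$. Subtracting, the $s^2A(x^{\ast})$ and $sw$ terms cancel exactly and one obtains $R(x)\le -\tfrac12\|y^-\|_E^2\le 0$ in one line. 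This is the same mechanism as yours---the carrier equation converts the convexity term into the action bilinear form and the identity $\mathfrak a(x^{\ast})=A(x^{\ast})$ produces the cancellation---but the specific choice $\sigma=s\ge 0$ sidesteps the sign issue and eliminates the detour through the quadratic in $s$, completion of the square, and the final Cauchy--Schwarz step.
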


\begin{lemma}\label{lem:HZ6}
   $\Phi_{\overline{H}}|_{\partial\Sigma_\tau}\le 0$ if $\tau>0$ is sufficiently large.
\end{lemma}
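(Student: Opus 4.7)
\smallskip

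\noindent\textbf{Proof proposal.}
The plan is to decompose $\partial\Sigma_\tau$ into its three natural faces and handle each separately. Writing $x=x^-+x^0+sx^{\ast+}$ and $y=x^-+x^0$, these faces are $F_0=\{s=0\}$, $F_\tau=\{s=\tau,\ \|y\|_E\le\tau\}$ and $F_y=\{0\le s\le\tau,\ \|y\|_E=\tau\}$. On $F_0$ the claim is immediate: $\mathfrak{a}(x)=-\tfrac{1}{2}\|x^-\|_E^2\le 0$ and $\overline{H}\ge 0$ force $\Phi_{\overline{H}}(x)\le 0$ for every $\tau$.

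For $F_\tau$ and $F_y$ the key preliminary observation is that $E^-$, $E^0$, $E^+$ are mutually $L^2$-orthogonal (the Fourier decomposition from Corollary~\ref{cor:eigenvalues} respects the sign of the frequency), hence
\[
\|x\|_{L^2}^2=\|x^-\|_{L^2}^2+|x^0|^2+s^2\|x^{\ast+}\|_{L^2}^2.
\]
Combining this with the equivalence $j_D^2(z)\ge |z|^2/R_1$ from (\ref{e:dualestimate}), Lemma~\ref{lem:HZ5} translates into a uniform bound on $\Sigma_\tau$,
\[
\Phi_{\overline{H}}(x)\le -\frac{\epsilon}{R_1}\|x\|_{L^2}^2+C,\qquad \forall\, x\in\Sigma_\tau.
\]
On $F_\tau$ the orthogonality gives $\|x\|_{L^2}^2\ge\tau^2\|x^{\ast+}\|_{L^2}^2$, so the displayed bound is $\le 0$ once $\tau^2\ge CR_1/(\epsilon\|x^{\ast+}\|_{L^2}^2)$.

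The real work is on $F_y$, and here I intend to argue by a dichotomy. Fix a threshold $M>CR_1/\epsilon$ (so that $-\epsilon M/R_1+C\le 0$). If $\|x\|_{L^2}^2\ge M$, the displayed estimate finishes the argument directly. Otherwise, $\|x\|_{L^2}^2<M$ forces simultaneously $|x^0|^2<M$ and $s^2\le M/\|x^{\ast+}\|_{L^2}^2$; since $\|y\|_E^2=\|x^-\|_E^2+|x^0|^2=\tau^2$, the first bound yields $\|x^-\|_E^2>\tau^2-M$, and the pointwise estimate (using only $\overline{H}\ge 0$) gives
\[
\Phi_{\overline{H}}(x)\le\mathfrak{a}(x)=\frac{s^2}{2}\|x^{\ast+}\|_E^2-\frac{\|x^-\|_E^2}{2}
\le \frac{M\|x^{\ast+}\|_E^2}{2\|x^{\ast+}\|_{L^2}^2}-\frac{\tau^2-M}{2},
\]
which is $\le 0$ for all $\tau$ sufficiently large. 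Choosing $M$ first and then $\tau$ large enough to handle all three faces simultaneously yields the conclusion.

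The main obstacle is a subtlety specific to the $H^{1/2}$ setting: Lemma~\ref{lem:HZ5} only controls $\int_0^1 j_D^2(x)$, equivalently $\|x\|_{L^2}^2$, and the $L^2$-norm of $x^-$ can be arbitrarily small while $\|x^-\|_E$ is arbitrarily large (take $x^-$ concentrated on a single very negative Fourier mode). Consequently Lemma~\ref{lem:HZ5} alone does not cover the regime on $F_y$ where $x^-$ dominates in $E$-norm but not in $L^2$-norm. The dichotomy handles this precisely because a small $L^2$-norm forces $s$ to be small, so the positive term $\tfrac{s^2}{2}\|x^{\ast+}\|_E^2$ in $\mathfrak{a}(x)$ is controlled independently of $\tau$, and the negative term $-\tfrac{1}{2}\|x^-\|_E^2\le -\tfrac{1}{2}(\tau^2-M)$ takes over.
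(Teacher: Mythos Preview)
Your proof is correct and follows the same overall strategy the paper invokes by citing \cite{HoZe90}: the three-face decomposition of $\partial\Sigma_\tau$, the trivial estimate on $\{s=0\}$, the use of Lemma~\ref{lem:HZ5} together with $j_D^2(z)\ge |z|^2/R_1$ to get the $L^2$-coercive bound, and the auxiliary estimate $\Phi_{\overline H}\le\mathfrak a$ coming from $\overline H\ge 0$. Your dichotomy on $\|x\|_{L^2}^2$ (which forces $s$ to be small in the complementary case and lets $-\tfrac12\|x^-\|_E^2$ dominate) is one clean way to organize the last face; an equivalent packaging, closer to how Hofer--Zehnder present it, is to take a convex combination of the two bounds $\Phi_{\overline H}(x)\le -\tfrac{\epsilon}{R_1}\|x\|_{L^2}^2+C$ and $\Phi_{\overline H}(x)\le \tfrac{s^2}{2}\|x^{\ast+}\|_E^2-\tfrac12\|x^-\|_E^2$ with weights chosen so the $s^2$-coefficient becomes nonpositive, yielding directly $\Phi_{\overline H}(x)\le -c(\|x^-\|_E^2+|x^0|^2)+C'=-c\tau^2+C'$ on $F_y$.
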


As in the proof of Lemma~9 of \cite[\S3.4]{HoZe94}, we can obtain

\begin{lemma}\label{lem:HZ7}
 For $z_0\in \mathbb{R}^{n,k}\cap H^{-1}(0)$ and $\Gamma_\alpha:=\{z_0+x\,|\,x\in E^+\;\hbox{and}\;\|x\|_E=\alpha\}$
there exist constants $\alpha>0$ and $\beta>0$ such that
$\Phi_{\overline{H}}|_{\Gamma_\alpha}\geq\beta>0$.
  \end{lemma}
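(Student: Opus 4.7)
The plan is to adapt the standard linking estimate from the Hofer--Zehnder construction (compare \cite[\S3.4, Lemma~9]{HoZe94}) to our coisotropic variational setting. By Definition~\ref{def:simple}(ii) applied to $H\in\mathcal{H}(D,D\cap\mathbb{R}^{n,k})$, there exists an open set $U\subset D$ with $U\cap\mathbb{R}^{n,k}\ne\emptyset$ on which $H$ vanishes; I would pick $z_0\in U\cap\mathbb{R}^{n,k}$ and a radius $\rho>0$ so that $\overline{B(z_0,\rho)}\subset U$. Since $\overline{H}=H$ on $D$, this forces $\overline{H}\equiv 0$ on $B(z_0,\rho)$.

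For any $x=z_0+v$ with $v\in E^+$ and $\|v\|_E=\alpha$, the decomposition $E=E^+\oplus E^0\oplus E^-$ yields $x^+=v$, $x^0=z_0$, $x^-=0$, hence
$$
\mathfrak{a}(x)=\tfrac{1}{2}\|v\|_E^2=\tfrac{\alpha^2}{2}.
$$
The essential analytic input is a super-quadratic upper bound on $\overline{H}$ centred at $z_0$: for any fixed real $s>2$ there exists a constant $C_s>0$ such that
$$
0\le \overline{H}(z_0+w)\le C_s|w|^s\qquad\forall\, w\in\mathbb{R}^{2n}.
$$
The inequality is trivial for $|w|\le\rho$ by the local vanishing of $\overline{H}$ near $z_0$; for $|w|\ge\rho$ the at-most-quadratic growth $\overline{H}(z)\le C_1|z|^2+C_2$ (inherited by $\overline{H}$ from its construction, in the spirit of (\ref{e:functiona0})) combined with the triangle inequality bounds $\overline{H}(z_0+w)$ by $C_3|w|^2\le (C_3\rho^{2-s})|w|^s$.

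Next I would invoke the continuous Sobolev embedding $E=H^{1/2}_{n,k}\hookrightarrow L^s([0,1],\mathbb{R}^{2n})$, valid for every finite $s$ in our one-dimensional time variable, to produce a constant $C'>0$ with
$$
\mathfrak{b}_{\overline{H}}(z_0+v)=\int_0^1\overline{H}(z_0+v(t))\,dt\le C_s\|v\|_{L^s}^s\le C'\|v\|_E^s=C'\alpha^s.
$$
Putting the two estimates together gives $\Phi_{\overline{H}}(z_0+v)\ge \tfrac{\alpha^2}{2}-C'\alpha^s$, which for $\alpha>0$ small enough is at least $\beta:=\alpha^2/4>0$; this produces the required $\alpha$ and $\beta$.

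The only genuine subtlety is packaging the super-quadratic pointwise bound on $\overline{H}(z_0+\cdot)$: the quadratic behaviour of $\overline{H}$ at infinity has to be traded, via the local vanishing and a choice of shell radius $\rho$, for an arbitrarily high-power bound in $|w|$. Once this is in place, the remaining steps—the quadratic form calculation on $\Gamma_\alpha$ and the (borderline critical but still fine) fractional Sobolev embedding into $L^s$—are routine and no further ingredients appear to be needed.
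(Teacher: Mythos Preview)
Your proposal is correct and follows exactly the approach the paper indicates: the paper gives no proof of this lemma beyond the reference ``As in the proof of Lemma~9 of \cite[\S3.4]{HoZe94}, we can obtain\ldots'', and your argument is precisely that Hofer--Zehnder estimate transplanted to the coisotropic space $E=H^{1/2}_{n,k}$. The decomposition $x^+=v$, $x^0=z_0$, $x^-=0$ for $z_0\in E^0=\mathbb{R}^{n,k}$, the trade of local vanishing plus global quadratic growth for a bound $\overline{H}(z_0+w)\le C_s|w|^s$ with $s>2$, and the borderline embedding $H^{1/2}_{n,k}\hookrightarrow L^s$ for all finite $s$ are all the ingredients needed; one minor remark is that your choice of $z_0$ in the open zero-set $U$ (rather than an arbitrary point of $H^{-1}(0)$) is exactly what is required for the linking argument downstream.
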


Let $\phi^t$ be the negative gradient flow of $\Phi_{\overline{H}}$.
As in the proofs of Lemma~7 of \cite[\S3.3]{HoZe94} and Lemma~10 of \cite[\S3.4]{HoZe94} respectively, we have also
the following two lemmas.
\begin{lemma}
$\phi^t$ has the form
$\phi^t(x)=e^t x^-+x^0+e^{-t}x^++K(t,x)$,
where $K:\mathbb{R}\times E\to E$ is continuous and compact.
\end{lemma}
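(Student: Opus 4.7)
The plan is to apply variation of parameters (Duhamel's formula) to the gradient flow ODE, exploiting that the nonlinear part of $\nabla\Phi_{\overline{H}}$ is compact. By construction $\Phi_{\overline{H}}=\mathfrak{a}-\mathfrak{b}_{\overline{H}}$ with $\nabla\mathfrak{a}(x)=P^+x-P^-x$, so the negative gradient flow equation reads
\[
\dot{\phi}^t(x)=A\phi^t(x)+\nabla\mathfrak{b}_{\overline{H}}(\phi^t(x)),\qquad A:=P^--P^+,
\]
where the bounded self-adjoint linear operator $A$ has spectrum $\{+1,0,-1\}$ on $E^-$, $E^0$, $E^+$ respectively. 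Its one-parameter group is therefore
\[
e^{tA}x=e^{t}x^-+x^0+e^{-t}x^+.
\]

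The first step is to write the unique global solution by Duhamel as
\[
\phi^t(x)=e^{tA}x+\int_0^t e^{(t-s)A}\,\nabla\mathfrak{b}_{\overline{H}}(\phi^s(x))\,ds,
\]
which yields the claimed decomposition with
\[
K(t,x):=\int_0^t e^{(t-s)A}\,\nabla\mathfrak{b}_{\overline{H}}(\phi^s(x))\,ds.
\]
Global existence of $\phi^t$ is guaranteed because $\nabla\mathfrak{b}_{\overline{H}}$ is globally Lipschitz by Lemma~\ref{e:gradient}; joint continuity of $K:\mathbb{R}\times E\to E$ then follows from the joint continuity of $\phi^s(x)$ and of $\nabla\mathfrak{b}_{\overline{H}}$, together with the continuity of $s\mapsto e^{(t-s)A}$ in operator norm.

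The key step is compactness of $K(t,\cdot)$ for each fixed $t$. For a bounded $B\subset E$, the global Lipschitz bound together with Gronwall's inequality shows that $\mathcal{O}_t:=\{\phi^s(x):s\in[0,t],\,x\in B\}$ is bounded in $E$. Since $\nabla\mathfrak{b}_{\overline{H}}:E\to E$ is compact (Lemma~\ref{e:gradient}), the set $\mathcal{R}:=\nabla\mathfrak{b}_{\overline{H}}(\mathcal{O}_t)$ has compact closure. As $\sigma\mapsto e^{\sigma A}$ is norm-continuous on the compact interval $[0,t]$, the set
\[
\mathcal{Q}:=\{e^{(t-s)A}y:s\in[0,t],\,y\in\overline{\mathcal{R}}\}
\]
is compact in $E$, being the continuous image of the compact space $[0,t]\times\overline{\mathcal{R}}$. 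Since $K(t,x)$ is a Bochner integral whose integrand lies in $\mathcal{Q}$ for all $s\in[0,t]$ and $x\in B$, the value $K(t,x)$ belongs to $t\cdot\overline{\mathrm{conv}}(\mathcal{Q})$, which is itself compact by Mazur's theorem. Hence $K(t,B)$ has compact closure in $E$. The main subtlety is the uniform-in-$x$ control on orbits over bounded $B$, but this is routine given the global Lipschitz nature of the nonlinearity.
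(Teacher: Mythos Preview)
Your proof is correct and follows essentially the same approach as the paper's reference (Lemma~7 of \cite[\S3.3]{HoZe94}), namely the variation-of-constants formula applied to the linear part $e^{tA}$ together with the compactness and global Lipschitz property of $\nabla\mathfrak{b}_{\overline{H}}$. Your use of Mazur's theorem to package the compactness argument is slightly slicker than the usual subsequence extraction, but the underlying mechanism is identical; note also that your argument for fixed $t$ extends verbatim to bounded $t$-intervals, which is what ``$K$ compact on $\mathbb{R}\times E$'' means here.
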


 \begin{lemma}\label{positive+}
$\phi^t(\Sigma_\tau)\cap\Gamma_\alpha\neq \emptyset, \,\forall t\geq 0$.
 \end{lemma}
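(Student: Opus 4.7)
The plan is to adapt the classical linking-degree argument of Hofer-Zehnder (cf. Lemma~10 of \cite[\S3.4]{HoZe94}) to our boundary-value Banach space $E = H^{1/2}_{n,k}$. Geometrically, $\Sigma_\tau$ is a ``box'' in $V := E^-\oplus E^0\oplus\mathbb{R}x^{\ast+}$ while $\Gamma_\alpha$ is a sphere inside $z_0 + E^+$; the two sets link, and one expects the flow $\phi^t$ -- a compact perturbation of the hyperbolic linear flow $x^-+x^0+x^+ \mapsto e^{t}x^-+x^0+e^{-t}x^+$ -- to be unable to separate them.

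The first step is to show $\phi^t(\partial\Sigma_\tau)\cap\Gamma_\alpha=\emptyset$ for every $t\ge 0$. Since $\phi^t$ is the negative gradient flow of $\Phi_{\overline H}$ we have $\Phi_{\overline H}(\phi^t(x))\le\Phi_{\overline H}(x)$ for all $x$ and $t\ge 0$; combined with $\Phi_{\overline H}|_{\partial\Sigma_\tau}\le 0$ (Lemma~\ref{lem:HZ6}) and $\Phi_{\overline H}|_{\Gamma_\alpha}\ge\beta>0$ (Lemma~\ref{lem:HZ7}), this yields the claim. Next, I would convert ``$\phi^t(x)\in\Gamma_\alpha$'' into a fixed-point equation on $V$. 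Using the preceding lemma's decomposition $\phi^t(x)=e^{t}x^-+x^0+e^{-t}x^++K(t,x)$ with $K$ compact, define $B_t:\Sigma_\tau\to V$ by
\[
B_t(x^-+x^0+sx^{\ast+}) := e^{-t}P^-\phi^t(x) + (P^0\phi^t(x)-z_0) + \frac{\|P^+\phi^t(x)\|_E}{\|x^{\ast+}\|_E}\,x^{\ast+}.
\]
The prefactor $e^{-t}$ in the first summand absorbs the hyperbolic dilation on $E^-$, while the remaining two summands take values in finite-dimensional subspaces, so $B_t-\mathrm{id}_V$ is compact on $\Sigma_\tau$ and the family $\{B_t\}_{t\in[0,T]}$ is jointly equi-compact. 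By construction, $B_t(x)=y_\alpha:=(\alpha/\|x^{\ast+}\|_E)x^{\ast+}$ if and only if $P^-\phi^t(x)=0$, $P^0\phi^t(x)=z_0$ and $\|P^+\phi^t(x)\|_E=\alpha$, i.e., iff $\phi^t(x)\in\Gamma_\alpha$; by the first step, $y_\alpha\notin B_t(\partial\Sigma_\tau)$ for all $t\ge 0$, so the Leray-Schauder degree $\deg_{\mathrm{LS}}(B_t,\Int\,\Sigma_\tau,y_\alpha)$ is well defined.

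The conclusion then follows from homotopy invariance of the Leray-Schauder degree: it is constant in $t\ge 0$, and a direct calculation at $t=0$ gives $B_0(x)=x-z_0$, so that $B_0^{-1}(y_\alpha)$ consists of the single interior point $z_0+(\alpha/\|x^{\ast+}\|_E)x^{\ast+}\in\Sigma_\tau$ (for $\tau$ sufficiently large, which is the choice already made in Lemma~\ref{lem:HZ6}), giving degree $\pm 1$. Nonzero degree then produces the required $x\in\Sigma_\tau$ with $\phi^t(x)\in\Gamma_\alpha$. The main technical obstacle is verifying the compactness of $B_t-\mathrm{id}_V$: the factor $e^{-t}$ in the first summand is indispensable, since without it the term $(e^{t}-1)x^-$ would be an unbounded non-compact perturbation and Leray-Schauder degree would not apply.
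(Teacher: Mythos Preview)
Your proposal is correct and follows essentially the same linking--degree argument as Lemma~10 of \cite[\S3.4]{HoZe94}, which is precisely what the paper invokes. The key ingredients---the boundary separation via Lemmas~\ref{lem:HZ6} and \ref{lem:HZ7}, the rescaling $e^{-t}P^-\phi^t(x)=x^-+e^{-t}P^-K(t,x)$ to render $B_t-\mathrm{id}_V$ a compact perturbation, and the explicit degree computation at $t=0$---match the standard Hofer--Zehnder scheme, adapted here only in that $E^0=\mathbb{R}^{n,k}$ and $z_0$ is chosen in $\mathbb{R}^{n,k}\cap H^{-1}(0)$.
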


Define $\mathcal{F}:=\{\phi^t(\Sigma_\tau)\,|\, t\geq 0\}$ and
$$
c(\Phi_{\overline{H}}, \mathcal{F}):=\inf_{t\geq 0}\sup_{x\in \phi^t(\Sigma_\tau)}\Phi_{\overline{H}}(x).
$$
It follows from Lemmas~\ref{lem:HZ7} and \ref{positive+} that
 $$
 0<\beta\leq \inf _{x\in\Gamma_\alpha}\Phi_{\overline{H}}(x)\leq \sup_{x\in \phi^t(\Sigma_\tau)}\Phi_{\overline{H}}(x)\;\;
\forall t\geq 0,
$$
and hence $c(\Phi_{\overline{H}}, \mathcal{F})\geq\beta>0$.
On the other hand,  since $\Sigma_\tau$ is bounded, using the fact that $\Phi_{\overline{H}}$ maps  bounded sets into bounded sets we arrive at
$$
c(\Phi_{\overline{H}}, \mathcal{F})\leq \sup_{x\in\Sigma_\tau}\Phi_{\overline{H}}(x)<\infty.
$$
Thus the Minimax Lemma on \cite[page 79]{HoZe94} yields a critical point $x$ of $\Phi_{\overline{H}}$
with $\Phi_{\overline{H}}(x)=c(\Phi_{\overline{H}}, \mathcal{F})>0$.

Now Lemmas~\ref{lem:positive} and  \ref{lem:critic} together give the proof of (\ref{capacity}) in the case
that $D$ contains $0$ and is bounded,  smooth and strictly convex.

\section{Proofs of Theorem~\ref{th:represention*}
and Corollaries~\ref{cor:ellipsoid}--\ref{cor:polyDisk}}\label{sec:4}
\setcounter{equation}{0}

\subsection{Proof of Theorem~\ref{th:represention*}}\label{sec:4.1}

By the arguments above Section~\ref{sec:3.1} we may assume $0\in D$ in the following.

We first prove  (\ref{e:add1}) of Theorem~\ref{th:represention*}.
Since $D\cap \mathbb{R}^{n,k}\ne\emptyset$ implies  $D\cap \mathbb{R}^{n,k+1}\ne\emptyset$,
{by Theorem~\ref{th:represention}} and Proposition~\ref{prop:Brun.0} we obtain
\begin{equation}\label{e:ad3.2-}
c_{\rm LR}(D,D\cap \mathbb{R}^{n,k+1})=\min_{x\in\mathcal{A}_2^{n,k+1}}I_2
=\min_{x\in \mathcal{A}_2^{n,k+1}}\int_0^1H_D^\ast(-J_{2n}\dot{x})dt.
\end{equation}
{Let  $x\in\mathcal{A}^{n,k+1}_2$  such that}
\begin{equation}\label{e:ad3.2}
c_{\rm LR}(D,D\cap \mathbb{R}^{n,k+1})=\int_0^1H_D^\ast(-J_{2n}\dot{x})dt.
\end{equation}
Write
$x(t)=(q_1(t),\cdots,q_n(t),p_1(t),\cdots,p_n(t))$.
Then $x(0)$, $x(1)\in\mathbb{R}^{n,k+1}$ and $x(0)\sim x(1)$ imply that
\begin{eqnarray}\label{e:ad3.3}
&&\hspace{-4mm}p_j(0)=p_j(1)=0,\quad\forall j>k+1,\\
&&\hspace{-4mm}q_i(0)=q_i(1)\; \forall 1\le i\le k+1,\qquad p_j(0)=p_j(1)\;\forall 1\le j\le k+1.\label{e:ad3.4}
\end{eqnarray}
Define
\begin{equation}\label{e:ad3.5}
c:=(0,\cdots,0,p_{k+1}(0),0,\cdots,0)\in\mathbb{R}^{2n}
\end{equation}
 with the $(k+n+1)$-th coordinate equal to $p_{k+1}(0)$ and others $0$, and
 \begin{equation}\label{e:ad3.6}
 y(t):=x(t)-c,\quad\forall 0\le t\le 1.
 \end{equation}
From this, equalities (\ref{e:ad3.3}) and (\ref{e:ad3.4})-(\ref{e:ad3.5}) we get
\begin{equation}\label{e:ad3.7}
y(0),y(1)\in\mathbb{R}^{n,k} \quad\hbox{and}\quad y(0)\sim y(1).
\end{equation}
Moreover, since $\int_0^1x(t)dt\in J_{2n}V_0^{n,k+1}\subset J_{2n}V_0^{n,k}$ and $c\in J_{2n}V_0^{n,k}$, it holds that
$$
\int_0^1 y(t)dt=\int_0^1x(t)dt-c\in J_{2n}V_0^{n,k}.
$$
This and  (\ref{e:ad3.7}) show that $y\in\mathscr{F}_2^{n,k}$.

Note that $J_{2n}(x(1)-x(0))\in J_{2n}V_0^{n,k+1}$ and $c\in\mathbb{R}^{n,k+1}$. We have $\langle J_{2n}(x(1)-x(0)),c\rangle=0$ and hence
\begin{eqnarray*}
A(y)=\frac{1}{2}\int_0^1\langle-J_{2n}\dot{y},y\rangle dt
    &=&\frac{1}{2}\int_0^1\langle-J_{2n}\dot{x},x-c\rangle dt\\
    &=&A(x)+\frac{1}{2}\langle J_{2n}(x(1)-x(0)),c\rangle =1.
\end{eqnarray*}
Then $y\in\mathcal{A}^{n,k}_2$. Using 
Theorem \ref{th:represention} and Proposition~\ref{prop:Brun.0} again, we arrive at
\begin{eqnarray*}
c_{\rm LR}(D,D\cap \mathbb{R}^{n,k})&=&\min_{z\in \mathcal{A}_2^{n,k}}\int_0^1H_D^\ast(-J_{2n}\dot{z})dt\\
&\le& \int_0^1H_D^\ast(-J_{2n}\dot{y})dt\\
&=&\int_0^1H_D^\ast(-J_{2n}\dot{x})dt
=c_{\rm LR}(D,D\cap \mathbb{R}^{n,k+1}).
\end{eqnarray*}
Here the second and third equalities come from (\ref{e:ad3.6}) and (\ref{e:ad3.2}), respectively.
(\ref{e:add1}) is proved.

Next let us prove (\ref{e:add2}) of Theorem~\ref{th:represention*}.
Let
$$
\mathcal{E}=\{x\in W^{1,2}([0,1],\mathbb{R}^{2n})\,\Big|\,x(0)=x(1),\; \int_{0}^1x(t)dt=0,\; A(x)=1\}.
$$
By \cite[Proposition~2.1]{AAO08} (see also \cite[\S2.12]{MoZe} or \cite[\S1.5]{HoZe94}),  we have
$$
c_{\rm HZ}(D)=\min_{{z}\in \mathcal{E}}\int_0^1H_D^\ast(-J_{2n}\dot{{z}})dt.
$$
Suppose that $x\in \mathcal{E}$ satisfies
$$
c_{\rm HZ}(D)=\int_0^1H_D^\ast(-J_{2n}\dot{x})dt.
$$
Write $x(t)=(q_1(t),\cdots,q_n(t),p_1(t),\cdots,p_n(t))$. Then $x\in \mathcal{E}$ implies that $x(0)=x(1)$.
In particular, $p_n(0)=p_n(1)$. Let
$c:=(0,\cdots,0,p_n(0))\in\mathbb{R}^{2n}$
with the $2n$th coordinate equal to $p_n(0)$ and others $0$. It sits in
$J_{2n}V^{n,n-1}_0$ by (\ref{lagmultipier1}). Define
$y(t):=x(t)-c$.
Clearly, $y(0)$, $y(1)\in\mathbb{R}^{n,n-1}$, and $y(0)\sim y(1)$ since $y(1)-y(0)=0$. Moreover,
$$
\int_0^1y(t)dt=\int_{0}^1x(t)dt-c=-c\in J_{2n}V^{n,n-1}_0,
$$
and that $x(1)-x(0)=0$ implies
\begin{eqnarray*}
A(y)&=&\frac{1}{2}\int_0^1\langle-J_{2n}\dot{y},y\rangle dt\\
    &=&\frac{1}{2}\int_0^1\langle-J_{2n}\dot{x},x-c\rangle dt\\
    &=&A(x)+\langle J_{2n}(x(1)-x(0)),c\rangle
    =A(x)=1.
\end{eqnarray*}
Hence $y\in \mathcal{A}^{n,n-1}_2$. But it is obvious that
$$
\int_0^1H_D^\ast(-J_{2n}\dot{y})dt=\int_0^1H_D^\ast(-J_{2n}\dot{x})dt.
$$
From (\ref{e:ad3.2-}) and these we can derive (\ref{e:add2}) as follows:
\begin{eqnarray*}
c_{\rm LR}(D,D\cap \mathbb{R}^{n,n-1})&=&\min_{{z}\in\mathcal{A}^{n,n-1}_2}\int_0^1H_D^\ast(-J_{2n}\dot{{z}})dt\\
            &\le& H_D^\ast(-J_{2n}\dot{y})dt\\
            &=&\int_0^1H_D^\ast(-J_{2n}\dot{x})dt
            =c_{\rm HZ}(D).
\end{eqnarray*}

\subsection{Proof of Corollary \ref{cor:ellipsoid}}\label{sec:4.2}

For the sake of convenience we understand elements of $\mathbb{R}^{2n}$ as
column vectors. Note that $E(r_1,\cdots,r_n)=\{q<1\}$, where
$q(z)=\frac{1}{2}\langle Sz, z\rangle$ with $S={\rm Diag}(2/r^2_1,\cdots, 2/r^2_n, 2/r^2_1,\cdots, 2/r^2_n)$.
Since the Hamiltonian vector field of the quadratic form $q(z)$ on $\mathbb{R}^{2n}$
is $X_q(z)=J_{2n}Sz$, every characteristic  on $\partial E(r_1,\cdots,r_n)$ may be parameterized as the form
$$
[0, T]\ni \tau\mapsto \gamma_z(\tau):=\exp(\tau J_{2n}S)z\in \partial E(r_1,\cdots,r_n)
$$
 where $z\in\partial E(r_1,\cdots,r_n)$. Noting that $S$
 commutes with $J_{2n}$, it is easily computed that
  \begin{eqnarray*}
 &\exp(\tau J_{2n}S)=\\
 &\hspace{-3mm}\left(
           \begin{array}{cc}
             {\rm Diag}(\cos(2\tau/r^2_1),\cdots, \cos(2\tau/r^2_n))& -{\rm Diag}(\sin(2\tau/r^2_1),\cdots, \sin(2\tau/r^2_n)) \\
            {\rm Diag}(\sin(2\tau/r^2_1),\cdots, \sin(2\tau/r^2_n)) & {\rm Diag}(\cos(2\tau/r^2_1),\cdots, \cos(2\tau/r^2_n)) \\
           \end{array}
         \right)
\end{eqnarray*}
Hence for $z=(q_1,\cdots,q_n,p_1,\cdots,p_k,0,\cdots,0)^t\in\partial E(r_1,\cdots,r_n)\cap\mathbb{R}^{n,k}$, i.e.,
\begin{equation}\label{e:4.1}
 \sum^k_{j=1}\frac{q_j^2+p_j^2}{r^2_j}+ \sum^n_{j=k+1}\frac{q_j^2}{r^2_j}=1,
  \end{equation}
 we have $\exp(\tau J_{2n}S)z=(X_1(\tau),\cdots, X_n(\tau), Y_1(\tau),\cdots,Y_n(\tau))^t$,
  where
 \begin{eqnarray*}
 &&X_j(\tau)=q_j\cos(2\tau/r^2_j)-p_j\sin(2\tau/r^2_j),\quad j=1,\cdots,k,\\
 &&X_j(\tau)=q_j\cos(2\tau/r^2_j),\quad j=k+1,\cdots,n,\\
&&Y_j(\tau)=q_j\sin(2\tau/r^2_j)+p_j\cos(2\tau/r^2_j),\quad j=1,\cdots,k,\\
 &&Y_j(\tau)=q_j\sin(2\tau/r^2_j),\quad j=k+1,\cdots,n.
 \end{eqnarray*}
The condition that $\exp(TJ_{2n}S)z\in\mathbb{R}^{n,k}$ for  $T>0$ is equivalent to the following
 \begin{equation}\label{e:4.2}
 Y_j(T)=q_j\sin(2T/r^2_j)=0,\quad j=k+1,\cdots,n.
  \end{equation}
 In this case the requirement that $\exp(TJ_{2n}S)z-z\in V_0^{n,k}$
 is equivalent to the following
 \begin{equation}\label{e:4.3}
 \left.\begin{array}{ll}
 &q_j\left(\cos(2T/r^2_j)-1\right)-p_j\sin(2T/r^2_j)=0,\\
 &q_j\sin(2T/r^2_j)+p_j\left(\cos(2T/r^2_j)-1\right)=0,\\
 &\quad j=1,\cdots, k.
 \end{array}\right\}
   \end{equation}
 A direct computation also shows
 \begin{equation}\label{e:4.4}
 \langle-J_{2n}\dot\gamma_z(\tau), \gamma_z(\tau)\rangle=2\sum^k_{j=1}\frac{q_j^2+p_j^2}{r^2_j}+ 2\sum^n_{j=k+1}\frac{q_j^2}{r^2_j}=2
  \end{equation}
  and hence
  \begin{eqnarray}\label{e:4.5}
 A(\gamma_z)&=&\frac{1}{2}\int^T_0\langle-J_{2n}\dot\gamma_z(\tau), \gamma_z(\tau)\rangle d\tau\nonumber\\
 &=& T\sum^k_{j=1}\frac{q_j^2+p_j^2}{r^2_j}+ T\sum^n_{j=k+1}\frac{q_j^2}{r^2_j}=T.
  \end{eqnarray}

Our aim is  to find the smallest $T>0$ for all
$$
z=(q_1,\cdots,q_n,p_1,\cdots,p_k,0,\cdots,0)^t\in\partial E(r_1,\cdots,r_n)\cap\mathbb{R}^{n,k}
$$
satisfying (\ref{e:4.2}) and (\ref{e:4.3}).

\noindent{\bf Case 1}. $q_j=0$, $j=k+1,\cdots, n$. In this situation (\ref{e:4.1}) implies that for some $j\in\{1,\cdots,k\}$,
  \begin{equation}\label{e:4.6}
 \left.\begin{array}{ll}
 &q_j\left(\cos(2T/r^2_j)-1\right)-p_j\sin(2T/r^2_j)=0,\\
 &q_j\sin(2T/r^2_j)+p_j\left(\cos(2T/r^2_j)-1\right)=0
 \end{array}\right\}
   \end{equation}
has nonzero solutions and thus
$
\left(\cos(2T/r^2_j)-1\right)^2+\left( \sin(2T/r^2_j)\right)^2=0
$
or equivalently $T=m\pi r_j$ for some $m\in\N$.  It follows that $A(\gamma_z)=m\pi r^2_j$.
Let $r_1=\min\{r_i\,|\, 1\le i\le k\}$ without loss of generality.
Choose
$$
z=(q_1,\cdots,q_n,p_1,\cdots,p_n)^t\in\partial E(r_1,\cdots,r_n)\cap\mathbb{R}^{n,k}
$$
such that
$q_1^2+ p_1^2=r^2_1$ and $(q_j,p_j)=0$ for $j\ne 1$. Then $z$ and $T=\pi r^2_1$
satisfy (\ref{e:4.1})-(\ref{e:4.3}), and so $A(\gamma_z)=\pi\min\{r^2_j\,|\, 1\le j\le k\}$.

\noindent{\bf Case 2}. $q_j\ne 0$ for some $j\in\{k+1,\cdots, n\}$. By (\ref{e:4.2}) we have $T=\frac{m\pi}{2}r^2_j$
with $m\in\N$, and hence $A(\gamma_z)=\frac{m\pi}{2}r^2_j$.
On the other hand, let $r_n=\min\{r_i\,|\, k+1\le i\le n\}$ without loss of generality.
Choose $z=(q_1,\cdots,q_n,p_1,\cdots,p_n)^t\in\partial E(r_1,\cdots,r_n)\cap\mathbb{R}^{n,k}$ such that
$q_n^2=r^2_n$ and $p_1=\cdots=p_n=0$ and $q_j=0$ for $j\ne n$. Then $z$ and $T=\frac{\pi}{2}r^2_n$
satisfy (\ref{e:4.1})-(\ref{e:4.3}), and so $A(\gamma_z)=\frac{\pi}{2}\min\{r^2_j\,|\, k+1\le j\le n\}$.

Summarizing up the above two cases, (\ref{e:ellipsoid}) is derived from Theorem~\ref{th:represention}.

\subsection{Proof of Corollary \ref{cor:ellipsoid+}}\label{sec:4.3}

Let $H(x)=|x-{\bf a}|^2$ for $x\in\mathbb{R}^{2n}$. Then $\partial B^{2n}({\bf a},1)=H^{-1}(1)$. Let
$x:[0,T]\to H^{-1}(1)$ for $T>0$ satisfy
\begin{equation}\label{e:chord2}
\dot{x}=J\nabla H(x),\quad
x(0),x(T)\in\mathbb{R}^{n,k} \;\hbox{and}\; x(0)\sim x(T).
\end{equation}
Then it has action
\begin{eqnarray*}
A(x)&=&\frac{1}{2}\int_0^T\langle -J\dot{x},x\rangle dt\\
&=&\frac{1}{2}\int_0^T\langle -J\dot{x},x-{\bf a}\rangle dt+\frac{1}{2}\langle -J(x(T)-x(0)),{\bf a }\rangle\\
&=&T+\frac{1}{2}\langle -J(x(T)-x(0)),{\bf a }\rangle.
\end{eqnarray*}
Let
$x(0)=(q_1,\cdots,q_n,p_1,\cdots,p_k,0,\cdots,0)$
and $x(T)=(\hat{q}_1,\cdots,\hat{q}_n,\hat{p}_1,\cdots,\hat{p}_n)$.
We have
\begin{equation}\label{e:chord}
x(T)=e^{2TJ}(x(0)-{\bf a })+{\bf a },
\end{equation}
which leads to
\begin{eqnarray*}
&&\hat{q}_i=q_i\cos(2T)-p_i\sin(2T),\;\hbox{for}\; 1\le i\le k;\\
&&\hat{q}_i=q_i\cos(2T),\;\hbox{for}\;k+1\le i\le n-1;\\
&&\hat{q}_n=q_n\cos(2T)+a\sin(2T);\\
&&\hat{p}_i=p_i\cos(2T)+q_i\sin(2T),\;\hbox{for}\; 1\le i\le k;\\
&&\hat{p}_i=q_i\sin(2T),\;\hbox{for}\;k+1\le i\le n-1\\
&&\hat{p}_n=a(1-\cos(2T))+q_n\sin (2T).
\end{eqnarray*}
Then second line in (\ref{e:chord2}) implies
\begin{eqnarray}
&&\hat{p}_i=0,\;\hbox{for}\; k+1\le i\le n;\label{e:chord3}\\
&&\hat{q}_i=q_i \;\hbox{and}\; \hat{p}_i=p_i, \;\hbox{for}\; 1\le i\le k\label{e:chord4}
\end{eqnarray}

\noindent{\bf Case 1}.
$(q_1,\cdots,q_k,p_1,\cdots,p_k)\ne (0,\cdots,0)$. (\ref{e:chord4}) implies that $T=\pi$,
$x(T)=x(0)$ and $A(x)=T=\pi$.

\noindent{\bf Case 2}.
$(q_1,\cdots,q_k,p_1,\cdots,p_k)= (0,\cdots,0)$.
\begin{itemize}
\item[(i)]
If $q_{i_0}\ne 0$ for some $k+1\le i_0\le n-1$ and $a\ne 0$, then by (\ref{e:chord3})
we have $\sin(2T)=0$, $\cos(2T)=1$ and hence
$T=\pi$ and $A(x)=\pi$.

\item[(ii)]If $q_{i_0}\ne 0$ for some $k+1\le i_0\le n-1$ and $a=0$ then by (\ref{e:chord3}) it is enough to require
$\sin(2T)=0$
and hence
$T=\frac{\pi}{2}$ and $A(x)=\frac{\pi}{2}$.

\item[(iii)] If $q_i=0$ for all $k+1\le i\le n-1$ then $q_n^2+a^2=1$. The problem
is reduced to 2-dimensional. Take $q_n=r=\sqrt{1-a^2}$ when $a\le0$, and $q_n=-r=-\sqrt{1-a^2}$ when $a>0$. Then
\begin{equation}\label{e:chord5}
A(x)=\arcsin (r)-r\sqrt{1-r^2}\le\frac{\pi}{2}.
\end{equation}
\end{itemize}

Summarizing the above arguments, the possibly least action attained by leafwise chord is
given by (\ref{e:chord5}). (\ref{e:1ellipsoid+}) is proved by Theorem~\ref{th:represention}.

\subsection{Proof of Corollary \ref{cor:ellipsoid++}}\label{sec:4.4}

For $R>0$ denote by $\Delta_R$ the convex (open) domain in $\mathbb{R}^2$ surrounded by the curves
$$
\{q_n^2+p_n^2=1\;|\;p_n\ge 0\},\quad \{q_n=1\},\quad \{q_n=-1\},\quad \{p_n=-R\}.
$$
Let $j_R$ be the Minkowski functional of $\Delta_R$. Consider the convex domain in $\mathbb{R}^{2n}$ given by
$$
\widetilde{\Delta}_R:=\{z\in \mathbb{R}^{2n}\,|\,\tilde{j}_{R}^2(z):=|z_1|^2/R^2+j_R^2(z_2)<1\},
$$
where
$z_1=(q_1,\cdots,q_{n-1},p_1,\cdots,p_{n-1})$ and $z_2=(q_n,p_n)$ and
 for
 $$
 z=(q_1,\cdots,q_n,p_1,\cdots,p_n)\in\mathbb{R}^{2n}.
 $$
  Then for $0<R_1\le R_2$,
$\widetilde{\Delta}_{R_1}\subset\widetilde{\Delta}_{R_2}$,
$\cup_{R>1} \widetilde{\Delta}_R =U^{2n}(1)$
and hence
$$
c_{\rm LR}\left(U^{2n}(1), U^{n,k}(1)\right) = \sup_{R>0}c_{\rm LR}\left(\widetilde{\Delta}_R,\widetilde{\Delta}_R\cap\mathbb{R}^{n,k}\right).
$$
We claim that
$$
c_{\rm LR}\left(\widetilde{\Delta}_R,\widetilde{\Delta}_R\cap\mathbb{R}^{n,k}\right) =\frac{\pi}{2},\quad\forall R>1.
$$
To this end, let an absolutely continuous curve
$x:[0,T] \to \partial \widetilde{\Delta}_R$
satisfy
\begin{equation}\label{e:chord6}
\dot{x}=J_{2n}\partial \tilde{j}_R^2(x),\;{\rm a.\, e.},\quad
x(0), x(T)\in\mathbb{R}^{n,k},\; x(0)\sim x(T).
\end{equation}
Then the action of $x$ is
$A(x)=T$.
Since both $z_1\mapsto f(z_1):=|z_1|^2/R^2$ and $z_2\mapsto j_R^2(z_2)$ are convex and continuous,
we have
$$
\partial\tilde{j}_R^2(z_1,z_2)=\partial f(z_1)\times\partial j_R^2(z_2)=\{(2z_1/R^2,u)\,|\, u\in \partial j_R^2(z_2)\}.
$$
Write $x(t)=(z_1(t),z_2(t))$ then the  problem  (\ref{e:chord6}) is equivalent to the system
\begin{eqnarray}
&&\hspace{-4mm}\dot{z}_1=2J_{2(n-1)} z_1/R^2, \quad z_1(0), z_1(T)\in\mathbb{R}^{n-1,k}, \quad z_1(0)\sim z_1(T),\label{e:chord6.1}\\
&&\hspace{-4mm}\dot{z}_2=J_2\partial j_R^2(z_2), \quad z_2(0), z_2(T)\in\mathbb{R}^{1,0}, \quad z_2(0)\sim z_2(T)\label{e:chord6.2}.
\end{eqnarray}

\textsf{ We want to find the smallest $T>0$ such that (\ref{e:chord6}) holds, i.e., (\ref{e:chord6.1}) and (\ref{e:chord6.2}) hold at the same time.}

If $z_1(0)\ne 0$, (\ref{e:chord6.2}) implies that $T\ge \pi R^2/2>\pi/2$.

If $z_1(0)=0$, then $z_1=0$, $z_2(0)\in\partial\Delta_R\cap\mathbb{R}^{1,0}$ and thus $z_2(t)\in\partial\Delta_R$ for all $t\in [0, T]$.
In this situation $T$ is equal to the smaller of areas of the
semi-disk
$\{q_n^2+p_n^2\le 1\;|\;p_n\ge 0\}$
and the rectangle
$\{(q_n,p_n)\in\mathbb{R}^2\,|\, -1\le q_n\le 1,\; -R\le p_n\le 0\}$.
Thus $T=\pi/2$.

It follows that the smallest $T$ is $\pi/2$.

\subsection{Proof of Corollary \ref{cor:polyDisk}}\label{sec:4.5}

(\ref{e:polyD.2}) comes from the fact that $P^{2n}(r_1,\cdots,r_n)$ contain the ellipsoid given by
(\ref{e:ellipsoid-}). Let $(q_j, p_j)$ denote coordinates in $B^2(r_j)$ for each $j=1,\cdots,n$.
For each $k+1\le j<n$, the linear isomorphism  $\phi_j$ on $(\mathbb{R}^{2})^n$
only permuting coordinates $(q_j,p_j)$ and $(q_n,p_n)$  is a symplectomorphism fixing
$(\mathbb{R}^2)^k\times(\mathbb{R}\times\{0\})^{n-k}$ and $(\{0\}\times\{0\})^k\times(\mathbb{R}\times\{0\})^{n-k}$.
By Definition~\ref{def:coCap}(i),
\begin{eqnarray*}
&&c_{\rm LR}\left(P^{2n}(r_1,\cdots,r_n), P^{2n}(r_1,\cdots,r_n)\cap\mathbb{R}^{n,k}\right)\\
&=&c_{\rm LR}\left(P^{2n}(r'_1,\cdots, r'_n), P^{2n}(r'_1,\cdots,r'_n)\cap\mathbb{R}^{n,k}\right),
\end{eqnarray*}
where $r'_i=r_i$ for $i\ne j, n$, and $r'_n=r_j$, $r'_j=r_n$. Hence we can assume
$r_n=\min\{r_{k+1},\cdots, r_n\}$ for proving (\ref{e:polyD.1}).
Observe that $P^{2n}(r_1,\cdots,r_n)\subset r_nU^{2n}(1)=\{r_nz\,|\, z\in U^{2n}(1)\}$.
(\ref{e:polyD.1})  follows from Definition~\ref{def:coCap}(i)-(ii) and
(\ref{e:ellipsoid4+}) immediately.

\section{Proofs of Theorem~\ref{th:Brun} and Corollary~\ref{cor:Brun.2}  }\label{sec:Brunn}
\setcounter{equation}{0}

\subsection{Proof of Theorem~\ref{th:Brun}}\label{sec:Brunn.1}

Recall that for a convex domain $D\subset\mathbb{R}^{2n}$ containing $0$, $H_D=j_D^2$ and $H_D^\ast$ is the Legendre transformation
of $H_D$ given by $H_D^\ast(z)=\max _{\xi\in\mathbb{R}^{2n}}(\langle\xi,z\rangle-H_D(\xi))$. The support function of $D$ is
$$h_D(w)=\sup\{\langle x,w\rangle\,|\,x\in D\},\quad\forall w\in\mathbb{R}^{2n}$$
and there holds $(h_D)^2= 4H^\ast_D$.

For a real $p>1$, Proposition~\ref{prop:Brun.0} and (\ref{capacity}) in Theorem \ref{th:represention} show
\begin{equation}\label{e:Brun.1}
\left(c_{\rm LR}(D, D\cap\mathbb{R}^{n,k})\right)^{\frac{p}{2}}=\min_{x\in\mathcal{A}_p^{n,k}}
\int_0^1 \left(H_D^{\ast}(-J_{2n}\dot{x}(t))\right)^{\frac{p}{2}}dt.
\end{equation}
Corresponding to \cite[Proposition~2.1]{AAO08} we have

\begin{prop}\label{prop:Brun.2}
  For  real numbers $p_1>1$ and $p_2\ge 1$, there holds
  \begin{eqnarray*}
  (c_{\rm LR}(D, D\cap\mathbb{R}^{n,k}))^{\frac{p_2}{2}}&=&\min_{x\in\mathcal{A}_{p_1}^{n,k}}
\int_0^1(H_D^{\ast}(-J\dot{x}(t)))^{\frac{p_2}{2}}dt\\
&=&\min_{x\in\mathcal{A}_{p_1}^{n,k}}\frac{1}{2^{p_2}}\int_0^1
  (h_{D}(-J\dot{x}))^{p_2}dt.
  \end{eqnarray*}
\end{prop}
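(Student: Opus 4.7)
\emph{Plan.} The second equality is a direct algebraic restatement of the first: using $H_D^\ast=(h_D/2)^2$ from~(\ref{e:Brunn.0}) we have $(H_D^\ast)^{p_2/2}=h_D^{p_2}/2^{p_2}$. Setting
\[
\mu(p_1,p_2):=\inf_{x\in\mathcal{A}_{p_1}^{n,k}}\int_0^1 (H_D^{\ast}(-J_{2n}\dot{x}(t)))^{p_2/2}\,dt,
\]
it therefore suffices to prove $\mu(p_1,p_2)=c_{\rm LR}(D,D\cap\mathbb{R}^{n,k})^{p_2/2}$ with the infimum attained, by matching upper and lower bounds.

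\emph{Upper bound.} Invoke Theorem~\ref{th:represention} to produce a $c_{\rm LR}(D,D\cap\mathbb{R}^{n,k})$-carrier $x_0:[0,T_0]\to\partial D$, and reparametrise so that $-J_{2n}\dot{x}_0(t)\in\partial H_D(x_0(t))$ a.e.; then $T_0=c_{\rm LR}$, and via the Legendre--Fenchel equality $H_D^{\ast}(v)=\langle v,z\rangle-H_D(z)$ for $v\in\partial H_D(z)$ together with Euler's identity $\langle v,z\rangle=2H_D(z)$ for $2$-homogeneous $H_D$, one gets $H_D^{\ast}(-J_{2n}\dot{x}_0)\equiv H_D(x_0)\equiv 1$. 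Following Step~4 of the proof of Proposition~\ref{prop:Brun.0}, set $a:=1/\sqrt{c_{\rm LR}}$ and
\[
y^{\ast}(t):=a\,x_0(tT_0)-P_{n,k}\!\int_0^1\! a\,x_0(sT_0)\,ds,
\]
with $P_{n,k}$ the orthogonal projection onto $\mathbb{R}^{n,k}$ from~(\ref{lagmultipier1}). Using the orthogonality $\mathbb{R}^{n,k}\perp J_{2n}V_0^{n,k}$ of~(\ref{lagmultipier1})---which annihilates the shift term in the action pairing---one checks $y^{\ast}\in\mathcal{A}_{p_1}^{n,k}$ with $A(y^{\ast})=a^{2}T_{0}=1$, while $2$-homogeneity of $H_D^{\ast}$ gives $H_D^{\ast}(-J_{2n}\dot{y}^{\ast})\equiv(aT_0)^{2}=c_{\rm LR}$ and hence $\int_0^1(H_D^{\ast}(-J_{2n}\dot{y}^{\ast}))^{p_2/2}\,dt=c_{\rm LR}^{p_2/2}$. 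This proves $\mu(p_1,p_2)\le c_{\rm LR}^{p_2/2}$ and exhibits an explicit minimiser.

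\emph{Lower bound.} The idea is to reduce to the already-proved case $p_1=p_2$ of~(\ref{e:Brun.1}). Take any $x\in\mathcal{A}_{p_1}^{n,k}$ with $\int_0^1(H_D^{\ast}(-J_{2n}\dot{x}))^{p_2/2}\,dt<\infty$ (otherwise there is nothing to prove). The coercivity $H_D^{\ast}(v)\ge|v|^2/R_2$ from~(\ref{e:dualestimate}) forces $\dot{x}\in L^{p_2}$, and Lemma~\ref{lem:3.1} gives $x\in L^{\infty}\subset L^{p_2}$, so $x\in W^{1,p_2}$; since the boundary and mean-value constraints defining $\mathscr{F}^{n,k}_{\cdot}$ are independent of the exponent, $x\in\mathcal{A}_{p_2}^{n,k}$. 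For $p_2>1$, equation~(\ref{e:Brun.1}) applied with $p=p_2$ then yields $\int_0^1(H_D^{\ast}(-J_{2n}\dot{x}))^{p_2/2}\,dt\ge c_{\rm LR}^{p_2/2}$, whence $\mu(p_1,p_2)\ge c_{\rm LR}^{p_2/2}$. The edge case $p_2=1$ not covered by Proposition~\ref{prop:Brun.0} can be handled by taking $p_2\downarrow 1$ in the already-proven inequality, using Fatou's lemma and continuity of $t\mapsto c_{\rm LR}^{t/2}$.

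\emph{Main obstacle.} The technical crux is the upper-bound construction: both the normalisation $A(y^{\ast})=1$ and the constancy of $H_D^{\ast}(-J_{2n}\dot{y}^{\ast})$ depend delicately on the orthogonal decomposition~(\ref{lagmultipier1}) of $\mathbb{R}^{2n}$, which is precisely what makes the $\mathbb{R}^{n,k}$-valued shift $P_{n,k}\int\cdots$ invisible to the symplectic action while still placing $y^\ast$ inside $\mathscr{F}_{p_1}^{n,k}$. Everything else is routine bookkeeping adapted from the proof of Proposition~\ref{prop:Brun.0}.
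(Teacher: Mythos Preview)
Your argument is essentially correct and close in spirit to the paper's referenced approach (Artstein-Avidan--Ostrover, as adapted in \cite{JinLu1916}); the upper-bound construction via a rescaled carrier is exactly the same idea. There is, however, one small technical slip and one genuine difference in the lower-bound strategy worth noting.

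\textbf{The slip.} For the edge case $p_2=1$, Fatou's lemma points the wrong way: it gives $\int(H_D^\ast)^{1/2}\le\liminf_{p_2\downarrow 1}\int(H_D^\ast)^{p_2/2}$, whereas you need the reverse inequality to pass the bound $\int(H_D^\ast)^{p_2/2}\ge c_{\rm LR}^{p_2/2}$ to the limit. What works is dominated convergence (or reverse Fatou): for fixed $x\in\mathcal{A}_{p_1}^{n,k}$ one has the integrable majorant $1+(H_D^\ast(-J\dot{x}))^{p_1/2}$, so $\int(H_D^\ast)^{p_2/2}\to\int(H_D^\ast)^{1/2}$ as $p_2\downarrow 1$, and the desired inequality follows.

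\textbf{The difference.} The referenced proof handles all $p_2\ge 1$ at once by a reparametrisation-plus-Jensen argument: given $x\in\mathcal{A}_{p_1}^{n,k}$, reparametrise so that $h_D(-J\dot{\tilde{x}})$ is constant (then shift by $P_{n,k}\!\int\tilde{x}$ to land back in $\mathscr{F}^{n,k}$); the result lies in $\mathcal{A}_p^{n,k}$ for every $p>1$, and combining \eqref{e:Brun.1} for a single exponent (say $p=2$) with Jensen's inequality yields the lower bound for every $p_2\ge 1$ simultaneously. Your route instead bootstraps the integrability of $\dot{x}$ and invokes \eqref{e:Brun.1} at the target exponent $p=p_2$, which is slightly more direct for $p_2>1$ but forces the separate limiting argument at $p_2=1$. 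Both approaches are valid; the reparametrisation argument has the advantage of treating $p_2=1$ on equal footing with $p_2>1$.
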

This can be proved as in the proof of \cite[Proposition~2.1]{AAO08};
that is, we may obtain it by replacing  $c_{\rm EHZ}^\Psi$ by $c_{\rm LR}(D, D\cap\mathbb{R}^{n,k})$
in the proof of \cite[Proposition~9.3]{JinLu1916}.\\

\noindent{\bf Proof of Theorem~\ref{th:Brun}}.\quad
Choose a real $p_1>1$.
  Proposition~\ref{prop:Brun.2} implies that {for any $p\ge 1$}
\begin{eqnarray}\label{e:Brun.8}
&&\left(c_{\rm LR}(D+_pK, (D+_pK)\cap\mathbb{R}^{n,k})\right)^{\frac{p}{2}}\nonumber\\
&=&\min_{x\in\mathcal{A}_{p_1}^{n,k}}\int_0^1
  \left(\frac{(h_{D+_pK}(-J\dot{x}))^2}{4}\right)^{\frac{p}{2}} dt\nonumber\\
    &=&\min_{x\in\mathcal{A}_{p_1}^{n,k}}\frac{1}{2^p}\int_0^1
  ((h_{D}(-J\dot{x}))^{p}+(h_{K}(-J\dot{x}))^{p}) dt\nonumber\\
  &\ge& \min_{x\in\mathcal{A}_{p_1}^{n,k}}\frac{1}{2^p}\int_0^1
  (h_{D}(-J\dot{x}))^{p}dt+\min_{x\in\mathcal{A}_{p_1}^{n,k}}\frac{1}{2^p}\int_0^1
  (h_{K}(-J\dot{x}))^{p}dt\nonumber\\
  &=&\left(c_{\rm LR}(D, D\cap\mathbb{R}^{n,k})\right)^{\frac{p}{2}}+ \left(c_{\rm LR}(K, K\cap\mathbb{R}^{n,k})\right)^{\frac{p}{2}}.
\end{eqnarray}
{Suppose that  there exists a $c_{\rm LR}(D, D\cap\mathbb{R}^{n,k})$ carrier,
$\gamma_D:[0, T]\to\partial D$,  and
a $c_{\rm LR}(K, K\cap\mathbb{R}^{n,k})$ carrier,
 $\gamma_K:[0, T]\to\partial K$
that satisfy $\gamma_D=\alpha\gamma_K+{\bf b}$
for some $\alpha\in\mathbb{R}\setminus\{0\}$ and
some ${\bf b}\in \mathbb{R}^{n,k}$.
Then the latter  implies $A(\gamma_D)=\alpha^2 A(\gamma_K)$.
Moreover by Step 4 in the proof of Proposition~\ref{prop:Brun.0} we can
construct $z_D$ and $z_K$ in $\mathcal{A}_{p_1}^{n,k}$ such that
\begin{eqnarray}\label{e:Brun.8.1}
\left(c_{\rm LR}(D, D\cap\mathbb{R}^{n,k})\right)^{\frac{p}{2}}&=&\min_{x\in\mathcal{A}_{p_1}^{n,k}}\frac{1}{2^p}\int_0^1
  (h_{D}(-J\dot{x}))^{p}dt\nonumber\\
  &=&\frac{1}{2^p}\int_0^1
  (h_{D}(-J\dot{z}_D))^{p}dt,\\
  \left(c_{\rm LR}(K, K\cap\mathbb{R}^{n,k})\right)^{\frac{p}{2}}&=&\min_{x\in\mathcal{A}_{p_1}^{n,k}}\frac{1}{2^p}\int_0^1
  (h_{K}(-J\dot{x}))^{p}dt\nonumber\\
  &=&\frac{1}{2^p}\int_0^1
  (h_{K}(-J\dot{z}_K))^{p}dt.\label{e:Brun.8.2}
\end{eqnarray}
In particular, for suitable vectors ${\bf b}_D, {\bf b}_K\in \mathbb{R}^{n,k}$ it holds that
\begin{eqnarray*}
z_D(t)=\frac{1}{\sqrt{A(\gamma_D)}}\gamma_D(Tt)+{\bf b}_D\quad\hbox{and}\quad
z_K(t)=\frac{1}{\sqrt{A(\gamma_K)}}\gamma_K(Tt)+{\bf b}_K.
\end{eqnarray*}
It follows from these that $\dot{z}_D(t)=\alpha\left(\frac{A(\gamma_K)}{A(\gamma_D)}\right)^{1/2}\dot{z}_K=\dot{z}_K$
because $A(\gamma_D)=\alpha^2 A(\gamma_K)$.
This, (\ref{e:Brun.8.1})-(\ref{e:Brun.8.2}) and (\ref{e:Brun.8}) lead to
\begin{eqnarray*}
&&\left(c_{\rm LR}(D+_pK, (D+_pK)\cap\mathbb{R}^{n,k})\right)^{\frac{p}{2}}\\
&=&\left(c_{\rm LR}(D, D\cap\mathbb{R}^{n,k})\right)^{\frac{p}{2}}+\left(c_{\rm LR}(K, K\cap\mathbb{R}^{n,k})\right)^{\frac{p}{2}}.
\end{eqnarray*}
}

Now suppose that {\bf $p>1$} and the equality in (\ref{e:BrunA}) holds.
We may require that the above $p_1$ satisfies $p_1<p$.
Since  there exists $u\in\mathcal{A}_{p_1}^{n,k}$ such that
\begin{eqnarray*}
 \left(c_{\rm LR}(D+_pK, (D+_pK)\cap\mathbb{R}^{n,k})\right)^{\frac{p}{2}} =\int_0^1
  \left(\frac{(h_{D+_pK}(-J\dot{u}))^2}{4}\right)^{\frac{p}{2}}dt,
  \end{eqnarray*}
 it follows from the above computation that
 \begin{eqnarray*}
  &&\frac{1}{2^p}\int_0^1
  ((h_{D}(-J\dot{u}))^{p}+(h_{K}(-J\dot{u}))^{p})dt\\
  &=& \min_{x\in\mathcal{A}_{p_1}^{n,k}}\frac{1}{2^p}\int_0^1
  (h_{D}(-J\dot{x}))^{p}dt
  +\min_{x\in\mathcal{A}_{p_1}^{n,k}}\frac{1}{2^p}\int_0^1
  (h_{K}(-J\dot{x}))^{p}dt
\end{eqnarray*}
 and thus
\begin{eqnarray*}
\left(c_{\rm LR}(D, D\cap\mathbb{R}^{n,k})\right)^{\frac{p}{2}}&=&\min_{x\in\mathcal{A}_{p_1}^{n,k}}\frac{1}{2^p}\int_0^1
  (h_{D}(-J\dot{x}))^{p}dt\\
  &=&\frac{1}{2^p}\int_0^1
  (h_{D}(-J\dot{u}))^{p}dt\quad\hbox{and}\\
 \left(c_{\rm LR}(K, K\cap\mathbb{R}^{n,k})\right)^{\frac{p}{2}}&=&\min_{x\in\mathcal{A}_{p_1}^{n,k}}\frac{1}{2^p}\int_0^1
  (h_{K}(-J\dot{x}))^{p}dt\\
  &=&\frac{1}{2^p}\int_0^1
  (h_{K}(-J\dot{u}))^{p}dt.
\end{eqnarray*}
As in the proof \cite[Theorem~1.22]{JinLu1916},
we may derive from these, (\ref{e:Brun.1}), Proposition~\ref{e:Brun.1}   and H\"older's inequality that
 \begin{eqnarray*}
 2(c_{\rm LR}(D, D\cap\mathbb{R}^{n,k}))^{\frac{1}{2}}&=&
  \left(\int_0^1(h_{D}(-J\dot{u}))^{p}dt\right)^{\frac{1}{p}}\\
  &=&  \left(\int_0^1(h_{D}(-J\dot{u}))^{p_1}dt\right)^{\frac{1}{p_1}},\\
 2(c_{\rm LR}(K, K\cap\mathbb{R}^{n,k}))^{\frac{1}{2}}&=&
  \left(\int_0^1(h_{K}(-J\dot{u}))^{p} dt\right)^{\frac{1}{p}}\\
 &=&\left(\int_0^1(h_{K}(-J\dot{u}))^{p_1} dt\right)^{\frac{1}{p_1}}.
 \end{eqnarray*}
By Remark~\ref{rem:Brun.1.5} there are
 ${\bf a}_D, {\bf a}_K\in \mathbb{R}^{n,k}$ such that
\begin{eqnarray*}
&&\hspace{-4mm}\gamma_D(t)=\left(c_{\rm LR}(D, D\cap\mathbb{R}^{n,k})\right)^{1/2}u(t)+ \frac{2}{p_1}\left(c_{\rm LR}(D, D\cap\mathbb{R}^{n,k})\right)^{(1-p_1)/2}{\bf a}_D,\\
&&\hspace{-4mm}\gamma_K(t)=\left(c(K, K\cap\mathbb{R}^{n,k})\right)^{1/2}u(t)+ \frac{2}{p_1}\left(c(K, K\cap\mathbb{R}^{n,k})\right)^{(1-p_1)/2}{\bf a}_K
\end{eqnarray*}
are a $c_{\rm LR}(D, D\cap\mathbb{R}^{n,k})$ carrier and a
$c_{\rm LR}(K, K\cap\mathbb{R}^{n,k})$ carrier, respectively.
Clearly, they coincide up to  dilations and translations in
$\mathbb{R}^{n,k}$.
\hfill$\Box$\vspace{2mm}

\subsection{Proof of  Corollary~\ref{cor:Brun.2}  }\label{sec:Brunn.2}

{\bf (i)} By the assumptions, $0$ belongs to  $D\cap (x+K)$, $D\cap (y+K)$ and $D\cap(\lambda x+(1-\lambda)y+K)$.
The first inequality follows from  Corollary~\ref{cor:Brun.1} directly.

Suppose further that $D$ and $K$ are centrally symmetric, i.e., $-D=D$ and $-K=K$.
Then for any  $x\in K$, $D\cap (x+K)\ne\emptyset$ and   $D\cap(-x+K)=-(D\cap(x+K))$,  hence
taking $y=-x$ and $\lambda=1/2$ in the first inequality we get (\ref{e:BrunB}).

\noindent{\bf (ii)} Since $0\in D\cap K$,
 $K\subset RD$ for some $R>0$, and thus
\begin{eqnarray*}
&&\varepsilon\left(c_{\rm LR}(K, K\cap \mathbb{R}^{n,k})\right)^{1/2}\\
&\le&\left(c_{\rm LR}(D+\varepsilon K, (D+\varepsilon K)\cap\mathbb{R}^{n,k})\right)^{1/2}-\left(c_{\rm LR}(D, D\cap \mathbb{R}^{2n})\right)^{1/2}\\
&\le&\left(c_{\rm LR}((1+\varepsilon R)D, (1+\varepsilon R)D\cap\mathbb{R}^{n,k})\right)^{1/2}-\left(c_{\rm LR}(D, D\cap \mathbb{R}^{2n})\right)^{1/2}\\
&=&\varepsilon R\left(c_{\rm LR}(D, D\cap \mathbb{R}^{2n})\right)^{1/2}.
\end{eqnarray*}
This implies that the function
\begin{equation}\label{e:5.7}
(0,\infty)\ni\varepsilon\mapsto \frac{\left(c_{\rm LR}(D+ \varepsilon K, (D+\varepsilon K)\cap\mathbb{R}^{n,k} )\right)^{\frac{1}{2}}- \left(c_{\rm LR}(D, D\cap\mathbb{R}^{n,k})\right)^{\frac{1}{2}} }{\varepsilon}
\end{equation}
is bounded, and
\begin{equation}\label{e:5.8}
\lim_{\varepsilon\to 0+}\left(c_{\rm LR}(D+\varepsilon K, (D+\varepsilon K)\cap\mathbb{R}^{n,k})\right)^{1/2}=\left(c_{\rm LR}(D, D\cap \mathbb{R}^{2n})\right)^{1/2}.
\end{equation}
 Moreover, for $t>s>0$, as in \cite[pages 21-22]{AAO08} it follows from (\ref{e:BrunA}) that
\begin{eqnarray*}
&&\hspace{-4mm}(1-s/t)\left(c_{\rm LR}(D, D\cap\mathbb{R}^{n,k})\right)^{\frac{1}{2}}+ (s/t)\left(c_{\rm LR}(D+tK, (D+tK)\cap\mathbb{R}^{n,k})\right)^{\frac{1}{2}}\\
 &&\hspace{-4mm}\le \left(c_{\rm LR}(D+ sK, (D+sK)\cap\mathbb{R}^{n,k} )\right)^{\frac{1}{2}}
 \end{eqnarray*}
and so
\begin{eqnarray*}
&&\frac{\left(c_{\rm LR}(D+ tK, (D+tK)\cap\mathbb{R}^{n,k} )\right)^{\frac{1}{2}}- \left(c_{\rm LR}(D, D\cap\mathbb{R}^{n,k})\right)^{\frac{1}{2}} }{t}\\
&\le& \frac{\left(c_{\rm LR}(D+ sK, (D+sK)\cap\mathbb{R}^{n,k} )\right)^{\frac{1}{2}}- \left(c_{\rm LR}(D, D\cap\mathbb{R}^{n,k})\right)^{\frac{1}{2}} }{s}
 \end{eqnarray*}
These imply that the function in (\ref{e:5.7}) has a limit
$\Xi\ge \left(c(K, K\cap \mathbb{R}^{2n})\right)^{1/2}$ as $\varepsilon\to 0+$. So
\begin{eqnarray*}
&&\frac{c_{\rm LR}(D+\varepsilon K, (D+\varepsilon K)\cap\mathbb{R}^{n,k})-c_{\rm LR}(D, D\cap \mathbb{R}^{2n})}{\varepsilon}\\
&=&\frac{\left(c_{\rm LR}(D+ \varepsilon K, (D+\varepsilon K)\cap\mathbb{R}^{n,k} )\right)^{\frac{1}{2}}- \left(c_{\rm LR}(D, D\cap\mathbb{R}^{n,k})\right)^{\frac{1}{2}} }{\varepsilon}\times\\
&\times&\left(\left(c_{\rm LR}(D+\varepsilon K, (D+\varepsilon K)\cap\mathbb{R}^{n,k})\right)^{1/2}+\left(c_{\rm LR}(D, D\cap \mathbb{R}^{2n})\right)^{1/2}\right)
\end{eqnarray*}
has a limit $d_K(D, D\cap\mathbb{R}^{n,k})$ satisfying
\begin{eqnarray*}
d_K(D, D\cap\mathbb{R}^{n,k})&=&2\Xi \left(c_{\rm LR}(D, D\cap \mathbb{R}^{2n})\right)^{1/2}\\
&\ge& 2\left(c_{\rm LR}(D, D\cap \mathbb{R}^{2n})\right)^{1/2}\left(c_{\rm LR}(K, K\cap \mathbb{R}^{2n})\right)^{1/2}
\end{eqnarray*}
because of (\ref{e:5.8}).
The first inequality in (\ref{e:BrunD}) is proved.

In order to prove the second inequality we
fix a real $p_1>1$. Using Proposition~\ref{prop:Brun.2} we have $u\in \mathcal{A}_{p_1}^{n,k}$
such that
  \begin{eqnarray}\label{e:Brun.9}
  (c_{\rm LR}(D, D\cap\mathbb{R}^{n,k}))^{\frac{1}{2}}=\frac{1}{2}\int_0^1h_{D}(-J\dot{u}))dt
  \end{eqnarray}
By Remark~\ref{rem:Brun.1.5} there exists ${\bf a}_0\in \mathbb{R}^{n,k}$ such that
\begin{eqnarray}\label{e:Brun.10}
x^\ast(t)=\left(c_{\rm LR}(D, D\cap\mathbb{R}^{n,k})\right)^{1/2}u(t)+ \frac{2}{p_1}\left(c_{\rm LR}(D, D\cap\mathbb{R}^{n,k})\right)^{(1-p_1)/2}{\bf a}_0
 \end{eqnarray}
is  a $c_{\rm LR}(D, D\cap\mathbb{R}^{n,k})$ carrier.
Proposition~\ref{prop:Brun.2} also leads to
\begin{eqnarray}\label{e:Brun.11}
&&\left(c_{\rm LR}(D+\varepsilon K, (D+\varepsilon K)\cap\mathbb{R}^{n,k})\right)^{\frac{1}{2}}\nonumber\\
  &=&\min_{x\in\mathcal{A}_{p_1}^{n,k}}\frac{1}{2}\int_0^1
  (h_{D}(-J\dot{x})+ \varepsilon h_{K}(-J\dot{x}))dt\nonumber\\
  &\le& \frac{1}{2}\int_0^1
  h_{D}(-J\dot{u})dt+\frac{\varepsilon}{2}\int_0^1
  h_{K}(-J\dot{u})dt\nonumber\\
  &=&(c_{\rm LR}(D, D\cap\mathbb{R}^{n,k}))^{\frac{1}{2}}+\frac{\varepsilon}{2}\int_0^1
  h_{K}(-J\dot{u})dt
\end{eqnarray}
because of (\ref{e:Brun.9}). Let $z_D(t)=x^\ast(t)$ for $0\le t\le 1$.  From (\ref{e:Brun.11}) it follows that
\begin{eqnarray}\label{e:Brun.12}
&&\frac{\left(c_{\rm LR}(D+\varepsilon K, (D+\varepsilon K)\cap\mathbb{R}^{n,k})\right)^{\frac{1}{2}}-
(c_{\rm LR}(D, D\cap\mathbb{R}^{n,k}))^{\frac{1}{2}}}{\varepsilon}\nonumber\\
&\le&\frac{1}{2}\left(c_{\rm LR}(D, D\cap\mathbb{R}^{n,k})\right)^{-\frac{1}{2}}
\int_0^1h_{K}(-J\dot{z}_D)dt.
\end{eqnarray}
Letting $\varepsilon\to 0+$ in (\ref{e:Brun.12})
we arrive at the second inequality in (\ref{e:BrunD}).
\hfill$\Box$\vspace{2mm}

\section{Proof of Theorem~\ref{th:convexDiff}}\label{sec:Neduv}
\setcounter{equation}{0}

{Recall that $\mathcal{H}$ is a strictly convex function in $\mathbb{R}^{2n}$
with nonnegative values such that $\mathcal{H}(0)=0$ and $D(e)=\{\mathcal{H}<e\}$ for
$e$ near a regular value $e_0$ of $\mathcal{H}$ satisfying $\mathcal{H}^{-1}(e_0)\neq\emptyset$. }

For each number $\epsilon$ with
$|\epsilon|$ small enough the set $D_\epsilon:=D(e_0+\epsilon)$
is a bounded strictly convex  domain in $\mathbb{R}^{2n}$ containing $0$
and with $C^2$-boundary $\mathcal{S}_\epsilon=\mathcal{S}(e_0+\epsilon)$.
Let $H_\epsilon=(j_{D_\epsilon})^2$ and $H^\ast_\epsilon$ denote the
Legendre transform of $H_\epsilon$. Both are $C^{1,1}$ on $\mathbb{R}^{2n}$, $C^2$ on $\mathbb{R}^{2n}\setminus\{0\}$,
and have positive Hessian matrices at every point on $\mathbb{R}^{2n}\setminus\{0\}$.
For every fixed $x\in\mathbb{R}^{2n}\setminus\{0\}$, it was proved in \cite[Lemma~3.2]{Ned01} that
 $\epsilon\mapsto H_\epsilon(x)$ is of class $C^2$.

Let $x^\ast$ be a given $c_{\rm LR}(D_0, D_0\cap \mathbb{R}^{n,k})$-carrier.
{As stated in Remark~\ref{rem:CLRcarrier}}, with
$\mu=A(x^\ast)=c_{\rm LR}(D_0,D_0\cap \mathbb{R}^{n,k})$  we may assume that $x^\ast:[0,\mu]\rightarrow \mathcal{S}_0$
satisfies
$$
-J_{2n}\dot{x^\ast}(t)\in  \partial H_0(x^\ast(t)),\quad x^\ast(0), x^\ast(\mu)\in \mathbb{R}^{n,k},\quad x^\ast(0)-x^\ast(\mu)\in V_0^{n,k}.
$$
By Step 3 in the proof  of Proposition~\ref{prop:Brun.0} (taking $p=q=2$), for some ${\bf a}_0\in \mathbb{R}^{n,k}$,
$$
u:[0, 1]\to \mathbb{R}^{2n},\;t\mapsto \frac{1}{\sqrt{\mu}}x^\ast(\mu t)-\frac{{\bf a}_0}{\mu}
$$
belongs to $\mathcal{A}_2$ in (\ref{e:Brunn.1}) and satisfies
  \begin{equation}\label{e:Neduv.1}
-J_{2n}\dot{u}(t)=\nabla H_0(\mu u(t)+ {\bf a}_0)\quad\forall t\in [0,1].
\end{equation}
 Moreover, $\mu=I_2(u)=\int^1_0H^\ast_0(-J_{2n}\dot{u})$ and
\begin{equation}\label{e:Neduv.2}
\mathfrak{C}(\epsilon):=\mathscr{C}(e_0+\epsilon)=c_{\rm LR}(D_\epsilon, D_\epsilon\cap\mathbb{R}^{n,k})\le \int^1_0H^\ast_\epsilon(-J_{2n}\dot{u})dt.
\end{equation}

Clearly, $p\mapsto\langle\nabla\mathscr{H}(p),p\rangle/2$ restricts to a $C^1$
positive function $\alpha:\mathcal{S}_0\to\mathbb{R}$
 such that $\nabla\mathscr{H}(p)=\alpha(p)\nabla H_0(p)$ for any $p\in\mathcal{S}_0$.
Let
$$
h(s)=\int^s_0\frac{d\tau}{\alpha(x^\ast(\tau))}=2\int^s_0\frac{dt}{\langle\nabla\mathscr{H}(x^\ast(t)), x^\ast(t)\rangle}\quad\forall s\in [0,\mu].
$$
Then $h(\mu)=T_{x^\ast}$ and $h:[0,\mu]\to [0, T_{x^\ast}]$ has an inverse $g:[0, T_{x^\ast}]\to [0,\mu]$.
Since (\ref{e:Neduv.1}) implies  $-J\dot{x}^\ast(t)=\nabla H_0(x^\ast(t))$,
it is easily checked that $[0, T_{x^\ast}]\ni t\mapsto y(t):=x^\ast(g(t))\in\mathcal{S}_0$  satisfies
(\ref{e:convexDiff+}) and (\ref{e:convexDiff++}) with $T_x=T_{x^\ast}$.
In particular, $T_{x^\ast}$ is the return time of  $y$ for $\mathbb{R}^{n,k}$.
Almost repeating the arguments below \cite[Theorem~8.2]{JinLu1916} we can get
the corresponding result of \cite[Theorem~3.1]{Ned01} as follows.

\begin{thm}\label{th:Neduv}
{$\mathfrak{C}(\epsilon)\le c_{\rm LR}(D_0, D_0\cap\mathbb{R}^{n,k})+ T_{x^\ast}\epsilon+ K\epsilon^2$}
for some constant $K$ only depending on $\mathcal{S}_0$ and $H_\epsilon$ with $\epsilon$ near $0$.
\end{thm}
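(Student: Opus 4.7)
The strategy follows Neduv's template: use the minimax (variational) characterization of $c_{\rm LR}$ as an upper bound test, plug in a distinguished test path arising from the $\mathfrak{C}(0)$-carrier $x^\ast$, and Taylor expand the $\epsilon$-dependence of the integrand pointwise. First, by Proposition~\ref{prop:Brun.0} with $p=2$ applied to $D_\epsilon$, together with (\ref{e:Neduv.2}) (which, for the specific test function $u$ constructed from $x^\ast$, is noted right before the theorem), one has
\[
\mathfrak{C}(\epsilon)\;\le\; \int_0^1 H^\ast_\epsilon\bigl(-J_{2n}\dot u(t)\bigr)\,dt.
\]
So it suffices to expand the right-hand side through second order in $\epsilon$ and identify the coefficients.

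Next I would invoke the pointwise $C^2$-regularity of $\epsilon\mapsto H_\epsilon(x)$ for each fixed $x\neq 0$ from \cite[Lemma~3.2]{Ned01} and the corresponding regularity of its Legendre transform $H^\ast_\epsilon(y)$ (for $y\neq 0$) to write
\[
H^\ast_\epsilon(y)\;=\;H^\ast_0(y)+\epsilon\,\partial_\epsilon H^\ast_\epsilon(y)\big|_{\epsilon=0}+\tfrac{\epsilon^2}{2}\,\partial^2_\epsilon H^\ast_{\epsilon'}(y),
\]
with $\epsilon'$ between $0$ and $\epsilon$. Integrating against $dt$ and using $\int_0^1 H^\ast_0(-J_{2n}\dot u)=\mathfrak{C}(0)$ (this is the minimum value of $I_2$ at $u$, computed just above the theorem), the constant term reproduces $c_{\rm LR}(D_0,D_0\cap\mathbb{R}^{n,k})$ as required on the right-hand side.

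The core computation is the identification of the linear coefficient with $T_{x^\ast}$. By the envelope/Legendre formula, at points $y$ where the Legendre maximizer $z_y=\nabla H^\ast_0(y)$ is unique (guaranteed by the strict convexity of $D_0$),
\[
\partial_\epsilon H^\ast_\epsilon(y)\big|_{\epsilon=0}=-\,\partial_\epsilon H_\epsilon(z_y)\big|_{\epsilon=0}.
\]
From (\ref{e:Neduv.1}) the maximizer associated with $y(t):=-J_{2n}\dot u(t)$ is $z_y(t)=\mu u(t)+{\bf a}_0=\sqrt{\mu}\,x^\ast(\mu t)$. Differentiating the defining relation $\mathscr{H}\bigl(z/j_{D_\epsilon}(z)\bigr)=e_0+\epsilon$ implicitly at $\epsilon=0$ and combining with the $2$-homogeneity of $H_\epsilon$, one finds
\[
\partial_\epsilon H_\epsilon\bigl(\sqrt{\mu}\,x^\ast(s)\bigr)\big|_{\epsilon=0}=\frac{-2\mu}{\langle\nabla\mathscr{H}(x^\ast(s)),x^\ast(s)\rangle}.
\]
Substituting $s=\mu t$ and integrating in $t$ over $[0,1]$ yields exactly
\[
\int_0^1\partial_\epsilon H^\ast_\epsilon(-J_{2n}\dot u(t))\big|_{\epsilon=0}\,dt=2\int_0^\mu\frac{ds}{\langle\nabla\mathscr{H}(x^\ast(s)),x^\ast(s)\rangle}=T_{x^\ast},
\]
by the very definition of $T_{x^\ast}$ recorded above the theorem.

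Finally, for the quadratic remainder I would use that, because $\mathcal{S}_0$ is strictly convex $C^2$ and $x^\ast$ stays on it, the image $\{-J_{2n}\dot u(t):t\in[0,1]\}=\sqrt\mu\{\nabla H_0(x^\ast(\mu t))\}$ is a compact subset of $\mathbb{R}^{2n}\setminus\{0\}$. On this compact set, joint continuity of $(\epsilon,y)\mapsto \partial^2_\epsilon H^\ast_\epsilon(y)$ (which is where \cite[Lemma~3.2]{Ned01} is used one last time) delivers a uniform bound $|\partial^2_\epsilon H^\ast_{\epsilon'}(y)|\le 2K$ depending only on $\mathcal{S}_0$ and the family $\{H_\epsilon\}_{|\epsilon|\text{ small}}$, which completes the proof of the stated inequality (where the right-hand side is of course $c_{\rm LR}(D_0,D_0\cap\mathbb{R}^{n,k})+T_{x^\ast}\epsilon+K\epsilon^2$, i.e., the base capacity at $\epsilon=0$). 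The principal technical obstacle is the envelope-theorem step: $H_\epsilon$ is only $C^{1,1}$ at the origin, so one must justify differentiation by homogeneous scaling and by restricting attention to a bounded set of directions bounded away from $0$; the role of the preliminary scaling $z=\sqrt\mu\,x^\ast(\mu t)\in\mathcal{S}_0\cdot\sqrt\mu$ is precisely to avoid the singular ray through the origin.
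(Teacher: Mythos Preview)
Your proof plan is correct and follows exactly the Neduv template that the paper invokes: the paper itself does not give a detailed argument but simply says ``Almost repeating the arguments below \cite[Theorem~8.2]{JinLu1916} we can get the corresponding result of \cite[Theorem~3.1]{Ned01},'' and your outline (test the variational upper bound (\ref{e:Neduv.2}) with the carrier-induced $u$, Taylor expand $H^\ast_\epsilon$ via the envelope formula, identify the linear coefficient with $T_{x^\ast}$ by implicit differentiation of $\mathscr{H}(z/j_{D_\epsilon}(z))=e_0+\epsilon$, and bound the remainder by compactness) is precisely that argument. You also correctly flag that the right-hand side of the displayed inequality should read $c_{\rm LR}(D_0,D_0\cap\mathbb{R}^{n,k})$ rather than $c_{\rm LR}(D_\epsilon,D_\epsilon\cap\mathbb{R}^{n,k})$, without which the statement is vacuous.
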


Since $T_k^{\max}(e_0+\epsilon)$ and $T_k^{\min}(e_0+\epsilon)$ are the largest and smallest numbers in
the compact set $\mathscr{I}(e_0+\epsilon)$ defined by (\ref{e:convexDiff}),
by \cite[Lemma~4.1]{Ned01} and \cite[Corollary~4.2]{Ned01},
both  are functions of bounded variation in $\epsilon$
(and thus bounded near $\epsilon=0$), and $\epsilon\mapsto \mathscr{C}_k(e_0+\epsilon)$ is continuous.
As in the proof of \cite[Theorem~4.4]{Ned01}, it follows from  these and Theorem~\ref{th:Neduv}
 that $\mathfrak{C}_k(\epsilon)$ has, respectively, the left and right derivatives
at $\epsilon=0$,
\begin{eqnarray*}
&&\mathfrak{C}'_{k,-}(0)=\lim_{\epsilon\to0-}T_k^{\max}(e_0+\epsilon)=T_k^{\max}(e_0)\quad\hbox{and}\quad\\
&&\mathfrak{C}'_{k,+}(0)=\lim_{\epsilon\to0+}T_k^{\min}(e_0+\epsilon)=T_k^{\min}(e_0).
\end{eqnarray*}
 The first part of Theorem~\ref{th:convexDiff} is proved.
The final part is a direct consequence of the first one and
a modified version of the intermediate value theorem (cf. \cite[Theorem~5.1]{Ned01}).

 \section{Proofs of Theorem~\ref{th:J-estimate} and Example~\ref{ex:add}}\label{sec:add}
\setcounter{equation}{0}

In this section we always assume that $\langle\cdot,\cdot\rangle$ and $\|\cdot\|$ denote the standard inner product and norm in
$\mathbb{R}^{2n}$, respectively. Then $\langle u, v\rangle=\omega_0(u,J_{2n}v)$.
Let ${\rm Sp}(\mathbb{R}^{2n},\omega_0)$ be the group of linear symplectic
maps on $(\mathbb{R}^{2n},\omega_0)$.
Let $S^T$ be the adjoint of $S\in {\rm Sp}(\mathbb{R}^{2n},\omega_0)$ with respect to the inner product $\langle\cdot,\cdot\rangle$.
Then the matrix of $S^T$ under the standard symplectic basis $\{e_j\}^{2n}_{j=1}$ of $(\mathbb{R}^{2n},\omega_0)$ is
the transpose of the matrix of $S$ under the same basis $\{e_j\}^{2n}_{j=1}$.
Hence $S^T$ also belongs to ${\rm Sp}(\mathbb{R}^{2n},\omega_0)$.
Let $\mathbb{R}^{n,0}=\mathbb{R}^{n}\times 0\subset(\mathbb{R}^{2n},\omega_0)$ (resp.
$\mathbb{R}^{0,n}:=0\times\mathbb{R}^{n}\subset(\mathbb{R}^{2n},\omega_0)$)
be the horizontal (resp. vertical) Lagrangian subspace in $(\mathbb{R}^{2n},\omega_0)$.
Consider the stabilizers (or isotropy subgroups) of $\mathbb{R}^{n,0}$ and $\mathbb{R}^{0,n}$,
\begin{eqnarray*}
&&{\rm Sp}(\mathbb{R}^{2n},\omega_0;\mathbb{R}^{n,0}):=\{S\in {\rm Sp}(\mathbb{R}^{2n},\omega_0)\;|\;S(\mathbb{R}^{n,0})=\mathbb{R}^{n,0}\},\\
&&{\rm Sp}(\mathbb{R}^{2n},\omega_0;\mathbb{R}^{0,n}):=\{S\in {\rm Sp}(\mathbb{R}^{2n},\omega_0)\;|\;S(\mathbb{R}^{0,n})=\mathbb{R}^{0,n}\}.
\end{eqnarray*}
It is easy to check that  $S\in {\rm Sp}(\mathbb{R}^{2n},\omega_0;\mathbb{R}^{n,0})$
if and only if $S^T$ sits in ${\rm Sp}(\mathbb{R}^{2n},\omega_0;\mathbb{R}^{0,n})$.

\begin{lemma}\label{lem:Lagr}
Let $\{e_j\}^{2n}_{j=1}$ be the standard symplectic basis of $(\mathbb{R}^{2n},\omega_0)$,
($e_{n+j}=J_{2n}e_j$ for $j=1,\cdots,n$), and let $u\in \mathbb{R}^{n,0}$ and $v\in \mathbb{R}^{2n}$ satisfy $\omega_0(u,v)=1$.
Then for each $e_j$, $j=1,\cdots,n$, there exists a $S\in {\rm Sp}(\mathbb{R}^{2n},\omega_0;\mathbb{R}^{n,0})$ such that
$u=Se_j$ and $v=SJ_{2n}e_j$. Moreover, if $u\in \mathbb{R}^{2n}$ and $v\in \mathbb{R}^{0,n}$ satisfy $\omega_0(u,v)=1$
then for each $e_{j}$, $j=1,\cdots,n$, there exists a $S\in {\rm Sp}(\mathbb{R}^{2n},\omega_0;\mathbb{R}^{n,0})$ such that
$v=S^Te_j$ and $u=-S^TJ_{2n}e_j$.
\end{lemma}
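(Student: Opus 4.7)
The plan is to reduce both assertions to explicit linear-algebra constructions via the block parametrization of $\mathrm{Sp}(\mathbb{R}^{2n},\omega_0;\mathbb{R}^{n,0})$. With respect to the splitting $\mathbb{R}^{2n}=\mathbb{R}^{n,0}\oplus\mathbb{R}^{0,n}$, the requirement $S\mathbb{R}^{n,0}=\mathbb{R}^{n,0}$ combined with $S^{T}J_{2n}S=J_{2n}$ forces
\[
S=\begin{pmatrix}A & AM\\ 0 & (A^{T})^{-1}\end{pmatrix},\qquad A\in GL(n),\ M=M^{T},
\]
so the question becomes which pairs $(Se_j,SJ_{2n}e_j)$ are realizable by such $(A,M)$.

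For the first assertion I would write $u=(u_1,0)$ and $v=(v_1,v_2)$ under the splitting; the hypothesis $\omega_0(u,v)=1$ becomes $\langle u_1,v_2\rangle=1$, and the desired identities $Se_j=u$, $SJ_{2n}e_j=v$ reduce to the three equations $Ae_j=u_1$, $A^{T}v_2=e_j$, $AMe_j=v_1$. I would construct $A$ by prescribing its columns: set $Ae_j:=u_1$, and choose the remaining columns $\{Ae_i\}_{i\neq j}$ to be any basis of the hyperplane $v_2^{\perp}\subset\mathbb{R}^{n}$. The pairing $\langle u_1,v_2\rangle=1$ guarantees both that $v_2\neq 0$ (so the hyperplane makes sense) and that $u_1\notin v_2^{\perp}$ (so the resulting $A$ is invertible), while automatically forcing $\langle Ae_i,v_2\rangle=\delta_{ij}$, i.e.\ $A^{T}v_2=e_j$. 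For the third equation I would let $M$ be the symmetric matrix whose $j$-th row and column both equal $A^{-1}v_1$ and whose other entries vanish; this is consistent at position $(j,j)$, yields $Me_j=A^{-1}v_1$, and hence $AMe_j=v_1$, finishing the construction.

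For the second assertion I would use the transpose duality recalled just above the lemma, namely $S\in\mathrm{Sp}(\mathbb{R}^{2n},\omega_0;\mathbb{R}^{n,0})\Longleftrightarrow S^{T}\in\mathrm{Sp}(\mathbb{R}^{2n},\omega_0;\mathbb{R}^{0,n})$. First I would note the analogue of the first assertion with $\mathbb{R}^{0,n}$ in place of $\mathbb{R}^{n,0}$ (its proof is verbatim after swapping the two block rows/columns and using the basis $\{J_{2n}e_j\}$ of $\mathbb{R}^{0,n}$). Then, exploiting $J_{2n}^{2}=-I$ and the $J_{2n}$-invariance of $\omega_0$, I would rewrite the hypothesis $\omega_0(u,v)=1$ into a form suitable for that analogue, obtaining some $\tilde S\in\mathrm{Sp}(\mathbb{R}^{2n},\omega_0;\mathbb{R}^{0,n})$ that realizes the required images on the relevant basis vectors. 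Setting $S:=\tilde S^{T}$ places us back in $\mathrm{Sp}(\mathbb{R}^{2n},\omega_0;\mathbb{R}^{n,0})$, and the sign $-$ appearing in $u=-S^{T}J_{2n}e_j$ is precisely what this duality bookkeeping produces.

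No serious analytic obstacle arises; the main point is careful bookkeeping of signs and indices when translating between the block form of $S$ and its action on the symplectic basis, and respecting the symmetry of $M$ while still matching the first component of $v$. The construction works because the $(n-1)$ degrees of freedom in choosing $\{Ae_i\}_{i\neq j}\subset v_2^{\perp}$, together with the $\binom{n+1}{2}$ free entries of the symmetric matrix $M$, are more than sufficient to hit the remaining components of $(u,v)$ once the two constraints $Ae_j=u_1$ and $A^{T}v_2=e_j$ have been imposed on $A$.
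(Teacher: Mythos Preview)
Your argument for the first assertion is correct and takes a genuinely different route from the paper. The paper works intrinsically: it sets $u_1:=u$, observes that $\mathbb{R}^{n,0}\cap\{v\}^{\omega_0}$ has dimension $n-1$, picks a basis $\{u_2,\dots,u_n\}$ of that intersection so that $\{u_1,\dots,u_n\}$ is a basis of $\mathbb{R}^{n,0}$ with $\omega_0(u_i,v)=\delta_{i1}$, and then invokes a standard symplectic-basis extension result (\cite[Theorem~1.15]{deG06}) to complete $\{u_1,\dots,u_n,v\}$ to a full symplectic basis $\{u_i,v_i\}$; the linear map $S$ sending $e_i\mapsto u_i$, $e_{n+i}\mapsto v_i$ is then automatically symplectic and preserves $\mathbb{R}^{n,0}$. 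By contrast you exploit the explicit block parametrization of the stabilizer and reduce everything to the three linear equations $Ae_j=u_1$, $A^Tv_2=e_j$, $AMe_j=v_1$, which you solve constructively. Your approach is more concrete and self-contained (no external extension theorem), while the paper's is coordinate-free; both are short and each makes the other look natural.

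For the second assertion your strategy coincides with the paper's: the paper also reduces to the first claim via a $J_{2n}$-duality, setting $v':=J_{2n}v\in\mathbb{R}^{n,0}$, $u':=J_{2n}u$, applying the first part to obtain some $S^{-1}\in{\rm Sp}(\mathbb{R}^{2n},\omega_0;\mathbb{R}^{n,0})$ with $u'=S^{-1}e_j$, $v'=S^{-1}J_{2n}e_j$, and then unwinding using $S^TJ_{2n}=J_{2n}S^{-1}$. Your sketch (``prove the $\mathbb{R}^{0,n}$ analogue, then transpose'') is the same mechanism in slightly different packaging. The one place your write-up is thinner than the paper's is the explicit bookkeeping: you should spell out exactly which pair the analogue is applied to and carry the signs through the identities $J_{2n}^2=-I$ and $S^TJ_{2n}=J_{2n}S^{-1}$ line by line, rather than leaving it as ``the sign is precisely what this duality bookkeeping produces.''
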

\begin{proof}
It suffices to prove the case $j=1$. If $n=1$ the conclusion is obvious. We assume $n>1$ below.

Let $u\in \mathbb{R}^{n,0}$ and $v\in \mathbb{R}^{2n}$ satisfy $\omega_0(u,v)=1$.
Then $u_1:=u$ and $v_1:=v$ satisfy
$\omega_0(u_1,v_1)=1$.
 Let $\{v_1\}^{\omega_0}$ be the symplectic orthogonal complement of $\mathbb{R}v_1$ in $(\mathbb{R}^{2n},\omega_0)$.
Then $\dim \{v_1\}^{\omega_0}=2n-1$ and so $\dim\mathbb{R}^{n,0}\cap\{v_1\}^{\omega_0}=n-1$.
Take a basis of $\mathbb{R}^{n,0}\cap\{v_1\}^{\omega_0}$, $\{u_j\}^{n}_{j=2}$.
Then $\{u_j\}^{n}_{j=1}$ is a basis of $\mathbb{R}^{n,0}$ and $\omega_0(u_j,v_1)=\delta_{j1}$ for
$j=1,\cdots,n$. By \cite[Theorem~1.15]{deG06} we can extend $\{u_j\}^{n}_{j=1}\cup\{v_1\}$  into a
symplectic basis of $(\mathbb{R}^{2n},\omega_0)$, $\{u_j\}^{n}_{j=1}\cup\{v_j\}^n_{j=1}$, i.e.,
$$
\omega_0(u_i, u_j)=0,\quad \omega_0(v_i, v_j)=0,\quad \omega_0(u_i, v_j)=\delta_{ij}.
$$
Define a linear map $S_1:\mathbb{R}^{2n}\to \mathbb{R}^{2n}$ by
$$
S_1e_j=u_j,\quad S_1e_{n+j}=v_j,\quad j=1,\cdots,n.
$$
Then $S_1$ is symplectic and satisfies: $S_1(\mathbb{R}^{n,0})=\mathbb{R}^{n,0}$, $S_1e_1=u_1$ and $S_1(J_{2n}e_1)\\=
S_1e_{n+1}=v_1$.

Let $M$ be the subspace of $\mathbb{R}^{2n}$ spanned by $\{u_1,v_1\}$
and let $M^{\omega_0}$ be the symplectic orthogonal complement of $M$ in $(\mathbb{R}^{2n},\omega_0)$.
Both $M$ and $M^{\omega_0}$ are symplectic subspaces in $(\mathbb{R}^{2n},\omega_0)$ and
$\mathbb{R}^{2n}=M\oplus  M^{\omega_0}$.
Define a linear map $S_2:\mathbb{R}^{2n}\to \mathbb{R}^{2n}$ by
$$
S_2|_{M}u_1=u,\quad S_2|_{M}v_1=v,\quad S_2|_{M^{\omega_0}}=id_{M^{\omega_0}}.
$$
Then $S:=S_2S_1$ is the desired linear symplectic map.

It remains to prove the second claim.
Let $u\in \mathbb{R}^{2n}$ and $v\in \mathbb{R}^{0,n}$ satisfy $\omega_0(u,v)=1$.
Then $v':=J_{2n}v\in \mathbb{R}^{n,0}$ and $u':=J_{2n}u\in \mathbb{R}^{2n}$
satisfy $\omega_0(u', v')=1$. By the first claim there exists a
$S\in {\rm Sp}(\mathbb{R}^{2n},\omega_0;\mathbb{R}^{n,0})$ (and so
$S^{-1}\in {\rm Sp}(\mathbb{R}^{2n},\omega_0;\mathbb{R}^{n,0})$)
such that $u'=S^{-1}e_j$ and $v'=S^{-1}J_{2n}e_j=S^{-1}e_{n+j}$. Since $(S^{-1})^TJ_{2n}=J_{2n}S$ and $(S^{-1})^T=(S^T)^{-1}$ we have
\begin{eqnarray*}
&&v=J_{2n}^{-1}v'=J_{2n}^{-1}S^{-1}J_{2n}e_j= -J_{2n}S^{-1}J_{2n}e_j=  -(S^TJ_{2n})J_{2n}e_j=S^Te_j,\\
&&u=J_{2n}^{-1}u'=J_{2n}^{-1}S^{-1}e_j=-J_{2n}S^{-1}e_j=-S^TJ_{2n}e_j.
\end{eqnarray*}
\end{proof}

\noindent{\bf Proof of Theorem~\ref{th:J-estimate}}.
By (\ref{e:V0}) and (\ref{e:V1}) we have
 the symplectic decomposition of $\mathbb{R}^{2n}$,
$\mathbb{R}^{2n}=V^{n,k}_1+ (V^{n,k}_0+ J_{2n}V^{n,k}_0)$.
Let $D_1$ and $D_0$ be the projections of $D$ onto subspaces $V^{n,k}_1$ and $(V^{n,k}_0+ J_{2n}V^{n,k}_0)$, respectively.
Both are also  centrally symmetric.
After $V^{n,k}_1+ (V^{n,k}_0+ J_{2n}V^{n,k}_0)$ is identified with $\mathbb{R}^{2k}\times\mathbb{R}^{2(n-k)}$ there holds
$D\subset D_1\times D_0$. Thus
$$
c_{\rm LR}(D,D\cap \mathbb{R}^{n,k})\le c_{\rm LR}(D_1\times D_0, (D_1\times D_0)\cap \mathbb{R}^{n,k})
$$
if $k>0$.
In this situation, by (\ref{e:product1}) we obtain
 \begin{eqnarray}\label{e:J-1}
&& c_{\rm LR}(D_1\times D_0, (D_1\times D_0)\cap \mathbb{R}^{n,k})\nonumber\\
&&=\min\{{c_{\rm LR}(D_1, D_1\cap\mathbb{R}^{k,k})},   c_{\rm LR}(D_0, D_0\cap\mathbb{R}^{n-k,0})\},
 \end{eqnarray}
where  {$c_{\rm LR}(D_1, D_1\cap\mathbb{R}^{k,k})$} is equal to the Ekeland-Hofer-Zehnder capacity
$c_{\rm EHZ}(D_1)$ of $D_1$ (\cite[\S1.2]{JinLu1918}).
Let us estimate
 \begin{eqnarray}\label{e:J-2}
  c_{\rm LR}(D_0, D_0\cap\mathbb{R}^{n-k,0}).
 \end{eqnarray}
 We understand $D_0=D$ if $k=0$.
Let $\Pi_j$ denote the natural orthogonal projections from $\mathbb{R}^{2{n-k}}$
onto the subspace $E_j=\{(q_1,\cdots,q_{n-k},p_1,\cdots,p_{n-k})\in\mathbb{R}^{2(n-k)}\;|\; q_s=p_s=0\;\forall s\ne j\}$,
$j=1,\cdots,n-k$. For $S\in {\rm Sp}(\mathbb{R}^{2(n-k)},\omega_0;\mathbb{R}^{n-k,0})$, we have $S(D_0)\subset \times^{n-k}_{j=1}\Pi_j(S(D_0))$
after $\underbrace{\mathbb{R}^2\times\cdots\times\mathbb{R}^2}_{n-k\;\mbox{\scriptsize factors}}$
is identified with $\mathbb{R}^{2(n-k)}$ as below (\ref{e:product1}).
Using (\ref{e:product1}) again we arrive at
\begin{eqnarray}\label{e:J-3}
 c_{\rm LR}(D_0, D_0\cap\mathbb{R}^{n-k,0})
 &\le&
 c_{\rm LR}(\times^{n-k}_{j=1}\Pi_j(S(D_0)),  (\times^{n-k}_{j=1}\Pi_j(S(D_0)))\cap \mathbb{R}^{n-k,0} )\nonumber\\
 &=&\min_jc_{\rm LR}(\Pi_j(S(D_0))), \Pi_j(S(D_0)))\cap\mathbb{R}^{1,0})
 \end{eqnarray}
for any $S\in{\rm Sp}(\mathbb{R}^{2(n-k)},\omega_0;\mathbb{R}^{n-k,0})$ because
$$
c_{\rm LR}(D_0, D_0\cap\mathbb{R}^{n-k,0})= c_{\rm LR}(S(D_0), S(D_0)\cap\mathbb{R}^{n-k,0}).
$$
By the arguments in Remark~\ref{rem:R10case} we see that
$c_{\rm LR}(\Pi_j(S(D_0)), \Pi_j(S(D_0))\cap\mathbb{R}^{n-k,0})$ is equal to the smallest one of
\begin{eqnarray*}
&&{\rm Area}\left(\Pi_j(S(D_0))\cap\{(q_1,p_1)\in\mathbb{R}^{2}\;|\;q_1\ge 0\}\right)\;\hbox{and}\;\\
&&{\rm Area}\left(\Pi_j(S(D_0))\cap\{(q_1,p_1)\in\mathbb{R}^{2}\;|\;q_1\le 0\}\right).
\end{eqnarray*}
It follows from (\ref{e:J-3}) that
 \begin{eqnarray}\label{e:J-4}
 c_{\rm LR}(D_0, D_0\cap\mathbb{R}^{n-k,0})
 \le\frac{1}{2}\inf_{S\in {\rm Sp}(\mathbb{R}^{2(n-k)},\omega_0;\mathbb{R}^{n-k,0})}\min_j {\rm Area}\left(\Pi_j(S(D_0))\right).
 \end{eqnarray}

 Let us fix a $j\in\{1,\cdots,n-k\}$ and estimate
 $$
 \inf_{S\in {\rm Sp}(\mathbb{R}^{2(n-k)},\omega_0;\mathbb{R}^{n-k,0})}\min_j {\rm Area}\left(\Pi_j(S(D_0))\right).
 $$
 By \cite[Lemma~2.6]{GlOs}, for each $S\in {\rm Sp}(\mathbb{R}^{2(n-k)},\omega_0;\mathbb{R}^{n-k,0})$ we have
  \begin{eqnarray}\label{e:J-5}
{\rm Area}\left(\Pi_j(S(D_0))\right)\le 4\|S^Te_j\|_{\overline{D_0}^\circ}\|S^TJ_{2(n-k)}e_j\|_{(\overline{D_0})^\circ_v}
 \end{eqnarray}
 with $v=S^Te_j$, where $\overline{D_0}^\circ=\{y\in\mathbb{R}^{2(n-k)}\;|\;\langle x, y\rangle\le 1\;\forall x\in\overline{D_0}\}$ and
\begin{eqnarray}\label{e:J-5.1}
(\overline{D_0})_v=\overline{D_0}\cap\{v\}^\bot\quad\hbox{and}\quad\|w\|_{(\overline{D_0})^\circ_v}=\sup\{\langle w,y\rangle\;|\;y\in (\overline{D_0})_v\}.
 \end{eqnarray}
 where $\{v\}^\bot$ is the orthogonal complement of $\mathbb{R}v$ in $(\mathbb{R}^{2n},\langle\cdot,\cdot\rangle)$.
 By Lemma~\ref{lem:Lagr}
 if $u\in \mathbb{R}^{2(n-k)}$ and $v\in \mathbb{R}^{0,n-k}$ satisfy $\omega_0(u,v)=1$
then for each $e_{j}$, $j=1,\cdots,n-k$, there exists a $S\in {\rm Sp}(\mathbb{R}^{2(n-k)},\omega_0;\mathbb{R}^{n-k,0})$ such that
$v=S^Te_j$ and $u=-S^TJ_{2n}e_j$. Denote by $\mathbb{S}(\mathbb{R}^{0,n-k})$ the unit sphere in $\mathbb{R}^{0,n-k}$. Then we have
  \begin{equation}\label{e:J-6}
\inf_{S\in {\rm Sp}(\mathbb{R}^{2(n-k)},\omega_0;\mathbb{R}^{n-k,0})}{\rm Area}\left(\Pi_j(S(D_0))\right)
\le 4\inf_{v\in \mathbb{S}(\mathbb{R}^{0,n-k})}\inf_{u\in\Sigma_v}\|v\|_{\overline{D_0}^\circ}\|u\|_{(\overline{D_0})^\circ_v}
 \end{equation}
where $\Sigma_v=\{u\in \mathbb{R}^{2(n-k)}\:|\;\langle J_{2(n-k)}v,u\rangle=1\}$.
For each fixed $v\in \mathbb{S}(\mathbb{R}^{0,n-k})$, as in the proof of {\cite[Equation (17) in the proof of Proposition 2.5]{GlOs}}, we can obtain
 \begin{eqnarray}\label{e:J-7}
&&\inf_{u\in\Sigma_v}\|u\|_{(\overline{D_0})^\circ_v}=\sup\{\langle w, J_{2(n-k)}v\rangle\;|\;w\in\mathbb{R}J_{2(n-k)}v\;\&\;
\|w\|_{(\overline{D_0})_v}\le 1\}\nonumber\\
&=&\sup\left\{\langle aJ_{2(n-k)}v, J_{2(n-k)}v\rangle\;\Big|\;a\in\mathbb{R}\;\&\; |a|\le
\left(\|J_{2(n-k)}v\|_{(\overline{D_0})_v}\right)^{-1}\right\}\nonumber\\
&=&\frac{1}{\|J_{2(n-k)}v\|_{(\overline{D_0})_v}}
 \end{eqnarray}
because $\langle J_{2(n-k)}v, J_{2(n-k)}v\rangle=\langle v, v\rangle=1$. Note that $J_{2(n-k)}v\in\{v\}^\bot$.
We deduce
\begin{eqnarray*}
\|J_{2(n-k)}v\|_{(\overline{D_0})_v}&=&\inf\{r>0\,|\, J_{2(n-k)}v\in r(\overline{D_0})_v\}\\
&=&\inf\{r>0\,|\, J_{2(n-k)}v\in r\overline{D_0}\}=\|J_{2(n-k)}v\|_{\overline{D_0}}.
\end{eqnarray*}
 It follows from this equality, (\ref{e:J-4}), (\ref{e:J-5}) and (\ref{e:J-6}), (\ref{e:J-7}) that
\begin{eqnarray}\label{e:J-8}
 c_{\rm LR}(D_0, D_0\cap\mathbb{R}^{n-k,0})
 &\le& 2\inf_{v\in \mathbb{S}(\mathbb{R}^{0,n-k})}\frac{\|v\|_{\overline{D_0}^\circ}}{\|J_{2(n-k)}v\|_{\overline{D_0}}}\nonumber\\
 &=& \frac{2}{\|J_{2(n-k)}\|_{\overline{D_0}^\circ\cap\mathbb{R}^{0,n-k}\to \overline{D_0}}},
  \end{eqnarray}
where $\|J_{2(n-k)}\|_{\overline{D_0}^\circ\cap\mathbb{R}^{0,n-k}\to \overline{D_0}}$ is the norm of
$J_{2(n-k)}$ as a linear map between the normed spaces $(\mathbb{R}^{0,n-k}, \|\cdot\|_{\overline{D_0}^\circ\cap\mathbb{R}^{0,n-k}})$
and $(\mathbb{R}^{2(n-k)}, \|\cdot\|_{\overline{D_0}})$.

Recall that $\mathbb{R}^{2(n-k)}$ is identified with the symplectic subspace
$V^{n,k}_0+ J_{2n}V^{n,k}_0\subset\mathbb{R}^{2n}$ and that
$D_0$ is the image of $D$ under the natural orthogonal projection $\Pi_{n,k}$ from $\mathbb{R}^{2n}$ onto the subspace $(V^{n,k}_0+ J_{2n}V^{n,k}_0)$. We can understand $\overline{D_0}^\circ$ as
\begin{eqnarray*}
&&\{y\in V^{n,k}_0+ J_{2n}V^{n,k}_0\;|\;\langle \Pi_{n,k}x, y\rangle\le 1\;\forall x\in\overline{D}\}\\
&=&\{y\in V^{n,k}_0+ J_{2n}V^{n,k}_0\;|\;\langle x, y\rangle\le 1\;\forall x\in\overline{D}\}\\
&=&(V^{n,k}_0+ J_{2n}V^{n,k}_0)\cap(\overline{D})^\circ
\end{eqnarray*}
and thus $\overline{D_0}^\circ\cap\mathbb{R}^{0,n-k}$ as $(J_{2n}V^{n,k}_0)\cap(\overline{D})^\circ$. Hence
\begin{eqnarray}\label{e:J-8.1}
\|J_{2(n-k)}\|_{\overline{D_0}^\circ\cap\mathbb{R}^{0,n-k}\to \overline{D_0}}=
 \|J_{2n}|_{J_{2n}V^{n,k}_0}\|_{\overline{D}^\circ\to\overline{D}},
 \end{eqnarray}
where the right side is the norm  of $J_{2n}|_{J_{2n}V^{n,k}_0}$ as a linear map between the normed spaces
$$
(J_{2n}V^{n,k}_0, \|\cdot\|_{(J_{2n}V^{n,k}_0)\cap(\overline{D})^\circ})=
(J_{2n}V^{n,k}_0, \|\cdot\|_{(\overline{D})^\circ}|_{J_{2n}V^{n,k}_0})
\quad\hbox{and}\quad (\mathbb{R}^{2n}, \|\cdot\|_{\overline{D}}).
$$
 Hence (\ref{e:J-estimate3}) follows from (\ref{e:J-8}).

Finally, we prove equality (\ref{e:J-estimate4}).
By \cite[Remark~1.7.8]{Sch93},
for a norm $\|\cdot\|_K$ on $\mathbb{R}^m$ defined by a centrally symmetric convex body $K\subset\mathbb{R}^m$,
after the dual space of $\mathbb{R}^m$ is identified with $\mathbb{R}^m$ itself via
inner product $\langle\cdot,\cdot\rangle$, the dual Banach space $(\mathbb{R}^m, \|\cdot\|_K^\ast)$
of $(\mathbb{R}^m, \|\cdot\|_K)$ has its norm given by
$\|y\|_K^\ast=\sup\{\langle y,x\rangle\,|\,x\in\mathbb{R}^m,\;\|x\|_K\le 1\}=\|y\|_{K^\circ}$.
 Then we have
$$
\|J_{2n}v\|_{\overline{D}}=\|J_{2n}v\|_{(\overline{D}^\circ)^\circ}=\sup\{\langle J_{2n}v, u\rangle\,|\,u\in\mathbb{R}^{2n}\;\&\;
\|u\|_{\overline{D}^\circ}\le 1\}.
$$
But $\overline{D}^\circ=\{u\in\mathbb{R}^{2n}\;|\;
\|u\|_{\overline{D}^\circ}\le 1\}$ and
$$
(J_{2n}V^{n,k}_0)\cap\overline{D}^\circ=\{v\in J_{2n}V^{n,k}_0\;|\;\|v\|_{\overline{D}^\circ}\le 1\}.
$$
Hence
\begin{eqnarray*}
&&\|J_{2n}|_{J_{2n}V^{n,k}_0}\|_{\overline{D}^\circ\to\overline{D}}=
\sup\{\|J_{2n}v\|_{\overline{D}}\;|\;v\in J_{2n}V^{n,k}_0\setminus\{0\},\;\|v\|_{\overline{D}^\circ}\le 1\}\\
&&=\sup\{\langle J_{2n}v, u\rangle\,|\,u\in\mathbb{R}^{2n}\;\&\;
\|u\|_{\overline{D}^\circ}\le 1,\;v\in J_{2n}V^{n,k}_0\setminus\{0\},\;\|v\|_{\overline{D}^\circ}\le 1\}\\
&&=\sup\{\langle J_{2n}v,u\rangle\;|\;  u\in \overline{D}^\circ,\;
  v\in (J_{2n}V^{n,k}_0)\cap\overline{D}^\circ\}.
\end{eqnarray*}
The equality (\ref{e:J-estimate4}) is proved.
\hfill$\Box$\vspace{2mm}

\noindent{\bf Proof of Example~\ref{ex:add}}.
By \cite{AAO14} the classical billiard trajectory given by the line segment between
$(-1,0)$ and $(0,1)$ is the projection of a leafwise chord $\gamma$, with respect to the Lagrangian submanifold $(E^2(1,2)\times D^2(1))\cap\mathbb{R}^{2,0}$, on the boundary of $E^2(1,2)\times D^2(1)$ with action $A(\gamma)=2$. We obtain
$$
c_{\rm LR}(E^2(1,2)\times D^2(1),(E^2(1,2)\times D^2(1))\cap\mathbb{R}^{2,0})\le 2
$$
and hence equality (\ref{e:LargP1-Add2}) because
\begin{eqnarray*}
2&=&c_{\rm LR}(D^2(1)\times D^2(1),(E^2(1,2)\times D^2(1))\cap\mathbb{R}^{2,0})\\
&\le& c_{\rm LR}(E^2(1,2)\times D^2(1),(E^2(1,2)\times D^2(1))\cap\mathbb{R}^{2,0}).
\end{eqnarray*}
Similar arguments yield equality (\ref{e:LargP1-Add3}).

In order to prove inequality (\ref{e:LargP1-Add4}) consider the following convex domain in $\mathbb{R}^4$,
$$
C:=\{(x_1,x_2,y_1,y_2)\,|\,x_1^2+\frac{x_2^2}{4}+y_1^2+y_2^2\le 1\}\subset E^2(1,2)\times D^2(1),
$$
 which intersects with $\mathbb{R}^{2,1}$. By the monotonicity of $c_{\rm LR}$ there holds
\begin{equation}\label{e:7.1}
c_{\rm LR}(C,C\cap\mathbb{R}^{2,1})\le c_{\rm LR}(E^2(1,2)\times D^2(1),E^2(1,2)\times D^2(1)\cap\mathbb{R}^{2,1}).
\end{equation}
Let $H_C:=j_C^2=x_1^2+\frac{x_2^2}{4}+y_1^2+y_2^2$. Consider a leafwise chord  on $\partial C$ with respect to $\mathbb{R}^{2,1}$, $z:[0,T]\to\partial C$ satisfying
\begin{equation}\label{e:7.2}
\dot{z}=J_4\nabla H_C(z),\quad z(0), z(T)\in\mathbb{R}^{2,1},\quad z(0)\sim z(T).
\end{equation}
Then $A(z)=T$. Write $z(t)=(x_1(t),x_2(t),y_1(t),y_2(t))$.
Note that the first equation in (\ref{e:7.2}) is equivalent to
$$
\dot{x}_1=-2y_1,\quad\dot{y}_1=2x_1,\quad\hbox{and}\quad\dot{x}_2=-2y_2,\quad
\dot{y}_2=\frac{x_2}{2}.
$$
It follows that $x_1(t)^2+y_1(t)^2\equiv c_1$ and $\frac{x_2(t)^2}{4}+y_2(t)^2\equiv c_2$,
 where $c_1$, $c_2\ge 0$ are constants and $c_1+c_2=1$ since $z(t)$ is on the boundary of $C$.

The second and third conditions in (\ref{e:7.2}) are equivalent to
$$
\left\{
\begin{array}{c}
x_1(0)=x_1(T)\\
y_1(0)=y_1(T),
\end{array}
\right.
\quad\hbox{and}\quad
y_2(0)=y_2(T)=0.
$$

If $c_1\ne 0$ then $T\ge\pi$, i.e. $A(z)\ge\pi$.

If $c_1= 0$ then $x_1(t)\equiv 0$, $x_1(t)\equiv 0$ and $c_2=1$. Hence $z_2(t)=(x_2(t),y_2(t))$ is a leafwise chord on the boundary of ellipsoid
$\hat{E}^2(2,1)=\{(x_2,y_2)\,|\,{x_2^2}/{4}+y_2^2\le 1\}$
in $\mathbb{R}^2=\{(x_2,y_2)\}$, with respect to the Lagrangian submanifold $\mathbb{R}^{1,0}=\mathbb{R}\times\{0\}$. Hence
$$
\int_0^T \langle -J_2\dot{z}_2,z_2\rangle dt\ge\pi,
$$
half of the area of the ellipsoid $\hat{E}^2(2,1)$. It follows that
$A(z)=\pi$.

In summary,  $A(z)\ge\pi$ for any leafwise chord $z$ on $C$ with respect to $\mathbb{R}^{2,1}$. That is,
$c_{\rm LR}(C,C\cap \mathbb{R}^{2,1})\ge\pi$.
This and inequality (\ref{e:7.1}) lead to (\ref{e:LargP1-Add4}).
\hfill$\Box$\vspace{2mm}

 \section{An analogue of Viterbo's  conjecture}\label{sec:viterbo}
\setcounter{equation}{0}

\begin{guess}[Viterbo \cite{Vit00}]\label{conj:viterbo}
{\rm For any symplectic capacity $c$ and any convex body $D\subset\mathbb{R}^{2n}$ there holds
\begin{equation}\label{e:viterbo}
\frac{c(D)}{c(B^{2n}(1))}\le \left(\frac{{\rm Vol}(D)}{{\rm Vol}(B^{2n}(1))}\right)^{1/n}
\end{equation}
(or equivalently $(c(D))^n\le{\rm Vol}(D,\omega_0^n)=n!{\rm Vol}(D)$),
with equality if and only if $D$ is symplectomorphic to a ball, where ${\rm Vol}(D)$ denotes the Euclidean volume of $D$.
In other words, \textsf{among all convex bodies in $\mathbb{R}^{2n}$ with
a given volume, the symplectic capacity is maximal for symplectic images of the Euclidean ball.}}
\end{guess}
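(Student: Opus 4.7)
The assertion is the Viterbo volume--capacity conjecture, which is open in full generality for $n\ge 2$; what follows is a strategy rather than an actual proof. The first stage is a reduction to a single extremal capacity. Since every normalized symplectic capacity satisfies $c_{\rm Gr}\le c\le c^Z$, and $c^Z=c_{\rm EHZ}$ on convex bodies by the sandwich argument of \cite{AAO08}, it suffices to prove $(c_{\rm EHZ}(D))^n\le n!\,{\rm Vol}(D)$ together with the stated equality characterization. A symplectic dilation normalizes ${\rm Vol}(D)={\rm Vol}(B^{2n}(1))$ and reduces the inequality to $c_{\rm EHZ}(D)\le \pi$; smooth strictly convex approximation, together with Hausdorff continuity of both sides, reduces further to the case where the representation formula $c_{\rm EHZ}(D)=\min\{A(\gamma)>0:\gamma\text{ is a closed characteristic on }\partial D\}$ is available.

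Second, I would mount a variational argument on the ratio $\Phi(D):=(c_{\rm EHZ}(D))^n/{\rm Vol}(D)$. The Brunn--Minkowski inequality $(c_{\rm EHZ}(D+K))^{1/2}\ge (c_{\rm EHZ}(D))^{1/2}+(c_{\rm EHZ}(K))^{1/2}$ of \cite{AAO08} and the classical Brunn--Minkowski inequality for ${\rm Vol}^{1/(2n)}$ together exhibit $\Phi^{1/n}$ as a ratio of concave functionals on the space of convex bodies. Placing $D$ in John position yields a Hausdorff-precompact maximizing family, and upper semicontinuity of $c_{\rm EHZ}$ delivers a maximizer $D_\ast$; since ellipsoids achieve $\Phi=n!$ by direct computation, one has $\Phi(D_\ast)\ge n!$. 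The conjecture then reduces to proving $\Phi(D_\ast)=n!$ and that every maximizer is a linear symplectic image of a Euclidean ball. The intended mechanism is first-order optimality at $D_\ast$ under Minkowski perturbation $D_\ast+\varepsilon K$: combined with the equality characterization of Theorem~\ref{th:Brun} transferred to the EHZ setting and the differentiability analysis of Corollary~\ref{cor:Brun.2}(ii), this would force every action-minimizing closed characteristic on $\partial D_\ast$ to coincide, up to translation and dilation, with one on $\partial K$ for every convex $K$, producing a foliation of $\partial D_\ast$ by action-minimizers that in the convex category characterizes symplectic images of the round sphere.

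The decisive obstacle is this final rigidity step, which is what keeps the conjecture open: no known argument passes from first-order Minkowski optimality for $c_{\rm EHZ}$ to the linear structure of the maximizer, and no symplectic analogue of Steiner symmetrization is known that simultaneously preserves volume and monotonically increases $c_{\rm EHZ}$. Any complete proof must moreover carry enough information to settle the symmetric Mahler conjecture in parallel, by \cite{AAKO14}. A realistic partial deliverable is therefore twofold: (i) a weakened bound $(c_{\rm EHZ}(D))^n\le A_n\cdot n!\,{\rm Vol}(D)$ with a dimension-dependent constant $A_n$, following the Artstein-Avidan--Milman--Ostrover estimate; (ii) the sharp inequality verified on those classes where it is already known, namely $S^1$-invariant convex bodies (Hermann) and convex Lagrangian products $\Delta\times\Delta^\circ$ with $\Delta$ centrally symmetric, where Corollary~\ref{cor:LagrP}(2) of this paper supplies the capacity side directly. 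Any stronger statement would have to be announced as conditional on a new rigidity theorem of independent interest.
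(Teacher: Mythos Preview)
The statement you are addressing is Viterbo's conjecture, recorded in the paper as Conjecture~\ref{conj:viterbo}; the paper does not prove it and does not claim to --- it is stated as background and motivation for Section~\ref{sec:viterbo}, where the authors propose a coisotropic analogue and verify it in examples. So there is no ``paper's own proof'' to compare against, and your decision to present a strategy together with an honest assessment of the obstruction is the appropriate response.

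That said, your reduction step contains a genuine error. You write that $c^Z=c_{\rm EHZ}$ on convex bodies ``by the sandwich argument of \cite{AAO08}''; this equality is not known and is itself a major open problem --- essentially the strong Viterbo conjecture (cf.\ the reference \cite{GuRa20} in the paper's bibliography). The sandwich $c_{\rm Gr}\le c\le c^Z$ reduces the conjecture for all normalized capacities to the case $c=c^Z$, not to $c=c_{\rm EHZ}$; the inequality $c_{\rm EHZ}\le c^Z$ goes the wrong way for your purpose. Thus even a complete proof of $(c_{\rm EHZ}(D))^n\le n!\,{\rm Vol}(D)$ would not settle Conjecture~\ref{conj:viterbo} as stated for arbitrary $c$. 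You should either restrict your target to $c_{\rm EHZ}$ (which is how the conjecture is most often discussed, and is already enough to imply Mahler via \cite{AAKO14}) or keep $c^Z$ as the target and drop the claimed identification.

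A smaller point: the Artstein-Avidan--Milman--Ostrover bound \cite{AAMO08} already gives $(c_{\rm EHZ}(D))^n\le A_0^{\,n}\, n!\,{\rm Vol}(D)$ with a \emph{dimension-independent} constant $A_0$ (this is exactly (\ref{e:Vit8}) in the paper), so your partial deliverable~(i) with a dimension-dependent $A_n$ undersells what is known.
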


Clearly, this is true if $c$ is equal to the Gromov symplectic width $w_G$.
 Hermann \cite{Her98} proved (\ref{e:viterbo}) for convex Reinhardt domains $D$.
As an improvement of a result of Viterbo \cite{Vit00} it was proved in
  \cite{AAMO08} that  this conjecture holds true up to a multiplicative constant that is independent of the dimension.
So far this conjecture is only proved for some cases, see \cite{Ba20, GuRa20, KaRo19, ShiLu2008} and the references therein.
Surprisingly, by proving $c_{\rm HZ}(\Delta\times \Delta^\circ)=4$ for every centrally symmetric convex body $\Delta\subset\mathbb{R}^n_q$,
Artstein-Avidan,  Karasev, and Ostrover \cite{AAKO14} observed  that
Conjecture~\ref{conj:viterbo} for $c=c_{\rm HZ}$ and $D=\Delta\times \Delta^\circ$  implies the
the famous symmetric Mahler conjecture \cite{Mah39} in convex geometry:
\textsf{For any centrally symmetric convex body $\Delta\subset\mathbb{R}^n_q$,
there holds ${\rm Vol}(\Delta\times\Delta^\circ)\ge 4^n/n!$.}
It is pleasant that  Iriyeh and Shibata \cite{IrSh20}
have very recently proved the latter in the case $n=3$, and hence
(\ref{e:viterbo}) for $c=c_{\rm HZ}$ and $D=\Delta\times \Delta^\circ\subset\mathbb{R}^6$
with every centrally symmetric convex body $\Delta\subset\mathbb{R}^3_q$.

What are analogues of  the Viterbo conjecture for the coisotropic  capacities?
For simplicity we only consider the case $k=n-1$.
 As arguments above (\ref{e:BDY.1}), if $n=1$ and $k=0$
then  $c_{\rm LR}(D, D\cap\mathbb{R}^{1,0})$ is equal to the smaller symplectic area of
$D$ above and below the the line segment $D\cap\mathbb{R}^{1,0}$.
Motivated by this and the above Viterbo conjecture,
 for any convex body $D\subset\mathbb{R}^{2n}$
whose interior has nonempty intersection with $\mathbb{R}^{n,n-1}$, we conjecture:
\begin{equation}\label{e:viterbo2}
\frac{c_{\rm LR}(D, D\cap\mathbb{R}^{n,n-1})}{c_{\rm LR}(B^{2n}(1),  B^{2n}(1)\cap\mathbb{R}^{n,n-1})}\le 2\left(\frac{\min\{{\rm Vol}(D^+),
{\rm Vol}(D^-)\}}{{\rm Vol}(B^{2n}(1))}\right)^{1/n}
\end{equation}
or equivalently
\begin{equation}\label{e:viterbo2*}
c_{\rm LR}(D, D\cap\mathbb{R}^{n,n-1})\le \left(n!{\min\{{\rm Vol}(D^+),
{\rm Vol}(D^-)\}}\right)^{1/n},
\end{equation}
where $D^\pm:=\{x=(q_1,\cdots,q_n,p_1,\cdots,p_n)\in D\,|\,\pm p_n\ge 0\}$, i.e., two parts
of $D$ separated by hyperplane $\mathbb{R}^{n,n-1}\subset\mathbb{R}^{2n}$.

If $D$ is either centrally symmetric or  $\tau_0$-invariant, then the volume-preserving diffeomorphism
$(q,p)\mapsto (-q,-p)$ or $\tau_0:\mathbb{R}^{2n}\to \mathbb{R}^{2n}$ maps $D^+$ onto $D^-$, and hence
(\ref{e:viterbo2*}) becomes
\begin{equation}\label{e:viterbo2**}
c_{\rm LR}(D, D\cap\mathbb{R}^{n,n-1})\le \left(\frac{n!}{2}{\rm Vol}(D)\right)^{1/n}.
\end{equation}

In order to support our conjecture let us check examples we have computed.

\begin{example}\label{ex:Vit2}
{\rm Let  $D:=\prod^n_{i=1}(a_i, b_i)\times (-c_i, d_i)$ be as in (\ref{e:product7}). It is easy to prove that
\begin{eqnarray*}
c_{\rm LR}(D, D\cap\mathbb{R}^{n,n-1})&\le&(n!)^{\frac{1}{n}}c_{\rm LR}(D, D\cap\mathbb{R}^{n,n-1})\\
&\le&\left(n!{\min\{{\rm Vol}(D^+),{\rm Vol}(D^-)\}}\right)^{1/n}.
 \end{eqnarray*}
In particular, for any $c>0, d>0$ and reals $a<b$, if $D=(a, b)\times (-c, d)$ then (\ref{e:BDY.2}) leads to
\begin{eqnarray}\label{e:Vit2}
 c_{\rm LR}(D, D\cap\mathbb{R}^{1,0})=(b-a)\min\{c,d\}=\left(n!{\min\{{\rm Vol}(D^+),
{\rm Vol}(D^-)\}}\right)^{1/n}
  \end{eqnarray}
since $n=1$. Hence equality cases in (\ref{e:viterbo2*}) are possible.
}
\end{example}

\begin{example}\label{ex:Vit2.1}
{\rm Let $D=P^{2n}(r_1,\cdots,r_n)$ be as in
Corollary~\ref{cor:polyDisk}. Then
$$
{\rm Vol}(D^+)={\rm Vol}(D^-)=\frac{\pi^n}{2}r_1^2\cdots r_n^2,
$$
and
 \begin{eqnarray}\label{e:Vit3}
 c_{\rm LR}(D, D\cap\mathbb{R}^{n,n-1})=\frac{\pi}{2}\min\{2\min_{i\le n-1}r_i^2, r^2_n\}
 \end{eqnarray}
by (\ref{e:product4}). If $n=1$  then
$c_{\rm LR}(D, D\cap\mathbb{R}^{1,0})=\frac{1}{2}{\rm Vol}(D)$ by (\ref{e:1ellipsoid+}) or the arguments above (\ref{e:BDY.1}).
Suppose that $n>1$ and  $2\min_{i\le n-1}r_i^2=2r_j^2\ge r^2_n$ for some $0<j<n$, then
$$
{\rm Vol}(D^+)=\frac{\pi^n}{2}r_1^2\cdots r_n^2\ge \frac{\pi^n}{2^n}(r_n^2)^n= \left(c_{\rm LR}(D, D\cap\mathbb{R}^{n,n-1})\right)^n.
$$
Similarity, if $2\min_{i\le n-1}r_i^2=2r_j^2<r^2_n$ for some $0<j<n$, then
$$
{\rm Vol}(D^+)=\frac{\pi^n}{2}r_1^2\cdots r_n^2\ge (\pi r_j^2)^n=\left(c_{\rm LR}(D, D\cap\mathbb{R}^{n,n-1})\right)^n.
$$
In summary, we have
\begin{equation}\label{e:Vit4}
c(D, D\cap\mathbb{R}^{n,n-1})
\le \left(\frac{1}{2}{\rm Vol}(D)\right)^{1/n}\le \left(\frac{n!}{2}{\rm Vol}(D)\right)^{1/n}.
\end{equation}
That is, (\ref{e:viterbo2**}) holds in this situation.}
\end{example}

\begin{example}\label{ex:Vit3}
{\rm
For $D=E(r_1,\cdots,r_n)$ in  (\ref{e:ellipsoid-})
  we have ${\rm Vol}(D)=\frac{\pi^n}{n!}r_1^2\cdots r_n^2$ and
\begin{eqnarray*}
c_{\rm LR}\left(D, D\cap\mathbb{R}^{n,n-1}\right)=\frac{\pi}{2}\min\{2\min_{i\le n-1}r_i^2, r^2_n\}
   \end{eqnarray*}
 by (\ref{e:ellipsoid}).
If $2\min_{i\le n-1}r_i^2\ge r^2_n$, then $2r_i^2\ge r_n^2$ for $i=1,\cdots,n-1$, and so
$$
(2^{n-1}r_1^2\cdots r_n^2)^{1/n}\ge r_n^2=\min\{2\min_{i\le n-1}r_i^2, r^2_n\}.
$$
If $2\min_{i\le n-1}r_i^2<r^2_n$, we may assume $2\min_{i\le n-1}r_i^2=2r_1^2$. Then
$$
(2^{n-1}r_1^2\cdots r_n^2)^{1/n}> ((2r_1^2)^{n-1}2r_1^2)^{1/n}=2r_1^2=\min\{2\min_{i\le n-1}r_i^2, r^2_n\}.
$$
It follows from these that
\begin{eqnarray}\label{e:Vit5}
&&\frac{c_{\rm LR}(E(r_1,\cdots,r_n), E(r_1,\cdots,r_n)\cap\mathbb{R}^{n,n-1})}{c_{\rm LR}(B^{2n}(1),  B^{2n}(1)\cap\mathbb{R}^{n,n-1})}\nonumber\\
&\le& 2\left(\frac{\min\{{\rm Vol}(E(r_1,\cdots,r_n)^+),
{\rm Vol}(E(r_1,\cdots,r_n)^-)\}}{{\rm Vol}(B^{2n}(1))}\right)^{1/n},
\end{eqnarray}
with equality if and only if $r_1=\cdots=r_{n-1}=r_n/\sqrt{2}$.
}
\end{example}

\begin{example}\label{ex:Vit4}
{\rm Let $D_a=B^{2n}({\bf a}, 1)$  be as in (\ref{e:Ball1}) with $a\in [0,1)$, $r=\sqrt{1-a^2}$
and $\theta(r)=\arcsin(r)$. A long calculus exercise shows
\begin{eqnarray}\label{e:Vit6}
{\rm Vol}(D_a^-)=\frac{\pi^{n-1}}{n!}\left(\theta(r)-\cos\theta(r)\sum^{n-1}_{j=0}
\frac{(2n-2j-2)!!}{(2n-2j-1)!!}\sin^{2n-2j-1}\theta(r)\right)
\end{eqnarray}
and ${\rm Vol}(D_a^-)\le {\rm Vol}(D_a^+)$. Here $(2n+1)!!=1\cdot 3\cdot 5\cdots (2n+1)$,
$(2n)!!=2\cdot 4\cdot 6\cdots (2n)$, $0!!=0$ and $(-1)!!=0$.
 By (\ref{e:1ellipsoid+}) we have
$$
c_{\rm LR}\left(D_a, D_a\cap\mathbb{R}^{n,n-1}\right) =\theta(r)-\frac{1}{2}\sin(2\theta(r))=\frac{2}{3}(\theta(r))^3+
o((\theta(r))^4)
$$
as $\theta(r)\to 0$. Note that $\theta(r)\to 0$ as $a\to 1$. It follows that
$$
\lim_{a\to 1}\frac{n!\min\{{\rm Vol}(D_a^+),
{\rm Vol}(D_a^-)\}}{(c_{\rm LR}\left(D_a, D_a\cap\mathbb{R}^{n,n-1}\right))^n}=+\infty
$$
If $a\in (-1,0]$ then $D_a^-=D_{-a}^+$, $D_a^+=D_{-a}^-$ and so
$$
\lim_{a\to -1}\frac{n!\min\{{\rm Vol}(D_a^+),
{\rm Vol}(D_a^-)\}}{(c_{\rm LR}\left(D_a, D_a\cap\mathbb{R}^{n,n-1}\right))^n}=+\infty.
$$
These show that (\ref{e:viterbo2*}) holds for $D=D_a$ with $a\in (-1,0]\cup[0,1)$ and $|a\pm 1|$ small.
}
\end{example}

\begin{example}\label{ex:Vit5}
{\rm Let
$\phi:(B^{2n}(r), B^{2n}(r)\cap\mathbb{R}^{n,n-1})\to (D, D\cap\mathbb{R}^{n,n-1})$ be
a relative symplectic embedding respecting  the leaf relations on $B^{n,n-1}(r)$ and $D\cap\mathbb{R}^{n,n-1}$.
By (\ref{e:Rwidth-c}) we get
\begin{eqnarray*}
&&\frac{\pi r^2}{{\it w}_G(B^{2n}(1)\cap\mathbb{R}^{n,n-1};B^{2n}(1), \omega_0)}=
\left(\frac{{\rm Vol}(B^{2n}(r))}{{\rm Vol}(B^{2n}(1))}\right)^{1/n}\\
&&\qquad= \left(\frac{2\min\{{\rm Vol}(B^{2n}(r)^+),
{\rm Vol}(B^{2n}(r)^-)\}}{{\rm Vol}(B^{2n}(1))}\right)^{1/n}\\
&&\qquad= \left(\frac{2\min\{{\rm Vol}(\phi(B^{2n}(r))^+),
{\rm Vol}(\phi(B^{2n}(r))^-)\}}{{\rm Vol}(B^{2n}(1))}\right)^{1/n}\\
&&\qquad\le \left(\frac{2\min\{{\rm Vol}(D^+),
{\rm Vol}(D^-)\}}{{\rm Vol}(B^{2n}(1))}\right)^{1/n},
\end{eqnarray*}
and hence
{\small\begin{eqnarray}\label{e:Vit7}
\frac{{\it w}_G(D\cap\mathbb{R}^{n,n-1};D, \omega_0)}{{\it w}_G(B^{2n}(1)\cap\mathbb{R}^{n,n-1};B^{2n}(1), \omega_0)}
\le \left(\frac{2\min\{{\rm Vol}(D^+),
{\rm Vol}(D^-)\}}{{\rm Vol}(B^{2n}(1))}\right)^{1/n},
\end{eqnarray}}
which shows, in particular,  (\ref{e:viterbo2}) holding if $c_{\rm LR}$ is replaced by
the coisotropic  capacity ${\it w}_G/2$.}
\end{example}

\vspace{5mm}
%
%


\noindent{\bf Data availability statement}  Data sharing not applicable to this article as no
datasets were generated or analyzed during the current study.\\

\noindent{\bf Declarations}\\

\noindent{\bf Conflict of interest}  The authors declare no conflict of interest.\\

\noindent{\bf Consent for publication}  Not applicable.\\

\noindent{\bf Human and animal ethics}  Not applicable.\\

\noindent{\bf Consent to participate} Not applicable.


\subsection*{Acknowledgment}
We are also deeply grateful to the anonymous referees for giving very helpful
comments and suggestions to improve the exposition.

\end{document}